\newtheorem{theorem}{Theorem}
\newtheorem{lemma}{Lemma}
\newtheorem{corollary}{Corollary}
\newtheorem{proposition}{Proposition}
\newtheorem{definition}{Definition}
\newtheorem{example}{Example}
\newtheorem{remark}{Remark}
\title{Poincaré duality for singular tropical hypersurfaces}
\author{Samuel Dentan}
\date{}
\begin{document}
\maketitle

\begin{abstract}
    We establish a partial extension of the Poincaré duality theorem of Jell–Rau–Shaw to tropical hypersurfaces arising from non-primitive triangulations. We introduce a notion of level of primitivity for triangulations of lattice polytopes and show that tropical hypersurfaces satisfy a partial form of Poincaré duality determined by this level. This notion of primitivity is defined modulo a fixed integral domain and is weaker than the classical notion of primitivity.
    
    Moreover, we obtain a generalization of complete Poincaré duality over this integral domain for tropical hypersurfaces whose underlying triangulations are primitive modulo the integral domain. As a corollary, we show that any tropical hypersurface obtained by patchworking from a triangulation of a simple lattice polytope satisfies complete Poincaré duality over the field of rational numbers, providing a converse to a theorem of Aksnes. Throughout, we allow triangulations that are not necessarily convex.
\end{abstract}

\tableofcontents
\newpage

\section{Introduction}

\subsection{Context and Motivations}

Viro’s combinatorial patchworking (\cite{Vi 1984}, \cite{Vi 2006} and \cite{Ri 1993}) is a powerful method to construct real algebraic hypersurfaces in real projective toric varieties with a control on their topology. It is one of the most efficient known methods of construction, thus the study of the topological spaces obtained by combinatorial patchworking has become a field of research in itself.

We obtain a generalization of the work of Renaudineau and Shaw \cite{RS 2023}, which establishes bounds on the Betti numbers of primitive patchworkings, extending it to the almost-primitive case. We will present this work in a next paper which will be pre-printed in 2026. To this end, we find a partial extension of Poincaré duality theorem to non-primitive tropical hypersurfaces. In this paper we present our partial Poincaré duality theorem for non-primitive tropical hypersurfaces. We define also the notion of a $R$-primitive patchworking in a $R$-non-singular lattice polytope for an integral domain $R$, and we present our generalization of the complete Poincaré duality theorem of \cite{JRS 2018} in this framework. In particular, our corollary is that any non-necessarily primitive simplicial patchworking of a simple lattice polytope satisfies a complete Poincaré duality theorem over $\mathbb{Q}$, which is a converse of the main theorem of \cite{Ak 2023}.

Let us start by briefly recalling Viro’s combinatorial patchworking for projective hypersurfaces. Let $n\geq 2, d\geq1$ and $\Delta$ a $n$-simplex of $\mathbb{Z}^n$ of vertices $(0,...,0)$, $(d,0,...,0)$, $...$, $(0,...,0,d)$. Let $\Gamma$ be a triangulation of $\Delta$ with integral vertices, and fix a distribution of signs $\epsilon(v)\in\{-1,1\}$ for each vertex $v$ of $\Delta$. Symmetrize $\Delta$ by each coordinate hyperplane to obtain $2^n-1$ copies and identify the points of the boundary of the union of $\Delta$ and its copies with their symmetric by the origin. We denote $\Delta_*$ this union of $\Delta$ and its copies with these identifications. The space $\Delta_*$ is homeomorphic to $\mathbb{R}P^n$. Symmetrise the triangulation $\Gamma$ of $\Delta$ in each one of its copies. We obtain a triangulation $\Gamma_*$ of $\Delta_*$. Symetrize the sign of each vertex of $\Gamma$ to obtain a sign on each vertex of $\Gamma_*$, using the following rule: $ \epsilon(x_1,...,x_n) = \epsilon(|x_1|,...,|x_n|).(\mathrm{sign} ~ x_1)^{x_1}...(\mathrm{sign} ~ x_n)^{x_n}$. Consider the union $X$ of barycentric cells $\mathrm{Conv}(\mathrm{bar}(\sigma^a),\mathrm{bar}(\sigma^{a+1})$, $...$, $\mathrm{bar}(\sigma^b))$ where $\sigma^a\subseteq \sigma^{a+1}\subseteq ... \subseteq \sigma^b$ are incident simplices of $\Gamma_*$ with $\mathrm{dim}~\sigma^i =i$ and such that every $\sigma^i$ has different signs on its vertices.

Recall that the triangulation of $\Delta$ is said to be convex if there exists a convex piecewise affine function $\nu: \Delta \rightarrow \mathbb{R}$, affine on each $n$-simplex of the triangulation and non-affine on each union of two distinct $n$-simplices. Then the combinatorial Viro's patchworking theorem is:

\begin{theorem}
    (Combinatorial Viro's patchworking theorem) If the triangulation $\Gamma$ is convex there exists a real algebraic hypersurface of degree $d$ of $\mathbb{R}P^n$ satisfying the following homeorphism of pair:
    \begin{equation*}
        (\mathbb{R}P^n,\mathbb{R}X) \simeq (\Delta_*,X).
    \end{equation*}
\end{theorem}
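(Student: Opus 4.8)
\emph{Overall approach.} The plan is to realise $X$ as the real locus of one member of Viro's one–parameter family of algebraic hypersurfaces attached to $(\Gamma,\epsilon,\nu)$, and to show that for a sufficiently small value of the parameter this locus sits inside $\mathbb{R}P^n$ exactly as the piecewise–linear model $X$ sits inside $\Delta_*$. Fix a convex piecewise–affine function $\nu\colon\Delta\to\mathbb{R}$ witnessing convexity of $\Gamma$ (after clearing denominators we may assume it is integer valued on the vertices of $\Gamma$). For $t\in(0,1)$ set
\[
f_t(z_1,\dots,z_n)=\sum_{v=(v_1,\dots,v_n)}\epsilon(v)\,t^{\nu(v)}\,z_1^{v_1}\cdots z_n^{v_n},
\]
the sum running over the vertices $v$ of the triangulation, let $F_t$ be its degree–$d$ homogenisation, and put $Z_t=\{F_t=0\}\subset\mathbb{C}P^n$ with real part $\mathbb{R}Z_t$. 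The monomial $z_1^d$ corresponds to a vertex of $\Delta$, hence of $\Gamma$, so $F_t$ genuinely has degree $d$; it remains to show $(\mathbb{R}P^n,\mathbb{R}Z_t)\simeq(\Delta_*,X)$ for small $t>0$.

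\emph{Step 1: non-degeneracy for small $t$.} I would first show there is $\varepsilon>0$ such that for all $t\in(0,\varepsilon)$ the hypersurface $Z_t$ is smooth and transverse to every torus orbit of $\mathbb{C}P^n$. The key local fact is that, for a maximal simplex $\sigma$ of $\Gamma$ with vertices $w_0,\dots,w_n$, the $\sigma$-truncation of $f_t$ (the partial sum over $\mathrm{vert}(\sigma)$) is, after a monomial change of coordinates, the pullback of an affine–linear form $a_0+a_1u_1+\dots+a_nu_n$ under the étale covering of tori $z\mapsto\bigl(z^{w_1-w_0},\dots,z^{w_n-w_0}\bigr)$; being the pullback of a smooth hypersurface under an étale map, each such building block is smooth in $(\mathbb{C}^*)^n$ \emph{even when $\sigma$ is not unimodular}. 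An induction on $\dim\Delta$ applied to the truncations of $f_t$ along the faces of $\Delta$ (each of which is again a Viro polynomial, for the convex triangulation induced by $\Gamma$), together with a transversality/compactness argument near the tropical locus, then gives smoothness and transversality of the whole family for $t$ small. In particular $\mathbb{R}Z_t\to(0,\varepsilon)$ is a proper submersion of pairs, so by Ehresmann's lemma the pair $(\mathbb{R}P^n,\mathbb{R}Z_t)$ has a well-defined homeomorphism type for $t\in(0,\varepsilon)$.

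\emph{Step 2: tropical decomposition and local models.} Next I would use $\nu$ to cut both sides into matching pieces. The function $\nu$ determines a polyhedral subdivision $\check\Gamma$ of $\mathbb{R}^n$ dual to $\Gamma$ — the non-linearity locus of $x\mapsto\max_v(\langle v,x\rangle-\nu(v))$ together with the complementary regions — whose cell $\check\sigma$ dual to a simplex $\sigma$ has dimension $n-\dim\sigma$ and satisfies $\check\sigma\subseteq\overline{\check\tau}$ iff $\tau\subseteq\sigma$. Pulling $\check\Gamma$ back to $\mathbb{R}P^n$ via $\mathrm{Log}_t$ (extended over the toric boundary) and combining with the moment map and the signs gives, for each small $t$, a decomposition of $\Delta_*$ indexed by the simplices of $\Gamma_*$. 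The heart of the matter is that on a neighbourhood of $\check\sigma$ the polynomial $f_t$ is dominated, as $t\to0$, by its $\sigma$-truncation — this is precisely where \emph{strict} convexity of $\nu$ on unions of adjacent simplices is used, as it forces every other monomial to have strictly larger $t$-exponent there. Hence near $\check\sigma$ the real hypersurface $\mathbb{R}Z_t$ is, for small $t$, isotopic to the real zero set of the building block of $\sigma$; and for $\sigma$ a maximal simplex the computation of Step 1, combined with the symmetrisation rule $\epsilon(x_1,\dots,x_n)=\epsilon(|x_1|,\dots,|x_n|)(\mathrm{sign}\,x_1)^{x_1}\cdots(\mathrm{sign}\,x_n)^{x_n}$, identifies this local picture — as a pair inside the symmetrised copy of $\sigma$ — with the union of the barycentric cells $\mathrm{Conv}(\mathrm{bar}(\sigma^a),\dots,\mathrm{bar}(\sigma^b))$ of $\mathrm{bar}(\sigma)$ whose chain $\sigma^a\subseteq\cdots\subseteq\sigma^b$ consists of simplices of non-constant sign, i.e.\ with $X\cap\sigma$ and its incidences. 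On an overlap $\check\sigma\cap\overline{\check\tau}$ with $\tau\subseteq\sigma$ both local models are governed by the building block of $\tau$, so the local homeomorphisms can be chosen to agree there.

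\emph{Step 3: gluing, and the main obstacle.} Finally, using a partition of unity subordinate to $\{\check\sigma\}_{\sigma\in\Gamma_*}$ and the compatibility on overlaps from Step 2, the local homeomorphisms of pairs assemble into a global homeomorphism $(\mathbb{R}P^n,\mathbb{R}Z_t)\xrightarrow{\ \sim\ }(\Delta_*,X)$ for one, hence by Step 1 for every, small $t\in(0,\varepsilon)$, producing the desired real algebraic hypersurface of degree $d$. The main obstacle is exactly this gluing together with the domination/tameness assertion in Step 2: one must control $\mathbb{R}Z_t$ in the thin transition regions near the tropical locus, where several charts $\check\sigma$ meet, and show that no component of topology is created or destroyed there as $t\to0$. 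This is the place where the \emph{convexity} of the triangulation is indispensable — it is what makes the non-dominant monomials of strictly higher $t$-order on each cell of $\check\Gamma$ — and the uniform transversality of Step 1 is what lets one run an Ehresmann/Thom-type trivialisation of the family down to, and slightly past, $t=0$ in a suitable toric (or logarithmic) compactification, thereby pinning the small-$t$ fibre to the piecewise–linear limit $X$.
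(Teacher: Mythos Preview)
The paper does not prove this theorem: it is stated in the introduction as background, attributed to Viro with references to \cite{Vi 1984}, \cite{Vi 2006} and \cite{Ri 1993}, and no argument is given. So there is no ``paper's own proof'' to compare against.

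That said, your outline is essentially the classical proof as presented in those references (especially Risler's Bourbaki exposition): form the one-parameter family $f_t$ with exponents weighted by the convex function $\nu$, show non-degeneracy for small $t$, and identify the small-$t$ real locus with the combinatorial model via local charts governed by the truncations $f_t^{\sigma}$ on the cells of the dual decomposition. Your remark that the truncations are smooth even for non-unimodular $\sigma$ (via an \'etale monomial change of coordinates) is correct and relevant, since the paper is precisely interested in non-primitive triangulations. The one place where your sketch is thin is exactly where the classical proof is also delicate: the uniform control of $\mathbb{R}Z_t$ in the transition regions near the tropical skeleton (your Step~3). In the references this is handled either by Viro's original charts-with-collars argument or, in more modern treatments, by a toric degeneration / Log-limit argument; you have correctly identified this as the obstacle and named the right tools (strict convexity of $\nu$ to force domination, and an Ehresmann-type trivialisation), but in a full write-up you would need to actually carry this out rather than gesture at it.
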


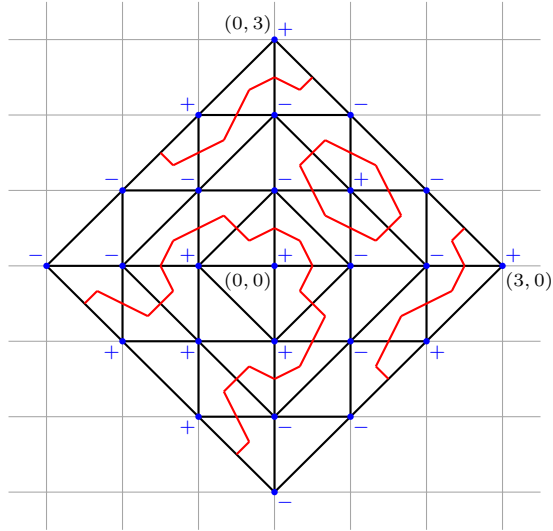
\begin{figure}[h]
\centering

\begin{tikzpicture}[scale=1]
  \draw[step=1, gray!70] (-3.5,-3.5) grid (3.5,3.5);

  \coordinate (A) at (3,0);
  \coordinate (B) at (0,3);
  \coordinate (C) at (-3,0);
  \coordinate (D) at (0,-3);
  \coordinate (A1) at (2,0);
  \coordinate (B1) at (0,2);
  \coordinate (C1) at (-2,0);
  \coordinate (D1) at (0,-2);
  \coordinate (A2) at (1,0);
  \coordinate (B2) at (0,1);
  \coordinate (C2) at (-1,0);
  \coordinate (D2) at (0,-1);
  \coordinate (E) at (-2,1);
  \coordinate (F) at (-2,-1);
  \coordinate (E1) at (2,1);
  \coordinate (F1) at (2,-1);
  \coordinate (E2) at (-1,2);
  \coordinate (F2) at (-1,-2);
  \coordinate (E3) at (1,2);
  \coordinate (F3) at (1,-2);

  \draw[thick, black] (A) -- (B) -- (C) -- (D) -- cycle;
  \draw[thick, black] (A1) -- (B1) -- (C1) -- (D1) -- cycle;
  \draw[thick, black] (A2) -- (B2) -- (C2) -- (D2) -- cycle;
  \draw[thick, black] (A) -- (C) -- cycle;
  \draw[thick, black] (B) -- (D) -- cycle;
  \draw[thick, black] (E) -- (F) -- (F1) -- (E1) -- cycle;
  \draw[thick, black] (E2) -- (F2) -- (F3) -- (E3) -- cycle;

  \draw[thick, red] (-0.5,-2.5) -- (-1/3,-7/3) -- cycle;
  \draw[thick, red] (-2/3,-5/3) -- (-1/3,-7/3) -- cycle;
  \draw[thick, red] (-2/3,-5/3) -- (-1/3,-4/3) -- cycle;
  \draw[thick, red] (0,-1.5) -- (-1/3,-4/3) -- cycle;
  \draw[thick, red] (1/3,-4/3) -- (0,-1.5) -- cycle;
  \draw[thick, red] (1/3,-4/3) -- (2/3,-2/3) -- cycle;
  \draw[thick, red] (1/3,-1/3) -- (2/3,-2/3) -- cycle;
  \draw[thick, red] (1/3,-1/3) -- (1/2,0) -- cycle;
  \draw[thick, red] (1/2,0) -- (1/3,1/3) -- cycle;
  \draw[thick, red] (1.5,-1.5) -- (4/3,-4/3) -- cycle;
  \draw[thick, red] (5/3,-2/3) -- (4/3,-4/3) -- cycle;
  \draw[thick, red] (5/3,-2/3) -- (7/3,-1/3) -- cycle;
  \draw[thick, red] (2.5,0) -- (7/3,-1/3) -- cycle; 
  \draw[thick, red] (2.5,0) -- (7/3,1/3) -- cycle;
  \draw[thick, red] (2.5,0.5) -- (7/3,1/3) -- cycle;
  \draw[thick, red] (2/3,2/3) -- (4/3,1/3) -- cycle;
  \draw[thick, red] (5/3,2/3) -- (4/3,1/3) -- cycle;
  \draw[thick, red] (5/3,2/3) -- (4/3,4/3) -- cycle;

  \draw[thick, red] (-2.5,-0.5) -- (-7/3,-1/3) -- cycle;
  \draw[thick, red] (-5/3,-2/3) -- (-7/3,-1/3) -- cycle;
  \draw[thick, red] (-5/3,-2/3) -- (-4/3,-1/3) -- cycle;
  \draw[thick, red] (-1.5,0) -- (-4/3,-1/3) -- cycle;
  \draw[thick, red] (-4/3,1/3) -- (-1.5,0) -- cycle;
  \draw[thick, red] (-4/3,1/3) -- (-2/3,2/3) -- cycle;
  \draw[thick, red] (-1/3,1/3) -- (-2/3,2/3) -- cycle;
  \draw[thick, red] (-1/3,1/3) -- (0,1/2) -- cycle;
  \draw[thick, red] (0,1/2) -- (1/3,1/3) -- cycle;
  \draw[thick, red] (-1.5,1.5) -- (-4/3,4/3) -- cycle;
  \draw[thick, red] (-2/3,5/3) -- (-4/3,4/3) -- cycle;
  \draw[thick, red] (-2/3,5/3) -- (-1/3,7/3) -- cycle;
  \draw[thick, red] (0,2.5) -- (-1/3,7/3) -- cycle;
  \draw[thick, red] (0,2.5) -- (1/3,7/3) -- cycle;
  \draw[thick, red] (0.5,2.5) -- (1/3,7/3) -- cycle;
  \draw[thick, red] (2/3,2/3) -- (1/3,4/3) -- cycle;
  \draw[thick, red] (2/3,5/3) -- (1/3,4/3) -- cycle;
  \draw[thick, red] (2/3,5/3) -- (4/3,4/3) -- cycle;

  \fill[blue] (0,-3) circle (1.2pt) node[font=\scriptsize, xshift=4,yshift=-4] {$-$};
  \fill[blue] (-1,-2) circle (1.2pt) node[font=\scriptsize, xshift=-4,yshift=-4] {$+$};
  \fill[blue] (0,-2) circle (1.2pt) node[font=\scriptsize, xshift=4,yshift=-4] {$-$};
  \fill[blue] (1,-2) circle (1.2pt) node[font=\scriptsize, xshift=4,yshift=-4] {$-$};
  \fill[blue] (-2,-1) circle (1.2pt) node[font=\scriptsize, xshift=-4,yshift=-4] {$+$};
  \fill[blue] (-1,-1) circle (1.2pt) node[font=\scriptsize, xshift=-4,yshift=-4] {$+$};
  \fill[blue] (0,-1) circle (1.2pt) node[font=\scriptsize, xshift=4,yshift=-4] {$+$};
  \fill[blue] (1,-1) circle (1.2pt) node[font=\scriptsize, xshift=4,yshift=-4] {$-$};
  \fill[blue] (2,-1) circle (1.2pt) node[font=\scriptsize, xshift=4,yshift=-4] {$+$};
  \fill[blue] (-3,0) circle (1.2pt) node[font=\scriptsize, xshift=-4,yshift=4] {$-$};
  \fill[blue] (-2,0) circle (1.2pt) node[font=\scriptsize, xshift=-4,yshift=4] {$-$};
  \fill[blue] (-1,0) circle (1.2pt) node[font=\scriptsize, xshift=-4,yshift=4] {$+$};
  \fill[blue] (0,0) circle (1.2pt) node[font=\scriptsize, xshift=4,yshift=4] {$+$};
  \fill[blue] (1,0) circle (1.2pt) node[font=\scriptsize, xshift=4,yshift=4] {$-$};
  \fill[blue] (2,0) circle (1.2pt) node[font=\scriptsize, xshift=4,yshift=4] {$-$};
  \fill[blue] (3,0) circle (1.2pt) node[font=\scriptsize, xshift=4,yshift=4] {$+$};
  \fill[blue] (-2,1) circle (1.2pt) node[font=\scriptsize, xshift=-4,yshift=4] {$-$};
  \fill[blue] (-1,1) circle (1.2pt) node[font=\scriptsize, xshift=-4,yshift=4] {$-$};
  \fill[blue] (0,1) circle (1.2pt) node[font=\scriptsize, xshift=4,yshift=4] {$-$};
  \fill[blue] (1,1) circle (1.2pt) node[font=\scriptsize, xshift=4,yshift=4] {$+$};
  \fill[blue] (2,1) circle (1.2pt) node[font=\scriptsize, xshift=4,yshift=4] {$-$};
  \fill[blue] (-1,2) circle (1.2pt) node[font=\scriptsize, xshift=-4,yshift=4] {$+$};
  \fill[blue] (0,2) circle (1.2pt) node[font=\scriptsize, xshift=4,yshift=4] {$-$};
  \fill[blue] (1,2) circle (1.2pt) node[font=\scriptsize, xshift=4,yshift=4] {$-$};
  \fill[blue] (0,3) circle (1.2pt) node[font=\scriptsize, xshift=4,yshift=4] {$+$};

  \node[color=black,font=\scriptsize] at (-0.35,3.2) {$(0,3)$};
  \node[color=black,font=\scriptsize] at (3.35,-0.2) {$(3,0)$};
  \node[color=black,font=\scriptsize] at (-0.35,-0.2) {$(0,0)$};

\end{tikzpicture}
\caption{A real algebraic curve of $\mathbb{R}P^2$ of degree $3$ obtained by Viro's patchworking.}
\label{Patchwork}
\end{figure}

\begin{example}
    Figure \ref{Patchwork} shows an example of algebraic curve of $\mathbb{R}P^2$ of degree $3$ obtained by Viro's patchworking. In this example $\Delta$ is the triangle of vertices $(0,0)$, $(0,3)$ and $(3,0)$. $\Delta_*$ is the square of vertices $(-3,0)$, $(0,3)$, $(0,-3)$ and $(3,0)$ where the line segments $[(3,0);(0,3)]$ and  $[(-3,0);(0,-3)]$ are glued together, as soon as the line segments $[(3,0);(0,-3)]$ and  $[(-3,0);(0,3)]$. A triangulation and a sign distribution are fixed on $\Gamma$ and symmetrized by the rules enounced above to obtain a triangulation and a sign distribution on $\Delta_*$. $X$ is the red curve with the glueing inherited from $\Delta_*$. $X$ is an union of some barycentric cells of the triangulation with the rule enounced above, it separates the signs "$+$" and "$-$". According to Viro's theorem there exists a real algebraic curve $\mathbb{R}X$ of degree $3$ in $\mathbb{R}P^2$ such that the pair $(\mathbb{R}P^2,\mathbb{R}X)$ is homeomorphic to this pair $(\Delta_*,X)$.
\end{example}

This theorem generalizes to hypersurfaces of projective toric varieties $\mathbb{R}Y$ instead of $\mathbb{R}P^n$, to do that we consider a lattice polytope $P$ of $\mathbb{Z}^n$ instead of $\Delta$, with a triangulation $\Gamma$ with integral vertices. For each facet of $P$ there is a copy of $P$, and we consider the union $P_*$ of those copies with some rule of glueing. Similarly, signs on each vertex of the triangulation $\Gamma$ are fixed, which determines signs on each vertex of the triangluation $\Gamma_*$ of $P_*$. with some rule of symmetry. Those signs determine a union $X$ of some barycentric cells of the $\Gamma_*$, then $P_*$ is homeomorphic to the real projective toric variety $\mathbb{R}Y$ associated to the polytope $P$, and Viro's Patchworking theorem enounces that if the triangulation $\Gamma$ of $P$ is convex, then there exists a real algebraic hypersurface $\mathbb{R}X$ of Newton polytope $P$, in the projective toric variety $\mathbb{R}Y$ such that the pair $(\mathbb{R}Y,\mathbb{R}X)$ is homeomorphic to the pair $(P_*,X)$.

The projective toric variety is non-singular if the polytope $P$ is non-singular, that is that the polytope is simple (each vertex of $P$ has exactly $n$ adjacent edges) and that for every vertex of $P$ the primitive integral direction vectors of the adjacent edges span the ambient lattice $\mathbb{Z}^n$. We say that the triangulation $\Gamma$ of the polytope $P$ is primitive if for every $n$-simplex the integral direction vectors of the edges span the ambient lattice $\mathbb{Z}^n$ (it means equivalently that the the volume of every $n$-simplex is $1/n!$). If the polytope $P$ is non-singular and if the triangulation is primitive, the real hypersurface $\mathbb{R}X$ obtained by patchworking is non-singular and Renaudineau and Shaw established the following theorem:

\begin{theorem}
    (\cite{RS 2023}) The Betti numbers $b_q(\mathbb{R}X)$ of a real hypersurface obtained by patchworking of a non-singular lattice polytope of $\mathbb{Z}^n$ and a primitive triangulation are bounded by the Hodge numbers $h^{p,q}(\mathbb{C}X)$ of the complexification in the following way:
    \begin{equation*}
        b_q(\mathbb{R}X) \leq \left \{ \begin{array}{ll}
            h^{q,q}(\mathbb{C}X) & \text{for } q = (n-1)/2 \\
            h^{q,n-q}(\mathbb{C}X) + h^{q,q}(\mathbb{C}X) & \text{otherwise} 
        \end{array} \right. .
    \end{equation*}
    \label{RS}
\end{theorem}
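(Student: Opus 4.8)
The plan is to argue with $\mathbb{Z}/2$-coefficients throughout --- so that we in fact bound the $\mathbb{Z}/2$-Betti numbers of $\mathbb{R}X$, which dominate the ordinary ones --- and to compare the homology of $\mathbb{R}X$ with the tropical homology of the tropical hypersurface $X^{\mathrm{tr}}$ dual to the triangulation $\Gamma$, equipped with the sign data $\epsilon$. Since $P$ is non-singular and $\Gamma$ is primitive, $X^{\mathrm{tr}}$ is a smooth compact tropical hypersurface whose complexification is the smooth hypersurface $\mathbb{C}X$, and the Viro patchwork $\mathbb{R}X$ --- described combinatorially above via the barycentric cells of $\Gamma_*$ --- carries a $\mathbb{Z}/2$-cellular structure fibred over the polyhedral complex $X^{\mathrm{tr}}$: over the relative interior of a face $\sigma$ the fibre is the finite set of real phases prescribed by $\epsilon$. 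The first step is to package this as a cellular cosheaf $\mathcal{S}^{\epsilon}$ of $\mathbb{Z}/2$-vector spaces on $X^{\mathrm{tr}}$, whose value on $\sigma$ is the $\mathbb{Z}/2$-span of the admissible phases there, and to show that its cellular chain complex $C_{\bullet}(X^{\mathrm{tr}};\mathcal{S}^{\epsilon})$ computes $H_{\bullet}(\mathbb{R}X;\mathbb{Z}/2)$; this is a Mayer--Vietoris / nerve argument reflecting that the patchwork is assembled from its orthant pieces along exactly this recipe.

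The technical core is a filtration of the cosheaf $\mathcal{S}^{\epsilon}$. After fixing a reference phase, the stalk on a face $\sigma$ is a free module over the group algebra $\mathbb{Z}/2[(\mathbb{Z}/2)^{k_\sigma}]$ of the relevant group of real-torus directions at $\sigma$; in characteristic $2$ this algebra is $\mathbb{Z}/2[u_1,\dots,u_{k_\sigma}]/(u_1^2,\dots,u_{k_\sigma}^2)$, and I would filter it by powers of its augmentation ideal. The point to establish is that the associated graded in filtration degree $p$ is canonically the degree-$p$ multi-tangent module $\mathcal{F}_p\otimes\mathbb{Z}/2$ of tropical homology, compatibly with the corestriction maps of the cosheaf and with the cellular differential. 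This produces a bounded filtration of $C_{\bullet}(X^{\mathrm{tr}};\mathcal{S}^{\epsilon})$ with $\mathrm{gr}_p C_{\bullet}\cong C_{\bullet}(X^{\mathrm{tr}};\mathcal{F}_p\otimes\mathbb{Z}/2)$, hence a spectral sequence whose first page in total degree $q$ is $\bigoplus_p H_q(X^{\mathrm{tr}};\mathcal{F}_p\otimes\mathbb{Z}/2)$ and which converges to $H_q(\mathbb{R}X;\mathbb{Z}/2)$. Comparing $E^{\infty}$ with $E^1$ then gives
\begin{equation*}
b_q(\mathbb{R}X)\ \le\ \sum_{p}\dim_{\mathbb{Z}/2}H_q\bigl(X^{\mathrm{tr}};\mathcal{F}_p\otimes\mathbb{Z}/2\bigr).
\end{equation*}

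It remains to identify the right-hand side with Hodge numbers. For a smooth tropical hypersurface the integral tropical homology groups $H_q(X^{\mathrm{tr}};\mathcal{F}_p)$ are torsion-free --- this is where non-singularity of $P$ and primitivity of $\Gamma$ re-enter --- so by the universal coefficient theorem $\dim_{\mathbb{Z}/2}H_q(X^{\mathrm{tr}};\mathcal{F}_p\otimes\mathbb{Z}/2)=\dim_{\mathbb{Q}}H_q(X^{\mathrm{tr}};\mathcal{F}_p\otimes\mathbb{Q})$, and by the theorem of Itenberg--Katzarkov--Mikhalkin--Zharkov this equals the Hodge number $h^{p,q}(\mathbb{C}X)$. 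Finally a smooth hypersurface in a smooth projective toric variety has vanishing Hodge numbers outside the diagonal $p=q$ and the primitive middle range $p+q=\dim_{\mathbb{C}}\mathbb{C}X$, so in each degree $q$ only the diagonal number $h^{q,q}(\mathbb{C}X)$ and one further ``primitive'' Hodge number survive on the right, and these two coincide precisely when $q=(n-1)/2$; substituting this vanishing into the displayed inequality is exactly the asserted estimate. (Convexity of $\Gamma$ enters only to invoke Viro's theorem, so that $\mathbb{R}X$ is genuinely an algebraic hypersurface; the inequality itself is combinatorial and holds for any primitive triangulation.)

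The step I expect to be the main obstacle is the filtration computation of the second paragraph: proving that $\mathcal{S}^{\epsilon}$ carries an augmentation-type filtration whose associated graded is exactly the tropical multi-tangent complex, naturally over all strata --- including the sedentary boundary faces of the tropical toric compactification, where the $\epsilon$-twisting of the gluing must be tracked carefully --- together with the torsion-freeness input needed to convert $\mathbb{Z}/2$-dimensions of tropical homology into the Hodge numbers of $\mathbb{C}X$.
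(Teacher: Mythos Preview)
This theorem is not proved in the paper: it is a background result cited from \cite{RS 2023}, and the paper only records a one-paragraph description of the original strategy (a spectral sequence converging to $H_*(\mathbb{R}X;\mathbb{F}_2)$ whose first page consists of the tropical homology groups $H_q(\mathcal{F}_p^{\mathbb{F}_2})$, followed by the computation of those groups). Your proposal reproduces exactly that strategy --- the augmentation-ideal filtration of the sign cosheaf, with associated graded the multi-tangent cosheaves $\mathcal{F}_p\otimes\mathbb{F}_2$, is precisely the Renaudineau--Shaw spectral sequence --- so there is nothing substantive to compare.

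The only minor divergence is in how you finish: you identify the $\mathbb{F}_2$-tropical ranks with Hodge numbers via torsion-freeness plus universal coefficients plus \cite{IKMZ 2018}, whereas the paper's summary mentions instead the tropical Lefschetz section theorem, tropical Poincar\'e duality, and Euler-characteristic arguments of \cite{ARS 2021}. In the smooth (non-singular polytope, primitive triangulation) setting both routes are available and yield the same numbers; the torsion-freeness you rely on is in fact one of the outputs of \cite{ARS 2021}, so the two descriptions are really the same argument viewed from slightly different angles.
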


The idea of their proof is to construct a spectral sequence degenerating to the homology of the real hypersurface $\mathbb{R}X$ whose first page corresponds to tropical homology groups $H_q(\mathcal{F}_p^{\mathbb{F}_2})$, which depends on the polytope $P$ and its triangulation $\Gamma$. Then they compute these tropical homology groups using a tropical Lefschetz hyperplan section theorem, a tropical Poincaré duality theorem and some Euler characteristic arguments (\cite{ARS 2021}).

To obtain a real algebraic hypersurface by patchworking from a triangulation of a lattice polytope, it is necessarily to assume that the triangulation is convex. However, if we remove this convexity hypothesis, Viro's Patchorking process continues to exist and we can still construct the topological object $X \subseteq P_*$ which is a piecewise-linear hypersurface. In this case we cannot interpret $X$ as a real hypersurface of the projective variety $\mathbb{R}Y$. However, the Hodge numbers of a generic algebraic hypersurface of Newton polytope $P$ depends only on this polytope $P$, and we denote them $h^{p,q}(P)$. Brugallé, Lopez de Medrano and Rau have proven that the main theorem of \cite{RS 2023} can be generalized in that context:

\begin{theorem}
    (\cite{BMR 2024}) The Betti numbers of the piecewise-linear hypersurface $X$ obtained by non-necessarily convex patchworking of a non-singular lattice polytope P of $\mathbb{Z}^n$ and a primitive triangulation are bounded in the following way:
    \begin{equation*}
        b_q(X) \leq \left \{ \begin{array}{ll}
            h^{q,q}(P) & \text{for } q = (n-1)/2 \\
            h^{q,n-q}(P) + h^{q,q}(P) & \text{otherwise} 
        \end{array} \right. .
    \end{equation*}
    \label{BMRt}
\end{theorem}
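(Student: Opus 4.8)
The plan is to re-run the proof of Theorem~\ref{RS}, after noticing that the Renaudineau--Shaw spectral sequence is a purely combinatorial object and that the convexity of $\Gamma$ is used, in \cite{RS 2023}, only through Viro's theorem --- i.e.\ only in order to realise the piecewise-linear hypersurface $X\subseteq P_*$ as an honest real algebraic hypersurface $\mathbb{R}X\subseteq\mathbb{R}Y$. Forgetting that realisation and working directly with the CW structure on $X$ (the union of barycentric cells of $\Gamma_*$ described above) and its cellular chain complex $C_\bullet(X;\mathbb{F}_2)$, all of the homological input survives. First I would recall the Renaudineau--Shaw filtration on $C_\bullet(X;\mathbb{F}_2)$: a barycentric cell of $X$ is recorded by a flag $\sigma^a\subseteq\cdots\subseteq\sigma^b$ of simplices of $\Gamma_*$ together with the sign data, and filtering by the number of coordinate reflections involved produces an exhaustive increasing filtration whose associated graded in filtration degree $p$ is the tropical cellular chain complex of the dual polyhedral complex of $\Gamma$ inside $P$ with the $\mathbb{F}_2$-coefficient system $\mathcal{F}_p$ carrying the sign/monodromy data. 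This identification of the associated graded is local and combinatorial --- it never mentions convexity --- so it applies verbatim here, yielding a spectral sequence with first page $E_1^{p,q}\cong H_q(\mathcal{F}_p^{\mathbb{F}_2})$ and abutment $H_{p+q}(X;\mathbb{F}_2)$, hence the inequality $b_q(X)\le\sum_p\dim_{\mathbb{F}_2}H_q(\mathcal{F}_p^{\mathbb{F}_2})$.

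Next I would bound the right-hand side by the stated quantity. The dual complex of a primitive triangulation of a non-singular lattice polytope is ``tropically smooth'': the star of each of its cells is a standard Bergman fan of a uniform matroid, exactly as when $\Gamma$ is convex, and convexity plays no role in this local picture. Therefore the $\mathcal{F}_p$, their cosheaf structure, and every local cohomology computation coincide with the regular case, and the three ingredients used by Renaudineau and Shaw --- the tropical Lefschetz hyperplane section theorem, tropical Poincaré duality, and the Euler-characteristic identity of \cite{ARS 2021} --- become statements about this abstract complex and its $\mathcal{F}_p$-homology which one checks directly in the possibly non-regular setting. Together they evaluate $\sum_p\dim_{\mathbb{F}_2}H_q(\mathcal{F}_p^{\mathbb{F}_2})$ as $h^{q,q}(P)$ for $q=(n-1)/2$ and as $h^{q,q}(P)+h^{q,n-q}(P)$ otherwise --- the merging of the two contributions in the middle degree being the self-dual case of Poincaré duality --- where $h^{p,q}(P)$, the Hodge numbers of a generic hypersurface of Newton polytope $P$, is the Danilov--Khovanskii expression depending on $P$ alone and coinciding with the tropical Hodge numbers of the dual complex.

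The hard part will be the step buried in the previous paragraph: the results of \cite{ARS 2021} are phrased for genuine tropical hypersurfaces, i.e.\ balanced weighted polyhedral complexes embedded in a compactification of $\mathbb{R}^n$, and such an embedding is available precisely when $\Gamma$ is convex. One must detach tropical Poincaré duality and the tropical Lefschetz theorem from the embedded balanced picture and re-prove them for the abstract dual complex of an arbitrary primitive triangulation of a non-singular polytope, using only that it is a compact tropical manifold all of whose vertex stars are standard --- concretely, (i) a Poincaré-duality isomorphism relating $H_q(\mathcal{F}_p)$ and $H_{n-1-q}(\mathcal{F}_{n-1-p})$ obtained by a combinatorial dualising-complex argument rather than from intersection theory on a tropical cycle, and (ii) a combinatorial Lefschetz degeneration, cutting $P$ by a generic tropical hyperplane that need not be compatible with any convex subdivision. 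Once these two combinatorial facts are in place the remainder --- the spectral sequence, the dimension count and the comparison with $h^{p,q}(P)$ --- is formal. One last subtlety to watch, as in \cite{RS 2023}, is the passage from $\mathbb{F}_2$ to $\mathbb{Q}$: a priori $\dim_{\mathbb{F}_2}H_q(\mathcal{F}_p^{\mathbb{F}_2})$ can exceed $\dim_{\mathbb{Q}}H_q(\mathcal{F}_p^{\mathbb{Q}})$ by $2$-torsion, and one must verify that in the primitive non-singular case no such extra torsion appears in the rows that contribute to the bound.
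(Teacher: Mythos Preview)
This theorem is not proved in the present paper: it is quoted from \cite{BMR 2024} as background and motivation, with no argument given here beyond the one-sentence summary in the introduction (spectral sequence with first page $H_q(\mathcal{F}_p^{\mathbb{F}_2})$, then tropical Lefschetz, Poincar\'e duality, and Euler-characteristic identities from \cite{ARS 2021}). Your sketch matches that summary and is the approach of \cite{BMR 2024}; there is nothing further in this paper to compare against.
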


Our interest is to generalize these bounds for non-necessarily primitive patchworking. The non-necessarily primitive triangulations of a lattice polytope can be ordered by a level $k$ of primitivity: a triangulation is said to be $k$-primitive if every $k$-simplex is primitive (a $k$-simplex is said to be primitive if it is primitive in its tangent lattice). However, as often in real algebraic geometry, the Betti numbers appearing in the theorem of \cite{RS 2023} and the theorem of \cite{BMR 2024} are the Betti numbers over the field $\mathbb{F}_2$. For this reason we are interested by a homology over $\mathbb{F}_2$, and what really matters is the primitivity modulo $2$, what we call $\mathbb{F}_2-$primitivity. A $k$-simplex of the triangulation $\Gamma$ is said to be $\mathbb{F}_2-$primitive if it is a dilation of a simplex with integral vertices whose volume is of the form $m/k!$ with $m$ an odd number. Here what we call the volume of a $k$-simplex $\sigma$ is its volume in its tangent space $T_{\mathbb{R}}\sigma$ endowed with the lattice $T_{\mathbb{Z}}\sigma:=T_{\mathbb{R}}\sigma\cap\mathbb{Z}^n$. Then we say that the triangulation $\Gamma$ is $(k,\mathbb{F}_2)$-primitive if every $k$-simplex is $\mathbb{F}_2$-primitive. As a $\mathbb{F}_2$-primitive triangulation is defined to be a $(n,\mathbb{F}_2)-$primitive triangulation, we say that a $(n-1,\mathbb{F}_2)$-primitive triangulation is an almost $\mathbb{F}_2$-primitive triangulation (see Section \ref{secprim} and Figures \ref{primtri}, \ref{primsimp} and \ref{triangulation} for more details). Remark that if $n$ is even, every almost $\mathbb{F}_2$-primitive triangulation is a $\mathbb{F}_2$-primitive triangulation, because for any $n$-simplex with $n$ even, if every facet is $\mathbb{F}_2$-primitive, then the $n$-simplex itself is $\mathbb{F}_2$-primitive.

It is also possible to defined the hypothesis of $\mathbb{F}_2$-non-singularity. We say that the lattice polytope $P$ is $\mathbb{F}_2$-non-singular if $P$ is simple and if for every vertex $s$ of $P$ the determinant of the family of primitive integral vectors directing the adjacent edges of $s$ is odd (see Section \ref{secprim} and Figure \ref{fignonsing} for more details). It is weaker than the non-singularity, but it is enough to generalize Theorem \ref{RS} of \cite{RS 2023} and Theorem \ref{BMRt} of \cite{BMR 2024}:

\begin{theorem}
    ([De 2026]) The Betti numbers of a piecewise-linear hypersurface $X$ obtained by non-necessarily convex patchworking of a $\mathbb{F}_2$-non-singular lattice polytope $P$ of $\mathbb{Z}^n$ and an almost $\mathbb{F}_2$-primitive triangulation are bounded by the following expressions:
    \begin{align*}
        b_{\frac{n-1}{2}}(X) \leq & ~~h^{\frac{n-1}{2},\frac{n-1}{2}}(P)+\mathrm{rk}~H_\frac{n-1}{2}(\mathcal{F}^{\mathbb{F}_2}_\frac{n+1}{2})+\mathrm{rk}~H_\frac{n+1}{2}(\mathcal{F}^{\mathbb{F}_2}_\frac{n-1}{2}) +(-1)^\frac{n-1}{2}(\chi(\mathcal{F}^{\mathbb{F}_2}_\frac{n-1}{2})-\chi_{\frac{n-1}{2}}(P)), \\
        b_0(X) \leq & ~~1+ h^{n-1,0}(P)+\mathrm{inf}~\Bigg[~\mathrm{rk}~H_1(\mathcal{F}^{\mathbb{F}_2}_{n-1}) - N_++N_-~;~\mathrm{rk}~H_{n-1}(\mathcal{F}^{\mathbb{F}_2}_1)~\Bigg], \\
        b_q(X) \leq & ~~\mathrm{inf}~ \Bigg [ \mathrm{rk}~H_{n-q}(\mathcal{F}^{\mathbb{F}_2}_q) + \mathrm{rk}~H_{n-1-q}(\mathcal{F}^{\mathbb{F}_2}_{q+1}) ~;\mathrm{rk}~H_{q}(\mathcal{F}^{\mathbb{F}_2}_{n-q}) + \mathrm{rk}~H_{q+1}(\mathcal{F}^{\mathbb{F}_2}_{n-1-q}) \Bigg] \\ & + 1+h^{n-1-q,q}(P) + (-1)^{n-1-q}(\chi(\mathcal{F}^{\mathbb{F}_2}_q)-\chi_q(P))~~~~~~~~\mathrm{for}~q\notin \Bigg\{0,n-1,\frac{n-1}{2}\Bigg\}
    \end{align*}
    where $\chi(\mathcal{F}^{\mathbb{F}_2}_q)$ is the Euler-characteristic of the homology $H_*(\mathcal{F}^{\mathbb{F}_2}_q)$, where $\chi_q(P):=\sum_{r=0}^nh^{q,r}(P)$, and where $N_+$ and $N_-$ are the numbers of non-$\mathbb{F}_2$-primitive $n$-simplices for which the product of the signs on the vertices are $+1$ and $-1$ respectively.
\end{theorem}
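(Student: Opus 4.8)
The strategy generalizes that of \cite{RS 2023} and \cite{BMR 2024}: one runs the Renaudineau--Shaw tropical-to-real spectral sequence, in the non-convex form of Brugallé, Lopez de Medrano and Rau, but only under $\mathbb{F}_2$-non-singularity of $P$ and almost $\mathbb{F}_2$-primitivity of $\Gamma$, and controls its second page using the partial Poincaré duality theorem of this paper, the tropical Lefschetz hyperplane section theorem of \cite{ARS 2021}, and Euler-characteristic bookkeeping. First one reconstructs the Renaudineau--Shaw filtration on the $\mathbb{F}_2$-cellular chain complex of the piecewise-linear hypersurface $X\subseteq P_*$ attached to the sign data of the patchworking, and checks that, exactly as in the primitive case, its associated graded is the tropical chain complex; the resulting spectral sequence then has first page $E_1^{p,q}=H_q(\mathcal{F}_p^{\mathbb{F}_2})$ and abuts to $H_*(X;\mathbb{F}_2)$. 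Because $\Gamma$ need not be primitive, $X$ is genuinely singular here, and one has in addition to record the extra local cells of $X$ lying over the non-$\mathbb{F}_2$-primitive $n$-simplices. Unconditionally this already gives $b_q(X)\le\sum_p\mathrm{rk}\,E_2^{p,q}$, and the task is to evaluate the right-hand side.

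For fixed $q$ one splits the column $\bigoplus_p H_q(\mathcal{F}_p^{\mathbb{F}_2})$ into a Lefschetz part and a primitive part. The Lefschetz part is handled by the tropical Lefschetz section theorem, which identifies the relevant tropical homology group with the middle Hodge number $h^{n-1-q,q}(P)$ of a generic hypersurface of Newton polytope $P$; this part sees only the combinatorics of $P$ and of a generic primitive refinement, so it is insensitive to the non-primitive simplices. For the primitive part one invokes the partial Poincaré duality theorem of this paper, which --- although weaker than the full duality --- supplies a second way of bounding the surviving $E_\infty$-contribution in degree $q$, expressed through the transposed groups: thus $H_{n-q}(\mathcal{F}_q^{\mathbb{F}_2})$ and $H_{n-1-q}(\mathcal{F}_{q+1}^{\mathbb{F}_2})$ give one bound and $H_q(\mathcal{F}_{n-q}^{\mathbb{F}_2})$ and $H_{q+1}(\mathcal{F}_{n-1-q}^{\mathbb{F}_2})$ the other, and the smaller of the two is exactly the $\inf[\ \cdot\ ;\ \cdot\ ]$ of the statement. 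The degree $q=\frac{n-1}{2}$ is the self-dual one, where the two bounds coincide and collapse to $\mathrm{rk}\,H_{\frac{n-1}{2}}(\mathcal{F}_{\frac{n+1}{2}}^{\mathbb{F}_2})+\mathrm{rk}\,H_{\frac{n+1}{2}}(\mathcal{F}_{\frac{n-1}{2}}^{\mathbb{F}_2})$.

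To reach the compact form of the statement one uses Euler characteristics: $\chi(\mathcal{F}_q^{\mathbb{F}_2})=\sum_{q'}(-1)^{q'}\mathrm{rk}\,H_{q'}(\mathcal{F}_q^{\mathbb{F}_2})$, while $\chi_q(P)=\sum_r h^{q,r}(P)$ records the same numerics in the primitive situation, so the defect $\chi(\mathcal{F}_q^{\mathbb{F}_2})-\chi_q(P)$ measures precisely the failure of the tropical homology of the possibly singular $(P,\Gamma)$ to compute Hodge numbers. Substituting this defect into the column bound, solving for the sum of the remaining tropical Betti numbers and using the vanishing of $H_{q'}(\mathcal{F}_p^{\mathbb{F}_2})$ outside the Lefschetz and primitive ranges yields the coefficient $(-1)^{n-1-q}\bigl(\chi(\mathcal{F}_q^{\mathbb{F}_2})-\chi_q(P)\bigr)$, while the constant $1$ in the bound absorbs one distinguished Lefschetz class together with the local models over the non-primitive simplices. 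For $q=0$ the column is short --- only $H_{n-1}(\mathcal{F}_1^{\mathbb{F}_2})$ and the local data of the non-$\mathbb{F}_2$-primitive $n$-simplices survive --- and a direct count of the connected components of $X$, in which each such simplex $\sigma$ contributes a bounded number of extra pieces depending only on the parity and on the sign product $\prod_{v\in\sigma}\epsilon(v)$, gives the $b_0$ bound with the term $-N_++N_-$.

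The main obstacle, the global input (partial Poincaré duality) being already supplied by this paper, is the behavior of the spectral sequence under the weaker hypothesis: showing that, under only almost $\mathbb{F}_2$-primitivity, it still degenerates early enough --- that the only possibly non-trivial higher differentials are those between the transposed pairs above, everything else collapsing as in the primitive case --- and simultaneously pinning down how the singular local models of $X$ over non-$\mathbb{F}_2$-primitive $n$-simplices enter the filtered chain complex. This is exactly where $(n-1,\mathbb{F}_2)$-primitivity is used: it forces the singular locus of $X$ to have codimension $1$ in $X$ and to be of a very restricted local type, which is what keeps all the correction terms finite and combinatorially computable, and it is also what makes the partial --- rather than the full --- Poincaré duality theorem the natural tool here.
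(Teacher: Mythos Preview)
The paper does not contain a proof of this theorem. It is stated in the Introduction with the attribution ``[De 2026]'' and is explicitly deferred: the paper writes that ``This theorem will be the purpose of the article \cite{De 2026} we will pre-print next year.'' The present paper proves only the partial and complete Poincar\'e duality theorems (Theorems~\ref{part} and~\ref{comp}), which are announced as ingredients for the future proof. There is therefore no ``paper's own proof'' to compare your proposal against.

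That said, your sketch is broadly consistent with the strategy the Introduction outlines: it describes the Renaudineau--Shaw approach as building a spectral sequence with first page $H_q(\mathcal{F}_p^{\mathbb{F}_2})$ converging to $H_*(X;\mathbb{F}_2)$, and then controlling that page via a tropical Lefschetz theorem, a tropical Poincar\'e duality, and Euler-characteristic arguments; and it says the motivation for the present paper was precisely to supply the Poincar\'e duality input in the almost $\mathbb{F}_2$-primitive setting. Your proposal follows that template. However, several of your claims go beyond what the paper states or hints at, and cannot be checked against it: for instance, that the spectral sequence can be set up with the same first page under mere almost $\mathbb{F}_2$-primitivity, that the Lefschetz input of \cite{ARS 2021} applies without change, the precise mechanism producing the $-N_++N_-$ correction for $b_0$, and the assertion that the singular locus has codimension~$1$ in $X$. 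These may well be correct, but they are not established here, and your proposal remains a plausible outline rather than a proof.
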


This theorem will be the purpose of the article \cite{De 2026} we will pre-print next year. As for \cite{RS 2023}, to prove this theorem we need a tropical Poincaré duality theorem. It was our motivation to find generalizations of the tropical Poincaré duality theorem discovered in \cite{JRS 2018} for non-singular tropical varieties.

 \subsection{Poincaré Duality}

As described in the previous section, we needed to generalize tropical Poincaré duality to non-primitive triangulation, for the field $\mathbb{F}_2$. However a natural question was to find generalization for any integral domain, and not specifically $\mathbb{F}_2$. Some people have already worked on tropical Poincaré duality over the rational field $\mathbb{Q}$. Aksnes \cite{Ak 2023} has proven that a necessarily condition to have tropical Poincaré duality over $\mathbb{Q}$ of a non-singular lattice Polytope is that the subdivision is simplicial. Then, a Poincaré duality theorem over $\mathbb{Q}$ for a non-singular polytope $P$ with a triangulation is a converse of his theorem. Moreover, knowing tropical objects satisfying Poincaré duality over $\mathbb{Q}$ is an interesting problem because there exists some research field about the class of tropical objects satisfying Poincaré duality over $\mathbb{Q}$ (\cite{AP 2023}). As $\mathbb{Q}$ is a field of characteristic zero, every simplex is $\mathbb{Q}$-primitif (the volume of every $n$-simplex with integral vertices in $\mathbb{Z}^n$ is of the form $m/n!$, and $m$ is always invertible in $\mathbb{Q}$ because it is an integer). Then every triangulation is $\mathbb{Q}$-primitive, thus a Poincaré duality theorem for $\mathbb{Q}$-primitive triangulation means a Poincaré duality theorem for any triangulation. That is the reason why we decide to write a more general proof of the partial Poincaré duality theorem, to include both $\mathbb{F}_2$ and $\mathbb{Q}$. We have proven a partial Poincaré duality theorem for any integral domain $R$:

\begin{theorem}
    (Partial Poincaré duality theorem) If $R$ is an integral domain, for a $R$-non-singular lattice polytope $P$ of $\mathbb{Z}^n$ with a $(k,R)$-primitive triangulation, with $2\leq k\leq n-1$, there are canonical isomorphism between some tropical cohomology groups an tropical homology groups:
    \begin{equation*}
        \forall p+q \geq 2n-k,~~~~~~~~~H^q(\mathcal{F}^p_{R}) \xrightarrow{\sim}H_{n-1-q}(\mathcal{F}_{n-1-p}^{R}).
    \end{equation*}
    \label{part}
\end{theorem}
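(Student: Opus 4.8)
The plan is to realise the map in the statement as a cap product with the tropical fundamental class of $X$ and to prove that it is an isomorphism in the prescribed range by a local‑to‑global argument controlled by the combinatorics of the triangulation $\Gamma$. First I would fix the cellular model: $X$ is the compact polyhedral complex attached to $(P,\Gamma)$ inside the tropical toric variety of $P$, its cells $\tau$ are dual to the faces $\sigma_\tau$ of $\Gamma$ with $\dim\tau=n-\dim\sigma_\tau$ away from the boundary strata, and both $H^q(\mathcal{F}^p_R)$ and $H_{n-1-q}(\mathcal{F}^R_{n-1-p})$ are computed by the cellular cochain, resp. chain, complexes of the multi‑tangent (co)sheaves. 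Since every cell of $X$ is balanced, $X$ carries a fundamental cycle $[X]\in H_{n-1}(\mathcal{F}^R_{n-1})$, and contraction against $[X]$ produces a canonical morphism $\mathrm{PD}^{p,q}\colon H^q(\mathcal{F}^p_R)\to H_{n-1-q}(\mathcal{F}^R_{n-1-p})$ for all $p,q$; the whole point of the proof is to isolate the locus where it is bijective.

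The second step is local Poincaré duality on the $R$-smooth part of $X$. Because $\Gamma$ is a triangulation, the star of a cell $\tau$ dual to an $m$-simplex $\sigma_\tau$ is isomorphic, as a weighted rational polyhedral fan, to $\mathbb{R}^{n-m}\times L^{m-1}(\sigma_\tau)$, where $L^{m-1}(\sigma_\tau)$ is the standard $(m-1)$-dimensional tropical linear space carrying the sublattice $T_{\mathbb{Z}}\sigma_\tau$; this fan is matroidal over $R$ precisely when $\sigma_\tau$ is $R$-primitive. As the normalised volume of a face of a simplex divides that of the simplex, $R$-primitivity passes to faces, so the $(k,R)$-primitivity of $\Gamma$ makes every $\tau$ with $\dim\sigma_\tau\le k$ an $R$-smooth point; together with the $R$-non-singularity of $P$, which handles the toric boundary strata, this shows that $X$ is smooth over $R$ outside the subcomplex $X^{\mathrm{sing}}=\{\tau:\dim\sigma_\tau\ge k+1\}$, which has dimension at most $n-1-k$ (in particular, since $k\ge 2$, all edges of $\Gamma$ are $R$-primitive, so $X$ has no singularities in codimension $1$). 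On $X\setminus X^{\mathrm{sing}}$ the morphism $\mathrm{PD}^{p,q}$ is an isomorphism: the argument of \cite{JRS 2018} applies verbatim to the sheaves $\mathcal{F}^\bullet_R$, the only structural input being that $\mathcal{F}^R_{n-1}$ is invertible at $R$-smooth cells.

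The third step is to propagate this to the range $p+q\ge 2n-k$ using the filtration $X^{\mathrm{sing}}\subseteq X$. I would form the mapping cone $K^\bullet$ of (a cellular representative of) $\mathrm{PD}$, note that by the previous step $K^\bullet$ is acyclic away from $X^{\mathrm{sing}}$, hence supported on $X^{\mathrm{sing}}$, and compute its hypercohomology through the spectral sequence of the stratification of $X^{\mathrm{sing}}$ by $m=\dim\sigma_\tau\ge k+1$. At a singular cell $\tau$ with $\dim\tau=n-m$ the local term is the Poincaré-duality defect of $\mathbb{R}^{n-m}\times L^{m-1}_R$ with coefficients in $\mathcal{F}^p_R$; the $\mathbb{R}^{n-m}$-factor shifts the cohomological degree by $n-m$, and an explicit computation of the tropical (co)homology of the $R$-thickened tropical linear space $L^{m-1}_R$ — equivalently, of the failure of self-duality of the stalk $\mathcal{F}^p_R$ at a non-$R$-primitive cone — shows that this defect occurs only in cohomological degrees $i$ with $p+i\le 2n-m-1$. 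Since $m\ge k+1$ on $X^{\mathrm{sing}}$ and $2n-m-1$ is decreasing in $m$, the cone $K^\bullet$ is acyclic in every degree $i$ with $p+i\ge 2n-k-1$; an isomorphism on $H^q$ requires this acyclicity in the two adjacent degrees $q-1$ and $q$, which is exactly the condition $p+q\ge 2n-k$.

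The main obstacle is precisely this last local computation together with its assembly. One must determine the tropical homology and cohomology of the thickened tropical linear space $L^{m-1}_R$ — which, unlike its smooth counterpart, can have stalks whose ranks over $R$ jump because the defining sublattice $T_{\mathbb{Z}}\sigma_\tau$ is non‑saturated modulo the relevant primes — and extract from it the sharp bound $p+i\le 2n-m-1$ for the degrees in which the duality defect can live, rather than the much weaker bound coming from $\dim X^{\mathrm{sing}}$ alone. Controlling the spectral sequence of the filtration so that the contributions of the different strata $m=k+1,\dots,n$ do not interact to enlarge this range, and verifying that the resulting threshold is genuinely $2n-k$ and not something larger, is where the real work lies; the fact that one obtains only a partial, rather than complete, Poincaré duality is forced by exactly this computation.
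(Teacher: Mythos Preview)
Your overall architecture---cap product with a fundamental class, then a local-to-global argument isolating where the duality map can fail---is sound and is morally what the paper does, but the route you describe is genuinely different from the paper's, and your sketch leaves the decisive step unproved.

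The paper does not work on the dual cell complex you describe. It passes to the \emph{cubical} subdivision, whose cells are indexed by pairs $(\sigma^a,\sigma^b)$ of nested simplices of $\Gamma$. This produces a canonical \emph{double complex} $C_{a,b}(\mathcal{F}_p^R)=\bigoplus_{\sigma^a\subseteq\sigma^b}\mathcal{F}_p^R(\sigma^a,\sigma^b)$ (and its dual), and hence two spectral sequences converging to $H_*(\mathcal{F}_p^R)$ and two converging to $H^*(\mathcal{F}^p_R)$. The cap product $\phi^q=\,\cdot\,\cap[\Omega]$ with the fundamental class maps one cohomological $E_0$-page to one homological $E_0$-page, and the whole proof reduces to computing the $E_1$-pages: on the cohomological side $E_1^{a,b}(\delta_2)=\bigoplus_{\sigma^a}H^{b-a}(\mathcal{F}^p_R(\sigma^a,*))$ is concentrated on the line $b=n$ (this uses only $R$-non-singularity of $P$), while on the homological side $E^1_{a,b}(d^1)=\bigoplus_{\sigma^b}H_{b-a}(\mathcal{F}_p^R(*,\sigma^b))$ is concentrated on the column $a=1$ \emph{plus} a small rectangle $1<a\le p+1$, $k<b\le n$ coming from the non-$R$-primitive simplices. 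Matching these via $\phi$ at the $E_1$- and $E_2$-pages, and tracking which diagonals of the homological $E_2$-page are untouched by the rectangle, gives exactly the range $p+q\ge 2n-k$.

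The point where your sketch is incomplete is exactly the one you flag: the assertion that at a singular cell with $\dim\sigma_\tau=m$ the duality defect lives only in degrees $i$ with $p+i\le 2n-m-1$. In the paper this is the content of Proposition~\ref{prop25}: for a $(k,R)$-primitive simplex $\sigma^b$ with $b>k$ one has $H_{b-a}(\mathcal{F}_p^R(*,\sigma^b))=0$ for all $a>p+1$ and $p<k$. The proof is not a formal consequence of dimension counting on $X^{\mathrm{sing}}$; it requires showing that for $p<a\le k$ the stalk $\mathcal{F}_p^R(\sigma^a,\sigma^b)$ already equals the ambient $\bigwedge^p(N_R/T_RF_{\sigma^b}^\perp)$, so that the tail of the complex becomes \emph{constant} and its homology is simplicial homology of a simplex. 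Establishing this equality uses the $R$-primitivity of all $k$-faces together with a comparison to a model primitive simplex $\Delta^a$ via an $R$-linear isomorphism (plus saturation arguments to pass from $\mathbb{Z}$ to $R$). Your appeal to ``the argument of \cite{JRS 2018} applies verbatim'' on the smooth part also hides work: \cite{JRS 2018} is over $\mathbb{Z}$ for genuinely primitive simplices, and the transfer to $R$-primitive simplices over an arbitrary integral domain $R$ needs the saturation argument in the paper's proof of Proposition~\ref{theohom}. So your plan is reasonable, but to make it a proof you would have to carry out precisely these two local computations, which is essentially what the paper's double-complex machinery packages.
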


Moreover, if $k=n$ (that is a $R$-primitive triangulation), we have a complete Poincaré duality theorem:

\begin{theorem}
    (Complete Poincaré duality theorem) If $R$ is an integral domain, for a $R$-non-singular lattice polytope $P$ of $\mathbb{Z}^n$ with a $R$-primitive triangulation, there are canonical isomorphisms between some tropical cohomology groups and tropical homology groups:
    \begin{equation*}
        \forall p,q,~~~~~~~~~H^q(\mathcal{F}^p_{R}) \xrightarrow{\sim}H_{n-1-q}(\mathcal{F}_{n-1-p}^{R}).
    \end{equation*}
    \label{comp}
\end{theorem}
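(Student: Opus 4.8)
We sketch a proof of Theorem~\ref{comp}.

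The plan is to prove Theorem~\ref{comp} by the same local-to-global mechanism that underlies Theorem~\ref{part}, the point being that the restriction $p+q\ge 2n-k$ there comes entirely from the local models attached to non-$R$-primitive simplices and therefore evaporates once $k=n$. Write $Y$ for the $(n-1)$-dimensional polyhedral complex (the tropical hypersurface dual to $(P,\Gamma)$) that carries the multi-tangent (co)sheaves, so that $H^q(\mathcal{F}^p_R)$ and $H_q(\mathcal{F}^R_p)$ are the cellular (co)homology of $Y$ with coefficients in $\mathcal{F}^p$, respectively $\mathcal{F}_p$. First I would write down the comparison morphism realising the putative duality: it is a cap product against a fundamental cocycle on the top-dimensional cells of $Y$, assembled from the contraction pairing $\mathcal{F}^p\otimes\mathcal{F}_{n-1-p}\to\underline{R}$ of multicovectors against multivectors together with a coherent choice of orientations. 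Evaluated on the open star of a cell, this morphism reduces the global statement to a family of \emph{local} ones: for each cell $\sigma$ of $Y$ one must identify the stalk cohomology of $\mathcal{F}^p$ at $\sigma$ with the local (Borel--Moore) homology of $\mathcal{F}_{n-1-p}$ at $\sigma$.

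Next I would analyse these local models. The open star of a $j$-cell $\sigma$ of $Y$ is, after an $R$-linear change of lattice, a product $\mathbb{R}^{j}\times C_\sigma$ with $C_\sigma$ the fan of the tropical hypersurface dual to the link in $\Gamma$ of the simplex dual to $\sigma$; thus local duality at $\sigma$ reduces to local duality at the apex of $C_\sigma$, while $R$-non-singularity of $P$ controls the cells of $Y$ meeting the toric boundary. The crucial point, and where the hypotheses enter, is that when $P$ is $R$-non-singular and $\Gamma$ is $R$-primitive, each $C_\sigma$ is, over $R$, the Bergman fan of a uniform matroid realisable over $R$, i.e.\ a standard tropical hyperplane. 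Two things must be checked. First, $R$-primitivity forces every lattice $T_{\mathbb{Z}}\tau$ and every quotient $\mathbb{Z}^n/T_{\mathbb{Z}}\tau$ appearing near $\sigma$ to be a free $R$-module with unit-determinant transition matrices (using that $R$-primitivity is inherited by faces, since the normalized volume of a face divides that of the ambient simplex), and hence every module $F_p(\tau)$, and every image and cokernel occurring in the local complexes, is a free $R$-module; it is precisely this freeness that can fail in the merely $(k,R)$-primitive case at cells dual to simplices of dimension $>k$, which is what produces the range restriction in Theorem~\ref{part}. Second, one computes the stalk and costalk (co)homology of the multi-tangent (co)sheaves of a standard tropical hyperplane with coefficients in $R$ and checks that the contraction pairing is a perfect $R$-duality in \emph{every} bidegree. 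I expect this second point to be the main obstacle: it is a genuine finite computation with the face lattice of the uniform matroid, and over a general integral domain one has neither vector-space duality nor universal coefficients available, so the freeness from the first point must be fed in at every step.

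Granting local duality in all bidegrees, I would globalise via the Mayer--Vietoris (equivalently, hypercohomology) spectral sequence of the cover of $Y$ by the open stars of its cells. It computes $H^\bullet(\mathcal{F}^p_R)$ from local cohomology and $H_\bullet(\mathcal{F}^R_{n-1-p})$ from local Borel--Moore homology, and the comparison morphism above induces a morphism of spectral sequences that is an isomorphism on the first page by the local duality just obtained; hence it is an isomorphism on the abutments, yielding the canonical isomorphisms $H^q(\mathcal{F}^p_R)\xrightarrow{\sim}H_{n-1-q}(\mathcal{F}^R_{n-1-p})$ for all $p,q$. As a consistency check, for $n\ge 3$ the range $p+q\ge n+1$ already follows from Theorem~\ref{part} with $k=n-1$, since an $R$-primitive triangulation is in particular $(n-1,R)$-primitive; the genuinely new content is thus local duality in the complementary range $p+q\le n$, together with the base case $n=2$, which the argument above also covers. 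Specialising to $R=\mathbb{Q}$ then gives that every triangulation of a $\mathbb{Q}$-non-singular (in particular every $\mathbb{F}_2$-non-singular) lattice polytope satisfies complete tropical Poincaré duality over $\mathbb{Q}$, the announced converse to \cite{Ak 2023}.
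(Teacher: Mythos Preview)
Your outline is correct and shares the same skeleton as the paper's proof: define a cap product with a fundamental class, reduce to local duality statements, verify these local statements using the combinatorics of a standard tropical hyperplane (the paper invokes this as Proposition~\ref{jrs}, citing \cite{JRS 2018}, exactly your ``Bergman fan of a uniform matroid'' computation), and then globalise via a spectral-sequence comparison. Where you differ is in the packaging of the local-to-global step. You propose a Mayer--Vietoris/hypercohomology spectral sequence for the cover by open stars; the paper instead works with the \emph{cubical} subdivision of $\Gamma$, which refines the cosheaf complex $C_*(\mathcal{F}_p^R)$ into an explicit double complex $C_{a,b}(\mathcal{F}_p^R)=\bigoplus_{\sigma^a\subseteq\sigma^b}\mathcal{F}_p^R(\sigma^a,\sigma^b)$. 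This yields two concrete spectral sequences $E^*_{*,*}(d^1)$ and $E_*^{*,*}(\delta_2)$ whose first pages are direct sums of the local complexes $H_*(\mathcal{F}_p^R(*,\sigma^b))$ and $H^*(\mathcal{F}^p_R(\sigma^a,*))$, and the paper shows each collapses to a single column (resp.\ row) at page one under $R$-primitivity (resp.\ $R$-non-singularity). The cap product $\phi^q=\,\cdot\cap[\Omega]$ is then checked, by hand, to intertwine the two first pages and to commute (up to sign) with the page-one differentials (Lemmas~\ref{lebhb}--\ref{8}); degeneration at page two gives the result.

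The trade-off is this: your open-star formulation is conceptually cleaner and closer to how one usually proves sheaf-theoretic Poincar\'e duality, but it leaves implicit several torsion/saturation checks over a general integral domain that you flag as ``the main obstacle''. The paper's cubical double complex makes these checks completely explicit---the saturation of images, the rank equalities via Euler characteristics, and the precise commutation $\phi^{q+1}\circ\delta_1=(-1)^{q+1}d^2\circ\phi^q$ are all written out at the level of chains---at the cost of heavier bookkeeping. In particular, the paper never needs to identify the local model topologically as $\mathbb{R}^j\times C_\sigma$; it stays entirely in the combinatorics of pairs $(\sigma^a,\sigma^b)$.
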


Then the corollary is a complete tropical Poincaré duality over $\mathbb{Q}$:

\begin{corollary}
    (Poincaré duality over $\mathbb{Q}$) For a simple polytope $P$ of $\mathbb{Z}^n$, for any triangulation with integral vertices, there are canonical isomorphisms between some tropical cohomology groups and tropical homology groups over $\mathbb{Q}$:
    \begin{equation*}
        \forall p,q,~~~~~~~~~H^q(\mathcal{F}^p_{\mathbb{Q}}) \xrightarrow{\sim}H_{n-1-q}(\mathcal{F}_{n-1-p}^{\mathbb{Q}}).
    \end{equation*}
\end{corollary}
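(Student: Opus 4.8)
The plan is to deduce this corollary directly from Theorem \ref{comp}, by checking that the hypotheses of that theorem are automatically fulfilled when one takes $R=\mathbb{Q}$. Since $\mathbb{Q}$ is a field, it is in particular an integral domain, so the only two things to verify are that a simple lattice polytope is $\mathbb{Q}$-non-singular and that every triangulation with integral vertices is $\mathbb{Q}$-primitive.

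For the first point I would recall that $R$-non-singularity of $P$ means that $P$ is simple and that, at each vertex $s$, the determinant $d_s$ of the family of primitive integral direction vectors of the $n$ edges incident to $s$ is invertible in $R$. Because $P$ is a full-dimensional lattice polytope, these $n$ vectors form a basis of $\mathbb{Q}^n$, so $d_s$ is a nonzero integer; every nonzero integer is invertible in $\mathbb{Q}$, hence $P$ is $\mathbb{Q}$-non-singular as soon as it is simple. For the second point I would recall that a $k$-simplex $\sigma$ of the triangulation, equipped with its tangent lattice $T_{\mathbb{Z}}\sigma$, has normalized volume $m_\sigma/k!$ with $m_\sigma$ a positive integer, and that $\sigma$ is $(k,R)$-primitive precisely when it is a dilation of an integral simplex whose volume is $m/k!$ with $m$ invertible in $R$ (taking dilation factor $1$, this reduces to the invertibility of $m_\sigma$). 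Over $R=\mathbb{Q}$ every positive integer is invertible, so every simplex of the triangulation, in every dimension, is $\mathbb{Q}$-primitive; in particular the triangulation is $(n,\mathbb{Q})$-primitive, i.e.\ $\mathbb{Q}$-primitive. Applying Theorem \ref{comp} with $R=\mathbb{Q}$ then yields the canonical isomorphisms $H^q(\mathcal{F}^p_{\mathbb{Q}}) \xrightarrow{\sim} H_{n-1-q}(\mathcal{F}_{n-1-p}^{\mathbb{Q}})$ for all $p,q$.

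There is no real obstacle here: all the substance is contained in Theorem \ref{comp}, and the proof of the corollary is purely a matter of unwinding the definitions of $R$-non-singularity and $(k,R)$-primitivity and observing that, over a field of characteristic zero, the relevant integers are always invertible. The one point I would take mild care with is to state explicitly that these notions specialize, for $R=\mathbb{Q}$, to "$P$ simple" and "$\Gamma$ an arbitrary integral triangulation", so that the reader sees at once that Theorem \ref{comp} applies in full generality over $\mathbb{Q}$; and I would close by remarking that the resulting statement is the converse, announced in the introduction, of the main theorem of \cite{Ak 2023}.
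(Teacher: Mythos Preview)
Your proposal is correct and follows exactly the approach the paper intends: the corollary is stated immediately after Theorem~\ref{comp} with no separate proof, the surrounding text having already noted that over $\mathbb{Q}$ every simplex is $\mathbb{Q}$-primitive, so all that remains is the routine verification (which you carry out) that a simple full-dimensional lattice polytope is automatically $\mathbb{Q}$-non-singular. If anything, your write-up is more explicit than the paper's, which leaves the check that ``simple $\Rightarrow$ $\mathbb{Q}$-non-singular'' to the reader.
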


Of course, this corollary remains true if we replace $\mathbb{Q}$ by any field of characteristic zero. The strategy to prove the partial Poincaré duality theorem is based on three ingredients. The first one is to remark that the dual object of the triangulation, on which are defined the tropical cosheaves $\mathcal{F}_p^R$, can be refined into a sub-complex of the cubical subdivision of the triangulation. The cubes of the triangulation are indexed by couples of incident simplices $\sigma\subseteq \tau,~\mathrm{dim} ~\sigma\geq 1$ of the triangulation, their dimension is the difference of dimension between $\sigma$ and $\tau$. Then for this cubical subdivision, the cosheaves complexes can be refined into double complexes $C_{a,b}(\mathcal{F}_p^R)$ defined as being the direct sum of the $\mathcal{F}_p^
R(\sigma,\tau)$ for $\sigma\subseteq\tau$ simplices of the triangulation of dimensions $a$ and $b$ respectively. Those double complexes induce two spectral sequences degenerating on the homology $H_q(\mathcal{F}_p^R)$. We can do the same in cohomology: there are double complexes $C^{a,b}(\mathcal{F}^p_R)$ which induces two spectral sequences degenerating on the cohomology $H^q(\mathcal{F}^p_R)$. The idea is then to construct morphisms from the pages of one of these two cohomological spectral sequence to the pages of one of these two homological spectral sequence. The second ingredient of the proof is to construct a fundamental class for these complexes of cosheaves $\mathcal{F}_p$, to construct a cap-product, and to prove that the cap product with the fundamental class induces morphisms from the $0^{th}$ page of one of the two cohomological spectral sequence to the $0^{th}$ page of one of the two homological spectral sequence. The third ingredient of the proof is then to prove that the cap product with the fundamental class commutes with the differentials of the $0^{th}$ page and induces isomorphisms at the first page of the spectral sequence. In the case of $R$-primitivity, the first page of the cohomological spectral sequence concentrated on a column, and is completely isomorphic to the first page of the homological spectral sequence, concentrated on a line, and the isomorphisms commutes with the differentials, thus at the infinite page we get the Poincaré duality. In the case of $k-R$-primitivity, for $2\leq k\leq n-1$, it remains $k$ isomorphisms from the first page of the cohomological spectral sequence to the first page of the homological spectral sequence. It induces at the infinite page a partial Poincaré duality theorem.

The Partial Poincaré duality theorem and the Poincaré duality theorem can also be generalized to a non-compact framework, considering hypersurfaces of some toric varieties obtained from a polytope $P$ where some faces are removed. Our Partial Poincaré duality theorem can be also ennounced in this non-compact framework, considering Borel-Moore homology and compact-support cohomology. The result of Edvard Aksnes \cite{Ak 2023} on the necessarily condition to satisfy tropical Poincaré duality over $\mathbb{Q}$ is formulated in an non-compact framework. However in that level of generality the proofs are longer and for the convenience of the reader we decide in this paper to establish our results in the compact framework, for hypersurfaces of projective toric varieties. The reader can find this non-compact generalization in my PhD thesis which will appear in 2026.

In the preliminaries of this article we begin with the construction of the cubical subdivision of the triangulation, the construction of the cosheaves $\mathcal{F}_p^R$, their homology, their dual sheaves and cohomology. We define also the double complexes and appropriate spectral sequences filtrating these homology and cohomology. In the second part of the preliminaries we define the notions of $R$-non-singularity for the lattice polytope, the notions of $R$-primitivity and other linked properties for the triangulation. At the end of the preliminaries we construct a fundamental class and a cap-product. In the second part of this article we announce the main lemmas useful to prove that the cap-product with the fundamental class induces isomorphisms between some terms of the first page of the homological spectral sequence and some term of the first page of the cohomological spectral sequence, and we use these lemmas to deduce the Partial Poincaré duality theorem and the complete Poincaré duality theorem. In the third partwe prove the lemmas used in the second part.

\section{Acknowledgment}

I am very grateful to my PhD advisors Arthur Renaudineau and Patrick Popescu-Pampu. I had a lot of prolific discussions with Arthur during my research work and the redaction of this paper, it helped me a lot. I had also very useful discussions with Patrick, which helped me to organize this article and to see possible generalizations of my work. I am also grateful to Jules Chenal for discussions.

\section{Preliminaries}

\subsection{Homology and Cohomology from a lattice polytope and an integral triangulation}

Let $P$ a convex lattice polytope $P$ of dimension $n \geq 2$ living in $\mathbb{R}^n$ endowed with the canonical unimodular lattice $M:= \mathbb{Z}^n$. It defines a projective toric variety $\mathbb{C}Y$. Let consider a triangulation $\Gamma$ of $P$ with integral vertices (non-necessarily convex). By commodity, in the following, when we write an exponent on a simplex of the triangulation $\Gamma$, it will denote its dimension. For instance, a simplex $\sigma^a \in \Gamma$ will design a simplex of dimension $a$ of $\Gamma$.

There exist different subdivisions associated to the triangulation $\Gamma$. In their article \cite{BMR 2024}, Brugallé, Lopez de Medrano and Rau use a subdivision indexed by the poset $\Xi:=\{(F,\sigma),~F \text{ face of}$ $P,~\sigma \in \Gamma \}$. However for the convenience of our proof we use in this article a finer subdivision, named the cubical subdivision (or dihomologic subdivision), which has more symetries than the one indexed by $\Xi$, leading to a better definition of the cap-product. The use of the cubical subdivision of the triangulation to compute tropical homology and cohomology was first introduced by Jules Chenal in his PhD thesis \cite{Ch 2024}. He has proven that the resulting tropical homology and cohomology are the same as for the subdivision indexed by the poset $\Xi$, which justify the possibility of this choice.

The cubical subdivision associated to the triangulation $\Gamma$ is the polyhedral complex whose cells are of the form:
$$c(\sigma^a,\sigma^b):=\mathrm{Conv}\Bigg(\Bigg\{\mathrm{bar}(\tau),~~\tau\in\Gamma,~~\sigma^a\subseteq\tau\subseteq\sigma^b\Bigg\}\Bigg),~~~~~~\sigma^a,\sigma^b\in\Gamma,~\sigma^a\subseteq\sigma^b.$$

In the case of a convex triangulation, the $(n-1)$-skeleton of the cubical subdivision is homeomorphic to the tropical hypersurface associated to the triangulation. Then this $(n-1)$-skeleton is a good generalization of tropical hypersurfaces for non-necessarily convex triangulation. The cubical cells which composed this $(n-1)$-skeleton are indexed by the couples of incident simplices $(\sigma^a,\sigma^b)$ of the triangulation, with the first one of dimension $a\geq1$. Then we introduce the following poset:
\begin{equation*}
    \Omega:=\{(\sigma^a,\sigma^b)\in\Gamma^2,~~\sigma^a\subseteq\sigma^b, ~~a\geq1\}.
\end{equation*}

The cubical cells of the $(n-1)$-skeleton of the cubical subdivision of the triangulation $\Gamma$ are partially ordered by inclusion, and graded by their dimension. It induces on the set $\Omega$ a structure of graded poset with the following partial order and dimension:
\begin{equation*}
    (\sigma_1,\sigma_2) \leq (\tau_1,\tau_2) \iff\sigma_1\subseteq\tau_1 ~\mathrm{and}~\sigma_2\subseteq\tau_2~~;
\end{equation*}
\begin{equation*}
    \mathrm{dim}~(\sigma^a,\sigma^b):=b-a.
\end{equation*}

The diamond property of a graded poset $\Lambda$ is the assumption that for every element $\omega,\gamma\in\Lambda$ with $\omega\leq \gamma$, if the difference of dimension between $\gamma$ and $\omega$ is $2$, then there exist exactly two distinct elements $\epsilon\in\Lambda$ satisfying $\omega <\epsilon< \gamma$. We can denote $\epsilon_1,\epsilon_2$ these two distinct elements, and we say that $\omega<\epsilon_1,\epsilon_2<\gamma$ forms a diamond. The immediately incident couples of a poset $\Lambda$ are the couples $(\alpha,\beta)\in\Lambda^2$ with $\alpha <\beta$ such that there exists no $\epsilon\in\Lambda$ satisfying $\alpha <\epsilon<\beta$. We denote the $\mathcal{I}(\Lambda)$ the set of immediately incident couples. A balancing signature of a graded poset $\Lambda$ satisfying the diamond property is a map $\rho: \mathcal{I}(\Lambda)\rightarrow\{-1,1\}$ such that for every diamond $\omega<\epsilon_1,\epsilon_2<\gamma$ we have:
$$\rho(\omega,\epsilon_1)\rho(\omega,\epsilon_2)\rho(\epsilon_1,\gamma)\rho(\epsilon_2,\gamma) =-1.$$

If we work with $R$-modules with $R$ an integral domain of characteristic different from $2$, the existence of a balancing signature is required to define homological and cohomological theories on the poset, it is necessarily to use it in the definition of the differentials, in order to make the square of the differential vanish. It is well known that any simplicial complex (seen as the poset of its faces ordered by inclusion and graded by their dimension) satisfy the diamond property and admits a balancing signature (constructed considering orientation morphisms from each simplex to its facets). Then the triangulation $\Gamma$ is a graded poset that satisfies the diamond property and admits a balancing signature $\rho:\mathcal{I}(\Gamma)\rightarrow\{-1,1\}$. As a consequence, the poset $\Omega$ also satisfies the diamond property and admits a balancing signature $\eta$ defined by:

\begin{align*}
    \forall (\sigma,\gamma),(\epsilon,\gamma) \in \Omega, ~~\mathrm{if}~(\sigma,\epsilon) \in\mathcal{I}(\Gamma),~~\eta((\epsilon,\gamma),(\sigma,\gamma)):= (-1)^{\mathrm{dim}~(\epsilon,\gamma)}\rho(\sigma,\epsilon)~~;
\end{align*}
\begin{align*}
    \forall (\sigma,\epsilon),(\sigma,\gamma) \in \Omega, ~~\mathrm{if}~(\epsilon,\gamma) \in\mathcal{I}(\Gamma),~~\eta((\sigma,\epsilon),(\sigma,\gamma)):= \rho(\epsilon,\gamma).
\end{align*}

In the following, we fix an integral domain $R$. Let $N:= \mathrm{Hom}(M, \mathbb{Z})$ the dual of the lattice $M=\mathbb{Z}^n$, and let $M_R = M \otimes R$ and $N_R = N \otimes R = \mathrm{Hom}_{\mathbb{Z}}(M, R) = \mathrm{Hom}_{R}(M_R,R)$. For any $\tau \in \Gamma$, we denote $F_{\tau}$ the minimal face of the polytope $P$ containing $\tau$. We can consider the tangent unimodular lattice $T_{\mathbb{Z}}F_{\tau}:=T_{\mathbb{R}}F_{\tau}\cap M$. Its tensorisation by the integral domain $R$ is called the tangent over $R$ of $F_{\tau}$: $T_{R}F_{\tau}:= T_{\mathbb{Z}}F_{\tau} \otimes R \subseteq M_R$. Similarly, for any element $\sigma$ of the triangulation we consider the tangent over $R$ of $\sigma$: $T_{R}\sigma:=T_{\mathbb{Z}}\sigma\otimes R \subseteq M_R$. As $M_R$ has for dual $N_R$, we can also consider the orthogonal tangent spaces over $R$ of $F_{\tau}$ and $\sigma$: $T_{R}F_{\tau}^{\perp}$ and $T_{R}\sigma^{1\perp}$, living in $N_R$. With these notations we can write for any integer $p \geq 0$ and $(\sigma, \tau) \in \Omega$:

\begin{equation*}
    \mathcal{F}^R_p(\sigma,\tau):= \sum_{\sigma^1 \subseteq \sigma} \bigwedge^p \frac{T_R \sigma^{1 \perp}}{T_R F_{\tau}^{\perp}}.
\end{equation*}

In tropical geometry these objects are often called tropical cosheaves, or multi-tangent cosheaves, there were first introduced by \cite{IKMZ 2018} for $R=\mathbb{R}$ and in the case of convex triangulation. Renaudineau and Shaw, and Brugallé, Lopez de Medrano and Rau used them for $R=\mathbb{F}_2$, for a coaser subdivision than the cubical subdivision. Jules Chenal is the first who use them for cubical subdivision. The reason why they are called cosheaves is because there are canonical morphisms of inclusions and projections:

\begin{equation*}
        \forall (\sigma, \tau),(\gamma,\tau) \in \Omega, ~\sigma \subseteq \gamma, ~~ \begin{array}{c} \iota_{\sigma,\tau}^{\gamma,\tau}: \mathcal{F}^R_p(\sigma,\tau) \hookrightarrow \mathcal{F}^R_p(\gamma,\tau) \\ \pi_{\sigma,\tau}^{\sigma,\gamma}: \mathcal{F}^R_p(\sigma,\tau) \twoheadrightarrow \mathcal{F}^R_p(\sigma,\gamma) 
        \end{array}.
\end{equation*}

These morphisms induce by composition some morphisms $\mathcal{F}^R_p(\sigma,\tau) \rightarrow \mathcal{F}^R_p(\gamma,\epsilon)$ for every ordered element $(\gamma,\epsilon)\leq(\sigma,\tau)$, then $\mathcal{F}^R_p$ is a contravariant functor from the poset (seen as a category whose objects are its elements, and whose morphisms are defined by the partial order), to the category of $R$-modules. The name cosheaf (or cellular cosheaf) is often used to design a contravariant functor from a poset to a category of $R$-modules. Then we can take the direct sum over the elements of $\Omega$ with the graduation, it define the following complex:
\begin{equation*}
    C_q(\mathcal{F}^R_p):=\bigoplus_{(\sigma,\tau) \in \Omega,~~\mathrm{dim}~(\sigma,\tau)=q} \mathcal{F}^R_p(\sigma,\tau) = \bigoplus_{b-a=q,~~1\leq a\leq b\leq n}\bigoplus_{\sigma^a\subseteq\sigma^b} \mathcal{F}^R_p(\sigma^a,\sigma^b).
\end{equation*}

The homology of the complex $C_*(\mathcal{F}^R_p)$ is constructed using a differential $d_q$ by the cosheaves morphisms, with respect to the balancing signature $\eta$ of the poset $\Omega$.We denote $H_*(\mathcal{F}_p^R)$ this homology. We remark that a double complex $C_{a,b}(\mathcal{F}^R_p):=\bigoplus_{\sigma^a\subseteq\sigma^b} \mathcal{F}^R_p(\sigma^a,\sigma^b)$ filters the complex $C_*(\mathcal{F}^R_p)$. We remark that the differential $d_q$ induces by restrictions and projections some morphisms on this double complex:
\begin{align*}
    d^1_{a,b}: C_{a,b}(\mathcal{F}^R_p) \rightarrow C_{a+1,b}(\mathcal{F}^R_p), \\
    d^2_{a,b}: C_{a,b}(\mathcal{F}^R_p) \rightarrow C_{a,b-1}(\mathcal{F}^R_p),
\end{align*}
and these morphisms satisfy:
\begin{equation*}
    d_q = \Bigg(\bigoplus_{b-a=q} d^1_{a,b} \Bigg) \oplus \Bigg( \bigoplus_{b-a=q} d^2_{a,b} \Bigg).
\end{equation*}

Then we see that two other morphisms appear: $d_q^1:= \bigoplus_{b-a=q} d^1_{a,b}$ and $d_q^2:= \bigoplus_{b-a=q} d^2_{a,b}$. They satisfy $d_q=d^1_q+d^2_q$, and we can check that $d^1_q$ and $d^2_q$ are both differentials for the complex $C_*(\mathcal{F}^R_p)$. The fact that these differentials are direct sums of the morphisms $d^1_{a,b}$, and the morphisms $d^2_{a,b}$ respectively make them compatible with the structure of double complex $C_{a,b}(\mathcal{F}^R_p)$, then they define two spectral sequences $E^*_{*,*}(d^1)$ and $E^*_{*,*}(d^2)$ whose $0^{th}$ page are $E^0_{*,*}(d^1)=E^0_{*,*}(d^2)=C_{*,*}(\mathcal{F}^R_p)$, whose $0^{th}$ differentials are $\partial^0_{*,*}(d^1):=d^1_{*,*}$ and $\partial^0_{*,*}(d^2):=d^2_{*,*}$ respectively, and whose differentials of the successive pages are induced by the differential $d_*=d^1_*+d^2_*$ of the complex $C_*(\mathcal{F}^R_p)$. These two spectral sequences both degenerates on the homology $H_*(\mathcal{F}_p^R)$ of the complex $C_*(\mathcal{F}^R_p)$ for the differential $d_*$.

Moreover we can remark that the double complex $C_{*,*}(\mathcal{F}^R_p)$ can be refined in the two following ways:
\begin{equation*}
    C_{a,b}(\mathcal{F}^R_p):= \bigoplus_{\sigma^b \in \Gamma} \bigoplus_{\sigma^a \subseteq \sigma^b} \mathcal{F}_p^R(\sigma^a,\sigma^b) = \bigoplus_{\sigma^a \in \Gamma} \bigoplus_{\sigma^a \subseteq \sigma^b} \mathcal{F}_p^R(\sigma^a,\sigma^b).
\end{equation*}

Then we introduce the following complexes:
\begin{align*}
    \forall \sigma^b \in \Gamma,~~C_{b-a}(\mathcal{F}_p^R(*,\sigma^b)):=\bigoplus_{\sigma^a \subseteq \sigma^b} \mathcal{F}_p^R(\sigma^a,\sigma^b), \\
    \forall \sigma^a \in \Gamma,~~ C_{b-a}(\mathcal{F}_p^R(\sigma^a,*)):=\bigoplus_{\sigma^a \subseteq \sigma^b} \mathcal{F}_p^R(\sigma^a,\sigma^b).
\end{align*}

For every $\sigma^b \in \Gamma$, the differential $d^1_q$ of $C_q(\mathcal{F}^R_p)$ induces a differential $d^{1,\sigma^b}_{b-a}$ of the complex $C_{b-a}(\mathcal{F}_p^R(*,\sigma^b))$, and we obtain an homology $H_{b-a}(\mathcal{F}_p^R(*,\sigma^b))$. For every $\sigma^a \in \Gamma$, the differential $d^2_q$ of $C_q(\mathcal{F}^R_p)$ induces likewise a differential $d^{2,\sigma^a}_{b-a}$ of the complex $C_{b-a}(\mathcal{F}_p^R(\sigma^a,*))$, and we obtain an homology $H_{b-a}(\mathcal{F}_p^R(\sigma^a,*))$. We obtain that the first pages of the spectral sequences $E^*_{*,*}(d^1)$ and $E^*_{*,*}(d^2)$ can be expressed by direct sums of these homology groups:
\begin{align*}
    E^1_{a,b}(d^1)=\bigoplus_{\sigma^b \in \Gamma} H_{b-a}(\mathcal{F}_p^R(*,\sigma^b)), \\
    E^1_{a,b}(d^2)=\bigoplus_{\sigma^a \in \Gamma} H_{b-a}(\mathcal{F}_p^R(\sigma^a,*)).
\end{align*}

From the introduction of the cosheaves $\mathcal{F}_p^R$ we have defined a lot of objects to describe the homology $H_q(\mathcal{F}_p^R)$ of these cosheaves. However, the partial Poincaré duality concerns both a cohomology and an homology. Let us now construct the dual objects, the sheaves $\mathcal{F}^p_R$ and their cohomology. For any integer $p \geq 0$ and $(\sigma,\tau) \in \Omega$ we define:
\begin{equation*}
    \mathcal{F}^p_R(\sigma,\tau):= \mathrm{Hom}_{R}(\mathcal{F}^R_p(\sigma,\tau),R) = \mathrm{Hom}_{\mathbb{Z}}(\mathcal{F}^{\mathbb{Z}}_p(\sigma,\tau),R).
\end{equation*}

The sheaves morphisms are dual to the cosheaves morphisms, then we obtain dual complexes of sheaves $C^q(\mathcal{F}_p^R)$ with dual differential $\delta^q:=(d_{q+1})^*$, which induces cohomologies $H^q(\mathcal{F}^p_R)$. There are dual double complexes $C^{a,b}(\mathcal{F}^p_R)$ with dual morphisms $\delta_1^{a,b}:=(d^1_{a-1,b})^*$ and $\delta_2^{a,b}:=(d^2_{a,b+1})^*$ which induces by direct sums the differentials $\delta^q_1:=(d^q_1)^*$ and $\delta^q_2:=(d^q_2)^*$ satisfying $\delta^q_1+\delta_q^2=\delta^q$. It defines also two spectral sequences $E^{*,*}_*(\delta_1)$ and $E^{*,*}_*(\delta_2)$ whose $0^{th}$ page are $E^{*,*}_0(\delta_1)=E^{*,*}_0(\delta_2)=C^{*,*}(\mathcal{F}^p_R$, whose $0^{th}$ differentials are $\partial_0^{*,*}(\delta_1):=\delta_1^{*,*}$ and $\partial_0^{*,*}(\delta_2):=\delta_2^{*,*}$ respectively, and whose differentials of the successive pages are induced by the differnetial $\delta^* = \delta_1^* + \delta_2^*$ of the complex $C^*(\mathcal{F}^p_R)$. Those two spectral sequence both degenerates on the cohomology $H^*(\mathcal{F}^p_R)$ of the complex $C^*(\mathbb{F}^R_p)$ for the differential $\delta^*$.

Moreover, we can also construct for every $\sigma^b \in \Gamma$ the dual complex $C^{b-a}(\mathcal{F}^p_R(*,\sigma^b))$, with the dual differential $\delta^{b-a}_{1,\sigma^b}:= (d_{b-a+1}^{1,\sigma^b})^*$ inducing the cohomology groups $H^{b-a}(\mathcal{F}^p_R(*,\sigma^b))$, and for every $\sigma^a \in \Gamma$ the dual complex $C^{b-a}(\mathcal{F}^p_R(\sigma^a,*))$, with the dual differential $\delta^{b-a}_{2,\sigma^a}:= (d_{b-a+1}^{2,\sigma^a})^*$ inducing cohomology groups $H^{b-a}(\mathcal{F}^p_R(\sigma^a,*))$. We obtain that the first pages of the spectral sequences $E^*_{*,*}(d^1)$ and $E^*_{*,*}(d^2)$ can be expressed by direct sums of these homology groups:
\begin{align*}
    E_1^{a,b}(\delta_1)=\bigoplus_{\sigma^b \in \Gamma} H^{b-a}(\mathcal{F}^p_R(*,\sigma^b)), \\
    E_1^{a,b}(\delta_2)=\bigoplus_{\sigma^a \in \Gamma} H^{b-a}(\mathcal{F}^p_R(\sigma^a,*)).
\end{align*}

We have yet constructed tropical homology and cohomology and their associated spectral sequences. Let us describe the notion of $R$-primitivity, $R$-non-singularity and related considerations on the triangulation before defining an associated cap-product and a fundamental class.

\subsection{Around $R$-primitivity and $R$-non-singularity}

\label{secprim}

From the beginning we have not assumed any hypothesis on the lattice polytope $P$ and its triangulation with integral vertices $\Gamma$. We can construct cosheaves and sheaves $\mathcal{F}_p^R$ and $\mathcal{F}^p_R$ and their related homology and cohomology, for any integral domain $R$. However to construct a fundamental class and then to obtain Poincaré duality or partial Poincaré duality, hypothesis will be progressively needed. In this article we have tried to assume the minimal for each proposition and lemma, in order that the reader may be able to use them for establishing even better generalizations of this partial Poincaré duality theorem. Let us begin by define the $R$-non-singularity of the lattice polytope $P$.

\begin{definition}
    (Global $R$-non-singularity) For any vertex $s$ of the lattice polytope $P$, let us denote $L^1_s$ the family of edges of $P$ which have $s$ as a vertex. For any edge of $P$, let us denote $u_e$ a generator of $T_Re$. Then we say that the lattice polytope $P$ is $R$-non-singular if for every vertex $s$ of $P$ the family $\{u_e,~~e \in L^1_s\}$ forms a basis of $M_R$. In particular it implies that the polytope $P$ is simple.

    \label{prim}
\end{definition}

\begin{definition}
    (Local $R$-non-singularity) We say that the lattice polytope $P$ is $R$-non-singular on a face $F$ of $P$ if there exists a free family $\{e_1,...,e_{n-\mathrm{dim}F}\}$ of vectors of $M_R$ such that for every face $G$ of the polytope for which $F$ is a face of $G$, there exists a subfamily of $\{e_{i_1},...,e_{i_{\mathrm{dim}G-\mathrm{dim}F}}\}$ of $\{e_1,...,e_{n-\mathrm{dim}F}\}$ such that:
    \begin{equation*}
        T_RG=T_RF\oplus Re_{i_1}\oplus...\oplus Re_{i_{\mathrm{dim}G-\mathrm{dim}F}}.
    \end{equation*}
    \label{locprim}
\end{definition}

\begin{remark}
    This definition of local $R$-non-singularity is compatible with the definition of global $R$-non-singularity. Indeed, the polytope $P$ is $R$-non-singular if and only if it is $R$-non-singular on every one of its faces.
\end{remark}

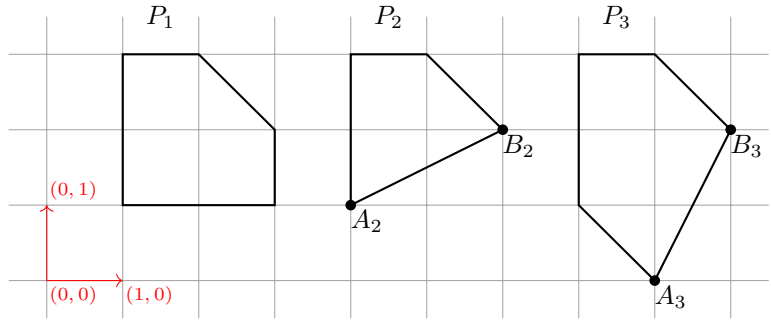
\begin{figure}[h]

\centering

\begin{tikzpicture}[scale=1]
  \draw[step=1, gray!70] (-3.5,-3.5) grid (6.5,0.5);

  \coordinate (A1) at (-2,-2);
  \coordinate (B1) at (-1,0);
  \coordinate (C1) at (0,-1);
  \coordinate (D1) at (-2,0);
  \coordinate (G1) at (0,-2);

  \coordinate (A2) at (1,-2);
  \coordinate (B2) at (2,0);
  \coordinate (C2) at (3,-1);
  \coordinate (D2) at (1,0);
  \coordinate (E2) at (1,-1);

  \coordinate (A3) at (4,-2);
  \coordinate (B3) at (5,0);
  \coordinate (C3) at (6,-1);
  \coordinate (D3) at (4,0);
  \coordinate (F3) at (5,-3);
  
  \draw[thick, black] (D1) -- (A1) -- (G1) -- (C1) --(B1) -- cycle;

  \draw[thick, black] (A2) -- (C2) -- (B2) -- (D2) -- (E2) -- cycle;

  \draw[thick, black] (A3) -- (F3) -- (C3) -- (B3) -- (D3) -- cycle;
  
  \draw[->, red] (-3,-3) -- (-2,-3); 
  \draw[->, red] (-3,-3) -- (-3,-2); 

  \node[color=red,font=\scriptsize] at (-2.65,-1.8) {$(0,1)$};
  \node[color=red,font=\scriptsize] at (-1.65,-3.2) {$(1,0)$};
  \node[color=red,font=\scriptsize] at (-2.65,-3.2) {$(0,0)$};

  \node[color=black,font=\normalsize] at (-1.5,0.5) {$P_1$};
  \node[color=black,font=\normalsize] at (1.5,0.5) {$P_2$};
  \node[color=black,font=\normalsize] at (4.5,0.5) {$P_3$};

  \fill[black] (1,-2) circle (2pt) node[font=\normalsize, xshift=6,yshift=-6] {$A_2$};
  \fill[black] (3,-1) circle (2pt) node[font=\normalsize, xshift=6,yshift=-6] {$B_2$};
  \fill[black] (5,-3) circle (2pt) node[font=\normalsize, xshift=6,yshift=-6] {$A_3$};
  \fill[black] (6,-1) circle (2pt) node[font=\normalsize, xshift=6,yshift=-6] {$B_3$};

\end{tikzpicture}

\caption{R-non-singularity of polytopes}
\label{fignonsing}
\end{figure}

In Figure \ref{fignonsing} the polygon $P_1$ is $R$-non-singular for any integral domain $R$. the polygon $P_2$ $\mathbb{Q}$-non-singular but it is $\mathbb{Z}$-singular and $\mathbb{F}_2$-singular, because it is $\mathbb{Z}$-singular on the vertices $A_2$ and $B_2$, and $\mathbb{F}_2$-singular on the vertex $A_2$. However, it is $\mathbb{F}_2$-non-singular on the vertex $B_2$ even if it is $\mathbb{Z}$-singular on this vertex. The polygon $P_3$ is $\mathbb{Z}$-singular (because $\mathbb{Z}$-singular on the vertices $A_3$ and $B_3$), but it is $\mathbb{Q}$-non-singular and $\mathbb{F}_2$-non-singular.

\begin{remark}
    This definition of local $R$-non-singularity implies that if the polytope $P$ is $R$-non-singular along a face $F$, then the number of faces $G$ of $P$ which has $F$ as a face is at most $2^{n-\mathrm{dim}F}$. However, as a general property of convex polytopes, this number is at least $2^{n-\mathrm{dim}F}$, thus it is equal to $2^{n-\mathrm{dim}F}$, and we obtain conversely that for any subfamily $\{e_{i_1},...,e_{i_{k-\mathrm{dim}F}}\}$ of $\{e_1,...,e_{n-\mathrm{dim}F}\}$, there exists a face $G$ of dimension $k$ of $P$ which has $F$ as a face such that:
    \begin{equation*}
        T_RG=T_RF\oplus Re_{i_1}\oplus...\oplus Re_{i_{\mathrm{dim}G-\mathrm{dim}F}}.
    \end{equation*}
    \label{bij}
\end{remark}

\begin{definition}
    (Integral length of an edge, index of a simplex) For any edge $\sigma^1$ of vertices $s_1,s_2 \in \mathbb{Z}^n$ we define the integral length $|\sigma^1|$ as being the greater common divisor of the coordinates of $s_2 - s_1$ in a basis of $\mathbb{Z}^n$. It does not depend on the choice of a basis of $\mathbb{Z}^n$ and on the choice of the ordering of $s_1,s_2$. For any simplex $\tau$ with vertices in $\mathbb{Z}^n$ we define the index $|\tau|$ of the simplex $\tau$ as being the greater common divisor of the integral lengths $|\sigma^1|$ for $\sigma^1$ edge of $\tau$.
    \label{index}
\end{definition}

\begin{definition}
    ($R$-primitivity of a simplex) A $k$-simplex $\sigma$ of $\mathbb{Z}^n$ with integral vertices $a_0,a_1,...,a_k$ is said to be $R$-primitive if for any $0\leq i \leq k$ the family of integral vectors $\{\frac{a_j-a_i}{|\sigma|},~~j\in\{0,...,r\}\backslash\{i\}\}$ forms a basis of $T_R\sigma$ ($|\sigma|$ is the index of $\sigma$ defined just above). Equivalently, it means that the volume of the $k$-simplex $\sigma$ is of the form $m |\sigma|^k/k!$ with $m$ invertible in the integral domain $R$.
    \label{primsimp}
\end{definition}

\begin{remark}
    In the case $R=\mathbb{Z}$, a simplex is $\mathbb{Z}$-primitive if and only if it is a dilatation of a primitive simplex. In the case $R=\mathbb{Q}$, every simplex is $\mathbb{Q}$-primitive.
\end{remark}

\begin{figure}[h]
  \centering
  \begin{tikzpicture}[scale=1]
  \draw[step=1, gray!70] (-3.5,-3.5) grid (3.5,1.5);

  \coordinate (A) at (3,1);
  \coordinate (B) at (1,1);
  \coordinate (C) at (0,0);
  \coordinate (A1) at (-3,-1);
  \coordinate (B1) at (-2,1);
  \coordinate (C1) at (-1,0);
  \coordinate (A2) at (-2,-1);
  \coordinate (B2) at (0,-1);
  \coordinate (C2) at (0,-3);
  \coordinate (A3) at (1,0);
  \coordinate (B3) at (1,-1);
  \coordinate (C3) at (2,-2);
  \coordinate (A4) at (1,-2);
  \coordinate (B4) at (1,-3);
  \coordinate (C4) at (2,-3);
  \coordinate (A5) at (2,-1);
  \coordinate (B5) at (3,0);
  \coordinate (C5) at (3,-3);
  
  \draw[thick, black] (A) -- (B) -- (C) -- cycle;
  \draw[thick, black] (A1) -- (B1) -- (C1) -- cycle;
  \draw[thick, black] (A2) -- (B2) -- (C2) -- cycle;
  \draw[thick, black] (A3) -- (B3) -- (C3) -- cycle;
  \draw[thick, black] (A4) -- (B4) -- (C4) -- cycle;
  \draw[thick, black] (A5) -- (B5) -- (C5) -- cycle;
  \draw[->, red] (-3,-3) -- (-2,-3); 
  \draw[->, red] (-3,-3) -- (-3,-2); 

  \node[color=red,font=\scriptsize] at (-2.65,-1.8) {$(0,1)$};
  \node[color=red,font=\scriptsize] at (-1.65,-3.2) {$(1,0)$};
  \node[color=red,font=\scriptsize] at (-2.65,-3.2) {$(0,0)$};

  \node[color=black,font=\normalsize] at (-2,0) {$T_1$};
  \node[color=black,font=\normalsize] at (1.2,0.7) {$T_2$};
  \node[color=black,font=\normalsize] at (-0.6,-1.6) {$T_3$};
  \node[color=black,font=\normalsize] at (1.3,-1) {$T_4$};
  \node[color=black,font=\normalsize] at (1.3,-2.7) {$T_5$};
  \node[color=black,font=\normalsize] at (2.6,-1.2) {$T_6$};

\end{tikzpicture}

  \caption{$R$-primitivity of triangles}
  \label{primtri}
\end{figure}
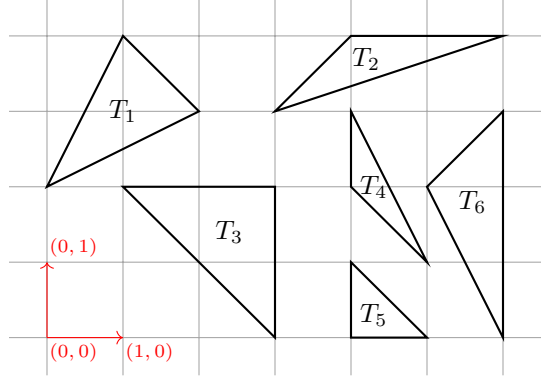

In Figure \ref{primtri} the triangles $T_3$, $T_4$ and $T_5$ are $R$-primitive for any integral domain $R$. The triangles $T_1$, $T_2$ and $T_6$ are not $\mathbb{Z}$-primitive but they are $\mathbb{Q}$-primitive. The triangle $T_2$ is not $\mathbb{F}_2$-primitive but the triangles $T_1$ and $T_6$ are $\mathbb{F}_2$-primitive.

\begin{definition}
    The triangulation $\Gamma$ is said to be $(k,R)$-primitive if any $k$-simplex is $R$-primitive. Similarly, a simplex with vertices in $\mathbb{Z}^n$ is said to be $(k,R)$-primitive if any of its $k$-faces is $R$-primitive. The triangulation $\Gamma$ is said to be almost $R$-primitive if it is $(n,R)$-primitive. It implies that any of its simplex is $R$-primitive. The triangulation $\Gamma$ is said to be almost $R$-primitive if it is $(n-1,R)$-primitive. Similarly, a simplex with vertices in $\mathbb{Z}^n$ is said to be almost $R$-primitive if any of its facet is $R$-primitive.
\end{definition}

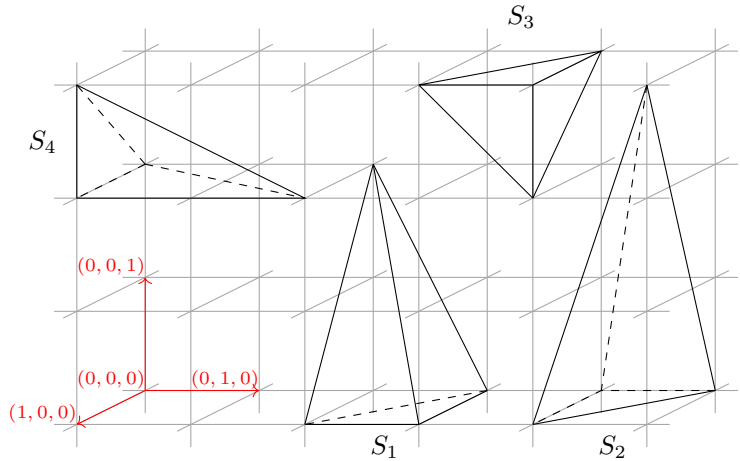
\begin{figure}[h]
  \centering
  \begin{tikzpicture}[
  x={(1cm,0cm)},
  y={(-0.6cm,-0.3cm)},
  z={(0cm,1cm)},
  scale=1.5]
  \draw[gray!70] (-2,0,-0.2) -- (-2,0,3.2);
  \draw[gray!70] (-2,1,-0.2) -- (-2,1,3.2);
  \draw[gray!70] (-1,0,-0.2) -- (-1,0,3.2);
  \draw[gray!70] (-1,1,-0.2) -- (-1,1,3.2);
  \draw[gray!70] (0,0,-0.2) -- (0,0,3.2);
  \draw[gray!70] (0,1,-0.2) -- (0,1,3.2);
  \draw[gray!70] (1,0,-0.2) -- (1,0,3.2);
  \draw[gray!70] (1,1,-0.2) -- (1,1,3.2);
  \draw[gray!70] (2,0,-0.2) -- (2,0,3.2);
  \draw[gray!70] (2,1,-0.2) -- (2,1,3.2);
  \draw[gray!70] (3,0,-0.2) -- (3,0,3.2);
  \draw[gray!70] (3,1,-0.2) -- (3,1,3.2);

  \draw[gray!70] (-2.2,0,0) -- (3.2,0,0);
  \draw[gray!70] (-2.2,1,0) -- (3.2,1,0);
  \draw[gray!70] (-2.2,0,1) -- (3.2,0,1);
  \draw[gray!70] (-2.2,1,1) -- (3.2,1,1);
  \draw[gray!70] (-2.2,0,2) -- (3.2,0,2);
  \draw[gray!70] (-2.2,1,2) -- (3.2,1,2);
  \draw[gray!70] (-2.2,0,3) -- (3.2,0,3);
  \draw[gray!70] (-2.2,1,3) -- (3.2,1,3);

  \draw[gray!70] (-2,-0.2,0) -- (-2,1.2,0);
  \draw[gray!70] (-1,-0.2,0) -- (-1,1.2,0);
  \draw[gray!70] (0,-0.2,0) -- (0,1.2,0);
  \draw[gray!70] (1,-0.2,0) -- (1,1.2,0);
  \draw[gray!70] (2,-0.2,0) -- (2,1.2,0);
  \draw[gray!70] (3,-0.2,0) -- (3,1.2,0);
  \draw[gray!70] (-2,-0.2,1) -- (-2,1.2,1);
  \draw[gray!70] (-1,-0.2,1) -- (-1,1.2,1);
  \draw[gray!70] (0,-0.2,1) -- (0,1.2,1);
  \draw[gray!70] (1,-0.2,1) -- (1,1.2,1);
  \draw[gray!70] (2,-0.2,1) -- (2,1.2,1);
  \draw[gray!70] (3,-0.2,1) -- (3,1.2,1);
  \draw[gray!70] (-2,-0.2,2) -- (-2,1.2,2);
  \draw[gray!70] (-1,-0.2,2) -- (-1,1.2,2);
  \draw[gray!70] (0,-0.2,2) -- (0,1.2,2);
  \draw[gray!70] (1,-0.2,2) -- (1,1.2,2);
  \draw[gray!70] (2,-0.2,2) -- (2,1.2,2);
  \draw[gray!70] (3,-0.2,2) -- (3,1.2,2);
  \draw[gray!70] (-2,-0.2,3) -- (-2,1.2,3);
  \draw[gray!70] (-1,-0.2,3) -- (-1,1.2,3);
  \draw[gray!70] (0,-0.2,3) -- (0,1.2,3);
  \draw[gray!70] (1,-0.2,3) -- (1,1.2,3);
  \draw[gray!70] (2,-0.2,3) -- (2,1.2,3);
  \draw[gray!70] (3,-0.2,3) -- (3,1.2,3);

  \coordinate (B) at (1,0,0);
  \coordinate (C) at (1,1,0);
  \coordinate (D) at (0,1,0);
  \coordinate (S) at (0,0,2);

  \coordinate (B1) at (3,0,0);
  \coordinate (C1) at (2,0,0);
  \coordinate (D1) at (2,1,0);
  \coordinate (S1) at (3,1,3);

  \coordinate (B2) at (-2,1,2);
  \coordinate (C2) at (-2,1,3);
  \coordinate (D2) at (-2,0,2);
  \coordinate (S2) at (0,1,2);

  \coordinate (B3) at (1,1,3);
  \coordinate (C3) at (2,1,3);
  \coordinate (D3) at (2,1,2);
  \coordinate (S3) at (2,0,3);

  \draw[->,red] (-2,0,0) -- (-1,0,0);
  \draw[->,red] (-2,0,0) -- (-2,1,0);
  \draw[->,red] (-2,0,0) -- (-2,0,1);

  \draw (B) -- (C) -- (D);
  \draw[dashed] (B) -- (D);
  \draw (S) -- (B);
  \draw (S) -- (C);
  \draw (S) -- (D);

  \draw[dashed] (B1) -- (C1) -- (D1);
  \draw (B1) -- (D1);
  \draw (S1) -- (B1);
  \draw[dashed] (S1) -- (C1);
  \draw (S1) -- (D1);

  \draw (S2) -- (B2) -- (C2) -- cycle;
  \draw[dashed] (D2) -- (S2);
  \draw[dashed] (D2) -- (B2);
  \draw[dashed] (D2) -- (C2);

  \draw (S3) -- (B3) -- (C3) -- cycle;
  \draw (D3) -- (S3);
  \draw (D3) -- (B3);
  \draw (D3) -- (C3);

  \node[color=red,font=\scriptsize] at (-2.3,1,0.1) {$(1,0,0)$};
  \node[color=red,font=\scriptsize] at (-2.3,0,1.1) {$(0,0,1)$};
  \node[color=red,font=\scriptsize] at (-2.3,0,0.1) {$(0,0,0)$};
  \node[color=red,font=\scriptsize] at (-1.3,0,0.1) {$(0,1,0)$};

  \node[color=black,font=\normalsize] at (0.1,0,-0.5) {$S_1$};
  \node[color=black,font=\normalsize] at (2.1,0,-0.5) {$S_2$};
  \node[color=black,font=\normalsize] at (1.3,0,3.3) {$S_3$};
  \node[color=black,font=\normalsize] at (-2.3,1,2.5) {$S_4$};

\end{tikzpicture}
  \caption{R-primitivity for $3$-simplices}
  \label{primsimpf}
\end{figure}

In Figure \ref{primsimpf} the simplex $S_3$ is $R$-primitive for any integral domain $R$. The simplices $S_2$, $S_1$ and $S_4$ are not $\mathbb{Z}$-primitive but they are $\mathbb{Q}$-primitive. Moreover any proper faces of the simplices $S_1$ and $S_2$ are $R$-primitive, then $S_1$ and $S_2$ are $(2,R)$-primitive for any integral doamin $R$ (that is almost $R$-primitive). However the simplex $S_4$ is not $(2,\mathbb{F}_2)$-primitive because the facet containing the edge of integral length $2$ are not $\mathbb{F}_2$-primitive, then we have also that $S_4$ is not $\mathbb{F}_2$-primitive. The simplex $S_1$ is also not $\mathbb{F}_2$-primitive but the simplex $S_2$ is $\mathbb{F}_2$-primitive.

\begin{figure}[h]
  \centering
  \begin{tikzpicture}[scale=1]
  \draw[step=1, gray!70] (-3.5,-3.5) grid (6.5,0.5);

  \coordinate (A1) at (-2,-2);
  \coordinate (B1) at (-1,0);
  \coordinate (C1) at (0,-1);
  \coordinate (D1) at (-2,0);
  \coordinate (E1) at (-2,-1);
  \coordinate (F1) at (-1,-2);
  \coordinate (G1) at (0,-2);

  \coordinate (A2) at (1,-2);
  \coordinate (B2) at (2,0);
  \coordinate (C2) at (3,-1);
  \coordinate (D2) at (1,0);
  \coordinate (E2) at (1,-1);
  \coordinate (F2) at (2,-2);
  \coordinate (G2) at (3,-2);
  \coordinate (H2) at (2,-1);

  \coordinate (A3) at (4,-2);
  \coordinate (B3) at (5,0);
  \coordinate (C3) at (6,-1);
  \coordinate (D3) at (4,0);
  \coordinate (E3) at (4,-1);
  \coordinate (F3) at (5,-2);
  \coordinate (G3) at (6,-2);
  
  \draw[thick, black] (A1) -- (B1) -- (C1) -- cycle;
  \draw[thick, black] (D1) -- (B1);
  \draw[thick, black] (E1) -- (B1);
  \draw[thick, black] (A1) -- (D1);
  \draw[thick, black] (F1) -- (C1);
  \draw[thick, black] (G1) -- (C1);
  \draw[thick, black] (A1) -- (G1);

  \draw[thick, black] (A2) -- (B2) -- (C2) -- cycle;
  \draw[thick, black] (D2) -- (B2);
  \draw[thick, black] (E2) -- (B2);
  \draw[thick, black] (A2) -- (D2);
  \draw[thick, black] (F2) -- (C2);
  \draw[thick, black] (G2) -- (C2);
  \draw[thick, black] (A2) -- (G2);
  \draw[thick, black] (H2) -- (A2);
  \draw[thick, black] (H2) -- (B2);
  \draw[thick, black] (H2) -- (C2);

  \draw[thick, black] (A3) -- (B3) -- (C3);
  \draw[thick, black] (D3) -- (B3);
  \draw[thick, black] (E3) -- (B3);
  \draw[thick, black] (A3) -- (D3);
  \draw[thick, black] (G3) -- (C3);
  \draw[thick, black] (A3) -- (G3);
  \draw[thick, black] (F3) -- (B3);
  \draw[thick, black] (G3) -- (B3);
  
  \draw[->, red] (-3,-3) -- (-2,-3); 
  \draw[->, red] (-3,-3) -- (-3,-2); 

  \node[color=red,font=\scriptsize] at (-2.65,-1.8) {$(0,1)$};
  \node[color=red,font=\scriptsize] at (-1.65,-3.2) {$(1,0)$};
  \node[color=red,font=\scriptsize] at (-2.65,-3.2) {$(0,0)$};

  \node[color=black,font=\normalsize] at (-1.5,0.5) {$\Gamma_1$};
  \node[color=black,font=\normalsize] at (1.5,0.5) {$\Gamma_2$};
  \node[color=black,font=\normalsize] at (4.5,0.5) {$\Gamma_3$};

\end{tikzpicture}
  \caption{Different triangulation of a lattice polygon}
  \label{triangulation}
\end{figure}
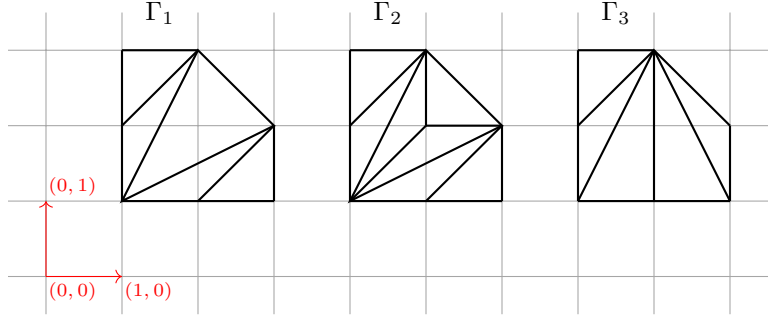

Figure \ref{triangulation} shows three triangulations $\gamma_1$, $\Gamma_2$ and $\Gamma_3$ of a lattice polygon. Then triangulation $\Gamma_2$ is $R$-primitive for any integral domain $R$ The triangualtion $\Gamma_1$ is not $\mathbb{Z}$-primitive but it is $\mathbb{Q}$-primitive and $\mathbb{F}_2$-primitive. The triangulation $\Gamma_3$ is not $\mathbb{Z}$-primitive and not $\mathbb{F}_2$-primitive but it is $\mathbb{Q}$-primitive.

The Poincaré duality theorem will be enounced for $R$-primitive triangulations of a $R$-non-singular lattice polytope. The partial Poincaré duality will be announced for $(k,R)$-primitive triangulations of a $R$-non-singular lattice polytope, with $2\leq k \leq n-1$. However, many lemmas are true for weaker hypothesis. For instance, the existence of a fundamental class requires weaker hypothesis. Then we will define two hypothesis $1$ and $2$ for the triangulation $\Gamma$, weaker than the $(k,R)$-primitivity for $2\leq k \leq n$ but enough to prove certain lemmas needed for Poincaré duality and partial Poincaré duality. To do that let us define the index $|\Gamma|$ of the triangulation $\Gamma$ and the reduced length $|\sigma^1|_r$ of any edge of it:

\begin{definition}
    We define the index $|\Gamma|$ of the triangulation $\Gamma$ as being the greater common divisor of the lengths $|\sigma^1|$ for $\sigma^1 \in \Gamma$. Then we define the reduced length $|\sigma^1|_r$ of each edge $\sigma^1 \in \Gamma$ as being the quotient of the length by this greater common divisor:
    \begin{equation*}
        |\sigma^1|_r:= \frac{|\sigma^1|}{|\Gamma|}.
    \end{equation*}
    \label{redlength}
\end{definition}

Then we can define the hypothesis $1$ and $2$:

\begin{definition}
    (Hypothesis 1 and 2)
    
    \underline{Hypothesis 1:} We say that the triangulation $\Gamma$ satisfies the hypothesis 1 for the integral domain $R$ if for any edge $\sigma^1 \in \Gamma$ the reduced length $|\sigma^1|_r$ is non-zero in the integral domain $R$.
    
    \underline{Hypothesis 2:} We say that the triangulation $\Gamma$ satisfies the hypothesis 2 for the integral domain $R$ if the reduced length $|\sigma^1|_r$ is invertible in $R$.
    \label{max}
\end{definition}

In Figure \ref{triangulation} the triangulations $\Gamma_1$ and $\Gamma_2$ satisfies Hypothesis 1 and 2 for any integral domain $R$. The triangulation $\Gamma_3$ satisfies Hypothesis 1 and 2 for $\mathbb{Q}$ but it satisfies only Hypothesis 1 for $\mathbb{Z}$ and it does not satisfy any of the Hypothesis $1$ and $2$ for $\mathbb{F}_2$. In the next section we will prove that if Hypothesis $1$ is satisfied, the homology group $H_{n-1}(\mathcal{F}_{n-1}^R)$ is a free $R$-module of rank $1$ and we define the fundamental class as a generator. This homology group can be of rank greater than $1$ if Hypothesis 1 is not satisfied and then the fundamental class does not exist. It is the case for the triangulation $\Gamma_3$ of this Figure \ref{triangulation}, the homology $H_{1}(\mathcal{F}_{1}^{\mathbb{F}_2})$ is of rank $2$ and the fundamental class cannot be defined.

The hypothesis $1$ is weaker than the hypothesis $2$. If the characteristic of the integral domain $R$ is non-zero, the hypothesis $1$ and $2$ are equivalent. If the characteristic of the integral domain $R$ is zero, the hypothesis $1$ is empty. To prove that these hypothesis are weaker that an hypothesis of $(k,R)$-primitivity of the triangulation for any $2\leq k \leq n$, it is enough to prove that the $(2,R)$-primitivity implies the hypothesis $2$, which is the following proposition:

\begin{proposition}
    If the triangulation $\Gamma$ is $(2,R)$-primitive, then it satisfies the hypothesis 2 for the integral domain $R$ (and thus also the hypothesis 1).
    \label{jt}
\end{proposition}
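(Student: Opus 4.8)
The plan is to unwind the definitions and reduce the statement to an elementary arithmetic fact about indices of simplices. Recall that the reduced length $|\sigma^1|_r$ of an edge $\sigma^1 \in \Gamma$ is $|\sigma^1|/|\Gamma|$, where $|\Gamma|$ is the gcd of all edge lengths, so Hypothesis 2 asks that each $|\sigma^1|_r$ be invertible in $R$. The key observation is that $(2,R)$-primitivity puts a strong constraint on the index of every triangle, hence on the lengths of the edges through that triangle.

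First I would fix an arbitrary triangle $\tau^2 \in \Gamma$ and recall from Definition \ref{primsimp} that $\tau^2$ being $R$-primitive means its volume is $m|\tau^2|^2/2!$ with $m$ invertible in $R$, where $|\tau^2|$ is the index, i.e. the gcd of the integral lengths of the three edges of $\tau^2$. Next I would use that this volume, computed in the tangent lattice $T_{\mathbb{Z}}\tau^2$, is a positive integer multiple of $1/2!$ (since the vertices are integral); writing the volume as $N/2!$ for a positive integer $N$, the equation $N = m|\tau^2|^2$ forces $|\tau^2|^2 \mid N$ and, since $m$ is a unit, $|\tau^2|^2$ and $N$ are associates in $R$ — in particular, if we are over $R$ with nonzero characteristic this already says $|\tau^2|$ is invertible unless it kills $N$; the clean statement I actually want is that \emph{the index $|\tau^2|$ of every triangle is invertible in $R$}, because $m = N/|\tau^2|^2$ is a unit and $N$ is a product of $|\tau^2|^2$ with a unit, so $|\tau^2|$ divides a unit in $R$. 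Then, since $|\tau^2|$ divides each of the three edge lengths $|\sigma^1|$ of $\tau^2$, each such $|\sigma^1|$ is an invertible multiple of $|\tau^2|$... no — rather, I claim directly that \emph{each edge length $|\sigma^1|$ of a triangle is invertible in $R$}: indeed $|\sigma^1|$ is a multiple of... the wrong way. The correct route: I will show every $|\sigma^1|/|\tau^2|$ is a unit, equivalently that $|\tau^2|$ and $|\sigma^1|$ are associates, by a small lattice computation inside $T_{\mathbb{Z}}\tau^2 \cong \mathbb{Z}^2$ — after dilating $\tau^2$ down by $|\tau^2|$ we get a primitive-index triangle whose $R$-primitivity says its (integer) area is a unit times $1/2$, forcing that rescaled triangle to be unimodular, hence each of its edges has $R$-invertible length; undoing the dilation shows $|\sigma^1| = |\tau^2| \cdot(\text{unit})$.

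Finally I would assemble the global statement. Since every edge of $\Gamma$ lies in at least one triangle of $\Gamma$ (because $\dim P = n \geq 2$ and $\Gamma$ triangulates $P$, so no edge is maximal), the previous step shows that for every edge $\sigma^1 \in \Gamma$ the ratio $|\sigma^1|/|\tau^2|$ is invertible in $R$ for the triangle $\tau^2 \supseteq \sigma^1$, and moreover all triangle-indices $|\tau^2|$ are mutually associate in $R$: two triangles sharing an edge $\sigma^1$ both have index dividing $|\sigma^1|$ with $R$-invertible quotient, and connectivity of the dual graph of triangles propagates this. Consequently all edge lengths of $\Gamma$ are associate to a single element $d \in \mathbb{Z}_{>0}$ up to $R$-units, whence $|\Gamma| = \gcd_{\sigma^1}|\sigma^1|$ satisfies $|\sigma^1|/|\Gamma|$ invertible in $R$ for every edge; that is exactly Hypothesis 2. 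The main obstacle I anticipate is making the "$R$-primitive triangle forces unimodularity after dilation" step precise over an arbitrary integral domain $R$ rather than over $\mathbb{Z}$: one must be careful that "volume $= m|\sigma|^k/k!$ with $m$ a unit" is the governing condition (as stated in Definition \ref{primsimp}) and translate the integrality of the underlying lattice volume into the divisibility conclusion, since over, say, $R=\mathbb{Q}$ this is vacuous (everything is a unit) while over $R=\mathbb{F}_2$ it is the genuine content; handling both uniformly is where the care is needed, but the natural division of cases by $\mathrm{char}(R) = 0$ versus $\mathrm{char}(R)\ne 0$ (as already flagged in Definition \ref{max}) makes it manageable.
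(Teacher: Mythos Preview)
Your approach is essentially the paper's: show that within any $R$-primitive triangle the three edge lengths are mutual $R$-associates (via the unit-volume condition), propagate this globally by a connectivity argument, and conclude with a Bezout identity on the reduced lengths. The paper makes the connectivity step precise as a separate lemma (any two edges of $\Gamma$ are linked by a chain of edges with consecutive pairs lying in a common triangle, proved via a path-through-$n$-simplices argument) and spells out the Bezout step explicitly; your sketch elides both and phrases connectivity via the dual graph of triangles, but the substance is the same.
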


Before proving this proposition, we need an intuitive lemma:

\begin{lemma}
    If $\sigma^1_a,\sigma^1_b$ are two edges of the triangulation $\Gamma$ of the polytope $P$, then there exists a sequence $\tau^1_1,\tau^1_2,...,\tau^1_k$ of edges of the triangulation $\Gamma$ such that $\tau^1_1=\sigma^1_a$, $\tau^1_k=\sigma^1_b$ and for all $1\leq i \leq k-1$ the edges $\tau^1_i$ and $\tau^1_{i+1}$ belongs to a same triangle of the triangulation.
    \label{br}
\end{lemma}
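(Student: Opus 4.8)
The plan is to recast the statement as a connectivity assertion for an auxiliary graph and then to localise it around each vertex. Let $G$ be the graph whose vertex set is the set of $1$-simplices of $\Gamma$, with two $1$-simplices joined by an edge of $G$ precisely when they are both faces of a common $2$-simplex (triangle) of $\Gamma$. Then Lemma \ref{br} is exactly the assertion that $G$ is connected, so it suffices to produce, for any two $1$-simplices $\sigma^1_a,\sigma^1_b$, a path between them in $G$.

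First I would reduce to the case where $\sigma^1_a$ and $\sigma^1_b$ share a vertex. Since the underlying space of $\Gamma$ is the convex polytope $P$, it is path-connected, hence the $1$-skeleton $\Gamma^{(1)}$ is a connected graph. Choosing a vertex $v_a$ of $\sigma^1_a$ and a vertex $v_b$ of $\sigma^1_b$, pick a path $v_a = w_0, w_1, \dots, w_m = v_b$ in $\Gamma^{(1)}$, so each $[w_{i-1},w_i]$ is a $1$-simplex of $\Gamma$. Then $\sigma^1_a$ and $[w_0,w_1]$ share $v_a$, consecutive edges $[w_{i-1},w_i]$ and $[w_i,w_{i+1}]$ share $w_i$, and $[w_{m-1},w_m]$ and $\sigma^1_b$ share $v_b$. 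Concatenating, it is enough to prove: any two $1$-simplices of $\Gamma$ through a common vertex $v$ lie in the same connected component of $G$.

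Next, fix a vertex $v$ and consider the link $\mathrm{lk}_\Gamma(v)$. Its vertices are exactly the $1$-simplices of $\Gamma$ containing $v$ and its edges are exactly the $2$-simplices of $\Gamma$ containing $v$; thus the subgraph of $G$ spanned by the $1$-simplices through $v$ is the $1$-skeleton of $\mathrm{lk}_\Gamma(v)$. As a simplicial complex is connected if and only if its $1$-skeleton is, the statement for the vertex $v$ is equivalent to the connectedness of $\mathrm{lk}_\Gamma(v)$ (this even produces a path of $1$-simplices all passing through $v$, more than the lemma asks). To see that $\mathrm{lk}_\Gamma(v)$ is connected, I would use that $\Gamma$ is a geometric simplicial complex in $\mathbb{R}^n$ whose underlying space is the $n$-dimensional convex polytope $P$ with $n\geq 2$. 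For $\varepsilon>0$ small enough that the sphere $S_\varepsilon(v)$ of radius $\varepsilon$ about $v$ meets only simplices containing $v$, radial projection gives a homeomorphism $\mathrm{lk}_\Gamma(v)\cong S_\varepsilon(v)\cap P$, and for small $\varepsilon$ this equals $S_\varepsilon(v)\cap C_v$, where $C_v$ is the tangent cone of $P$ at $v$. Since $P$ is $n$-dimensional, $C_v$ is a full-dimensional convex cone, and its set of unit directions is a connected subset of $S^{n-1}$ (using $n-1\geq 1$, convexity of $C_v$, and the fact that $C_v$ has nonempty interior to connect antipodal directions). Equivalently one may simply invoke that $P$ is a PL $n$-ball, so vertex links are PL spheres or balls of dimension $n-1\geq 1$, hence connected.

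The argument is routine point-set and PL topology; the only step requiring care is the connectedness of $\mathrm{lk}_\Gamma(v)$, where boundary vertices of $P$ and vertices lying in the relative interior of a positive-dimensional face of $P$ must be treated on the same footing. Passing to the tangent cone of the convex polytope $P$ at $v$ handles all these cases uniformly, which is why I would phrase the last step that way rather than case-splitting on the position of $v$.
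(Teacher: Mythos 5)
Your proof is correct, but it takes a genuinely different route from the paper's. The paper first establishes its Lemma \ref{ur} (any two $n$-simplices of $\Gamma$ can be joined by a chain in which consecutive $n$-simplices share a common facet), proved by a generic line-segment argument: join interior points of the two $n$-simplices by a segment chosen to avoid the $(n-2)$-skeleton. Lemma \ref{br} is then deduced by choosing $n$-simplices containing $\sigma^1_a$ and $\sigma^1_b$, chaining them via Lemma \ref{ur}, picking an edge in each intermediate facet, and connecting consecutive edges inside a common $n$-simplex through a shared vertex — a local step very close in spirit to yours, but the global connectivity is carried by the dual graph of $n$-simplices rather than by the $1$-skeleton. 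Your argument instead reduces along the $1$-skeleton to the case where the two edges share a vertex $v$, identifies the relevant subgraph of $G$ with the $1$-skeleton of $\mathrm{lk}_\Gamma(v)$, and proves the link connected by radial projection onto the tangent cone of the convex polytope $P$ at $v$ (or, equivalently, by citing that $P$ is a PL $n$-ball). This is shorter but imports standard PL facts (the cone structure of the closed star, the identification of $S_\varepsilon(v)\cap P$ with the link), whereas the paper's argument is elementary and self-contained. A further pragmatic point: the paper re-uses Lemma \ref{ur} later, in the proof that $H_{n-1}(\mathcal{F}_{n-1}^R)$ has rank one (the existence of the fundamental class), so proving it first amortises the work across the paper; your approach does not yield Lemma \ref{ur} as a by-product, so that lemma would still need its own proof.
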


This lemma is quite intuitive, it comes from the convexity of the lattice polytope $P$, but it is a bit technical to prove it. To obtain it we have to use the following lemma:

\begin{lemma}
    Let $\sigma^n_1,\sigma^n_2 \in \Gamma$. Then there exists an integer $k\geq 1$ and $\tau^n_1$, $\tau^n_2$,...,$\tau^n_k$ a sequence of $n$-dimensional simplices of $\Gamma$ such that $\sigma^n_1 = \tau^n_1$, $\sigma^n_2 = \tau^n_k$ and for every $1 \leq i \leq k-1$ there exists $\tau^{n-1}_i \in \Gamma$ a codimension $1$ face of both $\tau^n_i$ and $\tau^n_{i+1}$.

    \label{ur}
\end{lemma}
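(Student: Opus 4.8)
The plan is to prove that the relation ``$\sigma \sim \sigma'$ if and only if there is a chain of $n$-simplices of $\Gamma$ from $\sigma$ to $\sigma'$ in which consecutive terms share a codimension-$1$ face'' is an equivalence relation admitting a single class; the statement of Lemma~\ref{ur} is precisely that $\sigma^n_1 \sim \sigma^n_2$. Reflexivity, symmetry and transitivity are immediate, so the whole content is a connectedness statement, which I would extract from the connectedness of the interior of the convex polytope $P$.

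First I would record the local structure of $\Gamma$ along an interior ridge. If $\rho$ is a cell of $\Gamma$ with $\dim \rho = n-1$ and $\mathrm{relint}(\rho)\subseteq \mathrm{int}(P)$, then $\rho$ is a codimension-$1$ face of exactly two $n$-simplices of $\Gamma$. To see this, fix $x\in\mathrm{relint}(\rho)$ and a ball $B$ around $x$, small enough that every cell of $\Gamma$ meeting $B$ contains $x$; since $x\in\mathrm{int}(P)$ the ball $B$ lies in $P$ and is covered by $\Gamma$, and every cell of $\Gamma$ containing $x$ has $\rho$ as a face (because $x$ lies in $\mathrm{relint}(\rho)$, so $\rho$ is the unique minimal cell through $x$). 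The affine span of $\rho$ cuts $B$ into two half-balls; each $n$-simplex with facet $\rho$ fills one of them near $x$, distinct $n$-simplices have disjoint interiors, and both half-balls must be covered, so there are exactly two such $n$-simplices. This is the one place where the full-dimensionality and convexity of $P$ enter (they guarantee that a neighbourhood of an interior point of a ridge lies entirely inside $P$ and is covered by $\Gamma$).

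Next, let $A$ be the union of the closed $n$-simplices of $\Gamma$ in the $\sim$-class of $\sigma^n_1$, and $B$ the union of the remaining closed $n$-simplices; both are closed and $A\cup B = P$. Suppose for contradiction $B\neq\emptyset$, so $\sigma^n_2\subseteq B$. Let $\Sigma$ be the union of all cells of $\Gamma$ of dimension $\le n-2$, a closed polyhedral set of dimension $\le n-2$. Any point $x\in\mathrm{int}(P)\setminus\Sigma$ lies in the relative interior of a unique cell $\tau$ of $\Gamma$ with $\dim\tau\in\{n-1,n\}$ and $\mathrm{relint}(\tau)\subseteq\mathrm{int}(P)$: if $\dim\tau = n$ then $x$ lies in a single $n$-simplex, while if $\dim\tau = n-1$ then by the previous paragraph $x$ lies in exactly two $n$-simplices, which share the facet $\tau$ and are therefore $\sim$-equivalent. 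In both cases the $n$-simplices through $x$ lie in a single $\sim$-class, so $x$ cannot lie simultaneously in a closed $n$-simplex from the class of $\sigma^n_1$ and in one from another class. Hence $A\cap(\mathrm{int}(P)\setminus\Sigma)$ and $B\cap(\mathrm{int}(P)\setminus\Sigma)$ are disjoint, relatively closed in $\mathrm{int}(P)\setminus\Sigma$, cover it, and are both nonempty (they contain the interiors of $\sigma^n_1$ and $\sigma^n_2$ with $\Sigma$ deleted, a dense open part of each). But $\mathrm{int}(P)$ is a connected open subset of $\mathbb{R}^n$ and deleting the closed codimension-$\ge 2$ set $\Sigma$ does not disconnect it, so $\mathrm{int}(P)\setminus\Sigma$ is connected --- a contradiction. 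Therefore $B=\emptyset$ and the lemma follows.

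The argument has no deep point, and I expect the \emph{main obstacle} to be giving clean, self-contained justifications of the two folklore facts used: (i) that an interior codimension-$1$ cell of $\Gamma$ meets exactly two $n$-simplices, and (ii) that $\mathrm{int}(P)\setminus\Sigma$ is connected. For (ii) one may either cite the standard statement that a closed subset of codimension $\ge 2$ does not disconnect a connected manifold, or give a short general-position argument: $\mathrm{int}(P)$ is convex, so two points of $\mathrm{int}(P)\setminus\Sigma$ are joined by a segment in $\mathrm{int}(P)$, and a generic nearby polygonal path avoids $\Sigma$ because $\dim\Sigma + 1 \le n-1 < n$. Everything else is bookkeeping with the cells of $\Gamma$.
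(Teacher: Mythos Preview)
Your proof is correct, but it is organised differently from the paper's. The paper gives a direct construction: it fixes $x\in\mathrm{int}(\sigma^n_1)$, chooses $y\in\mathrm{int}(\sigma^n_2)$ generically so that the segment $]xy[$ avoids every simplex of $\Gamma$ of dimension $\le n-2$ (this is where convexity and a finite-union-of-hyperplanes argument are used), and then reads off the chain $\tau^n_1,\ldots,\tau^n_k$ as the sequence of $n$-simplices successively crossed by $[xy]$, with the intermediate $(n-1)$-simplices providing the shared facets. Your argument is instead a non-constructive open--closed decomposition of the connected set $\mathrm{int}(P)\setminus\Sigma$. The two are closely related --- your sketched justification of (ii) via a generic polygonal path is essentially the paper's construction --- but the paper's version produces the chain explicitly in one stroke and avoids having to state and prove the local fact~(i) about interior ridges separately. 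Your packaging has the advantage of making clear that the only genuine input is the connectedness of $\mathrm{int}(P)$ minus a codimension-$2$ set, so it would transfer more readily to settings where $P$ is not convex.
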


\begin{proof}
    \underline{Proof of Lemma \ref{ur}:} 
    Let $x\in\mathbb{R}^n$ a point of the interior of the simplex $\sigma^n_1$. For any $\mu^a \in \Gamma$, $a \leq n-2$, there exists an affine hyperplan $\mathcal{H}_{x,\mu^a}$ of $\mathbb{R}^n$ which contains $x$ and $\mu^a$. Let us consider the following union:
    \begin{equation*}
        A:= \bigcup_{\mu^a \in \Gamma, ~ a\leq n-2}\mathcal{H}_{x,\mu^a}.
    \end{equation*}
    It is a finite union of affine hyperplan of $\mathbb{R}^n$, thus its interior is empty. The interior of the simplex $\sigma^n_2$ is a non-empty open subset of $\mathbb{R}^n$. Thus, it is not included in $A$. Therefore, there exists a point $y$ of the interior of the simplex $\sigma^n_2$ such that $y \notin A$. Then for any hyperplan $\mathcal{H}_{x,\mu^a}$, the line $(xy)$ is transverse to it and the intersection is $\{x\}$. Then the intersection of the line $(xy)$ with $A$ is $\{x\}$. Thus the intersection of $A$ with the open line segment $]xy[$ is empty. As $A$ contains the union of the $\mu^a \in \Gamma$, $a \leq n-2$, we obtain that the simplices of $\Gamma$ crossed by $]xy[$ are only simplices of dimension $n-1$ or $n$. Moreover, as the line $(xy)$ does not cross any $\mu^a \in \Gamma, ~a\leq n-2$, it means that it does not cross any proper face of the crossed simplices of dimension $n-1$ of $\Gamma$, thus any simplex of $\Gamma$ of dimension $n-1$ crossed by the line $(xy)$ is crossed transversely. Let $a_1,...,a_{k-1}$ the intersection points of $]xy[$ with the simplices of dimension $n-1$, ordered by their distance from $x$, and let $\tau_1^{n-1}$,...,$\tau^{n-1}_{k-1}$ the associated simplices of dimension $n-1$.
    
    By convexity of the polytope $P$, as $x$ and $y$ belong to $P$, the line segment $[xy]$ is contained in $P$. Then the open line segments $]xa_1[$, $]a_1a_2[$, ..., $]a_{k-1}y[$ are contained in $P$. However they don't intersect any simplex of dimension $n-1$ of $\Gamma$. Then they are contained in the union of the interiors of the $n$-simplices of $\Gamma$. As these interiors of $n$-simplices are pairwise disjoint open subsets of $\mathbb{R}^n$, each open line segment $]xa_1[$, $]a_1a_2[$, ..., $]a_{k-1}y[$ must be included into the interior of one $n$-simplex of $\Gamma$. Let us denote $\tau_1^{n}$,...,$\tau^{n}_{k}$ these $n$-simplices of $\Gamma$ whose interior contains $]xa_1[$, $]a_1a_2[$, ..., $]a_{k-1}y[$ respectively. Then we have for every $1\leq i \leq k-1$ that $\tau^{n-1}_i$ is a face of both $\tau^n_i$ and $\tau^n_{i+1}$. Moreover, as $x$ and $y$ belong respectively to the interior of $\sigma^n_1$ and $\sigma^n_2$, we have that $\tau_1^n = \sigma_1^n$ and $\tau_k^n = \sigma_2^n$. It concludes the proof of the lemma.
\end{proof}

Now we can prove the Lemma \ref{br}:

\begin{proof}
    \underline{Proof of the Lemma \ref{br}:}
    Let $\sigma^1_a,\sigma^1_b$ edges of the triangulation $\Gamma$. Then there exists some $n$-simplices $\sigma^n_1,\sigma^n_2$ of $\Gamma$ containing $\sigma^1_a$ and $\sigma^1_b$ respectively. Applying Lemma \ref{ur}, there exists an integer $k$ and a sequence $\tau^n_1,...,\tau^n_k$ of $n$-simplices of $\Gamma$ such that $\sigma^n_1=\tau^n_1$, $\sigma^n_2 = \tau^n_k$ and for every $1 \leq i \leq k-1$ there exists $\tau^{n-1}_i \in \Gamma$ a codimension $1$ face of both $\tau^n_i$ and $\tau^n_{i+1}$.

    Let us fix an edge $\tau^1_i\subseteq\tau^{n-1}_i$ for any $1 \leq i \leq k-1$. Let us also fix the notations $\tau^1_{0}=\sigma^1_a$ and $\tau^1_{k} = \sigma^1_b$. Then for every $0\leq i \leq k-1$, the edges $\tau^1_i$ and $\tau^1_{i+1}$ belong to the same $n$-simplex $\sigma^n_{i+1}$. If $\tau^1_i\neq\tau^1_{i+1}$ there exist a vertex $s$ of $\tau^1_i$ and a vertex $t$ of $\tau^1_{i+1}$ such that $s \neq t$. But $s$ and $t$ are vertices of the same $n$-simplex $\sigma^n_{i+1}$, thus there exists an edge $\epsilon_i^1\subseteq\sigma^n_{i+1}$ of vertices $s$ and $t$. Then $\tau^1_i$ and $\epsilon^1_i$ are edges of the same $n$-simplex $\sigma^n_{i+1}$ and they share the vertex $s$, thus they belongs to a same triangle. Similarly, $\epsilon^1_i$ and $\tau^1_{i+1}$ belong also to a same triangle. In the case $\tau^1_i=\tau^1_{i+1}$ we let $\epsilon^1_i:=\tau^1_i=\tau^1_{i+1}$ and we also have that $\tau^1_i$ and $\epsilon^1_i$ belong to a same triangle and $\tau^1_{i+1}$ and $\epsilon^1_i$ belong to a same triangle.

    Finally the sequence $\tau^1_{0},\epsilon^1_0,\tau^1_1,\epsilon^1_1,...,\tau^1_{k-1}\epsilon^1_{k-1},\tau^1_k$ satisfy that each consecutive edge belong to a same triangle of $\Gamma$, and $\sigma^1_a=\tau^1_0$, $ \sigma^1_b = \tau^1_k$. It concludes the proof of Lemma \ref{br}
\end{proof}

We are now ready to prove the Proposition \ref{jt}:

\begin{proof}
    \underline{Proof of the Proposition \ref{jt}:}
    Let us consider a triangle $\sigma^2\in\Gamma$, of edges $\sigma^1_a$, $\sigma^1_b$, $\sigma^1_c$. As the triangulation $\Gamma$ is $(2,R)$-primitive, this triangle $\sigma^2$ is $R$-primitive. Then the volume of this triangle in a basis of $T_{\mathbb{Z}}\sigma^2$ is of the form $|\sigma^2|^2x/2!$ with $x$ an invertible element of $R$. Then the product of the lengths of two edges (for instance $|\sigma^1_a|| \sigma^1_b|$) divides $|\sigma^2|^2x$. Thus there exists an integer $y$ such that:
    \begin{equation*}
        x=y\frac{|\sigma^1_a|}{|\sigma^2|}\frac{|\sigma^1_b|}{|\sigma^2|}.
    \end{equation*}

    As $x$ is an invertible in $R$ it implies that the three integers $y$, ${|\sigma^1_a|/|\sigma^2|}$ and $|\sigma^1_b|/|\sigma^2|$ are invertible in $R$. Then, denoting $1_R$ the unit element of $R$, there exists $u,v\in R^*$ such that:
    \begin{equation*}
        \left \{ \begin{array}{l}
            |\sigma^1_a|1_R=|\sigma^2|u \\
            |\sigma^1_b|1_R=|\sigma^2|v
        \end{array} \right . .
    \end{equation*}
    Finally we have:
    \begin{equation*}
        |\sigma^1_a|1_R=|\sigma^1_b|v^{-1}u.
    \end{equation*}
    We have proven that if two edges $\sigma^1_a$ and $\sigma^1_b$ of the triangulation $\Gamma$ are edges of a same triangle of $\Gamma$, then there exists an element $w\in R^*$ such that $|\sigma^1_a|1_R=|\sigma^1_b|w$. However, according to Lemma \ref{br}, for any edges $\sigma_a^1$ and $\sigma_a^2$ of the triangulation $\Gamma$, there exists a sequence $\tau^1_1,\tau^1_2,...,\tau^1_k$ of edges of the triangulation such that $\tau^1_1=\sigma^1_a$, $\tau^1_k=\sigma^1_b$ and for all $1\leq i \leq k-1$ the edges $\tau^1_i$ and $\tau^1_{i+1}$ share a vertex. Applying by induction the property we just have proven, we obtain that there exists $z\in R^*$ such that $|\sigma^1_a|1_R=|\sigma^1_b|z$.

    Let us fix an edge $\sigma^1_0\in\Gamma$. For every edge $\tau^1\in \Gamma$, let us denote $z_{\tau^1}$ the element of $R^*$ such that $|\tau^1|1_R=|\sigma^1_0|z_{\tau^1}$. By definition, the index $|\Gamma|$ of the triangulation is the greater common divisor of the lengths $|\tau^1|$ for $\tau^1 \in \Gamma$. The reduced lengths are obtained by dividing the lengths by this index $|\Gamma|$, thus we obtain that the reduced lengths also satisfy:
    \begin{equation*}
        \forall\tau^1 \in \Gamma,~~|\tau^1|_r1_R=|\sigma^1_0|_rz_{\tau^1}.
    \end{equation*}
    There exists a family of integers $\lambda_{\tau^1}\in\mathbb{Z}, ~~\tau^1\in\Gamma$ such that the reduced lengths satisfy the Bezout relation:
    \begin{equation*}
        1=\sum_{\tau^1\in\Gamma}\lambda_{\tau^1}|\tau^1|_r.
    \end{equation*}
    Thus we obtain:
    \begin{equation*}
        1_R=\sum_{\tau^1\in\Gamma}\lambda_{\tau^1}|\tau^1|_r1_R=|\sigma^1_0|_r\sum_{\tau^1\in\Gamma}\lambda_{\tau^1}z_{\tau^1}.
    \end{equation*}
    Therefore $|\sigma^1_0|_r$ is invertible in $R$, and using again the equation $|\tau^1|_r1_R=|\sigma^1_0|_rz_{\tau^1}$, we obtain that for any $\tau^1 \in \Gamma$ the reduced length $|\tau^1|_r$ is invertible in $R$. It is exactly the purpose of hypothesis 2. It ends the proof.
\end{proof}

In the following we precise for each lemma or proposition the assumption we make. It can be noting, it can be the $R$-non-singularity of the polytope, or the local $R$-non-singularity on one of its faces. From the weakest to the strongest, it can be the hypothesis 1, the hypothesis 2, the $(k,R)$-primitivity for some integer $2 \leq k \leq n$, or the $R$-primitivity. The purpose of the following subsection is to construct the cap product.

\subsection{Cap-product}

\begin{definition}
    If $V$ is a $R$-module and $0 \leq p \leq p'$ integers, the contraction (denoted by a dot) is the unique bilinear operator $\bigwedge^p V^* \times \bigwedge^{p'} V \rightarrow \bigwedge^{p'-p} V$ such that:
    \begin{equation*}
        \forall \alpha \in \bigwedge^{p'-p}V^*, ~~\forall w \in \bigwedge^p V^*,~~ \forall v \in \bigwedge^{p'}V, ~~\alpha(w.v)=(\alpha\wedge w)(v).
    \end{equation*}
    \label{defcontr}
\end{definition}

If $V$ is a $R$-module, $I$ a finite subset of $\mathbb{Z}$ and $(v_i)_{i\in I}$ a family of elements of $V$, the notation $\bigwedge_{i\in I}v_i$ may be ambiguous because the sign depends on the order of computation of the wedges of the $v_i,~ I\in I$. But $\mathbb{Z}$ is totally ordered, then $I$ inherits a total order. In the following $\bigwedge_{i\in I}v_i$ denotes the wedge of the family $(v_i)_{i\in I}$ in respect to the total order on $I$ induced by the total order on $\mathbb{Z}$. That is, if $I=\{i_1,...,i_k\}$ with $i_1<i_2<...<i_k$ we let:
\begin{equation*}
    \bigwedge_{i \in I}v_i:= v_{i_1}\wedge v_{i_2}\wedge...\wedge v_{i_k}.
\end{equation*}

Before introducing the cap-product of sheaves and cosheaves, let us ennounce the following property satisfied by the contraction: if $W$ is a sub-$R$-module of $V$ and if $\pi: \bigwedge^p V^* \twoheadrightarrow \bigwedge^p W^*$ is the canonical projection, we have:
\begin{equation}
    \forall p'\geq p,~~~~\forall \beta \in \bigwedge^{p} V^*, ~~ \forall \alpha \in \bigwedge^{p'} W, ~~\beta.\alpha = \pi(\beta).\alpha.
    \label{yutr}
\end{equation}

Now we can define the cap-product of sheaves and cosheaves by the following proposition:

\begin{proposition}
    (Cap-product of sheaves and cosheaves) For every $\sigma \subseteq \gamma \subseteq \tau$ incident simplices of $\Gamma$, with $\mathrm{dim}~ \sigma \geq 1$, and for every integers $0\leq p \leq p'$, we name cap-product the unique bilinear map
    \begin{equation*}
        \cap: \begin{array}{rcl}
            \mathcal{F}^{p}_R(\gamma,\tau) \times \mathcal{F}_{p'}^R(\sigma,\tau)  & \rightarrow & \mathcal{F}_{p'-p}^R(\sigma,\gamma)   \\
            (\beta,\alpha) & \mapsto & \beta \cap \alpha
        \end{array}
    \end{equation*}
    which satisfies the following identity:
    \begin{equation*}
        \forall \overline{\beta} \in \bigwedge^p T_RF_{\tau}, ~~ \forall \alpha \in \mathcal{F}_{p'}^R(\sigma,\tau), ~~ [(\iota_{\sigma,\tau}^{\tau})^*(\overline{\beta})] \cap \alpha = \pi_{\tau}^{\gamma}(\overline{\beta}.\alpha)
    \end{equation*}
    where the dot $\overline{\beta}.\alpha$ denotes the contraction (Definition \ref{defcontr}), where $\pi_{\tau}^{\gamma}$ is the canonical projection $\bigwedge^{p'-p}\frac{N_R}{T_RF_{\tau}^{\perp}} \twoheadrightarrow \bigwedge^{p'-p}\frac{N_R}{T_RF_{\gamma}^{\perp}}$, and $(\iota_{\sigma,\tau}^{\tau})^*:\bigwedge^p T_RF_{\tau} \twoheadrightarrow \mathcal{F}^p_R(\sigma,\tau)$ is the dual of the canonical inclusion $\iota_{\sigma,\tau}^{\tau}:\mathcal{F}^R_p(\sigma,\tau) \hookrightarrow \bigwedge^p\frac{N_R}{T_RF_{\tau}^{\perp}}$
    \label{defcap}
\end{proposition}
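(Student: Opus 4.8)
The plan is to \emph{define} the cap product by the stated formula and then check that it is well posed; uniqueness will come out for free. Since $T_RF_\tau$ is saturated in $M_R$ one has $\bigl(\tfrac{N_R}{T_RF_\tau^\perp}\bigr)^*=T_RF_\tau$, so $(\iota_{\sigma,\tau}^\tau)^*$ sends $\overline\beta\in\bigwedge^pT_RF_\tau$ to the functional $v\mapsto\langle\overline\beta,v\rangle$ restricted to the submodule $\mathcal{F}^R_p(\sigma,\tau)\subseteq\bigwedge^p\tfrac{N_R}{T_RF_\tau^\perp}$. This map is onto $\mathcal{F}^p_R(\sigma,\tau)$, and dualizing the cosheaf inclusion $\iota_{\sigma,\tau}^{\gamma,\tau}$ gives a surjective restriction $\mathcal{F}^p_R(\gamma,\tau)\twoheadrightarrow\mathcal{F}^p_R(\sigma,\tau)$; hence the identity already forces $\beta\cap\alpha$ to depend on $\beta$ only through its restriction to $\mathcal{F}^R_p(\sigma,\tau)$, and to equal $\pi_\tau^\gamma(\overline\beta.\alpha)$ once a lift $\overline\beta$ of that restriction is chosen. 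I would take this as the definition and verify three things: independence of the lift $\overline\beta$, that the value lies in $\mathcal{F}_{p'-p}^R(\sigma,\gamma)$, and bilinearity.

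The tool for all of this is the compatibility $(\ref{yutr})$ of the contraction with passing to a submodule. By bilinearity in $\alpha$ and the presentation $\mathcal{F}_{p'}^R(\sigma,\tau)=\sum_{\sigma^1\subseteq\sigma}\bigwedge^{p'}\tfrac{T_R\sigma^{1\perp}}{T_RF_\tau^\perp}$, it is enough to treat $\alpha\in\bigwedge^{p'}W$ for a single edge $\sigma^1\subseteq\sigma$, where $W:=\tfrac{T_R\sigma^{1\perp}}{T_RF_\tau^\perp}$ is a direct summand of $V:=\tfrac{N_R}{T_RF_\tau^\perp}$ (again by saturatedness of $T_R\sigma^{1\perp}$ in $N_R$). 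Then $(\ref{yutr})$ gives $\overline\beta.\alpha=\pi_W(\overline\beta).\alpha\in\bigwedge^{p'-p}W$, where $\pi_W:\bigwedge^pV^*\twoheadrightarrow\bigwedge^pW^*$ is the restriction of a functional from $\bigwedge^pV$ to $\bigwedge^pW$.

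From here the verifications are short. If $\overline\beta,\overline\beta'$ both lift the restriction of $\beta$, then $\mu:=\overline\beta-\overline\beta'$ kills $\mathcal{F}^R_p(\sigma,\tau)$, hence kills its summand $\bigwedge^pW$, so $\pi_W(\mu)=0$ and $\mu.\alpha=0$; thus $\pi_\tau^\gamma(\overline\beta.\alpha)=\pi_\tau^\gamma(\overline\beta'.\alpha)$, and the same computation read with $\overline\beta=\overline\beta'$ is what identifies $(\iota_{\sigma,\tau}^\tau)^*(\overline\beta)\cap\alpha$ with $\pi_\tau^\gamma(\overline\beta.\alpha)$, i.e.\ establishes the stated identity. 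For the target, $\pi_\tau^\gamma$ is induced by the further quotient $V\twoheadrightarrow\tfrac{N_R}{T_RF_\gamma^\perp}$ (legitimate since $F_\gamma\subseteq F_\tau$, hence $T_RF_\tau^\perp\subseteq T_RF_\gamma^\perp$), under which $W$ surjects onto $\tfrac{T_R\sigma^{1\perp}}{T_RF_\gamma^\perp}$ because $T_RF_\gamma^\perp\subseteq T_R\sigma^{1\perp}$ (as $\sigma^1\subseteq\sigma\subseteq\gamma$); so $\pi_\tau^\gamma(\overline\beta.\alpha)$ lands in $\bigwedge^{p'-p}\tfrac{T_R\sigma^{1\perp}}{T_RF_\gamma^\perp}$, a summand of $\mathcal{F}_{p'-p}^R(\sigma,\gamma)$. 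Bilinearity is then immediate: $\overline\beta_1+\overline\beta_2$ lifts $\beta_1+\beta_2$ and $r\overline\beta$ lifts $r\beta$ modulo the restriction, while the contraction and $\pi_\tau^\gamma$ are linear; no sign conventions intervene at this stage.

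The hard part is really just the well-definedness step: it rests on the fact that a functional vanishing on the sub-cosheaf $\mathcal{F}^R_p(\sigma,\tau)$ contracts to zero against every element of $\mathcal{F}_{p'}^R(\sigma,\tau)$, and this uses the edgewise description of these cosheaves together with $(\ref{yutr})$ in an essential way --- one cannot argue inside the ambient exterior algebra alone. I expect that no primitivity or $R$-non-singularity hypothesis is needed: beyond $(\ref{yutr})$ the argument uses only that the tangent sublattices are saturated (giving freeness of the relevant quotients and the duality $\bigl(\tfrac{N_R}{T_RF_\tau^\perp}\bigr)^*=T_RF_\tau$) and the surjectivity of $(\iota_{\sigma,\tau}^\tau)^*$ recorded in the excerpt. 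Finally, the asymmetry between the $\gamma$ in the source $\mathcal{F}^p_R(\gamma,\tau)$ and the $\sigma$ everywhere else is only apparent: the cap product factors through the sheaf restriction $\mathcal{F}^p_R(\gamma,\tau)\twoheadrightarrow\mathcal{F}^p_R(\sigma,\tau)$, which is exactly why the defining identity is phrased in terms of lifts $\overline\beta$ of elements of $\mathcal{F}^p_R(\sigma,\tau)$.
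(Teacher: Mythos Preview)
Your proof is correct and follows essentially the same route as the paper's: both decompose $\alpha$ along the edges $\sigma^1\subseteq\sigma$, invoke the compatibility~(\ref{yutr}) with $W=\tfrac{T_R\sigma^{1\perp}}{T_RF_\tau^\perp}$ to see that $\overline\beta.\alpha_{\sigma^1}$ depends only on the restriction of $\overline\beta$ to $\bigwedge^pW$ (hence only on $(\iota_{\sigma,\tau}^\tau)^*(\overline\beta)$, since $\bigwedge^pW\subseteq\mathcal{F}^R_p(\sigma,\tau)$), and read off simultaneously that the result lands in $\mathcal{F}_{p'-p}^R(\sigma,\gamma)$. Your write-up is in fact a bit more explicit than the paper's on two points the latter leaves implicit: the saturatedness of $T_R\sigma^{1\perp}$ in $N_R$ that makes $W$ a direct summand of $V$ (so that~(\ref{yutr}) applies cleanly), and the observation that the cap product factors through the sheaf restriction $\mathcal{F}^p_R(\gamma,\tau)\twoheadrightarrow\mathcal{F}^p_R(\sigma,\tau)$, which resolves the apparent mismatch between the $\gamma$ in the domain and the $\sigma$ in the defining identity.
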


\begin{proof}
    As the projection $(\iota_{\sigma,\tau}^{\tau})^*$ is surjective, we just need to prove two statements. Firstly that $\pi_{\tau}^{\gamma}(\overline{\beta}.\alpha)$ belongs to $\mathcal{F}_{p'-p}^R(\sigma,\gamma)$, and secondly that if we have two $\overline{\beta}_1, \overline{\beta}_2  \in \bigwedge^p\frac{N_R}{T_RF_{\tau}^{\perp}}$ such that $(\iota_{\sigma,\tau}^{\tau})^*(\overline{\beta}_1) = (\iota_{\sigma,\tau}^{\tau})^*(\overline{\beta}_2)$ then $\pi_{\tau}^{\gamma}(\overline{\beta}_1.\alpha) = \pi_{\tau}^{\gamma}(\overline{\beta}_2.\alpha)$.

    Let's begin by proving the first statement. Let's write $\alpha = \sum_{\sigma^1 \subseteq \sigma} \alpha_{\sigma^1}$ with $\alpha_{\sigma^1} \in \bigwedge^{p'} \mathcal{F}_{1}^R(\sigma^1,\tau)$. Then, as $\mathcal{F}_{1}^R(\sigma^1,\tau)$ is a sub-$R$-module of $\frac{N_R}{T_RF_{\tau}^{\perp}}$, we can apply Equation (\ref{yutr}) and we get:
    \begin{equation*}
        \forall \sigma^1 \subseteq \sigma,~~~~ ~~ \overline{\beta}.\alpha_{\sigma^1} = (\iota_{\sigma^1,\tau}^{\tau})^*(\overline{\beta}).\alpha_{\sigma^1} \in \mathcal{F}_{p'-p}^R(\sigma^1,\tau).
    \end{equation*}
    It proves that $\pi_{\tau}^{\gamma}(\overline{\beta}.\alpha)$ belongs to $\sum_{\sigma^1 \subseteq \sigma} \mathcal{F}_{p'-p}^R(\sigma^1,\tau) = \mathcal{F}_{p'-p}^R(\sigma,\gamma)$.

    Let's take now $\overline{\beta}_1, \overline{\beta}_2  \in \bigwedge^{p}T_RF_\tau$ such that $(\iota_{\sigma,\tau}^{\tau})^*(\overline{\beta}_1) = (\iota_{\sigma,\tau}^{\tau})^*(\overline{\beta}_2)$. We have for every $\sigma^1 \subseteq\sigma$ and $\alpha_{\sigma^1} \in \mathcal{F}_{p'}^R(\sigma^1,\tau)$:
    \begin{align*}
        \overline{\beta}_1 .\alpha_{\sigma^1} &= (\iota_{\sigma^1,\tau}^{\tau})^*(\overline{\beta}_1).\alpha_{\sigma^1} = (\iota_{\sigma^1,\tau}^{\sigma,\tau})^*\circ (\iota_{\sigma,\tau}^{\tau})^*(\overline{\beta}_1).\alpha_{\sigma^1} \\&= (\iota_{\sigma^1,\tau}^{\sigma,\tau})^*\circ (\iota_{\sigma,\tau}^{\tau})^*(\overline{\beta}_2).\alpha_{\sigma^1} = \overline{\beta}_2 .\alpha_{\sigma^1}.
    \end{align*}
    Then using that every $\alpha \in \mathcal{F}_{p'}^R(\sigma,\tau)$ admits a decomposition $\alpha = \sum_{\sigma^1 \subseteq \sigma} \alpha_{\sigma^1}$ with $\alpha_{\sigma^1} \in \mathcal{F}_{p'}^R(\sigma^1,\tau)$, we conclude the proof of the second statement.
\end{proof}

The cap-product of cosheaves and sheaves can be extended to a cap product at the level of complexes. For every integers $1\leq a \leq c \leq b$ and $0 \leq p \leq p'$, for every $\sigma^c \in \Gamma$ we have the following canonical cap-product:
\begin{equation*}
    \cap: C^{b-c}(\mathcal{F}^{p}_R(\sigma^c,*)) \times C_{a,b}(\mathcal{F}_{p'}^R) \rightarrow C_{c-a}(\mathcal{F}_{p'-p}^R(*,\sigma^c))
\end{equation*}
where for $\sigma^a \subseteq \sigma^c$, the $\mathcal{F}_{p'-p}^R(\sigma^a,\sigma^c)$-component of the cap-product $\beta \cap \alpha$ of a cochain $\beta$ of $C^{b-c}(\mathcal{F}^{p}_R(\sigma^c,*))$ and a chain $\alpha$ of $C_{a,b}(\mathcal{F}_{p'}^R)$ is:
\begin{equation*}
    \left [ \beta \cap \alpha \right ]_{\sigma^a,\sigma^c}:= \sum_{\sigma^c \subseteq \sigma^b} \beta_{\sigma^c,\sigma^b} \cap \alpha_{\sigma^a,\sigma^b}.
\end{equation*}

We can even extent it to bigger complexes, with vanishing the other coordinates of the direct sums. We obtain for every integers $0\leq q \leq q'$ and $0\leq p \leq p'$ a cap-product:
\begin{equation*}
    \cap: C^{q}(\mathcal{F}^{p}_R) \times C_{q'}(\mathcal{F}_{p'}^R) \rightarrow C_{q'-q}(\mathcal{F}_{p'-p}^R)
\end{equation*}
where for every $c-a=q'-q$ and for every $(\sigma^a,\sigma^c) \in \Omega$, the $\mathcal{F}_{p'-p}^R(\sigma^a,\sigma^c)$-component of the cap-product $\beta \cap \alpha$ of a cochain $\beta$ of $C^{q}(\mathcal{F}^{p}_R)$ and a chain $\alpha$ of $C_{q'}(\mathcal{F}_{p'}^R)$ is also:
\begin{equation*}
    \left [ \beta \cap \alpha \right ]_{\sigma^a,\sigma^c}:= \sum_{\sigma^c \subseteq \sigma^{c+q}} \beta_{\sigma^c,\sigma^{c+q}} \cap \alpha_{\sigma^a,\sigma^{c+q}}.
\end{equation*}

\subsection{Fundamental Class}

We want to generalize Poincaré Duality. We already have a cap-product. As usually for Poincaré duality, we have to define a fundamental class, and the Poincaré duality will come from the cap-product with this fundamental class. As usual in topology, we want this fundamental class to belong to the chains of top degree, and more precisely to be a generator of top degree homology group. In particular the fondamental class can exist only if top degree homology group is of rank $1$.

Here the maximal cubic cells of the hypersurface are of dimension $n-1$, indexed by some $(\sigma^1,\sigma^n) \in \Omega$. We can consider the $C_{n-1}(\mathcal{F}_p^{R})$ for $p$ an integer. However, for $p<n-1$, the homology group $H_{n-1}(\mathcal{F}_p^{R})$ is a $R$-module of rank different from $1$ in general. In the following we prove that the cycles of $C_{n-1}(\mathcal{F}_{n-1}^{R})$ are a $R$-module of rank $1$ if we assume that Hypothesis $1$ is satisfied by the triangulation $\Gamma$ for the integral domain $R$ (see Definition \ref{max}), and then the fundamental class will be defined as a generator.

\begin{lemma}
    For every $(\sigma^1,\sigma^n) \in \Omega$ we have $\mathcal{F}_{n-1}^{R}(\sigma^1,\sigma^n) \simeq R$, and in particular $\mathcal{F}_{n-1}^{\mathbb{Z}}(\sigma^1,\sigma^n) \simeq \mathbb{Z}$. Moreover we have also $\mathcal{F}_{n-1}^{R}(\sigma^1,\sigma^n) = \mathcal{F}_{n-1}^{\mathbb{Z}}(\sigma^1,\sigma^n) \otimes R$.
\end{lemma}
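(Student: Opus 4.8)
The plan is to compute $\mathcal{F}_{n-1}^{R}(\sigma^1,\sigma^n)$ directly from the definition $\mathcal{F}_p^R(\sigma,\tau) = \sum_{\sigma^1 \subseteq \sigma} \bigwedge^p \big( T_R\sigma^{1\perp}/T_RF_\tau^{\perp} \big)$. Since $\sigma^n$ is an $n$-dimensional simplex inside the $n$-dimensional polytope $P$, the minimal face $F_{\sigma^n}$ of $P$ containing $\sigma^n$ is $P$ itself, so $T_RF_{\sigma^n} = M_R$ and $T_RF_{\sigma^n}^{\perp} = 0$ in $N_R$; and the summation index $\sigma^1 \subseteq \sigma^1$ reduces to the single edge $\sigma^1$. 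Hence
\begin{equation*}
    \mathcal{F}_{n-1}^{R}(\sigma^1,\sigma^n) = \bigwedge^{n-1} (T_R\sigma^1)^{\perp}, \qquad (T_R\sigma^1)^{\perp} := \{\varphi \in N_R : \varphi|_{T_R\sigma^1} = 0\}.
\end{equation*}

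The key step is then to show that $(T_R\sigma^1)^{\perp}$ is a free $R$-module of rank $n-1$, and more precisely that $(T_R\sigma^1)^{\perp} = (T_{\mathbb{Z}}\sigma^1)^{\perp}\otimes R$. The lattice $T_{\mathbb{Z}}\sigma^1 = T_{\mathbb{R}}\sigma^1 \cap M$ is a saturated rank-$1$ sublattice of $M=\mathbb{Z}^n$, so $M/T_{\mathbb{Z}}\sigma^1$ is torsion-free and one may choose a basis $e_1,\dots,e_n$ of $M$ with $e_1$ generating $T_{\mathbb{Z}}\sigma^1$. In the dual basis $e_1^*,\dots,e_n^*$ of $N$ one has $(T_{\mathbb{Z}}\sigma^1)^{\perp} = \mathbb{Z}e_2^* \oplus \cdots \oplus \mathbb{Z}e_n^*$, a direct summand of $N$; the short exact sequence $0 \to (T_{\mathbb{Z}}\sigma^1)^{\perp} \to N \to \mathrm{Hom}_{\mathbb{Z}}(T_{\mathbb{Z}}\sigma^1,\mathbb{Z}) \to 0$, given by restriction of a functional to $T_{\mathbb{Z}}\sigma^1$, is therefore split. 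Tensoring it with $R$ keeps it exact and identifies $(T_{\mathbb{Z}}\sigma^1)^{\perp}\otimes R$ with the kernel of $N_R \to \mathrm{Hom}_R(T_R\sigma^1,R)$, i.e.\ with $(T_R\sigma^1)^{\perp}$. In particular $(T_R\sigma^1)^{\perp} = Re_2^* \oplus \cdots \oplus Re_n^*$ is free of rank $n-1$.

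Finally I would take the top exterior power: $\bigwedge^{n-1}$ of a free $R$-module of rank $n-1$ is free of rank $1$, with generator $e_2^* \wedge \cdots \wedge e_n^*$, so $\mathcal{F}_{n-1}^{R}(\sigma^1,\sigma^n) \simeq R$, and the special case $R=\mathbb{Z}$ gives $\mathcal{F}_{n-1}^{\mathbb{Z}}(\sigma^1,\sigma^n) \simeq \mathbb{Z}$. For the last assertion, exterior powers of finitely generated modules commute with base change, so
\begin{equation*}
    \mathcal{F}_{n-1}^{R}(\sigma^1,\sigma^n) = \bigwedge^{n-1}\left((T_{\mathbb{Z}}\sigma^1)^{\perp}\otimes R\right) = \left(\bigwedge^{n-1}(T_{\mathbb{Z}}\sigma^1)^{\perp}\right)\otimes R = \mathcal{F}_{n-1}^{\mathbb{Z}}(\sigma^1,\sigma^n)\otimes R.
\end{equation*}
There is no serious obstacle here; the only point needing attention is that forming the orthogonal complement commutes with $-\otimes R$, which is exactly why one invokes the saturation of $T_{\mathbb{Z}}\sigma^1$ to split the relevant sequence of lattices before tensoring — everything else is formal multilinear algebra.
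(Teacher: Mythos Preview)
Your proof is correct and follows essentially the same approach as the paper: both identify $\mathcal{F}_{n-1}^{R}(\sigma^1,\sigma^n)$ with $\bigwedge^{n-1}T_R\sigma^{1\perp}$, use that this module is free of rank $n-1$, and invoke saturation (of $T_{\mathbb{Z}}\sigma^1$ in $M$, equivalently of $T_{\mathbb{Z}}\sigma^{1\perp}$ in $N$) to obtain the base-change identity. Your version is simply more explicit about why the orthogonal commutes with $-\otimes R$, spelling out the split exact sequence that the paper leaves implicit.
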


\begin{proof}
    For every $(\sigma^1,\sigma^n) \in \Omega$ we have $\mathcal{F}_{n-1}^{\mathbb{Z}}(\sigma^1,\sigma^n) = \bigwedge^{n-1} T_{\mathbb{Z}} \sigma^{1,\perp}$, and $T_{\mathbb{Z}} \sigma^{1,\perp}$ is of rank $n-1$, thus $\mathcal{F}_{n-1}^{\mathbb{Z}}(\sigma^1,\sigma^n) \simeq \mathbb{Z}$.

    Moreover, $T_{\mathbb{Z}}\sigma^{1\perp}$ is saturated in $N$, and then $ \bigwedge^{n-1}T_{\mathbb{Z}}\sigma^{1\perp}$ is saturated in $ \bigwedge^{n-1}N$. It precisely means that  $ \bigwedge^{n-1}T_{R}\sigma^{1\perp} =  \bigwedge^{n-1}T_{\mathbb{Z}}\sigma^{1\perp} \otimes R$ and then $\mathcal{F}_{n-1}^R(\sigma^1,\sigma^n) = \mathcal{F}_{n-1}^{\mathbb{Z}}(\sigma^1,\sigma^n) \otimes R$.
\end{proof}

To construct a fundamental class, we need the following lemma:

\begin{lemma}
   Let us denote $1_R$ the unit element of the integral domain  $R$. Then there exists a family $(1_{\sigma^1,\sigma^n})_{(\sigma^1,\sigma^n) \in \Omega}$ of generators of the $\mathcal{F}_{n-1}^{\mathbb{Z}}(\sigma^1,\sigma^n)$, $ (\sigma^1,\sigma^n) \in \Omega$ such that:
    \begin{equation*}
        d_{n-1} \left ( \bigoplus_{(\sigma^1,\sigma^n) \in \Omega} |\sigma^1|_r 1_{\sigma^1,\sigma^n} \otimes 1_R \right ) = 0.
    \end{equation*}
    \label{yt}
\end{lemma}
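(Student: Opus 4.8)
The plan is to build the family $(1_{\sigma^1,\sigma^n})_{(\sigma^1,\sigma^n)\in\Omega}$ out of two independent ingredients, one controlling each component of the differential $d_{n-1}=d^1_{n-1}\oplus d^2_{n-1}$ on $C_{n-1}(\mathcal F^R_{n-1})=\bigoplus_{(\sigma^1,\sigma^n)\in\Omega}\mathcal F^R_{n-1}(\sigma^1,\sigma^n)$. First I would record that, since every $\sigma^n$ is full‑dimensional, $F_{\sigma^n}=P$, so by the previous lemma $\mathcal F^{\mathbb Z}_{n-1}(\sigma^1,\sigma^n)=\bigwedge^{n-1}T_{\mathbb Z}\sigma^{1\perp}$, a module independent of $\sigma^n$; fixing a generator of $\bigwedge^n N$ produces the canonical isomorphism $\Phi\colon\bigwedge^{n-1}N_R\xrightarrow{\sim}M_R$, which carries $\bigwedge^{n-1}T_{\mathbb Z}\sigma^{1\perp}$ onto the saturated rank‑one sublattice $T_{\mathbb Z}\sigma^1\subseteq M$. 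Thus a generator of $\mathcal F^{\mathbb Z}_{n-1}(\sigma^1,\sigma^n)$ is the same as a choice of primitive integral direction $u_{\sigma^1}\in T_{\mathbb Z}\sigma^1$ of the edge $\sigma^1$; I write $1_{\sigma^1}$ for the generator with $\Phi(1_{\sigma^1})=u_{\sigma^1}$, and I will take $1_{\sigma^1,\sigma^n}:=o(\sigma^n)\,1_{\sigma^1}$ for a sign $o(\sigma^n)\in\{\pm1\}$ to be chosen. Setting $z:=\bigoplus_{(\sigma^1,\sigma^n)\in\Omega}|\sigma^1|_r\,1_{\sigma^1,\sigma^n}\otimes 1_R$, the key observation is that $d^1_{n-1}$ keeps $\sigma^n$ fixed while enlarging the edge into a triangle, whereas $d^2_{n-1}$ keeps the edge fixed while shrinking $\sigma^n$; hence $o(\sigma^n)$ factors out of every component of $d^1_{n-1}(z)$ and $1_{\sigma^1}$ out of every component of $d^2_{n-1}(z)$, so it suffices to arrange $d^1_{n-1}(z)=0$ and $d^2_{n-1}(z)=0$ separately, the first by a suitable choice of the $u_{\sigma^1}$, the second by a suitable choice of the $o(\sigma^n)$.

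For $d^2_{n-1}(z)$ I would argue that its component at $(\sigma^1,\sigma^{n-1})$ vanishes automatically when $\sigma^{n-1}\subseteq\partial P$, because then $F_{\sigma^{n-1}}$ is a facet of $P$ and $\mathcal F^R_{n-1}(\sigma^1,\sigma^{n-1})=\bigwedge^{n-1}\bigl(T_R\sigma^{1\perp}/T_RF_{\sigma^{n-1}}^\perp\bigr)=0$ by a rank count; and when $\sigma^{n-1}$ is interior, $F_{\sigma^{n-1}}=P$, the cosheaf projection $\mathcal F^{\mathbb Z}_{n-1}(\sigma^1,\sigma^n)\to\mathcal F^{\mathbb Z}_{n-1}(\sigma^1,\sigma^{n-1})$ is the identity of $\bigwedge^{n-1}T_{\mathbb Z}\sigma^{1\perp}$, and, using $\eta((\sigma^1,\sigma^{n-1}),(\sigma^1,\sigma^n))=\rho(\sigma^{n-1},\sigma^n)$, the component equals, up to a sign depending only on $n$, the element $|\sigma^1|_r\bigl(\rho(\sigma^{n-1},\sigma^n_1)o(\sigma^n_1)+\rho(\sigma^{n-1},\sigma^n_2)o(\sigma^n_2)\bigr)\,1_{\sigma^1}\otimes 1_R$, where $\sigma^n_1,\sigma^n_2$ are the two $n$‑simplices through $\sigma^{n-1}$. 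So $d^2_{n-1}(z)=0$ as soon as $\sum_{\sigma^n}o(\sigma^n)[\sigma^n]$ is a cycle of the relative complex $C_*(\Gamma,\partial\Gamma)$ equipped with the $\rho$‑differential. Such signs exist because $P$ is homeomorphic to a closed $n$‑ball, so $H_n(P,\partial P;\mathbb Z)\cong\mathbb Z$ and its fundamental class is represented by a chain $\sum_{\sigma^n}o(\sigma^n)[\sigma^n]$ with all $o(\sigma^n)=\pm1$ — concretely, fix an orientation of $P$ and let $o(\sigma^n)$ record whether the $\rho$‑orientation of $\sigma^n$ agrees with it.

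For $d^1_{n-1}(z)$ I would use $\eta((\sigma^2,\sigma^n),(\sigma^1_e,\sigma^n))=(-1)^{n-2}\rho(\sigma^1_e,\sigma^2)$ to see that the component at $(\sigma^2,\sigma^n)$ equals $(-1)^{n-2}o(\sigma^n)$ times the element $\sum_{\sigma^1_e\subseteq\sigma^2}\rho(\sigma^1_e,\sigma^2)\,|\sigma^1_e|_r\,1_{\sigma^1_e}\otimes 1_R$ of $\mathcal F^R_{n-1}(\sigma^2,\sigma^n)=\sum_{\sigma^1\subseteq\sigma^2}\bigwedge^{n-1}T_R\sigma^{1\perp}\subseteq\bigwedge^{n-1}N_R$; since $\Phi$ is injective this vanishes iff $\sum_e\rho(\sigma^1_e,\sigma^2)\,|\sigma^1_e|\,u_{\sigma^1_e}=0$ in $M$ (after clearing the factor $|\Gamma|$). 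Now fix a total order on the vertices of $\Gamma$ and take $\rho$ to be the balancing signature induced by orienting each simplex by increasing vertex order; then for $\sigma^2=\{v_0<v_1<v_2\}$ one has $\bigl(\rho(e_{12},\sigma^2),\rho(e_{02},\sigma^2),\rho(e_{01},\sigma^2)\bigr)=(1,-1,1)$, and if $u_{\sigma^1}$ is chosen to point from the smaller to the larger endpoint of $\sigma^1$, so that $|e_{ij}|\,u_{e_{ij}}=v_j-v_i$, the sum telescopes to $(v_2-v_1)-(v_2-v_0)+(v_1-v_0)=0$. Hence $d^1_{n-1}(z)=0$. (For a general balancing signature $\rho'=\rho\cdot(\mu\otimes\mu)$ with $\mu\colon\Gamma\to\{\pm1\}$ one replaces $u_{\sigma^1}$ by $\mu(\sigma^1)u_{\sigma^1}$ and $o(\sigma^n)$ by $\mu(\sigma^n)o(\sigma^n)$; equivalently, one invokes that up to a diagonal sign isomorphism of complexes nothing depends on the chosen balancing signature, so it is enough to treat the orientation‑induced one.)

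Combining the last two paragraphs gives $d_{n-1}(z)=d^1_{n-1}(z)\oplus d^2_{n-1}(z)=0$, which is exactly the assertion of the lemma. The genuinely non‑formal inputs are the orientability of $P$ together with the identification of the closedness of the $d^2$‑part with relative $n$‑cycles of the triangulation (for the signs $o(\sigma^n)$), and the elementary telescoping weighted‑balancing identity inside one triangle (for the directions $u_{\sigma^1}$). I expect the main nuisance to be the precise bookkeeping of the signs produced by $\eta$ and $\rho$ in the two components of the differential; the plan above is organized so that, once $d^1_{n-1}$ and $d^2_{n-1}$ are decoupled, each sign computation remains confined to a single cube of the cubical subdivision.
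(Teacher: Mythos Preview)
Your proof is correct and follows essentially the same route as the paper: both factor $1_{\sigma^1,\sigma^n}$ as a sign $o(\sigma^n)$ times a generator depending only on $\sigma^1$, handle $d^2_{n-1}$ via the relative fundamental class of $(P,\partial P)$ (the paper phrases this as comparing $o(\sigma^n)$ with a fixed orientation $o(M)$), and handle $d^1_{n-1}$ via the weighted balancing identity $\sum_e\rho(\sigma^1_e,\sigma^2)\,|\sigma^1_e|_r\,u_{\sigma^1_e}=0$ on each triangle. The only cosmetic difference is that you make the Hodge-type isomorphism $\Phi\colon\bigwedge^{n-1}N\to M$ explicit and specialize to the vertex-order balancing signature before invoking independence, whereas the paper builds $g(o(\sigma^1))$ directly from orientations and works with a general $\rho$ throughout.
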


\begin{proof}
    The cubical subdivision associated to the poset $\Omega$ is a polyhedral complex, satisfying the diamond property. It has for balancing signature the map $\eta$ used to construct the differential $d_{n-1}$ of the cosheaf $\mathcal{F}_{n-1}^{\mathbb{Z}}$. However we can also define the classical chain complex $C_*(\Omega;\mathbb{Z})$ of $\Omega$ over $\mathbb{Z}$, with the classical differential constructed with the balancing signature $\eta$ and defining the classical polyhedral homology $H_*(\Omega;\mathbb{Z})$ of $\Omega$ over $\mathbb{Z}$.
    
    Let us fix $o(M)$ an orientation of the lattice $M$. For any $\sigma^1 \in \Gamma$ of vertices $s_1,s_2$, an orientation of $\sigma^1$ is equivalent to the data of a generator $u_{\sigma^1}$ of $T_{\mathbb{Z}}\sigma^1$. Then this generator can be completed into a basis $\mathcal{B}$ of $M$ of orientation $o(M)$. Considering the dual basis and removing the dual vector of $u_{\sigma^1}$ we obtain a basis $\mathcal{B} \backslash \{u_{\sigma^1}^* \}$ of $T_{\mathbb{Z}}\sigma^{1\perp}$. Then we have that the orientation of this basis $\mathcal{B} \backslash \{u_{\sigma^1}^* \}$ of $T_{\mathbb{Z}}\sigma^{1\perp}$ does not depend on the choice of the base $\mathcal{B}$ of orientation $o(M)$ which complete $u_{\sigma^1}$ in $M$. It is a canonical way to obtain an orientation on $T_{\mathbb{Z}}\sigma^{1\perp}$ from an orientation of $T_{\mathbb{Z}}\sigma^1$. As an orientation of $T_{\mathbb{Z}}\sigma^{1\perp}$ is equivalent to the data of a generator of $\bigwedge^{n-1}T_{\mathbb{Z}}\sigma^{1\perp}$, we obtain a canonical way to associate a generator of $\bigwedge^{n-1}T_{\mathbb{Z}}\sigma^{1\perp}$ from an orientation of $\sigma^1$. We will denote $g(o(\sigma^1))$ this generator.

    For any simplex $\tau$, and any facet $\sigma$ of $\tau$, let us fix an orientation morphism $f_{(\sigma,\tau)}$ associating an orientation of $\tau$ from an orientation of $\sigma$ in the following way: we can consider an ordering of the vertices of $\tau$ in which the vertices of $\sigma$ occupy the first positions, we obtain an ordering of the vertices of $\sigma$ by restriction. Now we can prove that if we quotient the orderings by even permutations, this map remains well defined, thus it associates an orientation of $\sigma$ from an orientation of $\tau$. As a property of a balancing signature on simplicial complexes, the balancing signature $\rho$ of $\Gamma$ can be obtained by fixing a family $o(\sigma),~ \sigma \in \Gamma$ of orientations of simplices of $\Gamma$ such that for every immediately incident couple $(\sigma,\tau) \in \mathcal{I}(\Gamma)$ we have $f_{(\sigma,\tau)}(o(\tau))=\rho(\sigma,\tau)o(\tau)$. However we have also fixed an orientation $o(M)$ on the lattice $M$, it defines orientations on each $n$-simplex of $\Gamma$, which may be not equal to the fixed orientations $o(\sigma^n),~\sigma^n \in \Gamma$. Then we can introduce the following map $\mu$:
    \begin{equation*}
        \forall \sigma^n \in \Gamma, ~~ \mu(\sigma^n):= \left \{ \begin{array}{ll}
            1 & \text{if } o(\sigma^n) = o(M) \\
            -1 &  \text{if } o(\sigma^{n}) = -o(M)
        \end{array} \right. .
    \end{equation*}
    
    Finally, we can define the following generator of $\mathcal{F}_{n-1}^{\mathbb{Z}}(\sigma^1,\sigma^n) = \bigwedge^{n-1}T_{\mathbb{Z}}\sigma^{1\perp}$:
    \begin{equation*}
        1_{\sigma^1,\sigma^n}:= \mu(\sigma^n)g(o(\sigma^1)) .
    \end{equation*}

    It leads us to define the following direct sum:
    \begin{equation*}
        \alpha:= \bigoplus_{(\sigma^1,\sigma^n) \in \Omega} |\sigma^1|_r1_{\sigma^1,\sigma^n} \otimes 1_R .
    \end{equation*}

    Let us begin by proving that $\alpha$ is a cycle for the differential $d^{2}$. Let $(\sigma^1,\sigma^{n-1}) \in \Omega$. We have two possibilities: either the minimal face $F_{\sigma^{n-1}}$ of $P$ containing $\sigma^{n-1}$ is a facet, or it is $P$ itself.
    
    In the first case $\frac{T_{R}\sigma^{1\perp}}{T_{R}F_{\sigma^{n-1}}^{\perp}}$ is of rank $n-2$ and then $\mathcal{F}_{n-1}^{R}(\sigma^1,\sigma^{n-2}) = 0$, which implies that the coordinate over $(\sigma^1,\sigma^{n-1})$ of $d^{2}_{n-1}(\alpha)$ vanishes:
    \begin{equation*}
        [d^{2}_{n-1}(\alpha)]_{\sigma^1,\sigma^{n-1}} = 0 .
    \end{equation*}

    In the second case, there exist exactly two simplices $\sigma^n_1,\sigma^n_2 \in \Gamma$ such that $\sigma^{n-1}$ is a face of both $\sigma^n_1,\sigma^n_2$. Moreover if we apply the orientation morphisms $f_{(\sigma^{n-1},\sigma^n_1)}$ and $f_{(\sigma^{n-1},\sigma^n_2)}$ on the orientation $o(M)$ of the lattice $M$, we get opposite orientations on $\sigma^{n-1}$:
    \begin{equation*}
        f_{(\sigma^{n-1},\sigma^n_1)}(o(M)) = - f_{(\sigma^{n-1},\sigma^n_2)}(o(M)) .
    \end{equation*}
    Thus we have:
    \begin{equation*}
        \rho(\sigma^{n-1},\sigma^n_1)\mu(\sigma^n_1) = -\rho(\sigma^{n-1},\sigma^n_2)\mu(\sigma^n_2) .
    \end{equation*}
    Therefore we obtain in the second case:
    \begin{align*}
        &[d^{2}_{n-1}(\alpha)]_{\sigma^1,\sigma^{n-1}} \\ &= \rho(\sigma^{n-1},\sigma^n_1)\mu(\sigma^n_1)g(o(\sigma^1)) |\sigma^1|_r \otimes 1_R + \rho(\sigma^{n-1},\sigma^n_2)\mu(\sigma^n_2)g(o(\sigma^1)) |\sigma^1|_r \otimes 1_R \\ & = 0.
    \end{align*}

    Then we have that $\alpha$ is a cycle for the differential $d^{2}$:
    \begin{equation*}
        d^{2}_{n-1}(\alpha) = 0.
    \end{equation*}

    To conclude the proof we have now to prove that it is also a cycle for the differential $d^{1}$. Let $\sigma^2 \in \Gamma$. It is a triangle, let $\sigma^1_1,\sigma^1_2,\sigma^1_3 \in \Gamma$ its edges. For $i=1,2,3$ let us denote $u_{\sigma^1_i}$ the generator of $T_{\mathbb{Z}}\sigma^1$ associated to the orientation $f_{(\sigma^1_i,\sigma^2)}(o(\sigma^2))$ of $\sigma^1$. Then, the orientation morphisms $f_{(\sigma^1_1,\sigma^2)}$, $f_{(\sigma^1_2,\sigma^2)}$ and $f_{(\sigma^1_3,\sigma^2)}$ satisfy the following property:
    \begin{equation*}
        |\sigma^1_1|_ru_{\sigma^1_1} + |\sigma^1_2|_ru_{\sigma^1_2} + |\sigma^1_3|_ru_{\sigma^1_3} = 0.
    \end{equation*}
    It implies that the associated generators $g \circ f_{(\sigma^1_i,\sigma^2)}(o(\sigma^2))$ of $\bigwedge^{n-1}T_{\mathbb{Z}}\sigma^{1\perp}_i$ for $i = 1,2,3$ satisfy:
    \begin{equation*}
        |\sigma^1_1|_r~g \circ f_{(\sigma^1_1,\sigma^2)}(o(\sigma^2)) + |\sigma^1_2|_r~g \circ f_{(\sigma^1_2,\sigma^2)}(o(\sigma^2)) + |\sigma^1_3|_r~g \circ f_{(\sigma^1_3,\sigma^2)}(o(\sigma^2)) = 0.
    \end{equation*}
    However, for all $i=1,2,3$ we have:
    \begin{equation*}
        g \circ f_{(\sigma^1_i,\sigma^2)}(o(\sigma^2)) = \rho(\sigma^1_i,\sigma^2)g(o(\sigma^1_i)).
    \end{equation*}
    Therefore we obtain for every $\sigma^n \in \Gamma, ~\sigma^2 \subseteq \sigma^n$:
    \begin{equation*}
        [d^{1}_{n-1}(\alpha)]_{\sigma^2,\sigma^n} = (-1)^{n-1}\mu(\sigma^n)\left [ \sum_{i=1}^3|\sigma^1_i|_r\rho(\sigma^1_i,\sigma^2)g(o(\sigma^1_i)) \right ] \otimes 1_R = 0.
    \end{equation*}

    The cochain $\alpha$ is then also a cycle for the differential $d^{1}_{n-1}$:
    \begin{equation*}
        d^{1}_{n-1}(\alpha) = 0.
    \end{equation*}
    
    As the differential $d_{n-1}$ is the sum of the differentials $d^{1}_{n-1}$ and $d^{2}_{n-1}$, we obtain that it is a cycle for the differential $d_{n-1}$, which concludes the proof:
    \begin{equation*}
        d_{n-1}\left ( \bigoplus_{(\sigma^1,\sigma^n) \in \Omega} |\sigma^1|_r1_{\sigma^1,\sigma^n} \otimes 1_R \right ) = 0.
    \end{equation*}
\end{proof}

\begin{corollary}
    The rank of $H_{n-1}(\mathcal{F}_{n-1}^{R})$ is at least $1$.
\end{corollary}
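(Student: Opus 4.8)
The plan is to exhibit an explicit non-torsion cycle in top degree and read the bound off from it. The first observation is purely formal: every pair $(\sigma^a,\sigma^b)\in\Omega$ satisfies $1\le a\le b\le n$, so the cubical cells have dimension $b-a\le n-1$. Consequently $C_n(\mathcal{F}_{n-1}^R)=0$, the differential $d_n\colon C_n(\mathcal{F}_{n-1}^R)\to C_{n-1}(\mathcal{F}_{n-1}^R)$ vanishes, and hence
\begin{equation*}
H_{n-1}(\mathcal{F}_{n-1}^R)=\ker\bigl(d_{n-1}\colon C_{n-1}(\mathcal{F}_{n-1}^R)\to C_{n-2}(\mathcal{F}_{n-1}^R)\bigr),
\end{equation*}
the $R$-module of top-degree cycles. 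Moreover $C_{n-1}(\mathcal{F}_{n-1}^R)=\bigoplus_{(\sigma^1,\sigma^n)\in\Omega}\mathcal{F}_{n-1}^R(\sigma^1,\sigma^n)$ is a free $R$-module of finite rank, since each summand is isomorphic to $R$ by the lemma established just above; in particular it is torsion-free, and so is its submodule $H_{n-1}(\mathcal{F}_{n-1}^R)$. Thus it suffices to exhibit a single nonzero element of $H_{n-1}(\mathcal{F}_{n-1}^R)$, which will then automatically be non-torsion.

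For that element I would simply take the cycle $\alpha:=\bigoplus_{(\sigma^1,\sigma^n)\in\Omega}|\sigma^1|_r\,1_{\sigma^1,\sigma^n}\otimes 1_R$ produced by Lemma \ref{yt}, which already satisfies $d_{n-1}(\alpha)=0$ and so represents a class in $H_{n-1}(\mathcal{F}_{n-1}^R)$. The only point that needs an argument is that $\alpha\neq 0$. By Definition \ref{redlength} the reduced lengths $|\sigma^1|_r$, $\sigma^1\in\Gamma$, have greatest common divisor $1$, so they cannot all vanish in the integral domain $R$: if $R$ has characteristic $0$ then $\mathbb{Z}$ embeds in $R$ and each $|\sigma^1|_r\ge 1$ is already nonzero in $R$, while if $R$ has characteristic a prime $p$ then $p$ cannot divide every $|\sigma^1|_r$, for otherwise $p\mid 1$. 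Picking a pair $(\sigma^1,\sigma^n)\in\Omega$ with $|\sigma^1|_r$ nonzero in $R$ (such pairs exist: there are $n$-simplices in $\Gamma$ and each has edges), the $\mathcal{F}_{n-1}^R(\sigma^1,\sigma^n)$-component of $\alpha$ equals $|\sigma^1|_r$ times the generator $1_{\sigma^1,\sigma^n}\otimes 1_R$ of $\mathcal{F}_{n-1}^R(\sigma^1,\sigma^n)\simeq R$, hence is nonzero; therefore $\alpha\neq 0$.

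I would then conclude: $\alpha$ is a nonzero element of the torsion-free $R$-module $H_{n-1}(\mathcal{F}_{n-1}^R)$, so $R\cdot\alpha$ is a free submodule of rank $1$ and $\mathrm{rk}~H_{n-1}(\mathcal{F}_{n-1}^R)\ge 1$. I do not expect any genuine obstacle here: Lemma \ref{yt} carries all the real content by supplying the cycle, the passage from ``cycle'' to ``homology class'' is free in top degree because there is nothing in degree $n$, and the non-vanishing of $\alpha$ is an immediate consequence of the definition of the index $|\Gamma|$ of the triangulation.
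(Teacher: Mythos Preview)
Your proof is correct and follows essentially the same approach as the paper: use the cycle $\alpha$ from Lemma \ref{yt} and show it is nonzero by splitting into the characteristic zero and prime characteristic cases via the coprimality of the reduced lengths. You are in fact slightly more careful than the paper, making explicit that $H_{n-1}$ coincides with the kernel (no $n$-chains) and that torsion-freeness is what turns ``nonzero'' into ``rank at least $1$''.
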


\begin{proof}
    If the characteristic of the integral domain $R$ is zero, then every coordinates of the element $\bigoplus_{(\sigma^1,\sigma^n) \in \Omega} |\sigma^1|_r1_{\sigma^1,\sigma^n} \otimes 1_R$ constructed in the previous lemma is non-zero, thus this element itself is non zero, and then $H_{n-1}(\mathcal{F}_{n-1}^{R})$ is non-zero.

    If the characteristic of the integral domain $R$ is a prime number $p>0$, then as the reduced length $|\sigma^1|_r, ~\sigma^1 \in \Gamma$ are coprime, there exists at least one $\sigma^1 \in \Gamma$ such that $p$ does not divide $|\sigma^1|_r$, and thus for this particular edge $\sigma^1$ we have that $|\sigma^1|_r1_{\sigma^1,\sigma^n} \otimes 1_R$ is non-zero. Then the element $\bigoplus_{(\sigma^1,\sigma^n) \in \Omega} |\sigma^1|_r1_{\sigma^1,\sigma^n} \otimes 1_R$ constructed in the previous lemma is non-zero, and thus $H_{n-1}(\mathcal{F}_{n-1}^{R})$ is non-zero.
\end{proof}

We have proven that without hypothesis $H_{n-1}(\mathcal{F}_{n-1}^{R})$ is of rank at least $1$. However we will see in the following that we need more to prove that the rank is exactly $1$, we need to assume that the triangulation $\Gamma$ satisfies the Hypothesis 1 for the integral domain $R$ (Definition \ref{max}). Let us prove some lemma needed for that.

\begin{lemma}
    Assume that the triangulation $\Gamma$ satisfies the hypothesis 1 for the integral domain $R$ (Definition \ref{max}). Let $\alpha \in C_{n-1}(\mathcal{F}_{n-1}^{R})$ such that $d_{n-1}(\alpha) = 0$. Let $\sigma^2 \in \Gamma$ a triangle of edges $\sigma^1_1$, $\sigma^1_2$ and $\sigma^1_3$. For every $\sigma^n \in \Gamma, ~\sigma^2 \subseteq \sigma^n$, if $\alpha_{\sigma^1_1,\sigma^n} = 0$, then $\alpha_{\sigma^1_2,\sigma^n} = 0$ and $\alpha_{\sigma^1_3,\sigma^n} = 0$.
\end{lemma}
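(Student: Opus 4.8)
The plan is to exploit that in top degree the double complex is extremely thin. Since $b-a=n-1$ with $1\le a\le b\le n$ forces $(a,b)=(1,n)$, we have $C_{n-1}(\mathcal{F}_{n-1}^{R})=C_{1,n}(\mathcal{F}_{n-1}^{R})$; similarly $C_{n-2}(\mathcal{F}_{n-1}^{R})=C_{1,n-1}(\mathcal{F}_{n-1}^{R})\oplus C_{2,n}(\mathcal{F}_{n-1}^{R})$, with $d^2_{n-1}$ landing in the first summand and $d^1_{n-1}$ in the second. Hence $d_{n-1}(\alpha)=0$ already forces $d^1_{n-1}(\alpha)=0$. First I would extract the $(\sigma^2,\sigma^n)$-coordinate of this equation. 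As $\sigma^n$ is full dimensional, $F_{\sigma^n}=P$ and $T_RF_{\sigma^n}^{\perp}=0$, so every module in sight is an honest submodule of $\bigwedge^{n-1}N_R$: putting $W_i:=\mathcal{F}_{n-1}^{R}(\sigma^1_i,\sigma^n)=\bigwedge^{n-1}T_R\sigma_i^{1\perp}$ and $\mathcal{F}_{n-1}^{R}(\sigma^2,\sigma^n)=W_1+W_2+W_3$, and absorbing the ($i$-independent) global sign of $d^1_{n-1}$, the coordinate reads
\begin{equation*}
\rho(\sigma^1_1,\sigma^2)\,\alpha_{\sigma^1_1,\sigma^n}+\rho(\sigma^1_2,\sigma^2)\,\alpha_{\sigma^1_2,\sigma^n}+\rho(\sigma^1_3,\sigma^2)\,\alpha_{\sigma^1_3,\sigma^n}=0 \qquad\text{in }\textstyle\bigwedge^{n-1}N_R.
\end{equation*}
With $\alpha_{\sigma^1_1,\sigma^n}=0$ this says $\rho(\sigma^1_2,\sigma^2)\,\alpha_{\sigma^1_2,\sigma^n}=-\rho(\sigma^1_3,\sigma^2)\,\alpha_{\sigma^1_3,\sigma^n}$, an element lying simultaneously in $W_2$ and in $W_3$.

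So everything reduces to proving $W_2\cap W_3=0$ in $\bigwedge^{n-1}N_R$. By the first lemma of this subsection each $W_i=\bigwedge^{n-1}T_{\mathbb{Z}}\sigma_i^{1\perp}\otimes R$ is free of rank one and, as $\bigwedge^{n-1}T_{\mathbb{Z}}\sigma_i^{1\perp}$ is saturated in $\bigwedge^{n-1}N$, it is a direct summand of $\bigwedge^{n-1}N_R$. Fixing an orientation of $M$ gives a canonical isomorphism $\bigwedge^{n-1}N_R\xrightarrow{\ \sim\ }M_R$ under which $W_i$ goes to the rank-one submodule $T_R\sigma^1_i=R\,u_i$ generated by the primitive direction $u_i$ of the edge $\sigma^1_i$ (this is the generator $g(o(\sigma^1_i))$ from the proof of Lemma~\ref{yt}). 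Hence $W_2\cap W_3=0$ amounts to $R u_2\cap R u_3=0$, i.e. to $u_2,u_3$ being $R$-linearly independent in $M_R$; over a domain this holds iff some $2\times 2$ minor of $(u_2\mid u_3)$ is nonzero in $R$, equivalently iff the index $d:=[T_{\mathbb{Z}}\sigma^2:\mathbb{Z}u_2+\mathbb{Z}u_3]$ is nonzero in $R$.

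To control $d$: the edges $\sigma^1_2,\sigma^1_3$ share a vertex $v$ of $\sigma^2$ with other endpoints $P_2,P_3$, so $\mathbb{Z}u_2+\mathbb{Z}u_3\supseteq\mathbb{Z}(P_2-v)+\mathbb{Z}(P_3-v)$ with relative index $|\sigma^1_2|\,|\sigma^1_3|$, while $[T_{\mathbb{Z}}\sigma^2:\mathbb{Z}(P_2-v)+\mathbb{Z}(P_3-v)]=2\,\mathrm{vol}(\sigma^2)$; thus $d=2\,\mathrm{vol}(\sigma^2)/(|\sigma^1_2|\,|\sigma^1_3|)$. By $R$-primitivity of $\sigma^2$ (Definition~\ref{primsimp}), $\mathrm{vol}(\sigma^2)=m\,|\sigma^2|^2/2$ with $m$ invertible in $R$, and $J:=|\sigma^1_2|\,|\sigma^1_3|/|\sigma^2|^2$ is a positive integer since $|\sigma^2|$ divides each edge length; so $d=m/J$ is a positive integer dividing $m$, hence a unit — in particular nonzero — in $R$. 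Therefore $W_2\cap W_3=0$ (and symmetrically for the other pairs of edges), so $\rho(\sigma^1_2,\sigma^2)\,\alpha_{\sigma^1_2,\sigma^n}=0$ in the free module $W_2$, and since $\rho(\sigma^1_2,\sigma^2)\in\{-1,1\}$ is a unit, $\alpha_{\sigma^1_2,\sigma^n}=0$; likewise $\alpha_{\sigma^1_3,\sigma^n}=0$.

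The main obstacle is precisely the step $W_2\cap W_3=0$; the rest is bookkeeping with the definitions of $d^1_{n-1}$ and of the cosheaf inclusions. That step genuinely requires a non-degeneracy of the triangle $\sigma^2$ over $R$ (vanishing of $d$ in $R$ is exactly the failure of $u_2,u_3$ to span a rank-two $R$-module), which $R$-primitivity of $\sigma^2$ supplies — e.g. via $(2,R)$-primitivity of $\Gamma$, which by Proposition~\ref{jt} also yields Hypothesis~2, hence Hypothesis~1. Alternatively one can finish via the relation $\sum_i|\sigma^1_i|_r\,\rho(\sigma^1_i,\sigma^2)\,g(o(\sigma^1_i))=0$ from Lemma~\ref{yt}: once $W_1+W_2+W_3$ is known to be free of rank two, this is up to scalar the only relation among the $g(o(\sigma^1_i))$, so $\alpha_{\sigma^1_i,\sigma^n}=\lambda\,|\sigma^1_i|_r\,g(o(\sigma^1_i))$ for some $\lambda\in R$, and then $\alpha_{\sigma^1_1,\sigma^n}=0$ together with Hypothesis~1 ($|\sigma^1_1|_r\ne 0$ in the domain $R$) forces $\lambda=0$.
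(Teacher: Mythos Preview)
Your approach coincides with the paper's: both extract the $(\sigma^2,\sigma^n)$-component of $d^1_{n-1}(\alpha)=0$, obtain
$\rho(\sigma^1_2,\sigma^2)\,\alpha_{\sigma^1_2,\sigma^n}=-\rho(\sigma^1_3,\sigma^2)\,\alpha_{\sigma^1_3,\sigma^n}$,
and then conclude by arguing that $W_2\cap W_3=\mathcal{F}_{n-1}^R(\sigma^1_2,\sigma^n)\cap\mathcal{F}_{n-1}^R(\sigma^1_3,\sigma^n)=0$. The paper disposes of this last step in one sentence, asserting it follows from Hypothesis~1; you instead deduce it from $R$-primitivity of the triangle $\sigma^2$, and your computation showing the index $d=[T_{\mathbb{Z}}\sigma^2:\mathbb{Z}u_2+\mathbb{Z}u_3]$ divides $m$ (hence is a unit in $R$) is correct.

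Your caution is warranted: the vanishing $W_2\cap W_3=0$ does \emph{not} follow from Hypothesis~1 alone, and the lemma as stated is in fact false. Take $n=2$, let $P$ be the triangle with vertices $(0,0),(1,0),(2,7)$ with its trivial triangulation, and $R=\mathbb{F}_7$. All three edges have integral length~$1$, so every reduced length equals~$1$ and Hypotheses~1 and~2 both hold; yet the primitive edge directions $(1,0),(2,7),(1,7)$ all reduce to scalar multiples of $(1,0)$ in $M_{\mathbb{F}_7}$, so $W_1=W_2=W_3$ is a single line in $\bigwedge^{1}N_R$. Any triple on this line satisfying $\sum_i\rho_i\alpha_i=0$ is a cycle, and one can choose $\alpha_{\sigma^1_1,\sigma^2}=0$ with $\alpha_{\sigma^1_2,\sigma^2}\neq 0$. (Here $2\,\mathrm{vol}(\sigma^2)=7$, so $\sigma^2$ is not $\mathbb{F}_7$-primitive, consistent with your criterion.) Your alternative route via the relation from Lemma~\ref{yt} needs $W_1+W_2+W_3$ to have rank two, which fails for the same reason. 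Thus your argument is correct under $(2,R)$-primitivity, and the discrepancy with the stated hypothesis is a gap in the paper, not in your proof.
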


\begin{proof}
    Let  $\sigma^n \in \Gamma, ~\sigma^2 \subseteq \sigma^n$ such that $\alpha_{\sigma^1_1,\sigma^n} = 0$. Then we have:
    \begin{equation*}
        [d_{n-1}(\alpha)]_{\sigma^2,\sigma^n} = [d_{n-1}^{1}(\alpha)]_{\sigma^2,\sigma^n} = (-1)^{n-1} \left [ \rho(\sigma^1_2,\sigma^2)\alpha_{\sigma^1_2,\sigma^n} + \rho(\sigma^1_3,\sigma^2)\alpha_{\sigma^1_3,\sigma^n} \right ].
    \end{equation*}
    However we know that $[d_{n-1}(\alpha)]_{\sigma^2,\sigma^n} = 0$, thus we obtain:
    \begin{equation*}
        \rho(\sigma^1_2,\sigma^2)\alpha_{\sigma^1_2,\sigma^n} = - \rho(\sigma^1_3,\sigma^2)\alpha_{\sigma^1_3,\sigma^n}.
    \end{equation*}
    This element both belongs to $\mathcal{F}_{n-1}^{R}(\sigma^1_2,\sigma^n)$ and $\mathcal{F}_{n-1}^{R}(\sigma^1_3,\sigma^n)$. However, as a consequence of the hypothesis $1$, the intersection of the $R$-modules $T_R\sigma^{1\perp}_2$ and $T_R\sigma^{1\perp}_3$ is $\{0\}$, and then the intersection of the $R$-modules $\mathcal{F}_{n-1}^{R}(\sigma^1_2,\sigma^n)$ and $\mathcal{F}_{n-1}^{R}(\sigma^1_3,\sigma^n)$ is also $\{0\}$. Then the previous element is zero. It concludes the proof of the lemma.
\end{proof}

\begin{corollary}
    Assume that the triangulation $\Gamma$ satisfies the hypothesis 1 for the integral domain $R$ (Definition \ref{max}). Let $\alpha \in C_{n-1}(\mathcal{F}_{n-1}^{R})$ such that $d_{n-1}(\alpha) = 0$. For every $\sigma^n \in \Gamma$, if there exists $\sigma^1 \subseteq \sigma^n$ such that $\alpha_{\sigma_1, \sigma^n} = 0$, then for every $\sigma^1 \subseteq\sigma^n$ we have $\alpha_{\sigma_1, \sigma^n} = 0$.
\end{corollary}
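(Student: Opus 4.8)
\subsection*{Proof of the corollary}

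The plan is to fix an $n$-simplex $\sigma^n \in \Gamma$ together with an edge $\sigma^1_0 \subseteq \sigma^n$ for which $\alpha_{\sigma^1_0,\sigma^n} = 0$, and to propagate this vanishing to every edge of $\sigma^n$ using the preceding lemma. That lemma already tells us that as soon as one edge of a triangle $\sigma^2 \subseteq \sigma^n$ carries a zero component, all three edges of $\sigma^2$ do. So the only additional ingredient needed is a purely combinatorial fact about the $1$-skeleton of an $n$-simplex: any edge of $\sigma^n$ can be joined to $\sigma^1_0$ by a chain of edges of $\sigma^n$ in which two consecutive edges lie in a common $2$-face of $\sigma^n$ (equivalently, share a vertex).

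First I would record this combinatorial fact. Since $n \geq 2$, the simplex $\sigma^n$ has at least three vertices. Given two edges $e, e'$ of $\sigma^n$: if they share a vertex $v$, with other endpoints $a$ and $b$, then $\{v,a,b\}$ is a $2$-face of $\sigma^n$, hence a triangle of $\Gamma$ because $\Gamma$ is a simplicial complex, and it contains both $e$ and $e'$, so the chain $e, e'$ works directly. If instead $e = \{a,b\}$ and $e' = \{c,d\}$ are disjoint, then the chain $\{a,b\}, \{b,c\}, \{c,d\}$ works, since $\{a,b\}$ and $\{b,c\}$ both lie in the $2$-face $\{a,b,c\}$, while $\{b,c\}$ and $\{c,d\}$ both lie in the $2$-face $\{b,c,d\}$. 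In every case the edges of $\sigma^n$ are connected through shared $2$-faces.

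Then I would conclude by induction along such a chain. Write $\sigma^1_0 = \tau^1_1, \tau^1_2, \dots, \tau^1_m = \sigma^1$ where $\sigma^1 \subseteq \sigma^n$ is the arbitrary edge we are interested in and where, for each $i$, the edges $\tau^1_i$ and $\tau^1_{i+1}$ both lie in a common triangle $\sigma^2_i \subseteq \sigma^n$. By hypothesis $\alpha_{\tau^1_1,\sigma^n} = 0$; assuming $\alpha_{\tau^1_i,\sigma^n} = 0$, the preceding lemma applied to the triangle $\sigma^2_i$, with $\tau^1_i$ playing the role of the distinguished edge, forces the components of $\alpha$ over all edges of $\sigma^2_i$ to vanish, in particular $\alpha_{\tau^1_{i+1},\sigma^n} = 0$. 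Hence $\alpha_{\sigma^1,\sigma^n} = 0$, which is exactly the claim.

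I do not expect any real obstacle: the statement is essentially a transitive-closure version of the preceding lemma, and the only new input is the elementary connectivity of the edge graph of a simplex. The one point to be mildly careful about is that $n \geq 2$ is genuinely used (otherwise there are no triangles), and that the triangles $\{v,a,b\}$, $\{a,b,c\}$, $\{b,c,d\}$ produced above are indeed faces of $\sigma^n$, hence simplices of $\Gamma$ to which the preceding lemma legitimately applies.
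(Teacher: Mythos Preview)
Your proof is correct and follows essentially the same approach as the paper. The paper also fixes a vertex $s$ of the initial edge and treats the two cases ``$\tau^1$ shares $s$'' and ``$\tau^1$ is disjoint from $\sigma^1_0$'' separately, which is exactly your dichotomy between a one-step and a two-step chain of triangles; you have simply packaged these two cases as an edge-connectivity statement about the $1$-skeleton of a simplex before running the induction.
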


\begin{proof}
    Let $\sigma^n \in \Gamma$ and assume that there exists $\sigma^1 \subseteq \sigma^n$ such that $\alpha_{\sigma_1, \sigma^n} = 0$. Let $s$ a vertex of $\sigma^1$. For any $\tau^1 \in \Gamma$, $s \subseteq \tau^1 \subseteq \sigma^n$, as $\tau^1$ and $\sigma^1$ share the vertex $s$, they are equal or they define a unique triangle $\sigma^2 \subseteq \sigma^n$, which satisfy $\sigma^1 \subseteq \sigma^2$ and $\tau^1 \subseteq \sigma^2$. Thus, according to the previous lemma, for any $\tau^1 \in \Gamma$, $s \subseteq \tau^1 \subseteq \sigma^n$ we have $\alpha_{\tau^1,\sigma^n} = 0$.

    Let $\tau^1 \subseteq \sigma^n$ such that $s$ is not a vertex of $\tau^1$. Let $r,t$ the vertices of $\tau^1$. Then $rts$ forms a triangle of $\sigma^n$. The edge $st$ has $s$ for vertex, then according to what we just have said, we have $\alpha_{st,\sigma^n} = 0$. Thus, using the previous lemma for the triangle $rst$ we finally obtain that $\alpha_{\tau^1,\sigma^n} = 0$

    We have proven that no matter if $s$ is a vertex of an edge $\tau^1$ of $\sigma^n$, we have in any case $\alpha_{\tau^1,\sigma^n} = 0$. It concludes the proof of this corollary.
\end{proof}

\begin{lemma}
    Let $\alpha \in C_{n-1}(\mathcal{F}_{n-1}^{R})$ such that $d_{n-1}(\alpha) = 0$. For every $\sigma^n_1,\sigma^n_2 \in \Gamma$ such that there exists $\tau^{n-1} \in \Gamma$ a common face of codimension $1$ of $\sigma^n_1$ and $\sigma^n_2$, for every $\tau^1 \subseteq \tau^{n-1}$, if $\alpha_{\tau^{1},\sigma^n_1} = 0$, then $\alpha_{\tau^{1},\sigma^n_2} = 0$.
    \label{rt}
\end{lemma}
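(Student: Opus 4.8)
\underline{Proof proposal for Lemma \ref{rt}:}
The plan is to extract the conclusion from the single coordinate equation $[d_{n-1}(\alpha)]_{\tau^1,\tau^{n-1}} = 0$, after showing that this component of the cycle $\alpha$ involves only the two data $\alpha_{\tau^1,\sigma^n_1}$ and $\alpha_{\tau^1,\sigma^n_2}$, linked through invertible coefficients. We may of course assume $\sigma^n_1 \neq \sigma^n_2$, the case of equality being trivial.

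First I would observe that $\tau^{n-1}$ is an \emph{interior} $(n-1)$-simplex of $\Gamma$: an $(n-1)$-simplex contained in a proper face $F$ of $P$ would force $\dim F = n-1$, hence $F$ a facet of $P$, so $\tau^{n-1}$ would lie on $\partial P$ and be a face of a single $n$-simplex, contradicting that it is a common facet of the two distinct simplices $\sigma^n_1,\sigma^n_2$. Consequently $F_{\tau^{n-1}} = P = F_{\sigma^n_1} = F_{\sigma^n_2}$, so that $T_R F_{\tau^{n-1}}^{\perp} = T_R F_{\sigma^n_i}^{\perp} = 0$, and for each edge $\tau^1 \subseteq \tau^{n-1}$ the three $R$-modules $\mathcal{F}_{n-1}^{R}(\tau^1,\sigma^n_1)$, $\mathcal{F}_{n-1}^{R}(\tau^1,\sigma^n_2)$ and $\mathcal{F}_{n-1}^{R}(\tau^1,\tau^{n-1})$ all coincide with $\bigwedge^{n-1} T_R\tau^{1\perp}$, with the cosheaf projections $\pi_{\tau^1,\sigma^n_i}^{\tau^1,\tau^{n-1}}$ being the identity. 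In particular, an element of $\mathcal{F}_{n-1}^{R}(\tau^1,\sigma^n_2)$ is zero if and only if its image under $\pi_{\tau^1,\sigma^n_2}^{\tau^1,\tau^{n-1}}$ is zero. Moreover, being interior, $\tau^{n-1}$ is a facet of exactly two $n$-simplices of $\Gamma$, namely $\sigma^n_1$ and $\sigma^n_2$.

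Next I would identify the terms of $[d_{n-1}(\alpha)]_{\tau^1,\tau^{n-1}}$. Since $\alpha \in C_{n-1}(\mathcal{F}_{n-1}^{R})$ is supported on the cells $(\sigma^1,\sigma^n) \in \Omega$ of top dimension $n-1$, and $(\tau^1,\tau^{n-1})$ has dimension $n-2$, only the cells $(\sigma^1,\sigma^n)$ to which $(\tau^1,\tau^{n-1})$ is immediately incident in $\Omega$ contribute; as $\tau^1$ and $\sigma^1$ are edges this forces $\sigma^1 = \tau^1$, and then $\tau^{n-1}$ must be a facet of $\sigma^n$, i.e. $\sigma^n \in \{\sigma^n_1,\sigma^n_2\}$. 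Hence only the $d^2$-part of the differential reaches this component, and identifying everything with $\bigwedge^{n-1}T_R\tau^{1\perp}$ as above,
\begin{equation*}
    [d_{n-1}(\alpha)]_{\tau^1,\tau^{n-1}} = \eta\big((\tau^1,\tau^{n-1}),(\tau^1,\sigma^n_1)\big)\,\alpha_{\tau^1,\sigma^n_1} + \eta\big((\tau^1,\tau^{n-1}),(\tau^1,\sigma^n_2)\big)\,\alpha_{\tau^1,\sigma^n_2},
\end{equation*}
where, by the definition of the balancing signature $\eta$ of $\Omega$, the two coefficients equal $\rho(\tau^{n-1},\sigma^n_1)$ and $\rho(\tau^{n-1},\sigma^n_2)$, both in $\{-1,1\}$ and hence invertible in $R$. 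Finally, $d_{n-1}(\alpha) = 0$ makes this sum vanish; substituting $\alpha_{\tau^1,\sigma^n_1} = 0$ leaves $\rho(\tau^{n-1},\sigma^n_2)\,\alpha_{\tau^1,\sigma^n_2} = 0$, and invertibility of $\rho(\tau^{n-1},\sigma^n_2)$ gives $\alpha_{\tau^1,\sigma^n_2} = 0$. I expect no genuine obstacle: the only points needing care are checking that $\tau^{n-1}$ is interior (so the relevant cosheaf projections are isomorphisms and the $d^2$-sum has exactly two terms) and keeping the poset bookkeeping straight. Note that, unlike the preceding lemmas, this one requires no primitivity or Hypothesis 1 assumption; it uses only that the balancing signature takes values in $\{-1,1\}$ and that $T_R\tau^{1\perp}$ has rank $n-1$.
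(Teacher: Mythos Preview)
Your proof is correct and follows essentially the same approach as the paper: both arguments look at the $(\tau^1,\tau^{n-1})$-component of $d_{n-1}(\alpha)$, observe that $\tau^{n-1}$ is interior (so $F_{\tau^{n-1}} = P$ and the projection $\pi_{\tau^1,\sigma^n_2}^{\tau^1,\tau^{n-1}}$ is the identity), note that exactly two $n$-simplices contribute, and conclude from invertibility of the sign $\rho(\tau^{n-1},\sigma^n_2)$. Your exposition is in fact slightly more explicit than the paper's in checking that the $d^1$-part cannot reach this component.
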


\begin{proof}
    Let $\sigma^n_1,\sigma^n_2 \in \Gamma$ such that there exists $\tau^{n-1} \in \Gamma$ a common face of codimension $1$ of $\sigma^n_1$ and $\sigma^n_2$. Let $\tau^1 \subseteq \tau^{n-1}$. Assume that $\alpha_{\tau^{1},\sigma^n_1} = 0$. If $\sigma^n_1 = \sigma^n_2$, it is clear that $\alpha_{\tau^{1},\sigma^n_2} = 0$. Assume that $\sigma^n_1 \neq \sigma^n_2$. Then the simplices of dimension $n$ of $\Gamma$ which have $\tau^{n-1}$ as a face are exactly $\sigma^n_1$ and $\sigma^n_2$. Then we have:
    \begin{equation*}
        [d_{n-1}(\alpha)]_{\tau^{1},\tau^{n-1}} = \rho(\tau^{n-1},\sigma^n_2) \pi_{\tau^{1},\sigma^n_2}^{\tau^{1},\tau^{n-1}}(\alpha_{\tau^{1},\sigma^n_2}).
    \end{equation*}
    With $\pi_{\tau^{1},\sigma^n_2}^{\tau^{1},\tau^{n-1}}:\mathcal{F}_{n-1}^R(\tau_1,\sigma_2^{n})\twoheadrightarrow \mathcal{F}_{n-1}^R(\tau_1,\tau^n)$ canonical cosheaf morphism of projection. However, as $d_{n-1}(\alpha)$ vanishes, we obtain that $\pi_{\tau^{1},\sigma^n_2}^{\tau^{1},\tau^{n-1}}(\alpha_{\tau^{1},\sigma^n_2}) = 0$.

    Moreover, the minimal face $F_{\tau^{n-1}}$ of the polytope $P$ containing the simplex $\tau^{n-1}$ is $P$ itself, because it is a face of two distinct $n$-dimensional simplices of the triangulation (if it was contained in the border of $P$, it would have been a face of only one $n$-dimensional simplex). We have $F_{\tau^{n-1}} = P = F_{\sigma^n_2}$. Therefore we obtain $\mathcal{F}_{n-1}^{R}(\tau^1,\tau^{n-1}) = \mathcal{F}_{n-1}^{R}(\tau^1,\sigma^n_2)$. The morphism of $R$-module $\pi_{\tau^{1},\sigma^n_2}^{\tau^{1},\tau^{n-1}}$ is a projection from $\mathcal{F}_{n-1}^{R}(\tau^1,\sigma^n_2)$ into $\mathcal{F}_{n-1}^{R}(\tau^1,\tau^{n-1})$ which are identical, thus it the identity morphism. Then we obtain $\alpha_{\tau^{1},\sigma^n_2} = 0$, which concludes the proof of the lemma.
\end{proof}

Now the previous lemma and the previous corollary enable us to define the fundamental class for triangulation which satisfies the hypothesis 1 for the integral domain $R$ by the following proposition:

\begin{proposition}
    (Existence of a fundamental class) Assume that the triangulation $\Gamma$ satisfies Hypothesis 1 for the integral domain $R$ (Definition \ref{max}). Then the homology group $H_{n-1}(\mathcal{F}_{n-1}^{R})$ is a free $R$-module of rank $1$. Moreover, there exists $(1_{\sigma^1,\sigma^n})_{(\sigma^1,\sigma^n) \in \Omega}$ a family of generators of the $\mathcal{F}_{n-1}^{\mathbb{Z}}(\sigma^1,\sigma^n)$, $(\sigma^1,\sigma^n) \in \Omega$ such that the direct sum $[\Omega]:= \bigoplus_{\sigma^1,\sigma^n} |\sigma^1|_r1_{\sigma^1,\sigma^n} \otimes 1_r$ is a generator of $H_{n-1}(\mathcal{F}_{n-1}^{R})$. We call it the fundamental class over $R$ of the poset $\Omega$. It is unique if $R$ is of characteristic $2$, and it is unique up to a multiplication by $-1$ otherwise.
\end{proposition}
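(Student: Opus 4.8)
The plan is to reduce everything to the evaluation of a cycle at a single coordinate. First, the poset $\Omega$ has no element of dimension $n$: if $(\sigma^a,\sigma^b)\in\Omega$ then $\dim(\sigma^a,\sigma^b)=b-a\le n-1$. Hence $C_n(\mathcal{F}_{n-1}^R)=0$, so $H_{n-1}(\mathcal{F}_{n-1}^R)=\ker d_{n-1}=:Z_{n-1}$. Moreover $C_{n-1}(\mathcal{F}_{n-1}^R)=\bigoplus_{(\sigma^1,\sigma^n)\in\Omega}\mathcal{F}_{n-1}^R(\sigma^1,\sigma^n)$ is a free $R$-module, each summand being isomorphic to $R$ by the lemma asserting $\mathcal{F}_{n-1}^R(\sigma^1,\sigma^n)\simeq R$; thus $Z_{n-1}$, a submodule of a free module over the domain $R$, is torsion-free. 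By Lemma \ref{yt} the element $[\Omega]=\bigoplus|\sigma^1|_r\,1_{\sigma^1,\sigma^n}\otimes1_R$ lies in $Z_{n-1}$, and under Hypothesis $1$ all of its coordinates are non-zero (since $|\sigma^1|_r\ne0$ in $R$ and $1_{\sigma^1,\sigma^n}\otimes1_R$ generates the free module $\mathcal{F}_{n-1}^R(\sigma^1,\sigma^n)$), so $[\Omega]\ne0$ and $\mathrm{rk}\,Z_{n-1}\ge1$. It remains to prove $\mathrm{rk}\,Z_{n-1}\le1$, that $Z_{n-1}$ is free, and that $[\Omega]$ generates it.

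Fix a reference couple $(\sigma^1_0,\sigma^n_0)\in\Omega$ and consider the $R$-linear map $\mathrm{ev}\colon Z_{n-1}\to\mathcal{F}_{n-1}^R(\sigma^1_0,\sigma^n_0)\simeq R$, $\alpha\mapsto\alpha_{\sigma^1_0,\sigma^n_0}$; I would show it is injective. Let $\alpha\in Z_{n-1}$ with $\alpha_{\sigma^1_0,\sigma^n_0}=0$. By the corollary immediately preceding this proposition, $\alpha_{\sigma^1,\sigma^n_0}=0$ for every $\sigma^1\subseteq\sigma^n_0$. Now take an arbitrary $(\tau^1,\tau^n)\in\Omega$; by Lemma \ref{ur} there is a chain $\sigma^n_0=\nu^n_1,\dots,\nu^n_k=\tau^n$ of $n$-simplices of $\Gamma$ such that consecutive members $\nu^n_i,\nu^n_{i+1}$ share a codimension-$1$ face $\mu^{n-1}_i\in\Gamma$. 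By induction on $i$: assuming all coordinates of $\alpha$ over $\nu^n_i$ vanish, choose an edge $\zeta^1\subseteq\mu^{n-1}_i$ (possible as $n-1\ge1$); then $\alpha_{\zeta^1,\nu^n_i}=0$, Lemma \ref{rt} gives $\alpha_{\zeta^1,\nu^n_{i+1}}=0$, and the corollary again forces all coordinates of $\alpha$ over $\nu^n_{i+1}$ to vanish. Hence $\alpha_{\tau^1,\tau^n}=0$, and since $(\tau^1,\tau^n)$ was arbitrary, $\alpha=0$. So $Z_{n-1}$ embeds as an $R$-submodule of $R$; tensoring with the fraction field $K$ of $R$ gives $\mathrm{rk}\,Z_{n-1}\le1$, hence $\mathrm{rk}\,Z_{n-1}=1$.

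For the freeness and the generator, since $Z_{n-1}$ is torsion-free of rank $1$ with $[\Omega]\ne0$ we have $Z_{n-1}\otimes_RK=K\cdot([\Omega]\otimes1)$ and $Z_{n-1}\hookrightarrow Z_{n-1}\otimes_RK$, so any $\alpha\in Z_{n-1}$ can be written $\alpha\otimes1=s\,([\Omega]\otimes1)$ with $s\in K$. Comparing coordinates over each $(\sigma^1,\sigma^n)\in\Omega$ and using $[\Omega]_{\sigma^1,\sigma^n}=|\sigma^1|_r\,1_{\sigma^1,\sigma^n}\otimes1_R$ with $1_{\sigma^1,\sigma^n}$ a basis of $\mathcal{F}_{n-1}^R(\sigma^1,\sigma^n)$, we obtain $s\,|\sigma^1|_r\in R$ for every edge $\sigma^1$ of $\Gamma$ (each edge lies in some $n$-simplex, so these coordinates are defined). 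The reduced lengths $|\sigma^1|_r$, $\sigma^1\in\Gamma$, are globally coprime by the definition of $|\Gamma|$, so a Bézout identity $1=\sum_{\sigma^1}\lambda_{\sigma^1}|\sigma^1|_r$ with $\lambda_{\sigma^1}\in\mathbb{Z}$ (exactly as in the proof of Proposition \ref{jt}) yields $s=\sum_{\sigma^1}\lambda_{\sigma^1}\bigl(s\,|\sigma^1|_r\bigr)\in R$. Therefore $\alpha=s[\Omega]$, so $Z_{n-1}=R[\Omega]$; and $r\mapsto r[\Omega]$ is injective, for $r[\Omega]=0$ forces $r\,|\sigma^1_0|_r=0$ in $R$, hence $r=0$ by Hypothesis $1$ and because $R$ is a domain. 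Thus $H_{n-1}(\mathcal{F}_{n-1}^R)$ is a free $R$-module of rank $1$ generated by $[\Omega]$. Finally, if $(1_{\sigma^1,\sigma^n})$ and $(1'_{\sigma^1,\sigma^n})$ are two families of generators of the modules $\mathcal{F}_{n-1}^{\mathbb{Z}}(\sigma^1,\sigma^n)$ whose associated classes are cycles, these classes lie in $Z_{n-1}=R[\Omega]$, so $[\Omega]'=u[\Omega]$ for some $u\in R$; since the two families differ by signs, comparing one coordinate and cancelling $|\sigma^1|_r$ (licit by Hypothesis $1$) forces $u=\pm1_R$, hence $u\in R^\times$, and $u=1_R$ when $\mathrm{char}\,R=2$.

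The step I expect to be the real obstacle is the injectivity of $\mathrm{ev}$: it is the only place that genuinely uses the combinatorics of a triangulation of a convex polytope, namely the connectedness of the set of $n$-simplices through codimension-$1$ faces (Lemma \ref{ur}) together with the ``rigidity inside one $n$-simplex'' supplied by Hypothesis $1$ (the corollary preceding the statement). Once the rank is known to be $1$, upgrading to a free module with a prescribed generator is the short descent argument above, which uses only the global coprimality of the reduced lengths.
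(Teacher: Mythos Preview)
Your proof is correct and follows essentially the same route as the paper: the injectivity of the evaluation map is exactly the paper's propagation argument (vanishing at one coordinate forces vanishing everywhere, via the corollary, Lemma~\ref{rt}, and Lemma~\ref{ur}), and your descent through the fraction field together with the B\'ezout identity on the reduced lengths is a clean repackaging of the paper's computation $|\sigma^1_1|_r\alpha=\lambda[\Omega]$ followed by the same B\'ezout step to divide out $|\sigma^1_1|_r$.
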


\begin{proof}
    Let $\alpha \in C_{n-1}(\mathcal{F}_{n-1}^{R})$ such that $d_{n-1}(\alpha) = 0$. Assume that there exists $(\sigma^1_1,\sigma^n_1) \in \Omega$ such that $\alpha_{\sigma^1_1,\sigma^n_1} = 0$. Let us prove that it implies that $\alpha = 0$.

    Let $(\sigma^1_2,\sigma^n_2) \in \Omega$. If $\sigma^n_1 = \sigma^n_2$, it is clear from the last corollary that $\alpha_{\sigma^1_2,\sigma^n_2} = 0$. Assume now that $\sigma^n_1 \neq \sigma^n_2$. According to the Lemma \ref{ur}, there exist $\tau^n_1$, $\tau^n_2$,...,$\tau^n_k$ a sequence of $n$-dimensional simplices of $\Gamma$ such that $\sigma^n_1 = \tau^n_1$, $\sigma^n_2 = \tau^n_k$ and for every $1 \leq i \leq k-1$ there exists $\tau^{n-1}_i \in \Gamma$ a codimension $1$ face of both $\tau^n_i$ and $\tau^n_{i+1}$. As $\sigma^n_1 \neq \sigma^n_2$, we must have $k \geq 2$.

    For every $1\leq i \leq k-1$, let $\tau^1_i \in \Gamma$ such that $\tau^1_i \subseteq \tau^{n-1}_i$ (it exists because $n \geq 2$). Let us prove by induction that for every $1 \leq i \leq k-1$ we have $\alpha_{\tau^1_i,\tau^n_i} = 0$.
    
    For $i = 1$, as $\tau^{n-1}_1 \subseteq \tau^n_1$ and $\tau^n_1 = \sigma^n_1$, we have that $\tau^1_1$ and $\sigma^1_1$ are edges of the same simplex $\tau^n_1$. However $\alpha_{\sigma^1_1,\sigma^n_1} = 0$. Thus, according to the last corollary we have also $\alpha_{\tau^1_1,\tau^n_1} = 0$.

    Let $1 \leq i \leq k-2$ such that $\alpha_{\tau^1_i,\tau^n_i} = 0$. As $\tau^1_i \subseteq \tau^{n-1}_i$, as $\tau^{n-1}_i$ is a face of both $\tau^n_{i}$ and $\tau^n_{i+1}$, we have $\alpha_{\tau^1_i,\tau^n_{i+1}} = 0$ from Lemma \ref{rt}. Moreover, as $\tau^{n-1}_i$ and $\tau^{n-1}_{i+1}$ are both faces of $\tau^n_{i+1}$, we have that $\tau^1_i$ and $\tau^1_{i+1}$ are edges of the same simplex $\tau^n_{i+1}$. Therefore, according to the last corollary, as $\alpha_{\tau^1_i,\tau^n_{i+1}} = 0$, we have also $\alpha_{\tau^1_{i+1},\tau^n_{i+1}} = 0$.

    Then we have by induction that $\alpha_{\tau^1_{k-1},\tau^n_{k-1}} = 0$. The simplex $\tau^1_{k-1}$ is an edge of the simplex $\tau^{n-1}_{k-1}$ which is a face of both $\tau^{n}_{k-1}$ and $\sigma^n_2$. Then we obtain $\alpha_{\tau^1_{k-1},\sigma^n_2} = 0$ from Lemma \ref{rt}. However, $\sigma^1_2$ is also an edge of $\sigma^n_2$, thus we obtain $\alpha_{\sigma^1_2,\sigma^n_2} = 0$ from the last corollary.

    We have proven that for every $\alpha \in C_{n-1}(\mathcal{F}_{n-1}^{R})$ such that $d_{n-1}(\alpha) = 0$, if there exists $(\sigma^1,\sigma^n) \in \Omega$ such that $\alpha_{\sigma^1,\sigma^n} = 0$, then $\alpha = 0$. We also know from Lemma \ref{yt} that there exists $(1_{\sigma^1,\sigma^n})_{(\sigma^1,\sigma^n) \in \Omega}$ a family of non-zero elements of the $\mathcal{F}_{n-1}^{R}(\sigma^1,\sigma^n)$, $(\sigma^1,\sigma^n) \in \Omega$ such that the direct sum $[\Omega]:= \bigoplus_{\sigma^1,\sigma^n} |\sigma^1|_r1_{\sigma^1,\sigma^n} \otimes 1_R$ satisfies $d_{n-1}([\Omega]) = 0$. Let $\alpha \in C_{n-1}(\mathcal{F}_{n-1}^{R})$ such that $d_{n-1}(\alpha) = 0$ and let us prove that $\alpha$ is proportional to $[\Omega]$.

    If $\alpha = 0$ then we have clearly $\alpha = 0. [\Omega]$. Assume that $\alpha \neq 0$. There exists $(\sigma^1_1,\sigma^n_1) \in \Omega$ such that $\alpha_{\sigma^1_1,\sigma^n_1} \neq 0$. Then, as $1_{\sigma^1_1,\sigma^n_1} \otimes 1_R$ is a generator of $\mathcal{F}_{n-1}^R(\sigma^1_1,\sigma^n_1)$, there exists a non-zero element $\lambda \in R$ such that $\alpha_{\sigma^1_1,\sigma^n_1} = \lambda.(1_{\sigma^1_1,\sigma^n_1} \otimes 1_R)$. Thus we have also:
    \begin{equation*}
        |\sigma^1_1|_r\alpha_{\sigma^1_1,\sigma^n_1} = \lambda.(|\sigma^1_1|_r1_{\sigma^1_1,\sigma^n_1} \otimes 1_R).
    \end{equation*}
    Then the chain $|\sigma^1_1|_r\alpha - \lambda[\Omega]$ satisfies:
    \begin{equation*}
        \left ( |\sigma^1_1|_r\alpha - \lambda[\Omega]  \right )_{\sigma^1_1,\sigma^n_1} = 0.
    \end{equation*}

    According to our conclusion in the first part of the current proof, as $d_{n-1}(|\sigma^1_1|_r\alpha - \lambda[\Omega]) = 0$ we obtain that the whole chain $|\sigma^1_1|_r\alpha - \lambda[\Omega]$ is zero:
    \begin{equation*}
        |\sigma^1_1|_r\alpha = \lambda[\Omega].
    \end{equation*}

    Let us denote for every $(\sigma^1,\sigma^n) \in \Omega$, the coefficient $\omega_{\sigma^1,\sigma^n} \in R$ such that $\alpha_{\sigma^1,\sigma^n} = \omega_{\sigma^1,\sigma^n}.(1_{\sigma^1,\sigma^n}\otimes 1_R)$. We have the identities:
    \begin{equation*}
        \forall(\sigma^1,\sigma^n)\in\Omega,~~ |\sigma^1_1|_r\omega_{\sigma^1,\sigma^n} = |\sigma^1|_r\lambda.
    \end{equation*}

    As the integers $|\sigma^1|_r, ~\sigma^1\in\Gamma$ are coprime, there exists a family $\epsilon_{\sigma^1} \in\mathbb{Z}, ~\sigma^1\in\Gamma$ satisfying the Bezout relation:
    \begin{equation*}
        \sum_{\sigma^1 \in \Gamma}\epsilon_{\sigma^1}|\sigma^1|_r = 1.
    \end{equation*}

    Then, choosing for every $\sigma^1\in\Gamma$ a particular $\sigma^n_{\sigma^1} \in \Gamma$ satisfying that $(\sigma^1,\sigma^n_{\sigma^1})\in\Omega$, we obtain:
    \begin{equation*}
        \lambda=\sum_{\sigma^1 \in \Gamma}\epsilon_{\sigma^1}|\sigma^1_1|_r.\omega_{\sigma^1,\sigma^n_{\sigma^1}} = |\sigma^1_1|_r\sum_{\sigma^1 \in \Gamma}\epsilon_{\sigma^1}.\omega_{\sigma^1,\sigma^n_{\sigma^1}}.
    \end{equation*}

    We have proven that $|\sigma^1_1|_r$ divides $\lambda$ in the integral domain $R$. Moreover, $|\sigma^1_1|_r$ is non-zero according to hypothesis $1$ satisfied by the triangulation for the integral domain $R$ (Definition \ref{max}). Thus denoting $\mu:= \sum_{\sigma^1 \in \Gamma}\epsilon_{\sigma^1}.\omega_{\sigma^1,\sigma^n_{\sigma^1}}$ the quotient, we obtain:
    \begin{equation*}
        \alpha = \mu[\Omega].
    \end{equation*}
    
    Finally we have proven that $[\Omega]$ is a generator of $H_{n-1}(\mathcal{F}_p^{R})$, which concludes the proof.
\end{proof}

\subsection{Cap-product with the fundamental class}

We have proven in Lemma \ref{yt} that without hypothesis we can define the $[\Omega]:=\bigoplus_{\sigma^1,\sigma^n} |\sigma^1|_r1_{\sigma^1,\sigma^n} \otimes 1_R$, with $1_R$ the unit element of $R$, and the $1_{\sigma^1,\sigma^n}$ generators of the $\mathcal{F}_{n-1}^{\mathbb{Z}}(\sigma^1,\sigma^n)$ such that $d_{n-1}([\Omega]) =0$ (see Definition \ref{redlength} for the definition of the reduced length $|\sigma^1|_r$ of an edge $\sigma^1\in\Gamma$). However, without the hypothesis $1$ (see Definition \ref{max}) this element $[\Omega]$ will not be called the fundamental class because it is not a generator of $H_{n-1}(\mathcal{F}_{n-1}^R)$. This homology group can be of rank higher than $1$, and the definition of $[\Omega]$ is not unique up to a multiplication by $-1$. However, the Cap-product with the element $[\Omega]$ is defined without supposing the hypothesis $1$. In the following we will fix $[\Omega]$ without supposing the hypothesis $1$, except if we precise it, and in this case $[\Omega]$ would be the fundamental class.

\begin{definition}
    (Cap-product with $[\Omega]$) For any $n$-dimensional polytope $P$ and triangulation $\Gamma$ with integral vertices we define the following linear maps:
    \begin{equation*}
        \forall \sigma^{n-q}\in \Gamma,~~ \phi_{\sigma^{n-q}}: \begin{array}{rcl}
             C^{q}(\mathcal{F}^{p}_R(\sigma^{n-q},*)) & \rightarrow & C_{n-1-q}(\mathcal{F}_{n-1-p}^R(*,\sigma^{n-q}))  \\
             \beta & \mapsto & \beta \cap [\Omega] ~;
        \end{array}
    \end{equation*}
    \begin{equation*}
        \phi^q: \begin{array}{rcl}
             C^{q}\mathcal{F}^{p}_R) & \rightarrow & C_{n-1-q}(\mathcal{F}_{n-1-p}^R)  \\
             \beta & \mapsto & \beta \cap [\Omega] 
        \end{array}.
    \end{equation*}
    \label{defphi}
\end{definition}

By definition of the cap product, these two maps satisfy the following relation:
    \begin{equation*}
        \phi^q  \left ( \bigoplus_{b-a=q}\bigoplus_{\sigma^a\subseteq\sigma^b} \beta_{\sigma^a,\sigma^b} \right ) = \bigoplus_{\sigma^{n-q}\in\Gamma} \phi_{\sigma^{n-q}}\Bigg(\bigoplus_{\sigma^{n-q}\subseteq\sigma^n}\beta_{\sigma^{n-q},\sigma^n}\Bigg).
    \end{equation*}

The map $\phi^q$ is defined at the level of complexes. With good hypothesis it induces isomorphisms at the level of cohomology and homology. In the following section we enounce some lemma and we prove the partial Poincaré duality and the Poincaré duality using them.

\section{Proof of the partial and complete Poincaré duality}

\subsection{The lemmas and their consequences}

From the beginning of this article we do not remind the index $p$ of the cosheaves $\mathcal{F}_{p}^R$ and the sheaves $\mathcal{F}^{p}_R$ in the notations of the differentials $d^{1}$, $d^{2}$, $d$, $\delta_{1}$, $\delta_{2}$, $\delta$, in the notations of the map $\phi^q$ and in the notation of the spectral sequences $E^*_{*,*}(d^{1})$, $E^*_{*,*}(d^{2})$, $E_*^{*,*}(\delta_{1})$, $E_*^{*,*}(\delta_{2})$. The reason is that the index $p$ was mostly constant and there were no possible ambiguity. However in the next statements there will be possible ambiguity. Thus, if necessarily, we will precise the index $p$ by using the notations $d^{1,(p)}$, $d^{2,(p)}$, $d^{(p)}$, $\delta_{1,(p)}$, $\delta_{2,(p)}$, $\delta_{(p)}$ for the differentials and $E^*_{*,*}(d^{1,(p)})$, $E^*_{*,*}(d^{2,(p)})$, $E_*^{*,*}(\delta_{1,(p)})$, $E_*^{*,*}(\delta_{2,(p)})$ for the spectral sequences. For the map $\phi^q$ we will also use the notation $\phi^q_{(p)}$ and it will design the map starting from $C^{q}(\mathcal{F}^{p}_R)$.

To prove partial and complete Poincaré duality theorem, let's announce three lemma about the first pages of the spectral sequences $E_{*,*}^*(d^{1,(p)})$ and $E^{*,*}_*(\delta_{2,(p)})$. We will detail the proof of those lemma in the next section.

\begin{lemma}
    (For partial Poincaré duality) If the triangulation is $(k,R)$-primitive for an integer $2\leq k \leq n-1$, then for any integer $p<k$ the first page of the spectral sequences $E_{*,*}^*(d^{1,(p)})$ satisfies:
    \begin{equation*}
        E^1_{a,b}(d^{1,(p)}) = \left \{ \begin{array}{ll}
            \mathrm{ker}(d^{1,(p)}_{1,b}) & \text{if } a=1 \\
            \bigoplus_{\sigma^b \in \Gamma}H_{b-a}(\mathcal{F}_p^R(*,\sigma^b)) & \text{if } 1<a\le p+1 \text{ and } k<b\leq n \\
            0 & \text{otherwise}
        \end{array} \right. .
    \end{equation*}
    \label{4}
\end{lemma}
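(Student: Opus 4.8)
The plan is to analyze the complex $C_{b-a}(\mathcal{F}_p^R(*,\sigma^b))$ for each fixed $\sigma^b\in\Gamma$, since $E^1_{a,b}(d^{1,(p)}) = \bigoplus_{\sigma^b\in\Gamma} H_{b-a}(\mathcal{F}_p^R(*,\sigma^b))$, and to show that this homology vanishes except in the claimed range of bidegrees. First I would unwind the definitions: for fixed $\sigma^b$, the term in homological degree $q=b-a$ is $\bigoplus_{\sigma^a\subseteq\sigma^b}\mathcal{F}_p^R(\sigma^a,\sigma^b) = \bigoplus_{\sigma^a\subseteq\sigma^b}\sum_{\sigma^1\subseteq\sigma^a}\bigwedge^p (T_R\sigma^{1\perp}/T_RF_{\sigma^b}^\perp)$, and the differential $d^{1,\sigma^b}$ moves from $\sigma^a$ to its cofaces $\sigma^{a+1}\subseteq\sigma^b$ via the cosheaf inclusion maps $\iota$, weighted by the balancing signature $\eta$. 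The key structural observation is that $\mathcal{F}_p^R(\sigma^a,\sigma^b)$ depends on $\sigma^a$ only through its edges $\sigma^1\subseteq\sigma^a$, so the complex $C_*(\mathcal{F}_p^R(*,\sigma^b))$ decomposes (or filters) according to which edge-sets appear, and each graded piece should be identified with a (shifted, truncated) simplicial chain complex of a subcomplex of $\sigma^b$ with coefficients in an exterior power. This is where the $(k,R)$-primitivity enters: when $b>k$, every $k$-face of $\sigma^b$ is $R$-primitive, which (via Definition \ref{primsimp} and the local $R$-non-singularity) makes the relevant lattices split compatibly, so that the exterior-power coefficients behave like constant coefficients and the pieces become honest (co)chain complexes of simplices or their boundaries.

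The technical heart is the homology computation. I would fix $\sigma^b$ and a subset $S$ of edges of $\sigma^b$, let $\Delta_S$ be the smallest subcomplex of $\sigma^b$ whose edge-set is contained in $S$ (equivalently the simplices spanned by unions of vertices incident to edges in $S$), and show that the $S$-graded piece of $C_{b-a}(\mathcal{F}_p^R(*,\sigma^b))$ computes $\widetilde H_{*}$ of a space built from $\Delta_S$ — most likely a link or a relative complex that is either contractible or a sphere of a controlled dimension. Concretely: for $a=1$ the term sits at the bottom of the complex and its homology is by definition just $\ker(d^{1,(p)}_{1,b})$ (there is no incoming differential), which gives the first case for free. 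For $1<a\le p+1$ and $b>k$, the contributing subcomplexes $\Delta_S$ are those with at least $a$ edges through a common vertex (so that $\mathcal{F}_p^R(\sigma^a,\sigma^b)\neq 0$ forces $\bigwedge^p$ of a rank-$\ge p$ space, hence $a\le p+1$ is needed for a nonzero $p$-th exterior power after quotienting), and I would show the relevant reduced homology is concentrated in the single degree $b-a$, giving the surviving summand $\bigoplus_{\sigma^b}H_{b-a}(\mathcal{F}_p^R(*,\sigma^b))$. In all other cases — $a>p+1$, or $2\le b\le k$ — I would show the graded pieces are acyclic: when $b\le k$ the whole simplex $\sigma^b$ is $R$-primitive (being a face of a $k$-primitive triangulation, using that $(k,R)$-primitivity of the triangulation implies $(k',R)$-primitivity of every simplex for $k'\le k$), so $\mathcal{F}_p^R(\sigma^1,\sigma^b) = \bigwedge^p(N_R/T_RF_{\sigma^b}^\perp)$ is independent of $\sigma^1$, the cosheaf is constant, and $C_*(\mathcal{F}_p^R(*,\sigma^b))$ is the simplicial chain complex of $\sigma^b$ (a simplex, hence acyclic) tensored with a fixed module, killing homology in all positive degrees; the degree-zero homology then forces $a=b$, i.e. $a\le k$, but these are reabsorbed into the $a=1$ or vanishing cases appropriately. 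When $a>p+1$ the exterior power $\bigwedge^p$ of the rank $\le p$ quotient already vanishes at the level of $\mathcal{F}_p^R(\sigma^a,\sigma^b)$ for generic $\sigma^a$, and a short argument handles the boundary terms.

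I expect the main obstacle to be the bookkeeping that makes the splitting "$\mathcal{F}_p^R(\sigma^a,\sigma^b)$ as a sum over edge-sets, with coefficients that are genuinely constant once $R$-primitivity holds" rigorous and compatible with the balancing signature $\eta$ — i.e. checking that the identification with a simplicial chain complex intertwines $d^{1,\sigma^b}$ with the simplicial boundary up to the correct signs. A secondary difficulty is pinning down exactly which subcomplexes $\Delta_S$ contribute and proving their reduced homology is concentrated in one degree; here I would use that $\Delta_S$ deformation retracts onto a star of a vertex (contractible) or is the boundary of a face (a sphere), and that the $(k,R)$-primitivity guarantees we never leave the range where these exterior-power coefficients are "free enough" for the Künneth/universal-coefficient step to go through over the integral domain $R$. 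Once the per-$\sigma^b$ computation is done, summing over $\sigma^b\in\Gamma$ and matching bidegrees yields the three cases exactly as stated.
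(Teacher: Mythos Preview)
Your proposal has a genuine gap in the case $b\le k$. You claim that when $\sigma^b$ is $R$-primitive, ``$\mathcal{F}_p^R(\sigma^1,\sigma^b)=\bigwedge^p(N_R/T_RF_{\sigma^b}^\perp)$ is independent of $\sigma^1$, the cosheaf is constant''. This is false: by definition $\mathcal{F}_p^R(\sigma^1,\sigma^b)=\bigwedge^p(T_R\sigma^{1\perp}/T_RF_{\sigma^b}^\perp)$, a module of rank $\binom{\dim F_{\sigma^b}-1}{p}$, strictly smaller than $\bigwedge^p(N_R/T_RF_{\sigma^b}^\perp)$, and it genuinely depends on the direction of $\sigma^1$. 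For $1\le a\le p$ the modules $\mathcal{F}_p^R(\sigma^a,\sigma^b)$ are strictly increasing with $a$, so the complex $C_*(\mathcal{F}_p^R(*,\sigma^b))$ is \emph{not} a constant-coefficient simplicial chain complex in that range, and its acyclicity for $a\neq 1$ is not a formality. This is precisely the hard case: the paper handles it (Proposition~\ref{theohom}) by invoking the nontrivial result of Jell--Rau--Shaw \cite{JRS 2018} that the tropical Borel--Moore homology of a matroidal fan is concentrated in top degree, then transferring the vanishing from $\mathbb{Z}$ to $R$ via saturation arguments and an explicit $R$-linear identification of $\sigma^b$ with a standard primitive simplex. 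Your edge-set filtration sketch does not recover this, and you have not supplied a substitute argument.

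Your treatment of the case $a>p+1$ (for $b>k$) is closer in spirit to the paper's Proposition~\ref{prop25}, but your stated mechanism is backwards. It is not that ``$\bigwedge^p$ of a rank $\le p$ quotient vanishes''; each $T_R\sigma^{1\perp}/T_RF_{\sigma^b}^\perp$ has rank $\dim F_{\sigma^b}-1$, independent of $a$. What actually happens is that once $a>p$, the $(k,R)$-primitivity of the $a$-faces (for $a\le k$) forces the sum $\sum_{\sigma^1\subseteq\sigma^a}\bigwedge^p(T_R\sigma^{1\perp}/T_RF_{\sigma^b}^\perp)$ to \emph{saturate} to all of $\bigwedge^p(N_R/T_RF_{\sigma^b}^\perp)$; the cosheaf then becomes constant on the range $p<a\le b$, and in that range the complex agrees with the simplicial chain complex of $\sigma^b$ with constant coefficients, whose homology vanishes. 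The condition $a>p+1$ (rather than $a>p$) arises because two consecutive degrees must lie in the constant range to conclude vanishing of $H_{b-a}$.
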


\begin{lemma}
    (For complete Poincaré duality) If the triangulation is $R$-primitive, then for any integer $p$ the first page of the spectral sequences $E_{*,*}^*(d^{1,(p)})$ satisfies:
    \begin{equation*}
        E^1_{a,b}(d^{1,(p)}) = \left \{ \begin{array}{ll}
            \mathrm{ker}(d^{1,(p)}_{1,b}) & \text{if } a=1 \\
            0 & \text{otherwise}
        \end{array} \right. .
    \end{equation*}
    \label{4p}
\end{lemma}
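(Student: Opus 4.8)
\emph{Step 1: reduction to one simplex.}
The first page is $E^1_{a,b}(d^{1,(p)})=\bigoplus_{\sigma^b\in\Gamma}H_{b-a}(\mathcal{F}_p^R(*,\sigma^b))$, so it is enough to show, for each fixed $\sigma^b\in\Gamma$, that the complex $C_*(\mathcal{F}_p^R(*,\sigma^b))$ — which lives in degrees $0,\dots,b-1$ — has homology concentrated in top degree. Three cases come for free: in degree $b-1$ ($a=1$) there is no incoming differential, so $H_{b-1}=\ker d_{b-1}^{1,\sigma^b}$ and summing gives $E^1_{1,b}=\ker d^{1,(p)}_{1,b}$; in degree $0$ ($a=b$) the last differential is the canonical surjection $\bigoplus_{\sigma^{b-1}\subseteq\sigma^b}\mathcal{F}_p^R(\sigma^{b-1},\sigma^b)\twoheadrightarrow\mathcal{F}_p^R(\sigma^b,\sigma^b)$ (every edge of $\sigma^b$ lies in a facet), so $H_0=0$ unconditionally; and if $p\ge\dim F_{\sigma^b}$, in particular if $p\ge n$, every module $\mathcal{F}_p^R(\sigma^a,\sigma^b)$ vanishes. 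So the whole content is: $H_q(\mathcal{F}_p^R(*,\sigma^b))=0$ for $0<q<b-1$, which is vacuous unless $b\ge3$. This is exactly the range $b\le k$ of the analysis underlying Lemma \ref{4}, with $k=n$ so that it covers all of $\Gamma$.

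\emph{Step 2: a combinatorial model of the complex.}
Because $\Gamma$ is $R$-primitive, the fixed simplex $\sigma^b$ is itself an $R$-primitive simplex (Section \ref{secprim}). Write $V:=N_R/T_RF_{\sigma^b}^{\perp}$ (rank $\dim F_{\sigma^b}$), so $T_RF_{\sigma^b}=V^*$, and for each edge $e\subseteq\sigma^b$ the module $\mathcal{F}_1^R(e,\sigma^b)=T_Re^{\perp}/T_RF_{\sigma^b}^{\perp}$ is the annihilator $u_e^{\perp}\subseteq V$ of a generator $u_e\in V^*$ of $T_Re$; hence $\mathcal{F}_p^R(\sigma^a,\sigma^b)=\sum_{e\subseteq\sigma^a}\bigwedge^p u_e^{\perp}$. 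Choosing a vertex $v$ of $\sigma^b$, the directions $u_1,\dots,u_b$ of the edges through $v$ form an $R$-basis of $T_R\sigma^b$, which, being a direct summand of $V^*$, extends to a basis $u_1,\dots,u_d$ with dual basis $\xi_1,\dots,\xi_d$ of $V$. The elementary inclusion $\bigwedge^p(u_i-u_j)^{\perp}\subseteq\bigwedge^p u_i^{\perp}+\bigwedge^p u_j^{\perp}$ then shows that, for a face $\sigma^a$ containing $v$, $\mathcal{F}_p^R(\sigma^a,\sigma^b)$ is the coordinate subspace spanned by the $\xi_J$ ($|J|=p$) that omit at least one index of the vertex set of $\sigma^a$; in particular $\mathcal{F}_p^R(\sigma^a,\sigma^b)=\bigwedge^p V$ as soon as $\dim\sigma^a>p$. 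Consequently $C_*(\mathcal{F}_p^R(*,\sigma^b))$ is a subcomplex of $\bigwedge^p V\otimes C_*^{\ge1}(\sigma^b)$, the constant-coefficient chain complex of the poset of faces of $\sigma^b$ of dimension $\ge1$, and the quotient cosheaf $\mathcal{Q}_p$ is supported on the faces of dimension $\le p$.

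\emph{Step 3: acyclicity.}
For constant coefficients the vanishing in degrees $<b-1$ is classical: the reduced chain complex of $\sigma^b$ is contractible via "delete the vertex $v$", and this homotopy still satisfies $h\,d+d\,h=\mathrm{id}$ on $C_*^{\ge1}(\sigma^b)$ in every degree $<b-1$ (it fails only in degree $b-1$, where deleting $v$ from an edge leaves a vertex). To pass to the subcosheaf I would use the long exact sequence attached to $0\to\mathcal{F}_p^R\to\underline{\bigwedge^p V}\to\mathcal{Q}_p\to0$ over that poset, together with an induction on $p$: for $p\le1$ the cosheaf $\mathcal{Q}_p$ is zero or concentrated in degree $b-1$, and a direct cocycle-moving computation — essentially $\bigoplus_e V=\bigoplus_e u_e^{\perp}+V\otimes B^1$, where $B^1$ denotes the space of simplicial $1$-coboundaries of $\sigma^b$ — shows $H_{b-1}(\underline{\bigwedge^p V})\to H_{b-1}(\mathcal{Q}_p)$ is onto, so the long exact sequence forces $H_q(\mathcal{F}_p^R)=0$ for all $0<q<b-1$; for general $p$ one identifies $\mathcal{Q}_p$, in the adapted basis, with a cosheaf of the same multi-tangent type on the link of $v$ (the $R$-primitive facet $\sigma^{b-1}$ opposite $v$), so that the inductive hypothesis applies. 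Summing over $\sigma^b$ yields $E^1_{a,b}(d^{1,(p)})=0$ for $a\ge2$, which, with Step 1, is the statement.

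\emph{Expected main obstacle.}
The constant-coefficient part is soft; the work is in controlling the subcosheaf $\mathcal{F}_p^R$ (equivalently $\mathcal{Q}_p$), because the cosheaf corestriction maps point towards \emph{larger} faces, so there is no naive contracting homotopy on $\mathcal{F}_p^R$ itself and one is forced through the quotient. The genuinely awkward point is the faces $\sigma^a$ of $\sigma^b$ that do \emph{not} contain the chosen base vertex $v$: there $\mathcal{F}_p^R(\sigma^a,\sigma^b)$ is not a coordinate subspace in the $\xi_J$-basis (one must re-coordinatize, replacing the $u_i$ by differences $u_i-u_{i_0}$), so both the identification of $\mathcal{Q}_p$ with a link cosheaf and the cocycle-moving argument must be carried out compatibly across the whole poset rather than face by face. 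A cleaner alternative I would try first is to bypass $\mathcal{Q}_p$ entirely and prove directly, in each degree $0<q<b-1$, that every $d^{1,\sigma^b}$-cycle is a boundary, by lifting the simplicial nullhomotopy of $\sigma^b$ to one respecting the subspaces $\mathcal{F}_p^R(\sigma^a,\sigma^b)$ through an explicit argument using the simplicial cocycles of $\sigma^b$.
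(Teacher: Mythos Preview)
Your Step 1 reduction is exactly what the paper does: it suffices to show $H_{b-a}(\mathcal{F}_p^R(*,\sigma^b))=0$ for every $R$-primitive $\sigma^b\in\Gamma$ and every $a\ge2$, and then the column $a=1$ is automatically $\ker d^{1,(p)}_{1,b}$.

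From that point on the paper takes a completely different and much shorter route. Rather than attempt a direct combinatorial acyclicity argument, it proves Proposition \ref{theohom} in two moves. First, it uses the $R$-primitivity of $\sigma^b$ to build an $R$-linear automorphism $\psi_R$ of $M_R$ carrying the combinatorics of $\sigma^b$ onto those of the \emph{standard primitive} simplex $\Delta^b=\mathrm{Conv}(0,e_1,\dots,e_b)$ sitting in a standard primitive triangulation of a standard non-singular polytope, thereby identifying $C_*(\mathcal{F}_p^R(*,\sigma^b))$ with $C_*(\mathcal{F}_p^{'R}(*,\Delta^b))$. Second, it invokes the Jell--Rau--Shaw result (Proposition \ref{jrs}, \cite{JRS 2018}) that for a primitive simplex over $\mathbb{Z}$ the Borel--Moore tropical homology of the dual matroidal fan is concentrated in top degree, and then runs a saturation/base-change argument to pass from $\mathbb{Z}$ to $R$. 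All the combinatorial work you are setting up in Steps 2--3 is thus packaged inside the JRS black box; the paper never touches the face-by-face structure of $\mathcal{F}_p^R(\sigma^a,\sigma^b)$.

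Your direct approach is a legitimate alternative in spirit --- this is essentially a deletion--contraction/Orlik--Solomon style recursion --- but as written it has a real gap at exactly the point you flag. The claim that $\mathcal{Q}_p$ can be identified with a multi-tangent cosheaf on the link of $v$ is not correct as stated: $\mathcal{Q}_p$ is nonzero on \emph{all} faces of dimension $\le p$, including those containing $v$, so it is not a cosheaf on $\sigma^{b-1}$ alone. What one actually needs is a further splitting (contraction at $v$ versus deletion of $v$) that handles both kinds of faces, and this is precisely where the re-coordinatization problem for faces not containing $v$ has to be solved, not postponed. Your ``cleaner alternative'' of lifting the simplicial nullhomotopy runs into the same wall: the homotopy ``delete $v$'' sends the summand at a face $\tau\ni v$ to the summand at $\tau\setminus\{v\}$, but the cosheaf inclusion goes the other way, $\mathcal{F}_p^R(\tau\setminus\{v\},\sigma^b)\subseteq\mathcal{F}_p^R(\tau,\sigma^b)$, so $h$ does not restrict to the subcomplex. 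Either route can be pushed through with additional work, but neither is as economical as the paper's reduction to \cite{JRS 2018}.
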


\begin{lemma}
    (For partial and complete Poncaré duality) If the polytope $P$ is $R$-non-singular, then for any integer $p$ the first page of the spectral sequence $E^{*,*}_*(\delta_{2,(p)})$ satisfies the following property:
    \begin{equation*}
        E_1^{a,b}(\delta_{2,(p)}) = \left \{ \begin{array}{ll}
            \mathrm{coker}(\delta_{2,(p)}^{a,n-1}) & \text{if } b=n \\
            0 & \text{otherwise}
        \end{array} \right. .
    \end{equation*}
    \label{5}
\end{lemma}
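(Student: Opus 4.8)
The plan is to fix $\sigma^a\in\Gamma$ and to compute the cohomology of the column complex $\bigl(C^{\bullet}(\mathcal F^p_R(\sigma^a,*)),\delta_{2,\sigma^a}\bigr)$, showing that it is concentrated in top degree $b-a=n-a$; since $E_1^{a,b}(\delta_{2,(p)})=\bigoplus_{\sigma^a\in\Gamma}H^{b-a}(\mathcal F^p_R(\sigma^a,*))$ and the cohomology of a cochain complex in its top degree is the cokernel of the last incoming differential, this gives exactly the asserted description of $E_1^{a,b}(\delta_{2,(p)})$. The first step is to identify this column complex: because the balancing signature $\eta$ of $\Omega$, restricted to the pairs with fixed first coordinate $\sigma^a$, coincides with the balancing signature $\rho$ of $\Gamma$, the complex $C^{\bullet}(\mathcal F^p_R(\sigma^a,*))$ is — after the reindexing $\sigma^b\mapsto\dim\sigma^b-a$ that sends $\sigma^a$ to an augmentation term — the augmented simplicial cochain complex of the link $\mathrm{Lk}_\Gamma(\sigma^a)$ with coefficients in the cellular sheaf $\sigma^b\mapsto\mathcal F^p_R(\sigma^a,\sigma^b)$. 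Since $\Gamma$ is a simplicial subdivision of the convex body $P$ it is a PL triangulation of $B^n$, so $\mathrm{Lk}_\Gamma(\sigma^a)$ is a PL sphere $S^{n-a-1}$ when $\sigma^a$ is interior to $P$ (i.e. $F_{\sigma^a}=P$) and a PL ball $B^{n-a-1}$ when $\sigma^a\subseteq\partial P$. In the interior case $T_RF_{\sigma^b}^{\perp}=0$ for every $\sigma^b\supseteq\sigma^a$, the coefficient sheaf is the constant module $V^{*}$ with $V=\sum_{\sigma^1\subseteq\sigma^a}\bigwedge^pT_R\sigma^{1\perp}$, and the cohomology is $\widetilde H^{\,\bullet-1}(S^{n-a-1};V^{*})$, which is concentrated in simplicial degree $n-a-1$ for an arbitrary coefficient module; so the interior case holds over any integral domain.

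For the boundary case, put $F:=F_{\sigma^a}$, of dimension $d<n$. Then $R$-non-singularity of $P$ forces $R$-non-singularity along $F$ (Definition \ref{locprim}), so Remark \ref{bij} supplies a free family $e_1,\dots,e_{n-d}\in M_R$ together with a bijection $I\mapsto G_I$ between subsets $I\subseteq\{1,\dots,n-d\}$ and faces $G\supseteq F$ of $P$, with $T_RG_I=T_RF\oplus E_I$ where $E_I:=\bigoplus_{i\in I}Re_i$; write $I(\sigma^b)$ for the subset with $G_{I(\sigma^b)}=F_{\sigma^b}$. Since every edge $\sigma^1\subseteq\sigma^a$ lies in $F$ one checks that inside $N_R$ one has $T_R\sigma^{1\perp}=\bigl((T_RF)^{\vee}\cap(T_R\sigma^1)^{\perp}\bigr)\oplus(T_RF)^{\perp}$, which yields the natural decomposition
\[
\mathcal F^R_p(\sigma^a,\sigma^b)=\bigoplus_{j=0}^{p}\mathcal G_{p-j}\otimes\textstyle\bigwedge^{j}E_{I(\sigma^b)},\qquad \mathcal G_m:=\sum_{\sigma^1\subseteq\sigma^a}\textstyle\bigwedge^{m}\bigl((T_RF)^{\vee}\cap(T_R\sigma^1)^{\perp}\bigr),
\]
under which every cosheaf projection $\mathcal F^R_p(\sigma^a,\sigma^{b+1})\twoheadrightarrow\mathcal F^R_p(\sigma^a,\sigma^b)$ is block-diagonal in $j$, being the identity on $\mathcal G_{p-j}$ tensored with $\bigwedge^{j}$ of the coordinate projection $E_{I(\sigma^{b+1})}\twoheadrightarrow E_{I(\sigma^b)}$. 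Dualizing, the column complex splits as $\bigoplus_{j=0}^{p}\mathcal G_{p-j}^{\vee}\otimes D_j^{\bullet}$, where $D_j^{\bullet}=\bigoplus_{\sigma^b\supseteq\sigma^a}\bigwedge^{j}E_{I(\sigma^b)}$ carries the $\delta_2$-differential built from the sign-twisted inclusions $\bigwedge^{j}E_{I(\sigma^b)}\hookrightarrow\bigwedge^{j}E_{I(\sigma^{b+1})}$, so the whole question reduces to computing $H^{\bullet}(D_j^{\bullet})$.

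For that I would decompose $\bigwedge^{j}E_I=\bigoplus_{J\subseteq I,\,|J|=j}R$ once more, splitting $D_j^{\bullet}=\bigoplus_{|J|=j}D_J^{\bullet}$ where $D_J^{\bullet}$ is the constant-$R$ cochain complex supported on the up-set $\{\sigma^b\supseteq\sigma^a:F_{\sigma^b}\supseteq G_J\}$. Write $\partial_J$ for the union of the faces of $P$ not containing $G_J$ — an antistar, hence a PL ball inside the PL sphere $\partial P$ when $G_J\subsetneq P$, and all of $\partial P$ when $G_J=P$. Then, for $J\neq\emptyset$, $D_J^{\bullet}$ is the relative simplicial cochain complex of $\bigl(\mathrm{Lk}_\Gamma(\sigma^a),\mathrm{Lk}_{\Gamma\cap\partial_J}(\sigma^a)\bigr)$, so $H^{k}(D_J^{\bullet})\cong\widetilde H^{\,k-2}\bigl(\mathrm{Lk}_{\Gamma\cap\partial_J}(\sigma^a)\bigr)$ by the long exact sequence of the pair and the contractibility of the ball $\mathrm{Lk}_\Gamma(\sigma^a)$; while for $J=\emptyset$, $D_\emptyset^{\bullet}$ is the augmented cochain complex of $\mathrm{Lk}_\Gamma(\sigma^a)$ itself, which is acyclic. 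Now $\mathrm{Lk}_{\Gamma\cap\partial_J}(\sigma^a)$ is the link of the simplex $\sigma^a$ in the $(n-1)$-dimensional PL ball or sphere $\partial_J$, hence a PL ball or a PL sphere of dimension $n-a-2$ (with $S^{-1}$ meaning the complex $\{\emptyset\}$), and in all cases its reduced cohomology vanishes in degrees below $n-a-2$. Thus $H^{k}(D_J^{\bullet})=0$ for $k\neq n-a$ and $H^{n-a}(D_J^{\bullet})$ is free; consequently each $D_j^{\bullet}$ is a bounded complex of finitely generated free $R$-modules with cohomology concentrated in degree $n-a$, hence homotopy equivalent to that free module placed in degree $n-a$, so $\mathcal G_{p-j}^{\vee}\otimes D_j^{\bullet}$ again has cohomology only in degree $n-a$, and summing over $j$ gives $H^{k}(C^{\bullet}(\mathcal F^p_R(\sigma^a,*)))=0$ for $k<n-a$, which is the lemma. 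I expect the main obstacle to lie in the boundary case: establishing the sheaf decomposition above cleanly, and, most of all, the PL-topological bookkeeping — recognising $D_J^{\bullet}$ as the asserted relative cochain complex and checking that $\mathrm{Lk}_{\Gamma\cap\partial_J}(\sigma^a)$ has the stated homotopy type and dimension regardless of where $\sigma^a$ sits inside the antistar $\partial_J$.
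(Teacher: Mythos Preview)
Your proposal is correct. The decomposition $\mathcal{F}^R_p(\sigma^a,\sigma^b)=\bigoplus_j\mathcal{G}_{p-j}\otimes\bigwedge^jE_{I(\sigma^b)}$ is exactly what underlies the paper's identity~\eqref{dirsum} (your $\mathcal G_m$ is the cosheaf $\mathcal F^R_m(\sigma^a,\sigma^a)$ once one identifies $T_R\sigma^{1\perp}/T_RF^\perp$ with $(T_RF)^\vee\cap T_R\sigma^{1\perp}$), and your further splitting $D_j^\bullet=\bigoplus_{|J|=j}D_J^\bullet$ mirrors the paper's decomposition by subsets $I$ in Lemma~\ref{homwedge}. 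The genuine difference lies in where the remaining cohomology computation is carried out. The paper invokes \cite{BMR 2024} to collapse the simplex-indexed complex $\{\sigma^b\supseteq\sigma^a\}$ down to the face poset $L_{\sigma^a}=\{F'\supseteq F_{\sigma^a}\}$, which under the bijection $\gamma$ of Remark~\ref{bij} is the Boolean lattice $\mathcal P(\{1,\dots,n-d\})$; each piece then becomes the augmented cochain complex of an abstract simplex $\Delta^{n-d-|J|-1}$, and the vanishing is a one-line combinatorial observation. You instead keep the full simplex indexing and interpret $D_J^\bullet$ as a relative cochain complex of a pair of links, then appeal to PL topology (antistars of faces in the boundary sphere $\partial P$ are balls; links of simplices in PL balls are balls or spheres) to conclude. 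Your route is more self-contained in that it does not outsource the face-poset reduction to \cite{BMR 2024}, but it trades a short combinatorial identification for several topological facts whose careful verification---especially the homotopy type of $\mathrm{Lk}_{\Gamma\cap\partial_J}(\sigma^a)$ regardless of how $\sigma^a$ sits inside $\partial_J$---is precisely the bookkeeping you flag at the end. Those facts are all standard, so the argument goes through.
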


For a $R$-non-singular polytope, this last lemma means that for any $p$ the spectral sequence $E_*^{*,*}(\delta_{2,(p)})$ vanishes at the first page except on a line. As the degree of the $r$-th page is $(-r,1-r)$, it implies that it degenerates at most at the second page, where it is reduced to a line. As it converges toward the cohomology $H^*(\mathcal{F}^p_R)$, we get the following proposition:

\begin{proposition}
    If the polytope $P$ is $R$-non-singular, then for any integer $p$ we have the following identity:
    \begin{equation*}
        H^{q}(\mathcal{F}^{p}_R) = E_2^{n-q,n}(\delta_{2,(p)}) .
    \end{equation*}
    \label{10a}
\end{proposition}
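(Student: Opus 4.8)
The plan is to obtain the proposition from Lemma \ref{5} by the standard degeneration argument for a spectral sequence whose first page is concentrated on a single line. First I would note that the double complex $C^{a,b}(\mathcal{F}^p_R)$ is bounded: every nonzero summand satisfies $1\le a\le b\le n$. Hence the spectral sequence $E^{*,*}_*(\delta_{2,(p)})$ is a bounded spectral sequence of the filtered complex $C^*(\mathcal{F}^p_R)$, so it converges to $H^*(\mathcal{F}^p_R)$ with no convergence subtleties, the associated graded of $H^q(\mathcal{F}^p_R)$ being assembled from the terms $E_\infty^{a,b}$ with $b-a=q$.

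Next I would feed in Lemma \ref{5}: the first page $E_1^{a,b}(\delta_{2,(p)})$ is zero unless $b=n$, in which case it equals $\mathrm{coker}(\delta_{2,(p)}^{a,n-1})$. So already the first page, and hence every later page, is supported on the line $b=n$. Since on the $r$-th page the differential has bidegree $(-r,1-r)$, for every $r\ge 2$ it strictly lowers the second index; hence starting from a class on the line $b=n$ it lands on the line $b=n+1-r<n$, which is zero, while a class hitting the line $b=n$ would have to come from the line $b=n+r-1>n$, also zero since $b\le n$ throughout the complex. Thus all differentials on the pages $r\ge 2$ vanish and the sequence degenerates at the second page: $E_2^{a,b}(\delta_{2,(p)})=E_\infty^{a,b}(\delta_{2,(p)})$.

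Finally I would read off the abutment. For each total degree $q$ there is exactly one pair $(a,b)$ on the line $b=n$ with $b-a=q$, namely $(n-q,n)$, so the filtration on $H^q(\mathcal{F}^p_R)$ has a single nonzero graded piece and there is no extension problem; I conclude $H^q(\mathcal{F}^p_R)\cong E_\infty^{n-q,n}(\delta_{2,(p)})=E_2^{n-q,n}(\delta_{2,(p)})$, which is the asserted identity.

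This deduction is essentially bookkeeping once Lemma \ref{5} is granted; the only point that calls for attention is keeping the two indices and the total degree straight, so that the surviving $E_2$ term is matched with $H^q$ rather than with, say, $H^{n-q}$. The genuine work sits entirely in Lemma \ref{5} (proved separately using the $R$-non-singularity of $P$), not in this argument.
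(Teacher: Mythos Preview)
Your argument is correct and is exactly the one the paper gives: Lemma \ref{5} concentrates $E_1^{*,*}(\delta_{2,(p)})$ on the line $b=n$, the differential of bidegree $(-r,1-r)$ then vanishes for all $r\ge 2$, so the sequence degenerates at page~2 and the single surviving term on each antidiagonal yields $H^q(\mathcal{F}^p_R)=E_2^{n-q,n}(\delta_{2,(p)})$. Your write-up is simply a more detailed version of the paper's one-sentence justification.
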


Similarly, for a $R$-primitive triangulation, the Lemma \ref{4p} means that for any $p$ the spectral sequence $E_{*,*}^*(d^{1,(p)})$ vanishes at the first page except on a colum. As the degree of the $r$-th page is $(1-r,-r)$, it implies that it degenerates at most at the second page, where it is reduced to a column. As it converges toward the cohomoglogy $H_*(\mathcal{F}_p^R)$, we get the following proposition:

\begin{proposition}
    If the triangulation is $R$-primitive, then for any integer $p$ we have the following identity:
    \begin{equation*}
        H_{q}(\mathcal{F}_{p}^R) = E^2_{1,q+1}(d^{1,(p)}) .
    \end{equation*}
    \label{10}
\end{proposition}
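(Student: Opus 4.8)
The plan is to deduce the identity from Lemma~\ref{4p} by the standard degeneration argument for a spectral sequence that is concentrated in a single column at its first page.

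First I would record that the spectral sequence $E^*_{*,*}(d^{1,(p)})$ is bounded: its $0$-th page $C_{a,b}(\mathcal{F}_p^R)$ is supported in the finite range $1\le a\le b\le n$, so it converges to the homology $H_*(\mathcal{F}_p^R)$ of the total complex $C_*(\mathcal{F}_p^R)$, and in each total degree $q$ the module $H_q(\mathcal{F}_p^R)$ carries a finite filtration whose associated graded is $\bigoplus_{b-a=q}E^\infty_{a,b}(d^{1,(p)})$, with no convergence subtlety.

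Next, by Lemma~\ref{4p} the first page $E^1_{a,b}(d^{1,(p)})$ vanishes unless $a=1$, where it equals $\mathrm{ker}(d^{1,(p)}_{1,b})$; hence $E^1_{*,*}(d^{1,(p)})$, and therefore every later page (being a subquotient of the previous one), is concentrated in the column $a=1$. The page-$1$ differential, of bidegree $(0,-1)$, maps this column to itself, so $E^2_{1,b}(d^{1,(p)})$ is by definition its homology there. For $r\ge 2$ the $r$-th differential has bidegree $(1-r,-r)$, so on the column $a=1$ it would have target in column $2-r\le 0$ and source in column $r\ge 2$, both of which are zero; hence all higher differentials vanish and $E^2_{a,b}(d^{1,(p)})=E^\infty_{a,b}(d^{1,(p)})$.

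Finally, in total degree $q$ the only pair $(a,b)$ with $b-a=q$ surviving to $E^\infty$ is $(1,q+1)$, so the filtration on $H_q(\mathcal{F}_p^R)$ has a single nonzero graded piece and the edge homomorphism yields the canonical isomorphism $H_q(\mathcal{F}_p^R)=E^2_{1,q+1}(d^{1,(p)})$. The argument is entirely formal once Lemma~\ref{4p} is available; the only point to handle with care is matching the bidegree conventions of the filtration so that the column $a=1$ is stable under the page-$1$ differential and unreachable by the higher ones --- which is exactly what the stated degree $(1-r,-r)$ of the $r$-th page provides. There is thus no genuine obstacle here beyond Lemma~\ref{4p} itself, whose proof is postponed to the next section.
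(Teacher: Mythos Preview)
Your argument is correct and follows essentially the same route as the paper: invoke Lemma~\ref{4p} to get concentration of $E^1$ on the column $a=1$, then use the bidegree $(1-r,-r)$ of the $r$-th differential to conclude degeneration at the second page and read off the identification with $H_q(\mathcal{F}_p^R)$. You have simply made explicit the boundedness and filtration details that the paper leaves implicit.
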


If the triangulation is $(k,R)$-primitive for an integer $2\leq k \leq n-1$, then for an integer $p<k$, the first page of the spectral sequence $E_{*,*}^*(d^{1,(p)})$ vanishes except on the half colum $a=1, b \leq k$ and on the rectangle $1 \leq a \leq p+1 ~;~ k<b\leq n $. As the differential of the $r$-th page is of degree $(1-r,-r)$, then for any $r\geq 2$ the terms of this rectangle the differential starting from a term $(1,b)$ with $b<k-p$ arrives to a zero (the term $(2-r,b-r)$), and the differential arriving to a term $(1,b)$ with $b<k-p$ starts from a zero (the term $(r,b+r)$). Indeed, as $r\geq 2$, the term $(r,b+r)$ can be non-zero only if $b+r> k$ and $r\leq p+1$, which implies $b\geq k-p$, in contradiction with $b<k-p$. Thus we obtain the identity $E_{1,b}^2(d^{1,(p)})=E_{1,b}^{\infty}(d^{1,(p)})$ for any $b<k-p$. Moreover, for any $b_0<k-p$, at the second page this term $(1,b_0)$ is the only one which is non-zero on the diagonal of terms $(a,b)$ satisfying $a+b=b_0-1$. However it is this diagonal which contributes to the homology group $H_{b_0-1}(\mathcal{F}_p^R)$, and then this homology group is equal to this only one non-zero term $E_{1,b_0}^{\infty}(d^{1,(p)}) = E_{1,b_0}^2(d^{1,(p)})$. We sum up this conclusion in the following proposition:

\begin{proposition}
    If the triangulation is $(k,R)$-primitive for an integer $2 \leq k \leq n-1$, then we have:
    \begin{equation*}
        \forall p<k, ~~ \forall q <k-p-1, ~~ H_q(\mathcal{F}_p^R) = E^2_{1,q+1}(d^{1,(p)}) .
    \end{equation*}
    \label{10c}
\end{proposition}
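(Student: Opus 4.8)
The plan is to feed the shape of the first page $E^1_{*,*}(d^{1,(p)})$ given by Lemma \ref{4} into the convergence of the spectral sequence. Since the triangulation has finitely many simplices, the double complex $C_{*,*}(\mathcal{F}_p^R)$ is bounded, so $E^*_{*,*}(d^{1,(p)})$ converges: for each $q$ the homology group $H_q(\mathcal{F}_p^R)$ carries a finite filtration whose associated graded is $\bigoplus_{b-a=q}E^\infty_{a,b}(d^{1,(p)})$. Hence it is enough to prove, for $p<k$ and $q<k-p-1$, that (a) every term $E^\infty_{a,b}$ on the antidiagonal $b-a=q$ with $a\ge 2$ vanishes, and (b) $E^\infty_{1,q+1}=E^2_{1,q+1}$. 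Granting (a) and (b), the associated graded of $H_q(\mathcal{F}_p^R)$ is concentrated in the single position $(1,q+1)$, there is no extension problem, and $H_q(\mathcal{F}_p^R)=E^2_{1,q+1}(d^{1,(p)})$.

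For (a) I would argue directly from Lemma \ref{4}: a term $E^1_{a,b}(d^{1,(p)})$ with $a\ge 2$ is non-zero only when $2\le a\le p+1$ and $k<b\le n$; on the antidiagonal $b=a+q$ this would force $k<a+q\le(p+1)+q$, hence $q>k-p-1$, contrary to $q<k-p-1$. Thus $E^1_{a,a+q}=0$ for all $a\ge 2$, and as $E^\infty$ is a subquotient of $E^1$, also $E^\infty_{a,a+q}=0$ for all $a\ge 2$.

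For (b) I would check that every page-$r$ differential ($r\ge 2$) touching the position $(1,q+1)$ vanishes. The differential of the $r$-th page has degree $(1-r,-r)$, so the differential out of $E^r_{1,q+1}$ lands in $E^r_{2-r,q+1-r}$, which is zero for $r\ge 2$ since its first index is $\le 0$ whereas every element of $\Omega$ has first index $\ge 1$. The differential into $E^r_{1,q+1}$ comes from $E^r_{r,q+1+r}$, a subquotient of $E^1_{r,q+1+r}$; by Lemma \ref{4} the latter (with $r\ge 2$) is non-zero only if $r\le p+1$ and $k<q+1+r$, but then $k<q+1+r\le q+p+2$, forcing $q\ge k-p-1$, contrary to $q<k-p-1$. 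Hence $E^r_{r,q+1+r}=0$ for all $r\ge 2$, so all differentials in and out of $(1,q+1)$ vanish from page $2$ onward, and $E^2_{1,q+1}=E^\infty_{1,q+1}$. Together with the convergence statement, this finishes the proof.

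I do not expect a genuine obstacle: once Lemma \ref{4} is in hand the whole argument is bookkeeping on the bigraded pages, and the only point requiring care is keeping the inequalities $q<k-p-1$, $a\le p+1$ and $k<b$ straight together with the direction $(1-r,-r)$ of the page-$r$ differentials. (For $p=k-1$ the condition $q<k-p-1$ is vacuous, so the statement is empty there, consistent with it being of substance only for $p\le k-2$.)
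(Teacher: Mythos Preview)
Your proof is correct and follows essentially the same approach as the paper: both arguments use Lemma \ref{4} to locate the possibly non-zero terms of $E^1_{*,*}(d^{1,(p)})$, check that for $r\ge 2$ the differentials of degree $(1-r,-r)$ into and out of $(1,q+1)$ have zero source or target when $q<k-p-1$, and conclude by convergence that the antidiagonal $b-a=q$ contributes only the single term $E^2_{1,q+1}$. Your explicit mention of boundedness of the double complex to justify convergence is a small addition, but otherwise the inequality bookkeeping is identical to the paper's.
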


Let's announce now tree lemma concerning the map $\phi^q$. As for the previous lemma, we will also prove them in a following part.

\begin{lemma}
    (For partial and complete Poincaré duality) If the triangulation is $(k,R)$-primitive for an integer $2 \leq k \leq n$, then the image of the map $\phi^q_{(p)}$ is the following:
    \begin{equation*}
        \forall q \geq n-k,~~~~\mathrm{im}(\phi^q_{(p)}) = \mathrm{ker}(d^{1,(n-1-p)}_{1,n-q}) .
    \end{equation*}
    \label{lebhb}
    Moreover, for $q<n-k$ we still have $d^{1,(n-1-p)}_{1,n-q}\circ\phi^q_{(p)} = 0$.
\end{lemma}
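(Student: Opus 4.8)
I would split the proof into the inclusion $\mathrm{im}(\phi^q_{(p)})\subseteq\ker(d^{1,(n-1-p)}_{1,n-q})$, which holds for every $q$ and contains the ``moreover'' clause, and the reverse inclusion, which needs $(k,R)$-primitivity together with $q\geq n-k$. For both, the starting point is to make $\phi^q_{(p)}$ explicit: since $[\Omega]$ is supported on the cells $(\sigma^1,\sigma^n)$, the cap-product formula shows that $\phi^q_{(p)}(\beta)=\beta\cap[\Omega]$ is supported on the cells $(\sigma^1,\sigma^{n-q})$, with
\[ \big[\beta\cap[\Omega]\big]_{\sigma^1,\sigma^{n-q}}=|\sigma^1|_r\sum_{\sigma^{n-q}\subseteq\sigma^n}\beta_{\sigma^{n-q},\sigma^n}\cap\big(1_{\sigma^1,\sigma^n}\otimes 1_R\big), \]
so $\phi^q_{(p)}$ depends only on the $(n-q,n)$-block of $\beta$, takes values in $C_{1,n-q}(\mathcal{F}^R_{n-1-p})$, and $d^{1,(n-1-p)}_{1,n-q}\circ\phi^q_{(p)}$ takes values in $C_{2,n-q}(\mathcal{F}^R_{n-1-p})$; it therefore suffices to control the latter's components over triangles $\sigma^2\subseteq\sigma^{n-q}$.

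For the inclusion $\mathrm{im}\subseteq\ker$, I would first record that the cap-product commutes with the cosheaf inclusion morphisms $\iota$ that build $d^1$, namely $\iota^{\sigma^2,\gamma}_{\sigma^1,\gamma}(\beta\cap x)=\beta\cap\iota^{\sigma^2,\tau}_{\sigma^1,\tau}(x)$ for $\sigma^1\subseteq\sigma^2\subseteq\gamma\subseteq\tau$; this follows directly from the defining identity of Proposition~\ref{defcap} and Equation~(\ref{yutr}), since enlarging the first simplex does not change the ambient module in which the contraction is computed. Applying $d^{1,(n-1-p)}_{1,n-q}$ to $\phi^q_{(p)}(\beta)$ and moving the inclusions past the cap-products, the component over a triangle $\sigma^2\subseteq\sigma^{n-q}$ becomes, up to a global sign, $\sum_{\sigma^{n-q}\subseteq\sigma^n}\beta_{\sigma^{n-q},\sigma^n}\cap\big[d^{1}_{n-1}([\Omega])\big]_{\sigma^2,\sigma^n}$, which vanishes because $d^{1}_{n-1}([\Omega])=0$ was proved within Lemma~\ref{yt}. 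Thus $d^{1,(n-1-p)}_{1,n-q}\circ\phi^q_{(p)}=0$ for every $q$, which is the ``moreover'' clause and yields $\mathrm{im}(\phi^q_{(p)})\subseteq\ker(d^{1,(n-1-p)}_{1,n-q})$.

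It then remains, under $(k,R)$-primitivity and $q\geq n-k$, to prove surjectivity onto $\ker(d^{1,(n-1-p)}_{1,n-q})$. Since $d^1$ fixes the second index, both $\phi^q_{(p)}$ and $d^{1,(n-1-p)}_{1,n-q}$ decompose over the $(n-q)$-simplices $\sigma^{n-q}$, so it is enough to show that each $\phi_{\sigma^{n-q}}$ surjects onto the top-degree cycles of $C_{\bullet}(\mathcal{F}^R_{n-1-p}(*,\sigma^{n-q}))$; those cycles vanish when $\mathrm{dim}\,F_{\sigma^{n-q}}<n-p$, so one may assume $\mathrm{dim}\,F_{\sigma^{n-q}}\geq n-p$. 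The hypothesis is used twice: $(k,R)$-primitivity entails $(2,R)$-primitivity, hence by Proposition~\ref{jt} that every reduced length $|\sigma^1|_r$ is invertible in $R$; and since $\mathrm{dim}\,\sigma^{n-q}=n-q\leq k$, the simplex $\sigma^{n-q}$ is itself $R$-primitive. Using $1_{\sigma^1,\sigma^n}=\mu(\sigma^n)g(o(\sigma^1))$, the invertibility of the $|\sigma^1|_r$, and the fact that the module $\mathcal{F}^p_R(\sigma^{n-q},\sigma^n)$ is the same for all $\sigma^n\supseteq\sigma^{n-q}$ (because $F_{\sigma^n}=P$), one sees that $\phi_{\sigma^{n-q}}$ factors as a surjection onto $\mathrm{Hom}_R\big(\sum_{\sigma^1\subseteq\sigma^{n-q}}\bigwedge^p T_R\sigma^{1\perp},R\big)$ followed by a map $\psi$ whose $\sigma^1$-component sends $\gamma$ to $|\sigma^1|_r(\gamma\cap(g(o(\sigma^1))\otimes 1_R))$, so $\mathrm{im}(\phi_{\sigma^{n-q}})=\mathrm{im}(\psi)$. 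Contraction against the generator $g(o(\sigma^1))\otimes 1_R$ of the top exterior power $\bigwedge^{n-1}T_R\sigma^{1\perp}$ identifies $\bigwedge^p(T_R\sigma^{1\perp})^{*}$ with $\bigwedge^{n-1-p}T_R\sigma^{1\perp}$, so surjectivity of $\psi$ onto the cycles reduces to the following linear problem: given a top-degree cycle $\alpha$, find one element of $\bigwedge^p M_R$ whose reductions modulo the edge-directions $T_R\sigma^1$ recover all the $\alpha_{\sigma^1}$ after applying the projection $\pi^{\sigma^{n-q}}$. The cycle relations around the triangles of $\sigma^{n-q}$ are precisely the pairwise compatibilities of these edge-local data, the $R$-primitivity of $\sigma^{n-q}$ makes those compatibilities sufficient for a global element of $\bigwedge^p M_R$ to exist, and the construction is propagated over all edges of $\sigma^{n-q}$ via the triangle-connectivity of its edge-graph, exactly as in Lemma~\ref{br} and the corollaries preceding the fundamental-class proposition.

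The crux is this final gluing step: $\alpha$ is indexed by the edges of $\sigma^{n-q}$ whereas a preimage cochain is indexed by the ambient $n$-simplices through $\sigma^{n-q}$, so the two sides are genuinely coupled, and it is the combination of $(k,R)$-primitivity and $q\geq n-k$ — equivalently $\mathrm{dim}\,\sigma^{n-q}\leq k$ — that simultaneously makes the edge-wise contractions isomorphisms and makes a cycle's triangle relations integrable into a single element of $\bigwedge^p M_R$. This is also why, for $q<n-k$, one can only assert the inclusion $\mathrm{im}(\phi^q_{(p)})\subseteq\ker(d^{1,(n-1-p)}_{1,n-q})$.
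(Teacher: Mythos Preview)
Your treatment of the inclusion $\mathrm{im}(\phi^q_{(p)})\subseteq\ker(d^{1,(n-1-p)}_{1,n-q})$ and of the ``moreover'' clause is correct and is essentially the paper's argument: one localises to each $\sigma^{n-q}$, pushes the inclusions through the cap-product, and invokes $d^{1}_{n-1}([\Omega])=0$ from Lemma~\ref{yt}.

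The surjectivity direction, however, has a genuine gap. You reduce correctly to a single $R$-primitive simplex $\sigma^{n-q}$ and to the map $\overline\psi:\bigwedge^p M_R\to C_{n-q-1}(\mathcal{F}^R_{n-1-p}(*,\sigma^{n-q}))$, and you observe that each edge-wise contraction is an isomorphism. But the claim that ``the cycle relations around the triangles are precisely the pairwise compatibilities'' and that ``$R$-primitivity makes those compatibilities sufficient for a global element of $\bigwedge^p M_R$ to exist'' is asserted, not proved. Concretely: you are claiming that a system of $\binom{n-q+1}{2}$ linear constraints on $\gamma\in\bigwedge^p M_R$ (one per edge) is solvable as soon as the $\binom{n-q+1}{3}$ triangle relations hold among the targets. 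This is a statement about the cokernel of a specific linear map, and it does not follow from edge-connectivity alone. Your appeal to Lemma~\ref{br} and to the corollaries before the fundamental-class proposition is misplaced: those results propagate the \emph{vanishing} of a cycle along an edge-path, which is a very different task from \emph{constructing} a global preimage from locally compatible data.

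The paper takes a different route that avoids this gluing problem entirely. It first computes $\mathrm{rank}\,\phi_{\sigma^{n-q},(p)}$ directly (Proposition~\ref{ranphi}), obtaining $\binom{r}{p-n+r}-\binom{r-a}{p-n+r-a}$ with $r=\dim F_{\sigma^{n-q}}$ and $a=\mathrm{rank}\,V$; under $R$-primitivity of $\sigma^{n-q}$ one has $a=n-q$. Independently, Proposition~\ref{theohom} (which rests on the matroidal-fan computation of \cite{JRS 2018}) gives $\mathrm{rank}\,\ker d^{*,\sigma^{n-q}}_{n-q-1}=\binom{r}{n-p}-\binom{r-(n-q)}{n-p}$. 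These two numbers agree, and Proposition~\ref{ranphi} also shows (under Hypothesis~2, hence under $(k,R)$-primitivity via Proposition~\ref{jt}) that $\mathrm{im}\,\phi_{\sigma^{n-q},(p)}$ is \emph{saturated} in the chain group. Rank equality plus saturation then forces $\mathrm{im}\,\phi_{\sigma^{n-q},(p)}=\ker d^{*,\sigma^{n-q}}_{n-q-1}$, and summing over $\sigma^{n-q}$ gives the lemma. Your constructive approach, if it could be completed, would be more elementary in that it would bypass \cite{JRS 2018}; but as written it replaces that input with an unproved cohomological claim of comparable depth.
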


\begin{lemma}
    If the polytope $P$ is $R$-non-singular and if the triangulation satisfies the hypothesis $2$ for the integral domain $R$ (see Definition \ref{max}), then we have the following identity:
    \begin{equation*}
        \mathrm{ker}(\phi^q_{(p)}) \cap C^{n-q,n}(\mathcal{F}^p_R) = \mathrm{im}(\delta_{2,(p)}^{n-q,n-1}).
    \end{equation*}
    \label{ejnf}
\end{lemma}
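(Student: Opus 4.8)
The plan is to reduce the identity to a statement internal to each simplex $\sigma:=\sigma^{n-q}\in\Gamma$, then to a linear-algebra computation in the exterior powers attached to the edges of $\sigma$. First I would note that since $[\Omega]$ is supported on couples $(\sigma^1,\sigma^n)$, the map $\phi^q_{(p)}=\,\cdot\cap[\Omega]$ sees only the summand $C^{n-q,n}(\mathcal{F}^p_R)$ of its source, and splits as a direct sum over $\sigma=\sigma^{n-q}\in\Gamma$ of maps $\phi_\sigma\colon C^q(\mathcal{F}^p_R(\sigma,*))\to C_{n-1-q}(\mathcal{F}^R_{n-1-p}(*,\sigma))$; likewise $\delta^{n-q,n-1}_{2,(p)}=\bigoplus_\sigma\delta^{q-1}_{2,\sigma}$, where $\delta^{q-1}_{2,\sigma}$ is the cohomological differential of the complex $C^*(\mathcal{F}^p_R(\sigma,*))$. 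So it suffices to show $\ker\phi_\sigma=\mathrm{im}\,\delta^{q-1}_{2,\sigma}$ for every $\sigma$. The structural point is that $\mathcal{F}^R_p(\sigma,\tau)$ depends on $\tau$ only through $F_\tau$, that $F_\tau=P$ for every $n$-dimensional $\tau\supseteq\sigma$ \emph{and} for every interior $\tau\supseteq\sigma$, and that accordingly the relevant $(\iota_{\sigma,\tau}^{\tau})^*$ are all the same surjection $\bigwedge^p M_R\twoheadrightarrow V_p$, where $V_p:=\mathcal{F}^p_R(\sigma,\tau)=(V_p^{\vee})^*$, $V_p^{\vee}:=\mathcal{F}^R_p(\sigma,\tau)=\sum_{\sigma^1\subseteq\sigma}\bigwedge^p T_R\sigma^{1\perp}$, is the common module. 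Feeding the explicit shape of $[\Omega]$ from Lemma \ref{yt} (with $\mu$, $g(o(\sigma^1))$, $1_{\sigma^1,\sigma^n}=\mu(\sigma^n)g(o(\sigma^1))$) into Proposition \ref{defcap}, one gets that $\phi_\sigma$ factors as $\beta\mapsto S(\beta):=\sum_{\sigma\subseteq\sigma^n}\mu(\sigma^n)\,\beta_{\sigma,\sigma^n}\in V_p$ followed by a \emph{detection} map $D\colon V_p\to C_{n-1-q}(\mathcal{F}^R_{n-1-p}(*,\sigma))$, $D(v)_{\sigma^1}=|\sigma^1|_r\,\pi^{\sigma}\!\big(\tilde v.(g(o(\sigma^1))\otimes 1_R)\big)$, $\pi^\sigma$ being the projection $\bigwedge^{n-1-p}N_R\twoheadrightarrow\bigwedge^{n-1-p}(N_R/T_RF_\sigma^{\perp})$; as Hypothesis $2$ makes each $|\sigma^1|_r$ invertible, $\ker D=\{v:\pi^\sigma(\tilde v.(g(o(\sigma^1))\otimes 1_R))=0\text{ for all edges }\sigma^1\subseteq\sigma\}$.

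I would then establish two auxiliary facts. \emph{(i) $\ker S\subseteq\mathrm{im}\,\delta^{q-1}_{2,\sigma}$.} The module $\ker S$ is generated by the chains supported on two $n$-simplices $\sigma^n_1,\sigma^n_2\supseteq\sigma$ with opposite weights; by an argument like that of Lemma \ref{ur} (applied inside the star of $\sigma$) any two $n$-simplices containing $\sigma$ are linked by a path of adjacencies through interior $(n-1)$-faces containing $\sigma$, and across such a face $\delta^{q-1}_{2,\sigma}$ is exactly transport of a value in $V_p$ from one side to the other (the relevant $\mathcal{F}^p_R(\sigma,\sigma^{n-1})$ being again $V_p$), so each generator is $\delta^{q-1}_{2,\sigma}$ of a telescoping chain. \emph{(ii) $S(\mathrm{im}\,\delta^{q-1}_{2,\sigma})=\ker D$.} Computing $S\circ\delta^{q-1}_{2,\sigma}$, the interior $(n-1)$-faces contribute $0$ thanks to the incidence relation $\rho(\sigma^{n-1},\sigma^n_1)\mu(\sigma^n_1)=-\rho(\sigma^{n-1},\sigma^n_2)\mu(\sigma^n_2)$ of Lemma \ref{yt}, leaving $S(\mathrm{im}\,\delta^{q-1}_{2,\sigma})=\sum_{\sigma^{n-1}\subseteq\partial P,\ \sigma^{n-1}\supseteq\sigma}\mathrm{im}\big(\iota_{\sigma^{n-1}}\colon\mathcal{F}^p_R(\sigma,\sigma^{n-1})\hookrightarrow V_p\big)$. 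By the local $R$-non-singularity of $P$ on $F_\sigma$ (Definition \ref{locprim}) and Remark \ref{bij}, the facets of $P$ containing $F_\sigma$ are exactly $m:=n-\dim F_\sigma$ of them, $G_1,\dots,G_m$ with $T_RG_i^{\perp}=Re_i^*$, each realised as $F_{\sigma^{n-1}}$ for some boundary $(n-1)$-simplex of $\Gamma$ containing $\sigma$, and $\mathrm{im}(\iota_{G_i})=(e_i^*\wedge V_{p-1}^{\vee})^{\perp}$, the annihilator in $V_p$ of $e_i^*\wedge V_{p-1}^{\vee}$; hence $S(\mathrm{im}\,\delta^{q-1}_{2,\sigma})=\sum_{i=1}^m(e_i^*\wedge V_{p-1}^{\vee})^{\perp}$, and unwinding $\tilde v.(g(o(\sigma^1))\otimes 1_R)$ through the exterior duality isomorphism $\bigwedge^p(M_R/T_R\sigma^1)\xrightarrow{\sim}\bigwedge^{n-1-p}T_R\sigma^{1\perp}$ identifies this sum with $\ker D$.

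Granting (i) and (ii): if $\phi_\sigma(\beta)=0$ then $S(\beta)\in\ker D=S(\mathrm{im}\,\delta^{q-1}_{2,\sigma})$, so $\beta-\delta^{q-1}_{2,\sigma}\gamma\in\ker S\subseteq\mathrm{im}\,\delta^{q-1}_{2,\sigma}$ for a suitable $\gamma$, whence $\beta\in\mathrm{im}\,\delta^{q-1}_{2,\sigma}$; the reverse inclusion $\mathrm{im}\,\delta^{q-1}_{2,\sigma}\subseteq\ker\phi_\sigma$ is the easy half of (ii) (equivalently it follows from the Leibniz identity $d\circ(\,\cdot\cap[\Omega])=\pm(\,\cdot\cap[\Omega])\circ\delta$ decomposed along the double complex, which also yields the complementary relation $d^{1,(n-1-p)}_{1,n-q}\circ\phi^q_{(p)}=0$ of Lemma \ref{lebhb}). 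The main obstacle is the non-easy half of (ii), the inclusion $\ker D\subseteq\sum_{i=1}^m(e_i^*\wedge V_{p-1}^{\vee})^{\perp}$: concretely, if a functional $v$ on $\sum_{\sigma^1\subseteq\sigma}\bigwedge^pT_R\sigma^{1\perp}$ satisfies $\iota_{e_1^*}\!\cdots\iota_{e_m^*}\big(v|_{\bigwedge^pT_R\sigma^{1\perp}}\big)=0$ for every edge $\sigma^1$ of $\sigma$, one must produce a \emph{single} decomposition $v=\sum_i v_i$, valid simultaneously for all edges, with $v_i$ annihilating $e_i^*\wedge V_{p-1}^{\vee}$. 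Carrying out this gluing is where the fact that the edges of a simplex span its tangent space, together with Hypothesis $2$, are used essentially; everything else is bookkeeping with the cosheaf maps and the signs $\rho,\mu$.
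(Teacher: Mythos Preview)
Your reduction to individual simplices $\sigma=\sigma^{n-q}$ and the factorization $\phi_\sigma=D\circ S$ through the ``summing'' map $S$ are correct and coincide with the paper's setup (its $\overline\psi\circ\epsilon$ is your $D\circ S$ after lifting to $\bigwedge^p M_R$). The divergence is in how the local identity $\ker\phi_\sigma=\mathrm{im}\,\delta^{q-1}_{2,\sigma}$ is established.

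The paper does \emph{not} attempt your steps (i) and (ii). It first shows $\phi_\sigma\circ\delta^{q-1}_{2,\sigma}=0$ by a direct computation reducing to $d^{2,(n-1)}_{n-1}[\Omega]=0$, which gives the inclusion $\mathrm{im}\,\delta\subseteq\ker\phi_\sigma$. It then closes by a pure rank-and-saturation argument: Proposition~\ref{theocohom} (using only the $R$-non-singularity of $P$ at $F_\sigma$) shows $H^{n-a}(\mathcal F^p_R(\sigma,*))\cong\mathcal F_R^{\,p-n+\dim F_\sigma}(\sigma,\sigma)$ is torsion-free, hence $\mathrm{im}\,\delta$ is saturated of the expected corank; Proposition~\ref{ranphi} (using Hypothesis~2) computes $\mathrm{rank}\,\phi_\sigma$ and shows $\mathrm{im}\,\phi_\sigma$ is saturated. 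The two ranks match, so the inclusion is an equality. No star-connectivity argument and no explicit description of $\ker D$ are needed.

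Your alternative via (i) and (ii) is a legitimate program, and (i) together with the easy half of (ii) go through. But your stated intuition for the hard half of (ii) has a genuine flaw: under Hypothesis~2 alone the edges of $\sigma$ need \emph{not} span $T_R\sigma$. For instance the triangle on $(0,0),(1,0),(2,5)$ has all three edges of integral length~$1$ (so any triangulation with $|\Gamma|=1$ containing it satisfies Hypothesis~2 over $\mathbb Z$), yet $V=\sum_{\sigma^1\subseteq\sigma}T_{\mathbb Z}\sigma^1=\mathbb Z(1,0)\oplus\mathbb Z(0,5)$ has index~$5$ in $T_{\mathbb Z}\sigma=\mathbb Z^2$. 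Proposition~\ref{ranphi} indeed expresses $\mathrm{rank}\,\phi_\sigma$ in terms of $\mathrm{rank}\,V$, not $\dim\sigma$; so the ``gluing'' you describe must be organised around $V$ rather than $T_R\sigma$, and your formula $\mathrm{im}(\iota_{G_i})=(e_i^*\wedge V_{p-1}^\vee)^\perp$ requires justification (one only has $\ker\pi_i=V_p^\vee\cap(e_i^*\wedge\bigwedge^{p-1}N_R)$ a priori). Once these corrections are made, completing your hard inclusion is essentially equivalent to reproving the content of Propositions~\ref{theocohom} and~\ref{ranphi}; the paper's rank-and-saturation route avoids that duplication.
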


\begin{lemma}
    For any lattice polytope and triangulation with integral vertices we have the following relation of commutation:
    \begin{equation*}
        \phi^{q+1}_{(p)} \circ \delta_{1,(p)}^q = (-1)^{q+1} d^{2,(n-1-p)}_{n-q-1} \circ \phi^q_{(p)}.
    \end{equation*}
    \label{8}
\end{lemma}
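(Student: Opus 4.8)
The plan is to prove the identity componentwise in $C_{n-q-2}(\mathcal{F}_{n-1-p}^{R})$: both sides lie there, so it suffices to check that, for every $(\sigma^1,\sigma^{n-q-1})\in\Omega$ and every cochain $\beta\in C^{q}(\mathcal{F}^{p}_R)$, the $\mathcal{F}_{n-1-p}^{R}(\sigma^1,\sigma^{n-q-1})$-components of $\phi^{q+1}_{(p)}(\delta_{1,(p)}^{q}\beta)$ and of $(-1)^{q+1}\,d^{2,(n-1-p)}_{n-q-1}(\phi^{q}_{(p)}\beta)$ coincide. By $R$-linearity and the direct-sum description of $C^{q}(\mathcal{F}^{p}_R)$ one may assume $\beta$ supported on a single pair $(\sigma^{n-q},\sigma^{n})$, and fix a lift $\overline{\beta}\in\bigwedge^{p}T_RF_{\sigma^{n}}$ with $(\iota_{\sigma^{n-q},\sigma^{n}}^{\sigma^{n}})^{*}(\overline{\beta})=\beta_{\sigma^{n-q},\sigma^{n}}$. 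Since $[\Omega]$ is supported on the pairs $(\sigma^1,\sigma^{n})$, the chain $\phi^{q}_{(p)}\beta$ is then supported on the pairs $(\sigma^1,\sigma^{n-q})$ with $\sigma^1\subseteq\sigma^{n-q}$ and $\phi^{q+1}_{(p)}(\delta_{1,(p)}^{q}\beta)$ on the pairs $(\sigma^1,\sigma^{n-q-1})$ with $\sigma^1\subseteq\sigma^{n-q-1}$ and $\sigma^{n-q-1}$ a facet of $\sigma^{n-q}$; so both sides vanish unless $\sigma^1\subseteq\sigma^{n-q-1}\subseteq\sigma^{n-q}$, a flag I fix from now on.

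For the left-hand side I would expand $\delta_{1,(p)}^{q}$, which is dual to $d^{1,(p)}$ and hence built from the transposed inclusion morphisms of $\mathcal{F}^{p}_R$ together with the balancing signature $\eta$ of $\Omega$. Using the compatibility $(\iota^{\sigma^{n-q},\sigma^{n}}_{\sigma^{n-q-1},\sigma^{n}})^{*}\circ(\iota^{\sigma^{n}}_{\sigma^{n-q},\sigma^{n}})^{*}=(\iota^{\sigma^{n}}_{\sigma^{n-q-1},\sigma^{n}})^{*}$ of the restriction maps, this yields $(\delta_{1,(p)}^{q}\beta)_{\sigma^{n-q-1},\sigma^{n}}=\eta_{1}\,(\iota^{\sigma^{n}}_{\sigma^{n-q-1},\sigma^{n}})^{*}(\overline{\beta})$, where $\eta_{1}$ is the $\eta$-weight of the covering relation of $\Omega$ joining $(\sigma^{n-q},\sigma^{n})$ and $(\sigma^{n-q-1},\sigma^{n})$. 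Capping with $[\Omega]$, only the $\sigma^{n}$-summand survives, and the defining identity of the cap product (Proposition \ref{defcap}, with $\gamma=\sigma^{n-q-1}$, $\tau=\sigma^{n}$) rewrites the transposed inclusion as a contraction followed by a projection:
\[
\bigl[\phi^{q+1}_{(p)}(\delta_{1,(p)}^{q}\beta)\bigr]_{\sigma^1,\sigma^{n-q-1}}=\eta_{1}\,\pi^{\sigma^{n-q-1}}_{\sigma^{n}}\!\bigl(\overline{\beta}\,.\,(|\sigma^1|_r\,1_{\sigma^1,\sigma^{n}}\otimes 1_R)\bigr).
\]
On the right-hand side, Proposition \ref{defcap} directly gives $[\phi^{q}_{(p)}\beta]_{\sigma^1,\sigma^{n-q}}=\pi^{\sigma^{n-q}}_{\sigma^{n}}(\overline{\beta}\,.\,(|\sigma^1|_r\,1_{\sigma^1,\sigma^{n}}\otimes 1_R))$; then $d^{2,(n-1-p)}_{n-q-1}$ acts on this support by the cosheaf projection $\pi^{\sigma^{n-q-1}}_{\sigma^{n-q}}$ scaled by the $\eta$-weight $\eta_{2}$ of the covering joining $(\sigma^1,\sigma^{n-q})$ and $(\sigma^1,\sigma^{n-q-1})$, and the transitivity $\pi^{\sigma^{n-q-1}}_{\sigma^{n-q}}\circ\pi^{\sigma^{n-q}}_{\sigma^{n}}=\pi^{\sigma^{n-q-1}}_{\sigma^{n}}$ gives
\[
\bigl[d^{2,(n-1-p)}_{n-q-1}(\phi^{q}_{(p)}\beta)\bigr]_{\sigma^1,\sigma^{n-q-1}}=\eta_{2}\,\pi^{\sigma^{n-q-1}}_{\sigma^{n}}\!\bigl(\overline{\beta}\,.\,(|\sigma^1|_r\,1_{\sigma^1,\sigma^{n}}\otimes 1_R)\bigr).
\]

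Comparing the two displays, the lemma collapses to the combinatorial sign identity $\eta_{1}=(-1)^{q+1}\eta_{2}$. Here $\eta_{1}$ is the $\eta$-weight of a covering that enlarges the first entry of a pair in $\Omega$, $\eta_{2}$ that of a covering that shrinks the second entry; by the two clauses defining $\eta$ from the simplicial balancing signature $\rho$, both equal $\pm\rho(\sigma^{n-q-1},\sigma^{n-q})$, but the clause attached to first-entry coverings carries an extra $(-1)^{\dim}$-twist that the clause attached to second-entry coverings does not, and, evaluated at the cubical cells in play (of dimensions $q$ and $q+1$), this asymmetry is precisely what yields $(-1)^{q+1}$. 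I expect this sign count to be the one genuinely delicate point: one must carefully track how transposing $d^{1,(p)}$ redistributes the $\eta$-weights among codimension-one cofaces and keep the $(-1)^{\dim}$-twist in $\eta$ consistent with the convention $\delta^{q}=(d_{q+1})^{*}$. It is worth noting that no hypothesis on $P$ or $\Gamma$ enters, and that the cycle property $d_{n-1}[\Omega]=0$ is not used here — this commutation is a formal, local consequence of the definitions of $\cap$, $[\Omega]$, $\delta_{1,(p)}$ and $d^{2,(n-1-p)}$ — the cycle property being instead needed elsewhere in the argument.
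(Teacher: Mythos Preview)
Your approach is correct and is essentially the same as the paper's, just organized differently. The paper introduces an auxiliary surjection $\gamma^q:\bigoplus_{(\sigma^{n-q},\sigma^n)}\bigwedge^p M_R\twoheadrightarrow C^{n-q,n}(\mathcal{F}^p_R)$ (the direct sum of the maps $(\iota^{\sigma^n}_{\sigma^{n-q},\sigma^n})^*$), computes both sides after precomposing with $\gamma^q$, and then invokes surjectivity; your choice of a lift $\overline{\beta}$ of a cochain supported on a single pair is exactly the componentwise version of this. Both computations unwind the cap product via Proposition~\ref{defcap}, use the transitivity $\pi^{\sigma^{n-q-1}}_{\sigma^{n-q}}\circ\pi^{\sigma^{n-q}}_{\sigma^{n}}=\pi^{\sigma^{n-q-1}}_{\sigma^{n}}$, and reduce the identity to the comparison of the two $\eta$-weights attached to the coverings $(\sigma^{n-q},\sigma^n)\!\to\!(\sigma^{n-q-1},\sigma^n)$ and $(\sigma^1,\sigma^{n-q})\!\to\!(\sigma^1,\sigma^{n-q-1})$; in the paper this sign $(-1)^{q+1}$ is simply written in at the step where $\delta_1^{n-q,n}$ is expanded. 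Your caveat that the sign bookkeeping is the only delicate point is exactly right, and your observation that neither $d_{n-1}[\Omega]=0$ nor any hypothesis on $P$ or $\Gamma$ enters here matches the paper's treatment.
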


\subsection{Proof of the complete Poincaré duality}

\begin{corollary}
    (Corollary of Lemmas \ref{4p}, \ref{5} \ref{lebhb} and \ref{ejnf}). If the polytope $P$ is $R$-non-singular and if the triangulation is $R$-primitive, then the maps $\phi^*$ induce the following isomorphisms between the terms of the first pages of the spectral sequences $E_*^{*,*}(\delta_{2,(p)})$ and $E^*_{*,*}(d^{1,(n-1-p)})$:
    \begin{equation*}
        \Tilde{\phi}^q_{1,(p)}: E_1^{n-q,n}(\delta_{2,(p)}) \xrightarrow{\sim} E^1_{1,n-q}(d^{1,(n-1-p)}).
    \end{equation*}
\end{corollary}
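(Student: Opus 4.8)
The plan is to deduce this corollary purely by combining the four quoted lemmas, after checking that their hypotheses hold. First I would note that an $R$-primitive triangulation is in particular $(2,R)$-primitive, so Proposition \ref{jt} shows that $\Gamma$ satisfies Hypothesis $2$ for $R$; together with the standing assumption that $P$ is $R$-non-singular, this puts all of Lemmas \ref{5}, \ref{4p}, \ref{lebhb}, \ref{ejnf} at our disposal, the last two applied with $k=n$ (so $n-k=0$ and Lemma \ref{lebhb} applies for every $q\geq 0$). By Lemma \ref{5} the term $E_1^{n-q,n}(\delta_{2,(p)})$ equals $\mathrm{coker}(\delta_{2,(p)}^{n-q,n-1})$, a quotient of $C^{n-q,n}(\mathcal{F}^p_R)$; by Lemma \ref{4p} the term $E^1_{1,n-q}(d^{1,(n-1-p)})$ equals $\mathrm{ker}(d^{1,(n-1-p)}_{1,n-q})$, a submodule of $C_{1,n-q}(\mathcal{F}^R_{n-1-p})$. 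So it suffices to produce a canonical isomorphism $\mathrm{coker}(\delta_{2,(p)}^{n-q,n-1})\xrightarrow{\sim}\mathrm{ker}(d^{1,(n-1-p)}_{1,n-q})$ induced by $\phi^q_{(p)}$.

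Next I would observe at the chain level that the cap product with $[\Omega]$ carries $C^{n-q,n}(\mathcal{F}^p_R)$ into $C_{1,n-q}(\mathcal{F}^R_{n-1-p})$ and depends only on that summand of $C^q(\mathcal{F}^p_R)$: in the formula $[\beta\cap[\Omega]]_{\sigma^a,\sigma^c}=\sum_{\sigma^c\subseteq\sigma^{c+q}}\beta_{\sigma^c,\sigma^{c+q}}\cap[\Omega]_{\sigma^a,\sigma^{c+q}}$ the element $[\Omega]$ is supported on pairs of the form $(\sigma^1,\sigma^n)$, which forces $a=1$ and $\sigma^{c+q}=\sigma^n$, hence $c=n-q$. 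Thus $\phi^q_{(p)}$ restricts to a map $\phi^q_{(p)}\colon C^{n-q,n}(\mathcal{F}^p_R)\to C_{1,n-q}(\mathcal{F}^R_{n-1-p})$ whose image, by Lemma \ref{lebhb}, is exactly $\mathrm{ker}(d^{1,(n-1-p)}_{1,n-q})$ and whose kernel, by Lemma \ref{ejnf}, is exactly $\mathrm{im}(\delta_{2,(p)}^{n-q,n-1})$. The first isomorphism theorem for $R$-modules then yields an isomorphism $C^{n-q,n}(\mathcal{F}^p_R)/\mathrm{im}(\delta_{2,(p)}^{n-q,n-1})\xrightarrow{\sim}\mathrm{ker}(d^{1,(n-1-p)}_{1,n-q})$; I would take this induced map as the definition of $\tilde\phi^q_{1,(p)}$, and the identifications of the previous paragraph turn it into the claimed isomorphism $E_1^{n-q,n}(\delta_{2,(p)})\xrightarrow{\sim}E^1_{1,n-q}(d^{1,(n-1-p)})$.

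Essentially all the substance has been pushed into the four lemmas, so I do not expect a genuine obstacle at the level of this corollary. The one point deserving care — and which I would spell out — is the index bookkeeping: verifying that the $(n-q,n)$-entry of the $\delta_2$-spectral sequence and the $(1,n-q)$-entry of the $d^1$-spectral sequence really are the cokernel and kernel written above, using that $b=n$ is the top row of the cohomological double complex and $a=1$ the bottom row of the homological one, so that no other page-$0$ differential contributes. One also checks that $\phi^q_{(p)}$ descends to page $1$, which here is immediate since on the target we take a kernel rather than a further quotient and $\mathrm{im}(\phi^q_{(p)})$ already lies inside that kernel by Lemma \ref{lebhb}.
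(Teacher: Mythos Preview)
Your proof is correct and follows essentially the same approach as the paper: verify the hypotheses via Proposition \ref{jt}, use Lemmas \ref{5} and \ref{4p} to identify the spectral sequence terms with the relevant cokernel and kernel, then invoke Lemmas \ref{lebhb} and \ref{ejnf} to see that the restriction of $\phi^q_{(p)}$ to $C^{n-q,n}(\mathcal{F}^p_R)$ has precisely that image and kernel, and conclude by the first isomorphism theorem. Your added paragraph explaining why $\phi^q_{(p)}$ is supported on $C^{n-q,n}$ and lands in $C_{1,n-q}$ (via the support of $[\Omega]$) is a welcome clarification that the paper leaves implicit.
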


\begin{proof}
    As a $R$-primitive triangulation is a $n$-primitive triangulation, we have that the statement of the Lemma \ref{ejnf} is valid for any $q$ in the case of a $R$-primitive triangulation:
    \begin{equation*}
        \forall p,\forall q, ~~~~\mathrm{im}(\phi^q_{(p)}) = \mathrm{ker}(d^{1,(n-1-p)}_{1,n-q}).
    \end{equation*}
    Moreover the Lemma \ref{lebhb} is also valid because the $R$-primitivity of the triangulation implies the hypothesis 2 for the integral domain $R$ (see Proposition \ref{jt}). Then it implies that the restriction of $\phi^q_{(p)}$ to $C^{n-q,n}(\mathcal{F}^p_R)$ can be factorised into an isomorphism defined from $\mathrm{coker}(\delta_{2,(p)}^{n-q-1,n})$ to its image. However from the last equation we know that its image is $\mathrm{ker}(d^{1,(n-1-p)}_{1,n-q})$, thus we have an isomorphism from $\mathrm{coker}(\delta_{2,(p)}^{n-q-1,n})$ to $\mathrm{ker}(d^{1,(n-1-p)}_{1,n-q})$. Then we can conclude the proof using Lemmas \ref{4p} and \ref{5}.
\end{proof}

\begin{proposition}
    If the polytope $P$ is $R$-non-singular and if the triangulation is $R$-primitive, then the maps $\phi^*$ induce the following isomorphisms between the terms of the second pages of the spectral sequences $E_*^{*,*}(\delta_{2,(p)})$ and $E^*_{*,*}(d^{1,(n-1-p)})$:
    \begin{equation*}
        \Tilde{\phi}^q_{2,(p)}: E_2^{n-q,n}(\delta_{2,(p)}) \xrightarrow{\sim} E^2_{1,n-q}(d^{1,(n-1-p)}).
    \end{equation*}
    \label{11}
\end{proposition}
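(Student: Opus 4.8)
I would treat this exactly as the second-page upgrade of the preceding corollary: the corollary already gives, for every $q$, an isomorphism $\tilde\phi^q_{1,(p)}\colon E_1^{n-q,n}(\delta_{2,(p)})\xrightarrow{\sim}E^1_{1,n-q}(d^{1,(n-1-p)})$, and the plan is to show these isomorphisms intertwine the first-page differentials (up to a harmless sign), so that they descend to isomorphisms on the homology of the first pages, which is precisely the second pages.

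First I would make the two second pages explicit. By Lemma \ref{5} the page $E_1^{a,b}(\delta_{2,(p)})$ is supported on the line $b=n$, where $E_1^{a,n}(\delta_{2,(p)})=\mathrm{coker}(\delta_{2,(p)}^{a,n-1})$; since on $E_1$ the total differential $\delta_{(p)}=\delta_{1,(p)}+\delta_{2,(p)}$ acts through its $\delta_{1,(p)}$-part (the $\delta_{2,(p)}$-part being zero on $\delta_{2,(p)}$-homology), the first-page differential $\partial_1(\delta_{2,(p)})\colon E_1^{a,n}\to E_1^{a-1,n}$ is the map induced by $\delta_{1,(p)}$ on these cokernels, and $E_2^{n-q,n}(\delta_{2,(p)})$ is the homology of this one-line complex at $a=n-q$. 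Dually, by Lemma \ref{4p} the page $E^1_{a,b}(d^{1,(n-1-p)})$ is supported on the column $a=1$, where $E^1_{1,b}=\ker(d^{1,(n-1-p)}_{1,b})$, the first-page differential $\partial_1(d^{1,(n-1-p)})\colon E^1_{1,b}\to E^1_{1,b-1}$ is induced by $d^{2,(n-1-p)}$, and $E^2_{1,n-q}(d^{1,(n-1-p)})$ is the homology of that one-column complex at $b=n-q$.

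Next I would transport the chain-level identity of Lemma \ref{8} to these subquotients. Recall, from the proof of the preceding corollary and from Lemmas \ref{lebhb} and \ref{ejnf}, that $\phi^q_{(p)}$ factors through the projection onto $C^{n-q,n}(\mathcal{F}^p_R)$, vanishes there exactly on $\mathrm{im}(\delta_{2,(p)}^{n-q,n-1})$, and maps $C^{n-q,n}(\mathcal{F}^p_R)$ onto $\ker(d^{1,(n-1-p)}_{1,n-q})\subseteq C_{1,n-q}(\mathcal{F}^{n-1-p}_R)$ — which is how $\tilde\phi^q_{1,(p)}$ is built. Applying Lemma \ref{8} to a representative $\bar\beta\in C^{n-q,n}(\mathcal{F}^p_R)$ of a class in $E_1^{n-q,n}(\delta_{2,(p)})$ gives
\begin{equation*}
\phi^{q+1}_{(p)}\bigl(\delta_{1,(p)}^{q}\,\bar\beta\bigr)=(-1)^{q+1}\,d^{2,(n-1-p)}_{n-q-1}\bigl(\phi^{q}_{(p)}\,\bar\beta\bigr),
\end{equation*}
and passing to classes this reads $\tilde\phi^{q+1}_{1,(p)}\circ\partial_1(\delta_{2,(p)})=(-1)^{q+1}\,\partial_1(d^{1,(n-1-p)})\circ\tilde\phi^{q}_{1,(p)}$ on the first pages. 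As a sign is a unit of $R$, this commuting-up-to-sign ladder of isomorphisms carries kernels to kernels and images to images, hence induces an isomorphism on homology at each spot; that induced isomorphism is the desired $\tilde\phi^q_{2,(p)}\colon E_2^{n-q,n}(\delta_{2,(p)})\xrightarrow{\sim}E^2_{1,n-q}(d^{1,(n-1-p)})$.

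The substantive points are bookkeeping rather than computation: confirming that the first-page differentials really are the maps induced by $\delta_{1,(p)}$ and $d^{2,(n-1-p)}$ on the cokernel/kernel descriptions above (the standard edge behaviour of the spectral sequence of a double complex), checking that $\phi^q_{(p)}$ is compatible with both bigradings so that Lemma \ref{8} descends to the subquotients, and making sure the sign family $(-1)^{q+1}$ — which, if one prefers, can be absorbed by rescaling the $\tilde\phi^q_{1,(p)}$ into a genuine chain map — does not obstruct passage to homology. Since all of this is already encoded in Lemmas \ref{4p}, \ref{5}, \ref{8}, \ref{lebhb}, \ref{ejnf} and the preceding corollary, the argument requires no fresh calculation; the main place where care is needed is precisely the claim that $\tilde\phi_{1}$ is a morphism of the first-page complexes, i.e.\ the descent of Lemma \ref{8}.
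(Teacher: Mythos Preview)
Your proposal is correct and follows essentially the same approach as the paper: both use the first-page isomorphisms from the preceding corollary together with the commutation relation of Lemma~\ref{8} to show that the maps $\tilde\phi^q_{1,(p)}$ intertwine the first-page differentials up to the sign $(-1)^{q+1}$, and then pass to homology. Your write-up is in fact slightly more explicit than the paper's in spelling out why the first-page differentials are induced by $\delta_{1,(p)}$ and $d^{2,(n-1-p)}$ on the cokernel and kernel descriptions coming from Lemmas~\ref{5} and~\ref{4p}.
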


\begin{proof}
    The consequence of Lemmas $\ref{lebhb},\ref{ejnf}$ and $\ref{8}$ is the following global relation of commutation valid for all $p$ and $q$:
    \begin{equation*}
        \phi^{q+1}_{(p)} \circ \delta_{(p)}^q = (-1)^{q+1}d^{(n-1-p)}_{n-q-1} \circ \phi^q_{(p)}.
    \end{equation*}
    
    As the spectral sequences $E_*^{*,*}(\delta_{2,(p)})$ and $E^*_{*,*}(d^{1,(n-1-p)})$ come from a filtration of the cohomology $H^*(\mathcal{F}^p_R))$ and a filtration of the homology $H_*(\mathcal{F}_{n-1-p}^R))$ respectively, the differentials of their successive pages are induced by the associated differentials of complexes $\delta_{(p)}^*$ and $d^{(n-1-p)}_*$ respectively. Let's then denote $\partial_{r,(p)}^{*,*}$ and $\partial^{r,(n-1-p)}_{*,*}$ the differentials of the $r$-th pages $E_r^{*,*}(\delta_{2,(p)})$ and $E^r_{*,*}(d^{1,(n-1-p)})$ respectively. Thus, as we know from the last proposition that the maps $\phi^*_{(p)}$ induce isomorphisms $\Tilde{\phi}^*_{1,(p)}$ between the first pages of the corresponding spectral sequences, we get the following relation between the differential of the first pages $\partial_{1,(p)}^{*,*}$ and $\partial^{1,(n-1-p)}_{*,*}$:
    \begin{equation*}
        \partial^{1,(n-1-p)}_{1,n-q} = (-1)^{q+1} \Tilde{\phi}^{q+1}_{1,(p)}\circ \partial_{1,(p)}^{n-q,n} \circ (\Tilde{\phi}^q_{1,(p)})^{-1}.
    \end{equation*}

    Then the isomorphisms $\Tilde{\phi}^*_{1,(p)}$ are not only isomorphisms between the terms of the first pages of the spectral sequences, but they are also compatible with the differentials $\partial_{1,(p)}^{*,*}$ and $\partial^{1,(n-1-p)}_{*,*}$. Therefore they induce isomorphisms at the second pages:

    \begin{equation*}
        \bar{\phi}^q_{2,(p)}: E_2^{n-q,n}(\delta_{2,(p)}) \xrightarrow{\sim} E^2_{1,n-q}(d^{1,(n-1-p)}).
    \end{equation*}
\end{proof}

To conclude, the Theorem \ref{comp} of Complete Poincaré duality comes directly from the Propositions \ref{10a}, \ref{10} and \ref{11}.
\subsection{Proof of the partial Poincaré duality}
\begin{corollary}
    (Corollary of Lemmas \ref{4}, \ref{5} \ref{lebhb} and \ref{ejnf}). If the polytope $P$ is $R$-non-singular and if the triangulation is $(k,R)$-primitive for an integer $2\leq k \leq n-1$, then for any $p \geq n-k$ the maps $\phi^*_{(p)}$ induce the following isomorphisms between some of the terms of the first pages of the spectral sequences $E_*^{*,*}(\delta_{2,(p)})$ and $E^*_{*,*}(d^{1,(n-1-p)})$:
    \begin{equation*}
       \forall p \geq n-k,~~\forall q\geq n-k,~~ \Tilde{\phi}^q_{1,(p)}: E_1^{n-q,n}(\delta_{2,(p)}) \xrightarrow{\sim} E^1_{1,n-q}(d^{1,(n-1-p)}).
    \end{equation*}
\end{corollary}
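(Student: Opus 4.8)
The plan is to follow, almost verbatim, the argument for the corresponding corollary in the $R$-primitive case (Corollary of Lemmas \ref{4p}, \ref{5}, \ref{lebhb}, \ref{ejnf}), keeping careful track of the ranges of $p$ and $q$ in which each lemma is available. First I would record that a $(k,R)$-primitive triangulation with $k\geq 2$ satisfies Hypothesis $2$ for $R$ by Proposition \ref{jt}; together with the $R$-non-singularity of $P$ this makes Lemma \ref{ejnf} applicable, while Lemma \ref{lebhb} (stated for $(k,R)$-primitive triangulations with $2\leq k\leq n$) is available as well.

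Next I would unwind $\phi^q_{(p)}$ at the level of complexes. By the explicit formula for the cap product with $[\Omega]$, the chain $\beta\cap[\Omega]$ depends only on the components $\beta_{\sigma^{n-q},\sigma^n}$, that is only on the image of $\beta$ in $C^{n-q,n}(\mathcal{F}^p_R)$, and it is supported on couples $(\sigma^1,\sigma^{n-q})$, hence lies in $C_{1,n-q}(\mathcal{F}_{n-1-p}^R)=C_{n-1-q}(\mathcal{F}_{n-1-p}^R(*,\cdot))$. Restricting $\phi^q_{(p)}$ to $C^{n-q,n}(\mathcal{F}^p_R)$, Lemma \ref{ejnf} identifies its kernel with $\mathrm{im}(\delta_{2,(p)}^{n-q,n-1})$, so it factors through an injection
\begin{equation*}
\mathrm{coker}(\delta_{2,(p)}^{n-q,n-1})\hookrightarrow C_{1,n-q}(\mathcal{F}_{n-1-p}^R),
\end{equation*}
whose source is exactly $E_1^{n-q,n}(\delta_{2,(p)})$ by Lemma \ref{5} (the $b=n$ case, where the outgoing differential vanishes).

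It then remains to identify the image. For $q\geq n-k$, Lemma \ref{lebhb} gives $\mathrm{im}(\phi^q_{(p)})=\ker(d^{1,(n-1-p)}_{1,n-q})$; and since $p\geq n-k$ forces $n-1-p<k$, Lemma \ref{4} applies to the sheaf index $n-1-p$ and yields, in the case $a=1$, the equality $E^1_{1,n-q}(d^{1,(n-1-p)})=\ker(d^{1,(n-1-p)}_{1,n-q})$ (no incoming differential contributes, as $\Omega$ has no $a=0$ stratum). Combining the three identifications turns the factorised map into the claimed isomorphism $\Tilde{\phi}^q_{1,(p)}: E_1^{n-q,n}(\delta_{2,(p)})\xrightarrow{\sim}E^1_{1,n-q}(d^{1,(n-1-p)})$.

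The only point where genuine care is needed — and the source of the hypotheses ``$p\geq n-k$ and $q\geq n-k$'' — is precisely this matching of lemma hypotheses: Lemma \ref{lebhb} computes $\mathrm{im}(\phi^q_{(p)})$ only for $q\geq n-k$ (below that range one has merely $d^{1,(n-1-p)}_{1,n-q}\circ\phi^q_{(p)}=0$, not surjectivity onto the kernel), and Lemma \ref{4} describes $E^1_{\bullet,\bullet}(d^{1,(n-1-p)})$ only when $n-1-p<k$, i.e. $p\geq n-k$. I do not anticipate any further obstacle: once these range conditions are in force, the argument is line-for-line the $R$-primitive case, the bookkeeping being just the ranges of $p$ and $q$. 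In particular the full ``moreover'' clause of Lemma \ref{lebhb} and the precise shape of the rectangle in Lemma \ref{4} are \emph{not} used here; they enter only later, when one passes to the second page and extracts the partial Poincaré duality of Theorem \ref{part}.
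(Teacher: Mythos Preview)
Your proposal is correct and follows essentially the same approach as the paper's own proof: factor the restriction of $\phi^q_{(p)}$ to $C^{n-q,n}(\mathcal{F}^p_R)$ through its cokernel using Lemma \ref{ejnf} (valid via Proposition \ref{jt}), identify the image for $q\geq n-k$ via Lemma \ref{lebhb}, and then translate source and target into first-page terms via Lemmas \ref{5} and \ref{4}, the latter applied at index $n-1-p$ (hence the restriction $p\geq n-k$). Your write-up is in fact more explicit than the paper's about why $\phi^q_{(p)}$ lands in $C_{1,n-q}$ and about which lemma needs which hypothesis.
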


\begin{proof}
    The Lemma \ref{ejnf} implies that the restriction of $\phi^q_{(p)}$ to $C^{n-q,n}(\mathcal{F}^p_R)$ can be factorised into an isomorphism defined from $\mathrm{coker}(\delta_{2,(p)}^{n-q-1,n})$ to its image. However the Lemma \ref{lebhb} is also valid because the $(k,R)$-primitivity for an integer $k\geq 2$ implies the hypothesis $2$ (see Proposition \ref{jt}). Then for $q\geq n-k$ the image is $\mathrm{ker}(d^{1,(n-1-p)}_{1,n-q})$, thus if $q\geq n-k$ we have an isomorphism from $\mathrm{coker}(\delta_{2,(p)}^{n-q-1,n})$ to $\mathrm{ker}(d^{1,(n-1-p)}_{1,n-q})$. Then we can conclude the proof using Lemmas \ref{4} and \ref{5} (Lemma \ref{4} have to be applied for $n-1-p$ instead of $p$, that is why we get the condition $n-1-p < k$ and thus $p\geq n-k$).
\end{proof}

\begin{proposition}
    If the polytope $P$ is $R$-non-singular and if the triangulation is $R$-primitive, then for $p \geq n-k$ the maps $\phi^*_{(p)}$ induce the following isomorphisms between some of the terms of the second pages of the spectral sequences $E_*^{*,*}(\delta_{2,(p)})$ and $E^*_{*,*}(d^{1,(n-1-p)})$:
    \begin{equation*}
        \forall p \geq n-k,~~\forall q\geq n-k+1,~~\Tilde{\phi}^q_{2,(p)}: E_2^{n-q,n}(\delta_{2,(p)}) \xrightarrow{\sim} E^2_{1,n-q}(d^{1,(n-1-p)}).
    \end{equation*}
    \label{11p}
\end{proposition}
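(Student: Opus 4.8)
The plan is to mirror, almost verbatim, the proof of Proposition \ref{11}, the only new ingredient being a careful bookkeeping of the range of indices $q$ for which the comparison maps are isomorphisms on the first pages. First I would record the hypotheses: $P$ is $R$-non-singular and the triangulation is $(k,R)$-primitive with $2\leq k\leq n-1$. By Proposition \ref{jt} the triangulation then satisfies Hypothesis~$2$, so Lemma \ref{ejnf} applies; Lemma \ref{lebhb} applies directly; and Lemma \ref{8} needs no hypothesis at all. Combining these three exactly as in the proof of Proposition \ref{11}, I would deduce the global commutation relation
\[
    \phi^{q+1}_{(p)} \circ \delta_{(p)}^q = (-1)^{q+1}\, d^{(n-1-p)}_{n-q-1} \circ \phi^q_{(p)},
\]
valid for all $p$ and all $q$; here the \emph{moreover} clause of Lemma \ref{lebhb}, which covers $q<n-k$, is precisely what makes this relation hold for every $q$ rather than only for $q\geq n-k$.

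Next I would invoke the corollary just above the statement: for $p\geq n-k$ and every $q\geq n-k$ the map $\phi^q_{(p)}$ induces an isomorphism $\Tilde{\phi}^q_{1,(p)}\colon E_1^{n-q,n}(\delta_{2,(p)})\xrightarrow{\sim}E^1_{1,n-q}(d^{1,(n-1-p)})$. Since the differentials of the successive pages of both spectral sequences are induced by the total differentials $\delta_{(p)}$ and $d^{(n-1-p)}$, the global commutation relation passes to the first pages: as soon as $q\geq n-k$ (so that $\Tilde{\phi}^q_{1,(p)}$ and $\Tilde{\phi}^{q+1}_{1,(p)}$ are both isomorphisms) one obtains
\[
    \partial^{1,(n-1-p)}_{1,n-q} = (-1)^{q+1}\, \Tilde{\phi}^{q+1}_{1,(p)}\circ \partial_{1,(p)}^{n-q,n}\circ(\Tilde{\phi}^q_{1,(p)})^{-1},
\]
where $\partial_{1,(p)}^{n-q,n}$ and $\partial^{1,(n-1-p)}_{1,n-q}$ are the first-page differentials, which by Lemmas \ref{5} and \ref{4} stay on the line $b=n$ and on the column $a=1$ respectively.

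Finally I would pass to the second page. The group $E_2^{n-q,n}(\delta_{2,(p)})$ is the homology at the spot $(n-q,n)$ of the complex $(E_1^{\bullet,n}(\delta_{2,(p)}),\partial_{1,(p)})$, and $E^2_{1,n-q}(d^{1,(n-1-p)})$ is the homology at $(1,n-q)$ of $(E^1_{1,\bullet}(d^{1,(n-1-p)}),\partial^{1,(n-1-p)})$. By the displayed relation the family $(\Tilde{\phi}^{\bullet}_{1,(p)})$ is a morphism of complexes between them in the relevant range; a morphism of complexes which is an isomorphism in degrees $q-1,q,q+1$ induces an isomorphism on homology in degree $q$, so I would obtain $\Tilde{\phi}^q_{2,(p)}\colon E_2^{n-q,n}(\delta_{2,(p)})\xrightarrow{\sim}E^2_{1,n-q}(d^{1,(n-1-p)})$ whenever $\Tilde{\phi}^{q-1}_{1,(p)}$, $\Tilde{\phi}^q_{1,(p)}$ and $\Tilde{\phi}^{q+1}_{1,(p)}$ are all isomorphisms, i.e. whenever $q-1\geq n-k$, which is exactly the range $q\geq n-k+1$ of the statement (and $p\geq n-k$ is needed for the corollary to apply).

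The only genuinely delicate point is this last index shift: passing from ``isomorphism on the first page for $q\geq n-k$'' to ``isomorphism on the second page'' costs one unit in the range, because computing homology at a spot requires control of the comparison map at that spot and at its two neighbours; this is what forces the restriction to $q\geq n-k+1$, in contrast with the complete case (Proposition \ref{11}) where all $\Tilde{\phi}^q_{1,(p)}$ are isomorphisms and no such restriction appears. Everything else is a faithful repetition of the argument for Proposition \ref{11}.
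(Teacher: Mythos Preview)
Your proposal is correct and follows the paper's own proof essentially line by line: both derive the global commutation $\phi^{q+1}_{(p)}\circ\delta_{(p)}^q=(-1)^{q+1}d^{(n-1-p)}_{n-q-1}\circ\phi^q_{(p)}$ from Lemmas \ref{lebhb}, \ref{ejnf} and \ref{8}, invoke the preceding corollary to get first-page isomorphisms for $q\geq n-k$, and then observe that passing to homology at the second page costs one unit in the range because the incoming differential at the spot $(n-q,n)$ originates from the spot indexed by $q-1$. Your explicit mention of the ``moreover'' clause of Lemma \ref{lebhb} (needed so that $d^{1}\circ\phi^q=0$ even for $q<n-k$, hence the commutation is truly global) is a point the paper leaves implicit.
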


\begin{proof}
    The consequence of Lemma $\ref{lebhb},\ref{ejnf}$ and $\ref{8}$ is the following global relation of commutation:
    \begin{equation*}
        \phi^{q+1}_{(p)} \circ \delta_{(p)}^q = (-1)^{q+1}d^{(n-1-p)}_{n-q-1} \circ \phi^q_{(p)}.
    \end{equation*}
    
    As the spectral sequences $E_*^{*,*}(\delta_{2,(p)})$ and $E^*_{*,*}(d^{1,(n-1-p)})$ come from a filtration of the cohomology $H^*(\mathcal{F}^p_R))$ and a filtration of the homology $H_*(\mathcal{F}_{n-1-p}^R))$ respectively, the differentials of their successive pages are induced by the associated differentials of complexes $\delta_{(p)}^*$ and $d^{(n-1-p)}_*$ respectively. Let's then denote $\partial_{r,(p)}^{*,*}$ and $\partial^{r,(n-1-p)}_{*,*}$ the differentials of the $r$-th pages $E_r^{*,*}(\delta_{2,(p)})$ and $E^r_{*,*}(d^{1,(n-1-p)})$ respectively. Thus, as we know from the last proposition that for $p \geq n-k$ the maps $\phi^*_{(p)}$ induce isomorphisms $\Tilde{\phi}^*_{1,(p)}$ between the terms $(n-q,n)$ and $(1,n-q)$ for $q\geq n-k$ of the first pages of these spectral sequences, we get the following relation between the differential of the first pages $\partial_{1,(p)}^{*,*}$ and $\partial^{1,(n-1-p)}_{*,*}$:
    \begin{equation*}
        \forall p \geq n-k,~~ \forall q \geq n-k,~~ \partial^{1,(n-1-p)}_{1,n-q} = (-1)^{q+1} \Tilde{\phi}^{q+1}_{1,(p)}\circ \partial_{1,(p)}^{n-q,n} \circ (\Tilde{\phi}^q_{1,(p)})^{-1}.
    \end{equation*}
    
    Let $p \geq n-k$. For any $q \geq n-k$, the first differential starting from $E_1^{n-q,n}(\delta_{2,(p)})$ arrives on $E_1^{n-q-1,n}(\delta_{2,(p)})$ which is also a term in the "isomorphism zone" because $q+1 \geq n-k$. Moreover, for $q \geq n-k$, the first differential arriving on $E_1^{n-q,n}(\delta_{2,(p)})$ starts from $E_1^{n-q+1,n}(\delta_{2,(p)})$ which is a term in the "isomorphism zone" if and only if $q-1\geq n-k$. Therefore we finally have that the maps $\Tilde{\phi}^*_{1,(p)}$ induces thes following isomorphisms at the second page:

    \begin{equation*}
        \forall p \geq n-k, ~~ \forall q \geq n-k+1,~~\forall\bar{\phi}^q_{2,(p)}: E_2^{n-q,n}(\delta_{2,(p)}) \xrightarrow{\sim} E^2_{1,n-q}(d^{1,(n-1-p)}).
    \end{equation*}
\end{proof}

\begin{proof}
    \underline{Proof of Theorem \ref{part} of partial Poincaré duality:}
    
    From Proposition \ref{10c} we have:
    \begin{equation*}
        \forall p\geq n-k,~~ \forall n-1-q<k-n+p,~~H_{n-1-q}(\mathcal{F}_p^R)=E^2_{1,n-q}(d^{1,(n-1-p)}).
    \end{equation*}
    And from Proposition \ref{10a} we have also:
    \begin{equation*}
        \forall p, ~~ \forall q,~~H^q(\mathcal{F}^p_R) = E^{n-q,n}_2(\delta_{2,(p)}).
    \end{equation*}
    However, $n-1-q < k-n+p$ means $2n-k\leq p+q$, if we restrict us to the non-zero zone $0\leq p <n-1$ and $0\leq q < n-1$, it implies $q \geq n-k+1$, and $p\geq n-k+1 \geq n-k$. Then we can apply the last proposition and conclude the proof of partial Poincaré duality theorem.
\end{proof}

\section{Proof of the lemmas}

To prove the lemmas we use many times the notion of a saturated sub-module. Let us recall the definition:

\begin{definition}
    (Saturation) If $R$ is an integral domain, if $L$ is a $R$-module, and $S$ a sub-module of $L$, we say that $S$ is saturated in $L$ if:
    \begin{equation*}
        \forall y\in L, \forall \lambda\in R \backslash\{0\},~~\lambda y \in S \implies y \in S.
    \end{equation*}
    \label{sat}
\end{definition}

\subsection{The homological spectral sequence: proof of Lemma \ref{4} and \ref{4p}}

Let's begin by reminding the following identity satisfied by the first page of the spectral sequence $E^*_{*,*}(d^{1})$:
\begin{equation*}
    E^1_{a,b}(d^{1})= \bigoplus_{\sigma^b \in \Gamma} H_{b-a}(\mathcal{F}_p^R(*,\sigma^b)).
\end{equation*}

It shows that to prove Lemmas \ref{4} and \ref{4p} we just need to study the properties of the homology groups $H_{b-a}(\mathcal{F}_p^R(*,\sigma^b))$. For the Lemma \ref{4p}, we just need to prove that $H_{b-a}(\mathcal{F}_p^R(*,\sigma^b))=0$ for any $a\neq 1$ and $R$-primitive simplex $\sigma^b$, because then we get that the first page of the spectral sequence vanishes at the first page for $a\neq 1$, and then the fact that the terms of the column $a=1$ correspond to the kernel of the differentials $d^1_{1,b}$ is just a consequence of the definition of the spectral sequence $E^*_{*,*}(d^1)$ (the differentials of the $0$-th page corresponds to $d^1$) and the fact that at the $0$-th page we have $E^0_{0,b} = C_{0,b}(\mathcal{F}_p^R)=0$.

For the Lemma \ref{4}, for a $(k,R)$-primitive triangulation with $2 \leq k \leq n-1$, we also need to prove that $H_{b-a}(\mathcal{F}_p^R(*,\sigma^b))=0$ for any $a\neq 1$ and any $R$-primitive simplex $\sigma^b$, which will implies that $E^1_{a,b}(d^{1})=0$ for $a \neq 1$ and $b\leq k$ (because in a $(k,R)$-primitive triangulation, every $b$-simplex for $b\leq k$, is $R$-primitive). However we also need to prove that if $p<k$ and $b>k$, any $(k,R)$-primitive simplex $\sigma^b$ satisfies $H_{b-a}(\mathcal{F}_p^R(*,\sigma^b))=0$ for any $a>p+1$.

Finally, the proof of the Lemma \ref{4} and \ref{4p} is a consequence of the two following propositions that we will prove in this section:

\begin{proposition}
    Let $\sigma^b \in \Gamma$. Let $r:=\mathrm{rank} ~ T_R F_{\sigma^b}$. Assume that the simplex $\sigma^b$ is $R$-primitive (see Definition \ref{primsimp}). Then, for any integer $p \geq 0$, the homology of the chain complex $C_{q}(\mathcal{F}_p(*,\sigma^b))$ equipped with the differential $d^{*,\sigma^b}_{q}$ is torsion-free and satisfies:
    \begin{equation*}
        \forall q \neq b-1, ~~H_q(\mathcal{F}^R_p(*,\sigma^b)) = 0,
    \end{equation*}
    \begin{equation*}
        \mathrm{rank}_R ~ H_{b-1}(\mathcal{F}^R_p(*,\sigma^b)) = \binom{r}{p+1} - \binom{r-b}{p+1}.
    \end{equation*}
    \label{theohom}
\end{proposition}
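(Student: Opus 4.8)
The plan is to reduce the statement to a purely algebraic computation: first describe the cosheaf $\mathcal{F}^R_p(\,\cdot\,,\sigma^b)$ explicitly, then identify $(C_{b-\bullet}(\mathcal{F}^R_p(*,\sigma^b)),d^{1,\sigma^b})$ with a twist of the simplicial cochain complex of the simplex $\sigma^b$, and finally compute its homology by induction on $\dim\sigma^b$ via a cone decomposition. \textbf{Step 1 (local model).} Write $V:=T_RF_{\sigma^b}$, a free $R$-module of rank $r$, and $A:=\bigwedge^pV^*$. For an edge $e$ with primitive direction vector $u_e$ one has $\bigwedge^p T_Re^{\perp}=\ker\big(\iota_{u_e}\colon A\to\textstyle\bigwedge^{p-1}V^*\big)$, so $\mathcal{F}^R_p(\sigma^a,\sigma^b)=\sum_{e\subseteq\sigma^a}\ker\iota_{u_e}$. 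I would then use $R$-primitivity of $\sigma^b$ to show: (i) every face of $\sigma^b$ is again $R$-primitive, since for a facet $\tau$ the integral vectors $\tfrac{a_j-a_i}{|\tau|}$, expressed in the $R$-basis of $T_R\sigma^b$ furnished by primitivity, have coordinates in $R$, which forces each ratio $|e|/|\sigma^b|$ to be a unit of $R$; (ii) fixing a vertex $v_0$ of a face $\sigma^a$, the primitive directions of the edges of $\sigma^a$ at $v_0$ are unit multiples of part of a basis $w_1,\dots,w_r$ of $V$, and the elementary identity $\ker\iota_{w-w'}\subseteq\ker\iota_{w}+\ker\iota_{w'}$ — proved over $R$ by the splitting $\omega=w'^{*}\wedge\iota_{w}\omega+(\omega-w'^{*}\wedge\iota_{w}\omega)$ — lets one rewrite $\mathcal{F}^R_p(\sigma^a,\sigma^b)=\sum_{i\in S}\ker\iota_{w_i}$ for a suitable $S$ of size $a$. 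Hence $\mathcal{F}^R_p(\sigma^a,\sigma^b)$ is a free direct summand of $A$ of rank $\binom{r}{p}-\binom{r-a}{p-a}$ (with $\binom{m}{\ell}:=0$ for $\ell<0$ or $\ell>m$), the cosheaf maps are the evident inclusions, and up to a degree–dependent sign coming from $\eta$ the complex $C_{b-\bullet}(\mathcal{F}^R_p(*,\sigma^b))$ is the simplicial cochain complex of $\sigma^b$ (on faces of dimension $\ge 1$) with these twisted coefficients.

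\textbf{Step 2 (a more general inductive statement).} I would prove by induction on $d$: for an $R$-primitive simplex $\sigma$ of dimension $d$ and any free $R$-module $V$ of rank $\rho$ such that $T_R\sigma$ is a direct summand of $V$, the complex $C^V_*(\sigma):=\bigoplus_{\sigma^a\subseteq\sigma}\ \sum_{e\subseteq\sigma^a}\bigwedge^p T_Re^{\perp}$ (terms in homological degree $d-a$, $a\ge 1$, with the cochain differential of $\sigma$, and $T_Re^{\perp}$ taken in $V^*$) has homology concentrated in degree $d-1$, torsion-free, of rank $\binom{\rho}{p+1}-\binom{\rho-d}{p+1}$; the Proposition is the case $d=b$, $\rho=r$. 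Torsion-freeness is automatic once concentration is known, because the only surviving group is the kernel of a morphism between the free modules $C^V_{d-1}(\sigma)$ and $C^V_{d-2}(\sigma)$. The base case $d=1$ is immediate: $C^V_*(\sigma^1)=\bigwedge^p T_R\sigma^{1\,\perp}$ in degree $0$, free of rank $\binom{\rho-1}{p}=\binom{\rho}{p+1}-\binom{\rho-1}{p+1}$.

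\textbf{Step 3 (inductive step).} Fix a vertex $v_0$ of $\sigma$. The faces of dimension $\ge 1$ containing $v_0$ form an up-set, hence a subcomplex $A_*$; the faces not containing $v_0$ form a down-set, hence the quotient complex $Q_*$, which is exactly $C^V_*(\tau)$ for the opposite facet $\tau$ shifted up one degree, so by the inductive hypothesis $H_*(Q_*)$ is concentrated in degree $d-1$ of rank $\binom{\rho}{p+1}-\binom{\rho-d+1}{p+1}$. For $A_*$ one identifies the faces containing $v_0$ with the full face poset of a simplex $\Delta^{d-1}$ and uses the short exact sequence of cosheaves $0\to\mathcal{F}_p\to\underline{A}\to\mathcal{Q}_p\to 0$ on it ($\mathcal{Q}_p:=\underline A/\mathcal{F}_p$): the constant cosheaf contributes only $A$ in degree $d-1$; for the $\mathcal{Q}_p$-complex one splits the directions $w_1,\dots,w_d$ off (a bidegree decomposition of $A$ by the $w$-degree reduces everything to $\rho=d$) and then decomposes the $\rho=d$ complex by the support of a basis form, recognising each summand as the \emph{non-augmented} simplicial cochain complex of a simplex, whose cohomology is $R$ concentrated in cochain degree $0$; summing over supports and applying the Vandermonde identity shows $H_*(\mathcal{Q}_p\text{-complex})$ is concentrated in degree $d-1$ of rank $\binom{\rho}{p}-\binom{\rho-d}{p}$, whence $H_*(A_*)$ is concentrated in degree $d-1$ of rank $\binom{\rho-d}{p}$. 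Finally, since $H_*(A_*)$ and $H_*(Q_*)$ both live in degree $d-1$, the long exact sequence of $0\to A_*\to C^V_*(\sigma)\to Q_*\to 0$ collapses to the short exact sequence $0\to H_{d-1}(A_*)\to H_{d-1}(C^V_*(\sigma))\to H_{d-1}(Q_*)\to 0$, which gives vanishing in all other degrees and, after one use of Pascal's rule, the rank $\binom{\rho-d}{p}+\binom{\rho}{p+1}-\binom{\rho-d+1}{p+1}=\binom{\rho}{p+1}-\binom{\rho-d}{p+1}$.

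\textbf{Expected obstacle.} The technical heart is the computation of the $\mathcal{Q}_p$-cosheaf complex on $\Delta^{d-1}$: one must perform both reductions (to $\rho=d$ by bidegree, then to individual simplices by support) and, crucially, keep the augmented and non-augmented simplicial cochain complexes apart — the summand $\mathcal{Q}_p(\varnothing)$ cannot simply be peeled off because the differential out of it is an isomorphism, and it is this bookkeeping that produces the correction term $\binom{\rho-d}{p}$ in place of $\binom{\rho}{p}$. A subordinate but pervasive point, needed so that nothing is lost on passing from $\operatorname{Frac}(R)$ back to $R$ (and hence for the freeness and torsion-freeness claims), is that every identification in Step 1 must be carried out over $R$; this is precisely where $R$-primitivity is used, through the fact that the ratios $|e|/|\sigma^b|$ are units of $R$.
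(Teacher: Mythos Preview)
Your argument is correct and takes a genuinely different route from the paper. The paper does \emph{not} compute $H_*(\mathcal{F}^R_p(*,\sigma^b))$ directly: it quotes the vanishing result of Jell--Rau--Shaw for matroidal fans to handle the standard $\mathbb{Z}$-primitive simplex $\Delta^b=\mathrm{Conv}(0,e_1,\dots,e_b)$ inside an auxiliary polytope, uses saturation (from Chenal's thesis) to pass from $\mathbb{Z}$ to $R$ by tensoring, computes the rank via an Euler-characteristic calculation that itself invokes a rank formula from Brugall\'e--L\'opez de Medrano--Rau, and finally transfers everything to the given $\sigma^b$ through an $R$-linear isomorphism manufactured from $R$-primitivity. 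Your approach is instead a self-contained induction on $\dim\sigma$ via the cone decomposition $A_*$ (star of a vertex) and $Q_*$ (opposite facet), reducing the star piece to non-augmented simplicial cochain complexes of simplices; it uses no external results and gives the rank formula directly rather than through an Euler characteristic. The paper's route is shorter once the black boxes are granted and ties the statement to tropical Borel--Moore homology; yours is more elementary and structurally transparent, and would survive in contexts where those references are unavailable.

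One small point to tighten in your sketch: when you pass from the computation of $H_*(\underline A)$ and $H_*(\mathcal{Q}_p)$ to ``whence $H_*(A_*)$ is concentrated in degree $d-1$ of rank $\binom{\rho-d}{p}$'', the long exact sequence alone leaves a possible contribution in degree $d-2$; you need that $H_{d-1}(\underline A)\to H_{d-1}(\mathcal{Q}_p)$ is surjective. This is immediate from your own decomposition (the class of $w_I^*$ maps to the generator of the $I$-summand whenever $I\cap\{1,\dots,d\}\neq\varnothing$), but it should be said. Also, your justification in (i) that $|e|/|\sigma^b|$ is a \emph{unit} is correct but the reason is not quite ``coordinates in $R$'': rather, the determinant of the vectors $(a_j-a_0)/|\sigma^b|$ factors as $\prod_j(|e_{0j}|/|\sigma^b|)$ times an integer, and since this determinant is a unit in $R$ so is each factor.
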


\begin{proposition}
    Let $\sigma^b$ a $(k,R)$-primitive simplex for an integer $2\leq k \leq n-1$, with $b>k$. Then we have:
    \begin{equation*}
        \forall p<k,~~\forall a>p+1, ~~H_{b-a}(\mathcal{F}_p^R(*,\sigma^b)) = 0.
    \end{equation*}
    \label{prop25}
\end{proposition}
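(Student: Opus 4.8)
The plan is to show that, in the range of homological degrees covered by the statement, the complex $C_\bullet(\mathcal{F}_p^R(*,\sigma^b))$ with differential $d^{1,\sigma^b}$ is nothing but a truncated simplicial cochain complex of the simplex $\sigma^b$ with \emph{constant} coefficients, and to conclude from the acyclicity of a simplex.

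The first and main step is the local identification: for every face $\sigma^a$ of $\sigma^b$ with $\dim\sigma^a=a\ge p+1$ one has $\mathcal{F}_p^R(\sigma^a,\sigma^b)=\bigwedge^{p}(N_R/T_RF_{\sigma^b}^{\perp})$, the whole exterior power. Since $p<k<b$, the face $\sigma^a$ shares at least $p+2$ vertices with some $k$-face $\mu$ of $\sigma^b$ (take a $k$-subface of $\sigma^a$ if $a\ge k$, and complete the vertex set of $\sigma^a$ to a $(k+1)$-element subset of the vertices of $\sigma^b$ if $a<k$; note $k+1\le b+1$). By hypothesis $\mu$ is $R$-primitive, so if $v_0,\dots,v_{p+1}$ are $p+2$ common vertices of $\mu$ and $\sigma^a$, the vectors $w_j:=(v_j-v_0)/|\mu|$, $j=1,\dots,p+1$, form part of an $R$-basis of $T_R\mu$; since $T_\mathbb{Z}\mu$ is saturated in $T_\mathbb{Z}F_{\sigma^b}$, they extend to an $R$-basis $e_1,\dots,e_r$ of $T_RF_{\sigma^b}$ with $e_j=w_j$ for $j\le p+1$ (here $r=\mathrm{rank}\,T_RF_{\sigma^b}$). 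Writing $e_1^{*},\dots,e_r^{*}$ for the dual basis of $N_R/T_RF_{\sigma^b}^{\perp}$, the covector hyperplane $\frac{T_R[v_0,v_j]^{\perp}}{T_RF_{\sigma^b}^{\perp}}$ attached to the edge $[v_0,v_j]$ equals $\langle e_i^{*}:i\neq j\rangle$ (it depends only on the line through $w_j$, using that $R$ is a domain), and every $p$-fold wedge of distinct $e_i^{*}$'s lies in $\bigwedge^{p}\langle e_i^{*}:i\neq j\rangle$ for some $j\in\{1,\dots,p+1\}$, because a set of $p<p+1$ indices must omit one of $1,\dots,p+1$. Hence $\mathcal{F}_p^R(\sigma^a,\sigma^b)$, which contains all the $\bigwedge^{p}\langle e_i^{*}:i\neq j\rangle$ and is contained in $\bigwedge^{p}(N_R/T_RF_{\sigma^b}^{\perp})$, equals the full exterior power. (This is the same local computation that underlies Proposition~\ref{theohom}.)

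The second step is formal. The differential $d^{1,\sigma^b}$ only modifies the first entry of a pair $(\sigma^a,\sigma^b)$, via the cosheaf inclusions $\iota$ and the balancing signature of $\Omega$; by the first step, on the subposet of faces of $\sigma^b$ of dimension $\ge p+1$ the cosheaf $\sigma^a\mapsto\mathcal{F}_p^R(\sigma^a,\sigma^b)$ is the constant cosheaf with value $V:=\bigwedge^{p}(N_R/T_RF_{\sigma^b}^{\perp})$, all of whose structure maps are identities. Therefore the subcomplex of $C_\bullet(\mathcal{F}_p^R(*,\sigma^b))$ supported on faces of dimension $\ge p+1$, i.e.\ in homological degrees $q\le b-p-1$, is — after the reindexing $q\leftrightarrow b-q$, and up to the choice of balancing signature, which merely rescales generators and does not affect homology — the simplicial cochain complex of the full simplex $\sigma^b$ with coefficients in $V$, truncated to cochain degrees $\ge p+1$.

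Finally, the geometric realization of the simplex $\sigma^b$ is contractible, so $H^i$ of its simplicial cochain complex with coefficients in any module vanishes for $i\ge 1$. When $q\le b-p-2$ the groups in degrees $q+1,q,q-1$ all lie in the constant range $q'\le b-p-1$, so $H_q(\mathcal{F}_p^R(*,\sigma^b))$ is computed entirely inside the above subcomplex and equals $H^{b-q}$ of the cochain complex of $\sigma^b$ with coefficients in $V$; since $b-q\ge p+2\ge 1$ this group is zero. As the hypothesis $a>p+1$ is precisely $b-a\le b-p-2$, this is the assertion. I expect the delicate point to be the linear algebra in the first step: one has to keep track of the index $|\mu|$, the integral lengths of edges, and the relevant saturations, to be sure that the claimed family really is an $R$-basis and that the hyperplane attached to an edge is the expected coordinate hyperplane; everything after that is bookkeeping.
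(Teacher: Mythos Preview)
Your proof is correct and follows the same two–step strategy as the paper: first establish that $\mathcal{F}_p^R(\sigma^a,\sigma^b)=\bigwedge^{p}(N_R/T_RF_{\sigma^b}^{\perp})$ for every face $\sigma^a\subseteq\sigma^b$ with $a\ge p+1$, then identify the resulting portion of the complex with the simplicial (co)chain complex of the contractible simplex $\sigma^b$ with constant coefficients.

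The difference is in how the constancy is verified. The paper argues indirectly: for each $R$-primitive face $\sigma^a$ (with $a\le k$) it builds a primitive model simplex $\Delta^a$ inside an auxiliary polytope $P'$, transports the cosheaf through an isomorphism $\psi_R$, invokes Lemma~2.4 of \cite{BMR 2024} to compute $\mathrm{rank}\,\mathcal{F}_p^R(\sigma^a,\sigma^b)=\binom{\dim F_{\sigma^b}}{p}$, and combines this with a saturation argument to conclude equality with the full exterior power; the case $a>k$ is then handled by inclusion of a $k$-face. Your argument is more direct and self-contained: you locate a single $R$-primitive $k$-face $\mu$ sharing $p+2$ vertices with $\sigma^a$, extract from it an explicit $R$-basis of $T_RF_{\sigma^b}$ in which $p+1$ of the edge-orthogonals become coordinate hyperplanes, and cover $\bigwedge^{p}(N_R/T_RF_{\sigma^b}^{\perp})$ by the pigeonhole observation that any $p$-element subset of $\{1,\dots,r\}$ omits some index in $\{1,\dots,p+1\}$. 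This bypasses both the model-simplex comparison and the external citation. Your remark that the edge-orthogonal ``depends only on the line through $w_j$, using that $R$ is a domain'' is exactly right (and in fact $c_j=|[v_0,v_j]|/|\mu|$ is forced to be invertible in $R$, since $w_j=c_ju_j$ lies in an $R$-basis while $u_j\in T_R\mu$); this is the linear-algebra point you flagged as delicate, and it goes through cleanly.
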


Proposition \ref{theohom} can be deduced from the following proposition due to Philipp Jell, Johanes Rau and Kris Shaw:

\begin{proposition}
    (Philipp Jell, Johanes Rau, Kris Shaw) If a $b$-simplex $\Delta^b$ with integral vertices in $\mathbb{Z}^n$ is primitive (that is that the directing vectors of the adjacent edges of a vertex form a basis of the integral tangent lattice $T_{\mathbb{Z}}\Delta^b$), if this $b$-simplex $\Delta^b$ belongs to a triangulation of a lattice polytope $P$, then the resulting cosheaves $\mathcal{F}_p^{\mathbb{Z}}$ of this triangulation satisfies:
    \begin{equation*}
        \forall q \neq b-1,~~H_q(\mathcal{F}_p^{\mathbb{Z}}(*,\Delta^b))=0.
    \end{equation*}
    \label{jrs}
\end{proposition}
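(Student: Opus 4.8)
The plan is to reduce Proposition \ref{jrs} to a purely combinatorial assertion about a cochain complex on the face poset of a standard simplex, and then to settle that assertion with a short exact sequence of coefficient systems together with an induction on $b$; this is in essence the argument of \cite{JRS 2018}. First, since the whole construction is $\mathrm{GL}_n(\mathbb{Z})$-equivariant, the primitivity of $\Delta^b$ lets me assume after a unimodular change of coordinates that $\Delta^b=\mathrm{Conv}(0,e_1,\dots,e_b)$ and $T_{\mathbb{Z}}F_{\Delta^b}=\mathbb{Z}^r$ with $r:=\dim F_{\Delta^b}\ge b$. Writing $Q:=N/T_{\mathbb{Z}}F_{\Delta^b}^{\perp}$, one checks that for every edge $e$ of $\Delta^b$ the corank-$1$ saturated sublattice $L_e:=T_{\mathbb{Z}}e^{\perp}/T_{\mathbb{Z}}F_{\Delta^b}^{\perp}\subseteq Q$ contains the rank-$(r-b)$ coordinate summand dual to $e_{b+1},\dots,e_r$; hence $\mathcal{F}_p^{\mathbb{Z}}(\sigma,\Delta^b)=\sum_{e\subseteq\sigma}\bigwedge^{p}L_e$ decomposes, by the Künneth splitting of the exterior algebra of a direct sum, as a finite direct sum of copies of the analogous modules for $r=b$ twisted by the free modules $\bigwedge^{j}\mathbb{Z}^{r-b}$, compatibly with all differentials. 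It therefore suffices to treat the case $r=b$, in which $Q$ is free of rank $b$.

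Next I pass to a symmetric model: identifying $T_{\mathbb{Z}}\Delta^b$ with $\{\mu\in\mathbb{Z}^{b+1}:\sum_i\mu_i=0\}$ by barycentric coordinates gives $Q\cong V:=\mathbb{Z}^{b+1}/\mathbb{Z}(1,\dots,1)$, in which every edge $[v_i,v_j]$ of $\Delta^b$ yields the same kind of hyperplane $L_{ij}=\{c\in V:c_i=c_j\}$. Thus, up to the reindexing $q=b-\dim\sigma$, the complex $C_{\bullet}(\mathcal{F}_p^{\mathbb{Z}}(*,\Delta^b))$ is the cochain complex on the poset of faces of $\Delta^b$ of dimension $\ge 1$ with coefficient system $\sigma=\mathrm{Conv}(v_i:i\in S)\mapsto\mathcal{F}_p(\sigma)=\sum_{i<j\in S}\bigwedge^{p}\{c_i=c_j\}\subseteq\bigwedge^{p}V$ and the inclusions as transition maps (one checks these submodules are saturated by the $S_{b+1}$-symmetry, reducing to the faces through a fixed vertex where $\mathcal{F}_p(\sigma)$ is visibly a coordinate submodule), and Proposition \ref{jrs} becomes the statement that this complex is exact except in the degree of the $1$-dimensional faces.

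To compute it I extend the coefficient systems to all faces of $\Delta^b$, including vertices, where $\mathcal{F}_p$ vanishes, and use the short exact sequence $0\to\mathcal{F}_p\to\underline{\bigwedge^{p}V}\to\mathcal{Q}_p\to 0$ with $\mathcal{Q}_p(\sigma):=\bigwedge^{p}V/\mathcal{F}_p(\sigma)$. The constant system $\underline{\bigwedge^{p}V}$ gives the full cochain complex of the contractible simplex $\Delta^b$, so its homology is $\bigwedge^{p}V$ concentrated in the degree of the vertices. A short computation with the pairing $\bigwedge^{p}V\times\bigwedge^{p}V^{*}\to\mathbb{Z}$ identifies the quotient as $\mathcal{Q}_p(\sigma)^{*}\cong\bigwedge^{\dim\sigma}T_{\mathbb{Z}}\sigma\otimes\bigwedge^{p-\dim\sigma}(T_{\mathbb{Z}}\Delta^b/T_{\mathbb{Z}}\sigma)$, which vanishes for $\dim\sigma>p$ and equals $\bigwedge^{p}V$ at each vertex; concretely $\mathcal{Q}_p(\sigma)^{*}$ is the submodule of $\bigwedge^{p}T_{\mathbb{Z}}\Delta^b$ of $p$-vectors divisible by the orientation class of $\sigma$. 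Feeding these into the long exact homology sequence of the short exact sequence reduces Proposition \ref{jrs} to the claim that the $\mathcal{Q}_p$-complex is concentrated in the vertex degree, that is, the cochain complex of the orientation-twisted normal-forms system $\sigma\mapsto\bigwedge^{\dim\sigma}T_{\mathbb{Z}}\sigma\otimes\bigwedge^{p-\dim\sigma}(T_{\mathbb{Z}}\Delta^b/T_{\mathbb{Z}}\sigma)$ on $\Delta^b$ is exact away from the $0$-skeleton. This I would prove by induction on $b$: deleting a vertex $v_b$ cuts the face poset into the faces of the facet $\Delta^{b-1}$ and the star of $v_b$, giving a short exact sequence of complexes; on the facet the system is, after the Künneth splitting of the first paragraph applied with one extra direction, the corresponding system for $\Delta^{b-1}$, and the star is handled by the same analysis centered at $v_b$, so the resulting long exact sequence closes the induction, the base case $b=1$ being trivial. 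Alternatively, the concentration can be proved over every field by the same scheme, using dimension counts and inclusion-exclusion for the arrangements $\{c_i=c_j\}$, and then transported to $\mathbb{Z}$ via the universal coefficient theorem, since all groups are finitely generated and the single surviving homology group inherits torsion-freeness from the constant-system term.

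The step I expect to be the main obstacle is the control of the quotient system $\mathcal{Q}_p$, equivalently the uniform understanding of $\sum_{i<j\in S}\bigwedge^{p}\{c_i=c_j\}$ as $S$ varies: for faces through a fixed vertex this is a coordinate submodule of $\bigwedge^{p}V$ in a suitable basis, but no single basis works for all faces at once, so the acyclicity must be extracted either from the vertex-deletion induction, whose delicate point is identifying precisely how $\mathcal{F}_p$ restricts to the link of the deleted vertex, where both $b$ and $p$ shift, or from a filtration by $\dim\sigma$ whose associated spectral sequence one must show collapses. The $\mathrm{GL}_n(\mathbb{Z})$-reduction, the Künneth splitting, the contractibility computation for the constant system, and the reindexing are all routine; and the full statement is available in \cite{JRS 2018}.
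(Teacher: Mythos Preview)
The paper does not prove this proposition independently: it treats the statement as a reformulation of a result in \cite{JRS 2018} (the concentration of tropical Borel--Moore homology of a matroidal fan in top degree, within their Proposition~5.5), and the paragraph following the statement merely supplies the dictionary---the primitive simplex $\Delta^b$ is dual to the matroidal fan $V=\frac{N_{\mathbb{R}}}{T_{\mathbb{R}}\Delta^{b\perp}}\oplus H_{trop}(\Delta^b)$ inside $\frac{N_{\mathbb{R}}}{T_{\mathbb{R}}F_{\Delta^b}^{\perp}}$, and the cellular Borel--Moore complex of $V$ coincides, up to a degree shift by $\dim F_{\Delta^b}-b$, with $C_*(\mathcal{F}_p^{\mathbb{Z}}(*,\Delta^b))$.

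Your route is genuinely different: a direct combinatorial computation rather than a translation into the matroidal-fan language. The reductions (unimodular normalisation, K\"unneth splitting of the $r-b$ extra directions, symmetric barycentric model) and the short exact sequence $0\to\mathcal{F}_p\to\underline{\bigwedge^p V}\to\mathcal{Q}_p\to 0$ are all sound, and this is indeed the shape of the Orlik--Solomon computation that \cite{JRS 2018} carries out internally for matroidal fans. The thin part is exactly what you flag: the module identification of $\mathcal{Q}_p(\sigma)^*$ is correct in rank but the isomorphism needs justification, and the vertex-deletion induction is outlined rather than executed---in particular the behaviour of the coefficient systems on the star of the deleted vertex, where both $b$ and $p$ shift, is asserted rather than checked. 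Your fallback (prove the vanishing over every field by dimension count, then transfer to $\mathbb{Z}$ via universal coefficients, using that all chain groups are finitely generated free) is legitimate and probably the cleanest way to close that gap. In sum: the paper outsources the proof to \cite{JRS 2018} and explains only why the two formulations match; your approach is a self-contained sketch along the lines \cite{JRS 2018} uses internally, and could replace the citation if the induction were written out in full.
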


This proposition was formulated in their article \cite{JRS 2018} in terms of matroidal fans. In their proof of their Proposition $5.5$ they write that the tropical Borel-Moore homology $H_{p,q}^{BM}(V;\mathbb{Z})$ of a matroidal fan $V$ of dimension $r$ of $\mathbb{R}^s$ vanishes except if $q=r$. Here a primitive $b$-simplex $\Delta^b$ living in $T_{\mathbb{R}}F_{\Delta^b}$ ($F_{\Delta^b}$ denoting the minimal face of the polytope $P$ containing $\Delta^b$) is dual to a matroidal fan of the form $V:=\frac{N_{\mathbb{R}}}{T_{\mathbb{R}}\Delta^{b\perp}}\oplus H_{trop}(\Delta^b)$ where $H_{trop}(\Delta^b)\subseteq \frac{T_{\mathbb{R}}\Delta^{b\perp}}{T_{\mathbb{R}}F_{\Delta^b}^{\perp}}$ is the tropical hyperplan dual to $\Delta^b\subseteq T_{\mathbb{R}}\Delta^b$. This matroidal fan lives in $\frac{N_{\mathbb{R}}}{T_{\mathbb{R}}\Delta^{b\perp}}\oplus \frac{T_{\mathbb{R}}\Delta^{b\perp}}{T_{\mathbb{R}}F_{\Delta^b}^{\perp}} = \frac{N_{\mathbb{R}}}{T_{\mathbb{R}}F_{\Delta^b}^{\perp}}  \simeq \mathbb{R}^{\mathrm{dim} F_{\Delta^b}}$. This matroidal fan is of dimension $\mathrm{dim} F_{\Delta^b}-1$, then according to \cite{JRS 2018} its tropical Borel Moore Homology $H_{p,q}^{BM}(V;\mathbb{Z})$ vanishes for all $q \neq \mathrm{dim} F_{\Delta^b}-1$.

However, looking at equation 5.1 in \cite{JRS 2018} we can identify the cellular complex $C^{BM,Cell}_{p,*}(V;\mathbb{Z})$ with the complex $C_*(\mathcal{F}_p^{\mathbb{Z}}(*,\Delta^b))$ in the following way:
$$C^{BM,Cell}_{p,q}(V;\mathbb{Z})=C_{q-\mathrm{dim} F_{\Delta^b}+b}(\mathcal{F}_p^{\mathbb{Z}}(*,\Delta^b)).$$
With the identity $H_{p,q}^{BM}(V;\mathbb{Z})=H_{q,p}^{BM,Cell}(V;\mathbb{Z})$ from equation 5.1 of \cite{JRS 2018}, it leads to:
\begin{equation*}
    H_{q-\mathrm{dim} F_{\Delta^b}+b}(\mathcal{F}_p^{\mathbb{Z}}(*,\Delta^b)) = H^{BM}_{p,q}(V;\mathbb{Z}).
\end{equation*}

Finally we obtain that in our case the claim $H^{BM}_{p,q}(V;\mathbb{Z})=0$ for all $q \neq \mathrm{dim} F_{\Delta^b}-1$ of Philipp Jell, Johanes Rau, Kris Shaw can be reformulated into the Proposition \ref{jrs}. As a consequence of this Proposition \ref{jrs} we can prove the Proposition \ref{theohom}:

\begin{proof}
    \underline{Proof of Proposition \ref{theohom}:} Consider a $R$-primitive simplex $\sigma^b$ of the triangulation $\Gamma$ of our lattice polytope $P$ and denote $r=\mathrm{rank} ~ T_RF_{\sigma^b}$ (where $F_{\sigma^b}$ is the minimal face of $P$ that contains $\sigma^b$). Now denoting $(e_1,...,e_n)$ the canonical basis of $\mathbb{Z}^n$ let us define the following objects:
    \begin{align*}
        &\Delta^b:=\mathrm{Conv}(0,e_1,...,e_b),\\
        &P':=\mathrm{Conv}(-e_1,...,-e_r,e_1,...,e_n), \\
        &\Gamma':=\{\text{faces of } \mathrm{Conv}(0,\epsilon_1e_1,...,\epsilon_re_r,e_{r+1},...,e_n) ;~~\epsilon_1,...,\epsilon_r\in\{-1,+1\}\}.
    \end{align*}
    $\Delta^b$ is a primitive $b$-simplex, $P'$ is a lattice polytope of dimension $n$ of $\mathbb{Z}^n$, $\Gamma'$ is a triangulation of $P'$, $\Delta^b$ is an element of this triangulation $\Gamma'$ and $F'_{\Delta^b}=\mathrm{Conv}(-e_1,...,-e_r,e_1,...,e_r)$ is the minimal face of $P'$ containing $\Delta^b$, it is of dimension $r$. We can apply the Proposition \ref{jrs} to this simplex $\Delta^b$ with the cosheaves $\mathcal{F}_p^{'\mathbb{Z}}$ of this triangulation $\Gamma'$:
    \begin{equation*}
        \forall q \neq b-1,~~H_q(\mathcal{F}_p^{'\mathbb{Z}}(*,\Delta^b))=0.
    \end{equation*}
    According to the thesis of Jules Chenal \cite{Ch 2024}, Proposition 3.1.14, the primitivity of the simplex $\Delta^b$ implies that for any face $\Delta^a$ of $\Delta^b$ the $\mathbb{Z}$-module $\mathcal{F}_p^{'\mathbb{Z}}(\Delta^a,\Delta^b)$ is saturated in $\bigwedge^p\frac{N}{T_{\mathbb{Z}}F_{\Delta^b}^{'\perp}}$ (see Definition \ref{sat} for the saturation of a sub-module).
    
    However, if $A_1,...,A_k$ are saturated $\mathbb{Z}$-sub-module of a $\mathbb{Z}$-module $B$, then for any integral domain $R$ the tensor products $A_1 \otimes R$, ..., $A_k\otimes R$ can be identified as $R$-sub-modules of $B\otimes R$, and if $A_1+...+A_k$ is saturated in $B$, then the tensor product $(A_1+...+A_k) \otimes R$ can also be identified to a sub-module of $B \otimes R$ and it satisfies the identidy $A_1 \otimes R + ... + A_k \otimes R = (A_1+...+A_k) \otimes R$. For any $\Delta^a\subseteq \Delta^b$ we can apply this result to $\mathcal{F}^{'\mathbb{Z}}_p(\Delta^a,\Delta^b)=\sum_{\Delta^1 \subseteq \Delta^a}\mathcal{F}^{'\mathbb{Z}}_p(\Delta^1,\Delta^b)$ knowing that all of the $\mathcal{F}^{'\mathbb{Z}}_p(\Delta^1,\Delta^b)$ and also $\mathcal{F}^{'\mathbb{Z}}_p(\Delta^a,\Delta^b)$ are saturated in $\bigwedge^p\frac{N}{T_{\mathbb{Z}}F_{\Delta^b}^{'\perp}}$. We obtain the following identity of sub-modules of $\bigwedge^p\frac{N_R}{T_RF_{\Delta^b}^{'\perp}}$:
    \begin{equation}
        \forall \Delta^a\subseteq \Delta^b,~~\mathcal{F}_p^{'R}(\Delta^a,\Delta^b) = \mathcal{F}_p^{'\mathbb{Z}}(\Delta^a,\Delta^b) \otimes R.
        \label{tens}
    \end{equation}
    
    Let us denote $d^{'\mathbb{Z},*,\Delta^b}_{b-a}$ and $d^{'R,*,\Delta^b}_{b-a}$ the differentials of $C_{b-a}(\mathcal{F}^{'\mathbb{Z}}_p(*,\Delta^b))$ and $C_{b-a}(\mathcal{F}^{'R}_p(*,\Delta^b))$ respectively. As they are induced by the inclusions $\mathcal{F}_p^{'\mathbb{Z}}(\Delta^a,\Delta^b)\hookrightarrow \mathcal{F}_p^{'\mathbb{Z}}(\Delta^{a+1},\Delta^b)$ and $\mathcal{F}_p^{'R}(\Delta^a,\Delta^b)\hookrightarrow \mathcal{F}_p^{'R}(\Delta^{a+1},\Delta^b)$ respectively, it implies that they commute with the morphisms $x \mapsto x \otimes \lambda$, for any $\lambda \in R$. Then the image $\mathrm{im}~d^{'R,*,\Delta^b}_{b-a}$ satisfies:
    \begin{equation*}
        \mathrm{im}~d^{'R,*,\Delta^b}_{b-a}=\{y_1\otimes \lambda_1+...+y_k\otimes\lambda_k,~k\geq 1,~y_1,...,y_k\in \mathrm{im}~d^{'\mathbb{Z},*,\Delta^b}_{b-a},~\lambda_1,...,\lambda_k\in R\}.
    \end{equation*}

    In the previous identity the sums of the tensor products $y_i \otimes\lambda_i$ are computed as elements of $\bigoplus_{\Delta^{a+1} \subseteq \Delta^b} \bigwedge^p\frac{N}{T_{\mathbb{Z}}F_{\Delta^b}^{'\perp}} \otimes R = \bigoplus_{\Delta^{a+1} \subseteq \Delta^b} \bigwedge^p\frac{N_R}{T_{R}F_{\Delta^b}^{'\perp}}$. To conclude that it is equal to $\mathrm{im}~d^{'\mathbb{Z},*,\Delta^b}_{b-a} \otimes R$ we need to have the saturation of $\mathrm{im}~d^{'\mathbb{Z},*,\Delta^b}_{b-a}$ in $\bigoplus_{\Delta^{a+1} \subseteq \Delta^b} \bigwedge^p\frac{N}{T_{\mathbb{Z}}F_{\Delta^b}^{'\perp}}$. The saturation of $\mathcal{F}_p^{'\mathbb{Z}}(\Delta^a,\Delta^b)$ in $\bigwedge^p\frac{N}{T_{\mathbb{Z}}F_{\Delta^b}^{'\perp}}$ implies the saturation of $C_{b-a}(\mathcal{F}^{'\mathbb{Z}}_p(*,\Delta^b))$ in $\bigoplus_{\Delta^a \subseteq \Delta^b} \bigwedge^p\frac{N}{T_{\mathbb{Z}}F_{\Delta^b}^{'\perp}}$ which implies also the saturation of the kernel $\mathrm{ker}~d^{'\mathbb{Z},*,\Delta^b}_{b-a}$ in $\bigoplus_{\Delta^a \subseteq \Delta^b} \bigwedge^p\frac{N}{T_{\mathbb{Z}}F_{\Delta^b}^{'\perp}}$. However the homology vanishes for $b-a\neq b-1$, then the image $\mathrm{im}~d^{'\mathbb{Z},*,\Delta^b}_{b-a}$ is also saturated in $\bigoplus_{\Delta^{a+1} \subseteq \Delta^b} \bigwedge^p\frac{N}{T_{\mathbb{Z}}F_{\Delta^b}^{'\perp}}$ if $a \neq 0$. However, for $a=0$ the complex is zero and the image of the differential is also zero. Then for any $a$ the image $\mathrm{im}~d^{'\mathbb{Z},*,\Delta^b}_{b-a}$ is saturated in $\bigoplus_{\Delta^{a+1} \subseteq \Delta^b} \bigwedge^p\frac{N_R}{T_{R}F_{\Delta^b}^{'\perp}}$ and we obtain:
    \begin{equation}
        \forall a,~~~~\mathrm{im}~d^{'R,*,\Delta^b}_{b-a}=\mathrm{im}~d^{'\mathbb{Z},*,\Delta^b}_{b-a} \otimes R.
        \label{tens2}
    \end{equation}

    In particular this identity implies an equality of rank:
    \begin{equation}
        \forall a,~~~~\mathrm{rank}_{\mathbb{Z}}~\mathrm{im}~d^{'\mathbb{Z},*,\Delta^b}_{b-a} = \mathrm{rank}_{R}~\mathrm{im}~d^{'R,*,\Delta^b}_{b-a}.
        \label{rktens2}
    \end{equation}

    However from equation \ref{tens} we have also $C_{b-a}(\mathcal{F}_p^{'R}(*,\Delta^b))=C_{b-a}(\mathcal{F}_p^{'\mathbb{Z}}(*,\Delta^b))\otimes R$ which also leads to an equality of rank:
    \begin{equation}
        \forall a,~~~~\mathrm{rank}_{\mathbb{Z}}~C_{b-a}(\mathcal{F}_p^{'\mathbb{Z}}(*,\Delta^b)) = \mathrm{rank}_{R}~C_{b-a}(\mathcal{F}_p^{'R}(*,\Delta^b)).
        \label{rktens3}
    \end{equation}
    Then using the rank theorem we obtain:
    \begin{equation*}
        \forall a,~~\mathrm{rank}_R ~\mathrm{ker}~d^{'R,*,\Delta^b}_{b-a} = ~\mathrm{rank}_{\mathbb{Z}}~\mathrm{ker}~d^{'\mathbb{Z},*,\Delta^b}_{b-a}.
    \end{equation*}
    As the homology $H_{b-a}(\mathcal{F}_p^{'\mathbb{Z}}(*,\Delta^b))$ vanishes for $a\neq1$, and using equation \ref{rktens2} we obtain:
    \begin{equation*}
        \forall a\neq1,~~\mathrm{rank}_R ~\mathrm{ker}~d^{'R,*,\Delta^b}_{b-a} =\mathrm{rank}_{\mathbb{Z}} ~\mathrm{im}~d^{'{\mathbb{Z}},*,\Delta^b}_{b-a-1} = \mathrm{rank}_{R} ~\mathrm{im}~d^{'R,*,\Delta^b}_{b-a-1}.
    \end{equation*}
    To conclude that the homology $H_{b-a}(\mathcal{F}_p^{'R}(*,\Delta^b))$ vanishes for all $a \neq 1$, it is now enough to prove that the $R$-module $\mathrm{im}~d^{'R,*,\Delta^b}_{b-a-1}$ is saturated in $\bigoplus_{\Delta^a \subseteq \Delta^b}\bigwedge^p\frac{N_R}{T_RF^{'\perp}_{\Delta^b}}$. However the fact that $\mathrm{im}~d^{'\mathbb{Z},*,\Delta^b}_{b-a-1}$ is saturated in $\bigoplus_{\Delta^a \subseteq \Delta^b}\bigwedge^p\frac{N}{T_{\mathbb{Z}}F^{'\perp}_{\Delta^b}}$, combined with the identity \ref{tens2} proves it (and it enven proves it in the case $a=1$). Then we obtain that the homology vanish for all $a\neq 1$, and it is torison-free for $a=1$:
    \begin{align*}
        \forall a\neq 1,~~~~H_{b-a}(\mathcal{F}_p^{'R}(*,\Delta^b))=0, \\
        H_{b-1}(\mathcal{F}_p^{'R}(*,\Delta^b)) \text{ is torsion-free}.
    \end{align*}

    We can compute the rank of $H_{b-1}(\mathcal{F}_p^{'R}(*,\Delta^b))$ using the Euler characteristics. However, from the identity \ref{rktens3} this Euler characteristics does not depend on the integral domain $R$. As the simplex $\Delta^b$ is primitive, any face $\Delta^a\subseteq \Delta^b$ is also primitive. Then we can apply the Lemma 2.4 of \cite{BMR 2024} (applied for $k:=1$, $F:=F'_{\Delta^b}$ and $\sigma:=\Delta^a$), and we obtain:
    \begin{equation*}
        \mathrm{rank}_{\mathbb{F}_2}~\mathcal{F}^{'\mathbb{F}_2}_p(\Delta^a,\Delta^b)=\binom{r}{p}-\binom{r-a}{p-a}.
    \end{equation*}
    As the number of $a$-dimensional faces of a $b$-simplex is $\binom{b+1}{a+1}$, we obtain that the Euler characteristics is the following:
    \begin{align*}
        \chi(\mathcal{F}_p^{'R}(*,\Delta^b)) &= \sum_{a=1}^b(-1)^{b-a}\binom{b+1}{a+1}\Bigg[\binom{r}{p}-\binom{r-a}{p-a}\Bigg] \\
        & =\sum_{a=1}^b(-1)^{b-a}\Bigg [ \binom{b}{a}+\binom{b}{a+1} \Bigg]\Bigg[\binom{r}{p}-\binom{r-a}{p-a}\Bigg] \\
        &=\sum_{a=1}^b(-1)^{b-a}\binom{b}{a}\Bigg[\binom{r}{p}-\binom{r-a}{p-a}-\binom{r}{p}+\binom{r-a+1}{p-a+1}\Bigg]
        \\ &=\sum_{a=1}^b(-1)^{b-a}\binom{b}{a}\binom{r-a}{p+1-a}
        \\ & =\sum_{a=1}^b (-1)^{b-a}\sum_{D\in \mathcal{P}_{a}(\{1,...,b\})}\mathrm{Card}\bigcap_{i\in D} \{X \in \mathcal{P}_{p+1}(\{1,...,r\}),i\in X\}
        \\ & = (-1)^{b-1} \mathrm{Card} \bigcup_{i=1}^b \{X \in \mathcal{P}_{p+1}(\{1,...,r\}),i\in X\}
        \\& =(-1)^{b-1}\Bigg[\binom{r}{p+1}-\binom{r-b}{p+1}\Bigg]
    \end{align*}
    where $\mathcal{P}_{p+1}(\{1,...,r\})$ denotes the set of subsets of cardinal $p+1$ of $\{1,...,r\}$. Then we obtain the rank of $H_{b-1}(\mathcal{F}_p^{'R}(*,\Delta^b))$:
    \begin{equation*}
        \mathrm{rank}~H_{b-1}(\mathcal{F}_p^{'R}(*,\Delta^b)) = (-1)^{b-1}\chi(\mathcal{F}_p^{'R}(*,\Delta^b)) = \binom{r}{p+1}-\binom{r-b}{p+1}.
    \end{equation*}
        
    Let us denote denote $s,a_1,...,a_b$ the vertices of the simplex $\sigma^b$. By definition of the $R$-primitivity (see Definition \ref{primsimp}), the family $\mathcal{B}_1=\{\frac{sa_1}{|\sigma^b|},...,\frac{sa_b}{|\sigma^b|}\}$ forms a $R$-basis of the tangent space $T_R\sigma^b$ (Remind that $|\sigma^b|$ denotes the greater common divisor of the integral lengths of the edges of $\sigma^b$, see Definition \ref{index}). Moreover, as the $\mathbb{Z}$-module $T_{\mathbb{Z}}\sigma^b$ is saturated in $T_{\mathbb{Z}}F_{\sigma^b}$ and $T_{\mathbb{Z}}F_{\sigma^b}$ is saturated in $\mathbb{Z}^n$, there exist integral vectors $u_{b+1},...,u_n$ of $\mathbb{Z}^n$ such that:
    \begin{align*}
        T_{\mathbb{Z}}\sigma^b\oplus\bigoplus_{i=b+1}^r \mathbb{Z}u_i=T_{\mathbb{Z}}F_{\sigma^b}~~~~;~~~~
        T_{\mathbb{Z}}F_{\sigma^b}\oplus\bigoplus_{i=r+1}^n \mathbb{Z}u_i=\mathbb{Z}^n.
    \end{align*}
    Then the family $\mathcal{B}_2:=\mathcal{B}_1\cup\{u_{b+1},...,u_{r}\}$ forms a $R$-basis of $T_RF_{\sigma^b}$ and the family $\mathcal{B}_3:=\mathcal{B}_2\cup\{u_{r+1},...,u_n\}$ forms a $R$-basis of $\mathbb{Z}^n\otimes R$. We can consider the $\mathbb{Z}$-linear map $\psi: \mathbb{Z}^n\rightarrow\mathbb{Z}^n$ sending each $e_i, ~ 1 \leq i \leq b$ to $\frac{sa_1}{|\sigma^b|}$, and each $e_i,~b+1\leq i \leq n$ to $u_i$. Then as $\mathcal{B}_3$ is a $R$-basis of $\mathbb{Z}^n\otimes R$, it induces by tensorisation an isomorphism of $R$-module $\psi_R:\mathbb{Z}^n\otimes R \overset{\sim}{\rightarrow} \mathbb{Z}^n\otimes R$. This isomorphism satisfies $\psi_R(T_RF'_{\Delta^b})=T_RF_{\sigma^b}$, then its dual satisfies $\psi_R^*(T_RF^{\perp}_{\sigma^b})=T_RF^{'\perp}_{\Delta^b}$, and then it induces the following factorization:
    \begin{equation*}
        \Tilde{\psi}_R^*: \frac{N_R}{T_RF^{\perp}_{\sigma^b}} \overset{\sim}{\rightarrow}\frac{N_R}{T_RF^{'\perp}_{\Delta^b}}.
    \end{equation*}
    Moroever the morphism $\psi$ maps the $b$-simplex $\Delta^b$ to a dilation of the $b$-simplex $\sigma^b$ (by the factor $1/|\sigma^b|$), then for each edge of $\Delta^1\subseteq\Delta^b$, denoting $\sigma^1\subseteq \sigma^b$ the corresponding edge of $\sigma^b$, the map $\psi$ induces by restriction and co-restriction an isomorphism $T_{\mathbb{Z}}\Delta^1\overset{\sim}{\rightarrow} T_{\mathbb{Z}}\sigma^1$. Then $\psi_R$ induces an isomorphism $T_{R}\Delta^1\overset{\sim}{\rightarrow} T_{R}\sigma^1$ and thus its dual $\psi_R^*$ induces an isomorphism $T_{R}\sigma^{1\perp}\overset{\sim}{\rightarrow} T_{R}\Delta^{1\perp}$. Finally $\Tilde{\psi}_R^*$ induces by restriction and co-restriction isomorphisms $\frac{T_{R}\sigma^{1\perp}}{T_RF^{\perp}_{\sigma^b}}\overset{\sim}{\rightarrow} \frac{T_{R}\Delta^{1\perp}}{T_RF^{'\perp}_{\Delta^b}}$ which induces also isomorphims $ \bigwedge^p\frac{T_{R}\sigma^{1\perp}}{T_RF^{\perp}_{\sigma^b}}\overset{\sim}{\rightarrow} \bigwedge^p\frac{T_{R}\Delta^{1\perp}}{T_RF^{'\perp}_{\Delta^b}}$. By sums, it induces between any face $\Delta^a\subseteq \Delta^b$ and its corresponding $\sigma^a\subseteq\sigma^b$ an isomorphism $\mathcal{F}^{'R}_p(\Delta^a,\Delta^b)\overset{\sim}{\rightarrow}\mathcal{F}^R_p(\sigma^a,\sigma^b)$. As these family of isomorphims came from a same map $\psi$ sending $\Delta^b$ to $\sigma^b$ compatible with the combinatorial of the simplices $\Delta^b$ and $\sigma^b$, these isomorphisms induces isomorphisms between the chain complexes which commutes with the differential (as soon as we have chosen compatible balancing signature between the poset of the faces of $\Delta^b$ and the poset of the faces of $\sigma^b$). However the homology does not depend on the choice of a balancing signature, then we can choose compatible ones, and we obtain at the end that their homology are isomorphic. It proves the proposition: the homology $H_{b-a}(\mathcal{F}_p^R(*,\sigma^b))$ is torsion-free for all $a$, and we have:
    \begin{align*}
        \forall a \neq 1,~~ H_{b-a}(\mathcal{F}_p^R(*,\sigma^b))&=0, \\
        \mathrm{rank}_R ~H_{b-1}(\mathcal{F}_p^R(*,\sigma^b)) &= \binom{r}{p+1}-\binom{r-b}{p+1}.
    \end{align*}
    
\end{proof}

\begin{proof}
    \underline{Proof of Proposition \ref{prop25}:}
    Let us fix $\sigma^b$ a $(k,R)$-primitive simplex for $2 \leq k \leq n-1$ and $b>k$. Then any $a$-face $\sigma^a$ of $\sigma^b$ with $a\leq k$ is a $R$-primitive simplex. Let us fix a $a$-face $\sigma^a$ of $\sigma^b$, with $a \leq k$. Similarly as in the proof of Proposition \ref{theohom}, we can construct a primitive $a$-dimensional simplex $\Delta^a$ which belongs to a primitive triangulation $\Gamma'$ of a non-singular lattice polytope $P'$, such that the minimal face $F_{\Delta^a}'$ of $P'$ containing $\Delta^a$ is of dimension $\mathrm{dim}~F_{\sigma^b}$ ($F_{\sigma^b}$ denotes the minimal face of the lattice polytope $P$ containing the simplex $\sigma^b$). We have also an injective morphism of $\mathbb{Z}$-module $\psi: \mathbb{Z}^n \rightarrow \mathbb{Z}^n$ which sends the simplex $\Delta^a$ to a dilatation of the simplex $\sigma^a$ such that it induces an isomorphism of $R$-module $\psi_R:\mathbb{Z}^n \otimes R \rightarrow \mathbb{Z}^n \otimes R$ satisfying $\psi_R(T_RF'_{\Delta^a})=T_RF_{\sigma^b}$. Then we obtain similarly that its dual $\psi_R^*$ induces isomophisms $\mathcal{F}^{'R}_p(\sigma^a,\sigma^b)\overset{\sim}{\rightarrow}\mathcal{F}_p^R(\Delta^a,\Delta^a)$. However, we obtain also a similar equation as \ref{tens}: $\mathcal{F}_p^{'R}(\Delta^a,\Delta^a) = \mathcal{F}_p^{'\mathbb{Z}}(\Delta^a,\Delta^a) \otimes R$ which implies that the rank of the $R$-module $\mathcal{F}_p^R(\Delta^a,\Delta^a)$ does not depend on the integral domain $R$. Then we obtain:
    \begin{equation*}
        \mathrm{rank}_R ~\mathcal{F}^R_p(\sigma^a,\sigma^b) = \mathrm{rank}_R ~\mathcal{F}^R_p(\Delta^a,\Delta^a) = \mathrm{rank}_{\mathbb{F}_2}~\mathcal{F}^{'\mathbb{F}_2}_p(\Delta^a,\Delta^a).
    \end{equation*}
    However, as the polytope $P'$ is non-singular and the triangulation $\Gamma'$ is primitive we can use the Lemma 2.4 of \cite{BMR 2024} (applied for $k:=1$, $F:=F'_{\Delta^a}$ and $\sigma:=\Delta^a$):
    \begin{equation*}
        \mathrm{rank}_{\mathbb{F}_2}~\mathcal{F}^{'\mathbb{F}_2}_p(\Delta^a,\Delta^a)=\binom{\mathrm{dim}F'_{\Delta^a}}{p}-\binom{\mathrm{dim}F'_{\Delta^a}-a}{p-a}.
    \end{equation*}
    Therefore using $\mathrm{dim}F'_{\Delta^a}=\mathrm{dim}F_{\sigma^b}$ we finally obtain for every face $\sigma^a$ of $\sigma^b$ with $a\leq k$:
    \begin{equation*}
        \mathrm{rank}_R ~\mathcal{F}^R_p(\sigma^a,\sigma^b) = \binom{\mathrm{dim}F_{\sigma^a}}{p}-\binom{\mathrm{dim}F_{\sigma^a}-a}{p-a}.
    \end{equation*}
    Then we obtain in the case $p<a\leq k$, for every $\sigma^a \subseteq \sigma^b$:
    \begin{equation}
        \mathrm{rank}_R ~ \mathcal{F}_p^R(\sigma^a,\sigma^b) = \binom{\mathrm{dim}F_{\sigma^b}}{p} = \mathrm{rank}~ \bigwedge^p\frac{N_R}{T_RF_{\sigma^b}^{\perp}}.
        \label{equrank}
    \end{equation}
    However, using again the simplex $\Delta^a$ of the triangulation $\Gamma'$ of the polytope $P'$, and the related morphism $\psi: \mathbb{Z}^n \rightarrow \mathbb{Z}^n$, we have similarly as is in the proof of Proposition \ref{theohom} that the $\mathbb{Z}$-module $\mathcal{F}^{'\mathbb{Z}}_p(\Delta^a,\Delta^a)$ is saturated in $\bigwedge^p\frac{N}{T_{\mathbb{Z}}F_{\sigma^b}^{\perp}}$ (see Definition \ref{sat} for the saturation of a sub-module). Using again the similar identity as equation \ref{tens} ($\mathcal{F}_p^{'R}(\Delta^a,\Delta^a) = \mathcal{F}_p^{'\mathbb{Z}}(\Delta^a,\Delta^a) \otimes R$) it implies that $\mathcal{F}^{'R}_p(\Delta^a,\Delta^a)$ is a $R$-module saturated in $\bigwedge^p\frac{N_R}{T_RF_{\sigma^b}^{\perp}}$. Then using the isomorphism $\psi_R^*:\frac{N_R}{T_RF_{\sigma^b}^{\perp}}\overset{\sim}{\rightarrow}\frac{N_R}{T_RF^{'\perp}_{\Delta^a}}$ we finally obtain that $\mathcal{F}_p^R(\sigma^a,\sigma^b)$ is a $R$-module saturated in $\bigwedge^p\frac{N_R}{T_RF_{\sigma^b}^{\perp}}$. However equation \ref{equrank} shows that there are of same rank, thus they are equal. We obtain for any $p<a\leq k$ and every $\sigma^a \subseteq \sigma^b$:
    \begin{equation*}
        \mathcal{F}_p^R(\sigma^a,\sigma^b) = \bigwedge^p\frac{N_R}{T_RF_{\sigma^b}^{\perp}}.
    \end{equation*}
    However, for any face $\sigma^a\subseteq\sigma^b$ with $a\geq k$ there exists a face $\sigma^k \subseteq \sigma^a$ and then if $p<k$ we have $\mathcal{F}_p^R(\sigma^a,\sigma^b)\supseteq\mathcal{F}_p^R(\sigma^k,\sigma^b)=\bigwedge^p\frac{N_R}{T_RF_{\sigma^b}^{\perp}}$. As we also know that $\mathcal{F}_p^R(\sigma^a,\sigma^b)\subseteq\bigwedge^p\frac{N_R}{T_RF_{\sigma^b}^{\perp}}$, we finally get:
    \begin{equation*}
        \forall p<k, ~~\forall p < a\leq b, ~~\forall \sigma^a \subseteq \sigma^b,~~\mathcal{F}_p(\sigma^a,\sigma^b) = \bigwedge^p\frac{N_R}{T_RF_{\sigma^b}^{\perp}} \simeq R^{\binom{\mathrm{dim}F_{\sigma^b}}{p}}.
    \end{equation*}

    We have obtained that for any $p<k$, $\mathcal{F}_p(*,\sigma^b)$ is a constant cosheaf for the $\sigma^a \subseteq \sigma^b$ with $p < a\leq b$. We have the following isomorphisms:
    \begin{equation*}
        \forall p<k,~~\forall p<a\leq b, ~~C_{b-a}(\mathcal{F}_p^R(*,\sigma^b)) = C_{b-a}(\sigma^b;R^{\binom{\mathrm{dim}F_{\sigma^b}}{p}}).
    \end{equation*}
    where $C_{*}(\sigma^b;R^{\binom{\mathrm{dim}F_{\sigma^b}}{p}})$ is the classical simplicial chains of the simplex $\sigma^b$ with coefficients in $R^{\binom{\mathrm{dim}F_{\sigma^b}}{p}}$. These isomorphisms commute with the differential and we obtain isomorphism with the simplicial homology of $\sigma^b$:
    \begin{equation*}
     \forall p<k,~~\forall p+1<a\leq b,~~H_{b-a}(\mathcal{F}_p^R(*,\sigma^b)) = H_{b-a}(\sigma^b;R^{\binom{r}{p}})=0.
    \end{equation*}
\end{proof}

\subsection{The cohomological spectral sequence: proof of Lemma \ref{5}}

Let's begin by reminding the following identity:
\begin{equation*}
    E_1^{a,b}(\delta_{2}):= \bigoplus_{\sigma^a \in \Gamma} H^{b-a}(\mathcal{F}^p_R(\sigma^a,*)).
\end{equation*}

It shows that to prove Lemma \ref{4} we just need to study the properties of the cohomology groups $H^{b-a}(\mathcal{F}^p_R(\sigma^a,*))$. In particular, as the Lemma \ref{5} enounces that $E_1^{a,b}(\delta_{2})=0$ for every $b\neq n$, we have to prove that  $H^{b-a}(\mathcal{F}^p_R(\sigma^a,*)) = 0$ for every $b\neq n$. To prove that we need an assumption of local $R$-non-singularity on the lattice polytope $P$. It is our following proposition:

\begin{proposition}
    Let $\sigma^a \in \Gamma$. Remind that $F_{\sigma^a}$ denotes the minimal face of the polytope $P$ containing $\sigma^a$. If the polytope $P$ is $R$-non-singular on its face $F_{\sigma^a}$ (see Definition \ref{locprim}), then the cohomology $H^{*}(\mathcal{F}^p_R(\sigma^a,*))$ is torsion-free and satisfies:
    \begin{equation*}
        H^{b-a}(\mathcal{F}_R^p(\sigma^a,*)) = \left \{ \begin{array}{ll}
            0 & \text{if } b\neq n \\
            \mathcal{F}_R^{p - n + \mathrm{dim} ~ F_{\sigma^a}}(\sigma^a,\sigma^a) & \text{if } b=n
        \end{array} \right . .
    \end{equation*}
    The isomorphism $H^{n-a}(\mathcal{F}_R^p(\sigma^a,*)) = \mathcal{F}_R^{p - n + \mathrm{dim} ~ F_{\sigma^a}}(\sigma^a,\sigma^a)$ is canonical up to a choice of a generator of $\bigwedge^{n-\mathrm{dim}F_{\sigma^a}}\frac{M_R}{T_RF_{\sigma^a}}$.
    \label{theocohom}
\end{proposition}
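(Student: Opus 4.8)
The plan is to filter the cochain complex $C^\bullet(\mathcal F^p_R(\sigma^a,*))$ by $\dim F_{\sigma^b}$ and to identify the resulting spectral sequence, on its unique nonzero row, with a Koszul‑type complex supported on the cube of faces of $P$ containing $F_{\sigma^a}$. The two structural facts I would use at the outset are that $\mathcal F^p_R(\sigma^a,\sigma^b)$ depends on $\sigma^b$ only through the face $F_{\sigma^b}$, and that (by local $R$-non-singularity of $P$ on $F_{\sigma^a}$ together with Remark \ref{bij}) the faces $G$ of $P$ with $F_{\sigma^a}\subseteq G$ form a Boolean cube $\{G_I\}_{I\subseteq[m]}$ with $m=n-\dim F_{\sigma^a}$ and $T_RG_I=T_RF_{\sigma^a}\oplus\bigoplus_{i\in I}Re_i$; moreover each $G_I$ is realised as $F_{\sigma^b}$ for some $\sigma^b\supseteq\sigma^a$ of $\Gamma$ (a maximal simplex of $\Gamma|_{G_I}$ through $\sigma^a$ has minimal face $G_I$). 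Since the cochain differential $\delta_2$ never decreases $\dim F_{\sigma^b}$, the subspaces $\bigoplus_{\dim F_{\sigma^b}\ge \dim F_{\sigma^a}+s}\mathcal F^p_R(\sigma^a,\sigma^b)$ form a filtration by subcomplexes, and its spectral sequence converges to $H^\bullet(\mathcal F^p_R(\sigma^a,*))$.

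The $E_0$-differential is the part of $\delta_2$ preserving $F_{\sigma^b}$, so $E_1^{s,t}=\bigoplus_{|I|=s}\mathcal F^p_R(\sigma^a,G_I)\otimes H^t$ of the complex spanned by those $\sigma^b$ with $\sigma^a\subseteq\sigma^b\subseteq G_I$ and $F_{\sigma^b}=G_I$. Writing $\sigma^b=\sigma^a*\eta$, this last complex is, up to a shift by one, the relative simplicial cochain complex of the pair $(L,\partial L)$ with $L=\operatorname{lk}_{\Gamma|_{G_I}}(\sigma^a)$; because $\sigma^a$ lies in the relative interior of $F_{\sigma^a}$, $L$ is a PL sphere when $I=\emptyset$ and a PL ball when $I\ne\emptyset$ (with $\partial L=\operatorname{lk}_{\Gamma|_{\partial G_I}}(\sigma^a)$), so in all cases this cohomology is a rank‑one free $R$-module concentrated in cochain degree $\dim G_I-a$ (the orientation of $G_I$ transversal to $\sigma^a$, which for $I=[m]$ is built from $\bigwedge^{m}(M_R/T_RF_{\sigma^a})$). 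Consequently $E_1$ is concentrated on the single row $t=\dim F_{\sigma^a}-a$, where it is the cochain complex of the cube $\{G_I\}$ with coefficients in the sheaf $I\mapsto \mathcal F^p_R(\sigma^a,G_I)$ twisted by these rank‑one orientation modules.

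It then remains to compute the cohomology of this cube complex, and I expect this to be the main obstacle. Using $T_RF_{\sigma^a}^{\perp}\subseteq T_R\sigma^{1\perp}$ for $\sigma^1\subseteq\sigma^a$ and the extension $0\to T_RF_{\sigma^a}^\perp/T_RG_I^\perp\to T_R\sigma^{1\perp}/T_RG_I^\perp\to T_R\sigma^{1\perp}/T_RF_{\sigma^a}^\perp\to 0$, the sheaf $I\mapsto\mathcal F^p_R(\sigma^a,G_I)$ carries a filtration, compatible with the cube maps, whose graded pieces are $\bigwedge^j\!\big(T_RF_{\sigma^a}^\perp/T_RG_I^\perp\big)^{\!*}\otimes\mathcal F^{p-j}_R(\sigma^a,\sigma^a)$; on the associated graded the cube complex splits, over $j$ and over the $j$-element subsets $S\subseteq[m]$, into reduced simplicial cochain complexes of simplices on the vertex sets $[m]\setminus S$, all acyclic except the one with $S=[m]$ (which forces $j=m$). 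Hence the spectral sequence degenerates at $E_2$, concentrated in bidegree $(m,\dim F_{\sigma^a}-a)$, i.e. in total cochain degree $n-a$, with value $\mathcal F^{p-m}_R(\sigma^a,\sigma^a)$ tensored with a rank‑one free module coming from $\bigwedge^{n-\dim F_{\sigma^a}}(M_R/T_RF_{\sigma^a})$; a single choice of generator of $\bigwedge^{n-\dim F_{\sigma^a}}(M_R/T_RF_{\sigma^a})$ trivialises this and yields the stated isomorphism, and torsion‑freeness is automatic since the unique nonzero group is $\operatorname{Hom}_R(\mathcal F^R_{p-m}(\sigma^a,\sigma^a),R)$ up to a rank‑one free twist. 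The delicate steps are the fibre computation of the second paragraph (identifying the interior‑star complex with the relative cohomology of a link ball and pinning down its orientation twist) and the exterior‑power bookkeeping of the last step over a general integral domain — handled, as in the proof of Proposition \ref{theohom}, by first establishing the statement for $R=\mathbb Z$ using saturation and then tensoring; alternatively one may replace $(P,\Gamma)$ near $F_{\sigma^a}$ by the prism model $F_{\sigma^a}\times[0,1]^{m}$ with a compatible triangulation and run the same argument there.
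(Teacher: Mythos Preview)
Your argument is correct and lands on the same core computation as the paper. Both proofs ultimately reduce to the Boolean cube $\{G_I\}_{I\subseteq[m]}$ of faces of $P$ containing $F_{\sigma^a}$, split the coefficients along the decomposition $\bigwedge^p\cong\bigoplus_j\bigwedge^j(T_RG_I/T_RF_{\sigma^a})\otimes(\cdot)$, and recognise each summand as the reduced cochain complex of a simplex on $[m]\setminus S$; this is exactly Lemma~\ref{homwedge} in the paper.

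The organisational difference is in how you pass from the simplicial index set $\{\sigma^b\supseteq\sigma^a\}$ to the cube $\{G_I\}$. The paper invokes, from \cite{BMR 2024}, a direct product decomposition (equation~\eqref{dirsum}) of the cochain complex: under local $R$-non-singularity the short exact sequence you wrote splits compatibly, so $\mathcal F^p_R(\sigma^a,\sigma^b)\cong\bigoplus_{r+t=p}\mathcal F^r_R(\sigma^a,\sigma^a)\otimes\mathcal G^t_{R,\sigma^a}(F_{\sigma^b})$ as complexes, and the second factor depends only on $F_{\sigma^b}$. You instead filter by $\dim F_{\sigma^b}$ and collapse the fibre over each $G_I$ via the relative link computation $(L,\partial L)$; this is more self-contained (it does not appeal to \cite{BMR 2024}) and makes the reduction to the cube explicit, at the cost of the extra PL ball/sphere step. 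Your filtration on the cube coefficients could in fact be replaced by the direct splitting the paper uses, since local $R$-non-singularity gives it outright; the $\mathbb Z$-first detour you sketch at the end is therefore unnecessary here.
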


Brugallé, Lopez de Medrano and Rau have proven this proposition in \cite{BMR 2024} (in their proof of Lemma 3.1), in the case of a non-singular lattice polytope, for the integral domain $R=\mathbb{F}_2$ and for homology:

\begin{proposition}
    \cite{BMR 2024} For a non-singular lattice polytope $P$ with a triangulation $\Gamma$ we have:
    \begin{equation*}
        H_{b-a}(\mathcal{F}^{\mathbb{F}_2}_p(\sigma^a,*)) = \left \{ \begin{array}{ll}
            0 & \text{if } b\neq n \\
            \mathcal{F}^{\mathbb{F}_2}_{p - n + \mathrm{dim} ~ F_{\sigma^a}}(\sigma^a,\sigma^a) & \text{if } b=n
        \end{array} \right . .
    \end{equation*}
    \label{BMR}
\end{proposition}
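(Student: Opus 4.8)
The plan is to reduce the statement to a local computation around $\sigma^a$ and run the spectral sequence of a suitable filtration, following \cite{BMR 2024}. Write $F := F_{\sigma^a}$, $d := \mathrm{dim}\,F$ and $m := n - d$. Non-singularity of $P$ gives local $\mathbb{F}_2$-non-singularity on $F$, so by Remark \ref{bij} the faces $G\supseteq F$ of $P$ biject with the subsets $S\subseteq\{1,\dots,m\}$ via $T_{\mathbb{F}_2}G_S = T_{\mathbb{F}_2}F\oplus\bigoplus_{i\in S}\mathbb{F}_2 e_i$ (so $G_\emptyset = F$, $G_{\{1,\dots,m\}}=P$, $\mathrm{dim}\,G_S = d+|S|$, and $T_{\mathbb{F}_2}G_S^{\perp}=\bigoplus_{i\notin S}\mathbb{F}_2 e_i^{*}$ inside $N_{\mathbb{F}_2}=(T_{\mathbb{F}_2}F)^{*}\oplus\bigoplus_i\mathbb{F}_2 e_i^{*}$). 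Only simplices $\sigma^b\supseteq\sigma^a$ enter the complex $C_*(\mathcal{F}^{\mathbb{F}_2}_p(\sigma^a,*))$, and for such a $\sigma^b$ with type $S:=S(\sigma^b)$ (defined by $F_{\sigma^b}=G_S$) the stalk is $\mathcal{F}^{\mathbb{F}_2}_p(\sigma^a,\sigma^b)=\sum_{\sigma^1\subseteq\sigma^a}\bigwedge^{p}\bigl(W_{\sigma^1}\oplus\bigoplus_{i\in S}\mathbb{F}_2 e_i^{*}\bigr)=:\mathcal{C}_S$, where $W_{\sigma^1}\subseteq(T_{\mathbb{F}_2}F)^{*}$ is the annihilator of the line $T_{\mathbb{F}_2}\sigma^1$; note it depends on $\sigma^b$ only through $S$.

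Next I filter $C_*(\mathcal{F}^{\mathbb{F}_2}_p(\sigma^a,*))$ by $|S(\sigma^b)|$. Since a facet inclusion $\sigma^{b-1}\subseteq\sigma^b$ satisfies $S(\sigma^{b-1})\subseteq S(\sigma^b)$, the subcomplexes $\{|S|\le j\}$ are $d^{2,\sigma^a}$-stable; on the associated graded the differential retains only facet maps with $|S|$ unchanged, hence with $S$ unchanged (nested subsets of equal size coincide) and cosheaf map the identity. Thus $\mathrm{gr}_j\cong\bigoplus_{|S|=j}\mathcal{C}_S\otimes D^S_{*}$, where $D^S_{*}$ is the $R$-coefficient complex with one cell for each $\sigma^b\supseteq\sigma^a$ of type $S$ and differential the facet maps preserving the type. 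Because the triangulation of $P$ is compatible with its face lattice, $\mathrm{relint}\,\sigma^a\subseteq\mathrm{relint}\,F$, so the link of $\sigma^a$ in the triangulation of $F$ is a triangulated sphere $S^{d-a-1}$; unwinding the definitions (a cell of type $S$ is $\sigma^a$ joined with a simplex $\tau$ of that link and the full simplex on the vertex set $S$ of the transverse orthant directions, and the type-preserving facet maps only shrink $\tau$) identifies $D^S_{*}$ with the augmented simplicial chain complex of $S^{d-a-1}$ shifted up by $|S|$. Hence $H_q(D^S_{*})=R$ for $q=\mathrm{dim}\,G_S-a=d-a+|S|$ and $0$ otherwise, so $E^1$ is concentrated on the single line where the total degree equals $d-a+|S|$, all higher differentials vanish for degree reasons, and $E^2=E^\infty$. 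Consequently $H_{q}(\mathcal{F}^{\mathbb{F}_2}_p(\sigma^a,*))$ equals the homology, in filtration degree $j:=q-d+a$, of the complex $\bigl(\bigoplus_{|S|=\bullet}\mathcal{C}_S,\ \textstyle\sum_i\pi_i\bigr)$, whose differential is the sum over $i$ of the coordinate projections $\mathcal{C}_S\to\mathcal{C}_{S\setminus\{i\}}$ killing $e_i^{*}$.

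It remains to compute this last complex, which is of Koszul type. Fixing an edge $\sigma^1$ and decomposing $\bigwedge^p(W_{\sigma^1}\oplus\mathbb{F}_2^S)=\bigoplus_k\bigwedge^{p-k}W_{\sigma^1}\otimes\bigwedge^{k}\mathbb{F}_2^{S}$, the complex splits over $k$ and over $k$-subsets $T\subseteq\{1,\dots,m\}$ into copies of the fully augmented simplicial chain complex of the simplex on vertex set $\{1,\dots,m\}\setminus T$; these are acyclic except when $T=\{1,\dots,m\}$ (forcing $k=m$), which contributes $\bigwedge^{p-m}W_{\sigma^1}$ in filtration degree $m$. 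Summing over $\sigma^1\subseteq\sigma^a$ yields homology $\sum_{\sigma^1\subseteq\sigma^a}\bigwedge^{p-m}W_{\sigma^1}=\mathcal{F}^{\mathbb{F}_2}_{p-m}(\sigma^a,\sigma^a)$, concentrated at $j=m$, i.e. total degree $q=d-a+m=n-a$ (so $b=n$). Since $p-m=p-n+\mathrm{dim}\,F_{\sigma^a}$, this is exactly the asserted value, and $H_{b-a}(\mathcal{F}^{\mathbb{F}_2}_p(\sigma^a,*))=0$ for $b\ne n$. (The case $m=0$, where $\sigma^a$ is interior to $P$, is the trivial instance of this argument.)

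The delicate points, which I would treat with care, are: (i) the identification of $D^S_{*}$ with the shifted augmented chain complex of the sphere $S^{d-a-1}$, together with the verification that the induced $E^1$-differential is exactly the free sum "$\sum_i\pi_i$" — this is where non-singularity of $P$ is genuinely used, since it furnishes the orthant/product structure of the faces through $F$; and (ii) carrying the submodule sum $\sum_{\sigma^1\subseteq\sigma^a}$ through the Koszul step, since $\mathcal{F}^{\mathbb{F}_2}_p(\sigma^a,\sigma^b)$ is a sum, not a direct sum, of the exterior powers attached to the edges of $\sigma^a$ — here one invokes the rank bookkeeping of \cite[Lemma 2.4]{BMR 2024} to see that the decomposition by $k$ above is compatible with this sum. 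Working over $\mathbb{F}_2$ is what trivialises all the signs (balancing signatures, Koszul signs) throughout.
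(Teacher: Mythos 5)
First, a point of comparison: the paper does not actually reprove this statement — Proposition \ref{BMR} is quoted from \cite{BMR 2024} (their Lemma 3.1), and what the paper supplies is the adaptation to the cohomological, general-$R$ setting (Proposition \ref{theocohom}, via the decomposition \ref{dirsum} and Lemma \ref{homwedge}). Your overall architecture matches that adaptation closely: you use local non-singularity to identify the faces of $P$ through $F_{\sigma^a}$ with the Boolean lattice $2^{\{1,\dots,m\}}$, you first collapse the ``triangulation direction'' and are left with a Koszul-type complex $\bigl(\bigoplus_{S}\mathcal{C}_S,\sum_i\pi_i\bigr)$ on that lattice, and your final splitting over subsets $T$ is exactly the computation of Lemma \ref{homwedge}; your worry (ii) about the sum over $\sigma^1\subseteq\sigma^a$ is in fact harmless, since the $T$-grading lives in the $\bigoplus_{i\in S}\mathbb{F}_2 e_i^*$ factor, which is common to all edges $\sigma^1$, so the internal sum and the direct sum over $T$ commute.

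The genuine flaw is in the step identifying $D^S_*$. You justify the claim $H_q(D^S_*)=R$ for $q=\dim G_S-a$ by asserting that a simplex of type $S$ containing $\sigma^a$ has the form ``$\sigma^a$ joined with a simplex $\tau$ of the link of $\sigma^a$ in $F$ and the full simplex on the vertex set $S$''. A general triangulation $\Gamma$ has no such join structure transverse to $F$: already for $P=[0,3]^2$ and $\sigma^a$ a corner vertex (so $d=a=0$, $m=2$), the simplices of type $S=\{1,2\}$ are all the triangles and all the interior edges of $\Gamma$ at that vertex, whose number is arbitrary, whereas your description would force $D^{\{1,2\}}_*$ to have a single generator. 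So $D^S_*$ is not chain-isomorphic to a shifted chain complex of $S^{d-a-1}$. The homology statement you need is nevertheless true, but for a different reason: $D^S_*$ is the relative chain complex of the pair consisting of the link of $\sigma^a$ in $\Gamma|_{G_S}$ (a PL ball of dimension $\dim G_S-a-1$ for $S\neq\emptyset$, since $\mathrm{relint}\,\sigma^a\subseteq\mathrm{relint}\,F\subseteq\partial G_S$) and its boundary sphere, which is the link of $\sigma^a$ in $\Gamma|_{\partial G_S}$; for $S=\emptyset$ one gets the reduced homology of the sphere $S^{d-a-1}$. Replacing the join argument by this (ball, boundary sphere) argument repairs the step, and it is then the compatibility of the fundamental classes of these pairs under the connecting homomorphisms that yields the identification of the $E^1$-differential with $\sum_i\pi_i$ — the delicate point you correctly flag. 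With that repair the proof goes through.
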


As their article \cite{BMR 2024} concerns only primitive triangulations, their proof is written for a primitive triangulation $\Gamma$, but in reality the primitivity of $\Gamma$ is used nowhere in their proof, that is why we enounce here their proposition for a non-necessarily primitive triangulation $\Gamma$. Their cosheaves $\mathcal{F}_p^{\mathbb{F}_2}$ are also defined for a coarser cellular complex, indexed by the poset $\Xi=\{(F,\sigma),~ F \text{ face of } P,~\sigma\in \Gamma,~\mathrm{dim} \sigma\geq 1\}$. However, according to our discussion in the beginning of the preliminaries of this article, Jules Chenal has proven in his PhD thesis \cite{Ch 2024} that the resulting homology is the same as with the cubical subdivision, indexed by the poset $\Omega=\{(\sigma,\tau),~\sigma,\tau \in \Gamma,~\sigma \subseteq \tau,~\mathrm{dim}\sigma \geq 1\}$ that we use in this article. It is the reason why we have written this Proposition \ref{BMR} of Brugallé, Lopez de Medrano and Rau in terms of the cubical subdivision.

Some arguments of the proof of Brugallé, Lopez de Medrano and Rau of Proposition \ref{BMR} can be adapted to our context to prove our Proposition \ref{theocohom}. The hypothesis of $R$-non-singularity of the polytope on the face $F_{\sigma^a}$ can be used instead of their stronger hypothesis of non-singularity of $P$ and we obtain the following identity:
\begin{equation}
    H^{b-a}(\mathcal{F}^p_R(\sigma^a,*))= \bigoplus_{r+t = p} \mathcal{F}^r_R(\sigma^a,\sigma^a) \otimes H^{b}(\mathcal{G}^t_{R,\sigma^a})
    \label{dirsum}
\end{equation}
where $\mathcal{G}^t_{R,\sigma^a}$ is a cellular sheaf on the poset $L_{\sigma^a}:=\{F \text{ face of }P,~~ F\supseteq \sigma^a\}$ graded by the dimension of the faces of $P$, defined by $\mathcal{G}^t_{R,\sigma^a}(F):=\bigwedge^t\frac{T_RF}{T_RF_{\sigma^a}}$, whose sheaves morphisms are induced by the inclusions $T_RF\subseteq T_RG$ for any incident faces $F\subseteq G$ of $P$ containing $\sigma^a$.

Now to conclude the proof of our Proposition \ref{theocohom} we have to prove that the cohomology $H^{b}(\mathcal{G}^t_{R,\sigma^a})$ vanishes except if $b=n=t+\mathrm{dim}F_{\sigma^a}$, which is the purpose of the following lemma:

\begin{lemma}
    If the polytope $P$ is non-singular of the face $F_{\sigma^a}$, then we have for any integers $t,b$:
    \begin{equation*}
        H^{b}(\mathcal{G}^t_{R,\sigma^a}) = \left  \{ \begin{array}{ll}
             0 & \text{ if } b \neq n \text{ or } n \neq t+\mathrm{dim}F_{\sigma^a} \\
             R& \text{ if } b = n = t+\mathrm{dim}F_{\sigma^a}
        \end{array} \right. .
    \end{equation*}
    \label{homwedge}
\end{lemma}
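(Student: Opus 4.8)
Here is the plan. The idea is to make the poset $L_{\sigma^a}$ and the sheaf $\mathcal{G}^t_{R,\sigma^a}$ completely explicit using the local $R$-non-singularity hypothesis, split $\mathcal{G}^t_{R,\sigma^a}$ into rank-$\le 1$ summands, and recognise each summand as (a degree shift of) the constant sheaf on the face poset of a simplex.

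\emph{Setting up coordinates.} Put $m:=n-\dim F_{\sigma^a}$, and let $e_1,\dots,e_m\in M_R$ be the free family provided by $R$-non-singularity of $P$ along $F_{\sigma^a}$ (Definition \ref{locprim}); taking $G=P$ there gives $M_R=T_RF_{\sigma^a}\oplus Re_1\oplus\dots\oplus Re_m$, so the $e_i$ are independent modulo $T_RF_{\sigma^a}$. By Remark \ref{bij}, the assignment $S\mapsto G_S$, where $T_RG_S=T_RF_{\sigma^a}\oplus\bigoplus_{i\in S}Re_i$ and $\dim G_S=\dim F_{\sigma^a}+|S|$, is an order isomorphism from the Boolean lattice of subsets of $\{1,\dots,m\}$ onto $L_{\sigma^a}$ (order-preservation and -reflection follow from the independence of the $e_i$ mod $T_RF_{\sigma^a}$). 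Under this identification $\frac{T_RG_S}{T_RF_{\sigma^a}}$ is canonically $\bigoplus_{i\in S}Re_i$, hence $\mathcal{G}^t_{R,\sigma^a}(G_S)=\bigwedge\nolimits^t\big(\bigoplus_{i\in S}Re_i\big)$ is free with basis $\{e_W:=\bigwedge_{i\in W}e_i \,:\, W\subseteq S,\ |W|=t\}$ (wedges taken with respect to the order of $\mathbb{Z}$, as fixed in the preliminaries), and the structure morphism attached to $S\subseteq S'$ sends each $e_W$ to $e_W$.

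\emph{Splitting the sheaf.} For each $t$-subset $T$ of $\{1,\dots,m\}$ I define the cellular sheaf $\mathcal{H}_T$ by $\mathcal{H}_T(G_S)=R\,e_T$ if $T\subseteq S$ and $\mathcal{H}_T(G_S)=0$ otherwise, with all nonzero morphisms equal to the identity. From the description of the structure maps above, $\mathcal{G}^t_{R,\sigma^a}=\bigoplus_{|T|=t}\mathcal{H}_T$ as cellular sheaves, so $H^b(\mathcal{G}^t_{R,\sigma^a})=\bigoplus_{|T|=t}H^b(\mathcal{H}_T)$; if $t>m$ there is no such $T$ and $\mathcal{G}^t_{R,\sigma^a}=0$, so from now on $t\le m$. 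Each $\mathcal{H}_T$ is the constant sheaf $R$ on the upper set $\{G_S:S\supseteq T\}$, which via $S\mapsto S\setminus T$ is order-isomorphic to the Boolean lattice of subsets of $\{1,\dots,m\}\setminus T$; moreover $\dim G_S=(\dim F_{\sigma^a}+t)+|S\setminus T|$, so the dimension grading is the cardinality grading shifted by the constant $\dim F_{\sigma^a}+t$. Consequently the cochain complex computing $H^\bullet(\mathcal{H}_T)$ is, after this shift and up to the choice of balancing signature (the cohomology of a cellular sheaf being independent of that choice, a fact already used in the paper), the reduced simplicial cochain complex of the simplex on the vertex set $\{1,\dots,m\}\setminus T$.

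\emph{Conclusion.} If $t<m$ this simplex is nonempty and contractible, so its reduced cohomology vanishes and $H^b(\mathcal{H}_T)=0$ for all $b$ (equivalently, the standard cone homotopy based at any fixed $i_0\in\{1,\dots,m\}\setminus T$ contracts the complex). If $t=m$ the only admissible $T$ is $\{1,\dots,m\}$, its upper set consists of the single cell $G_{\{1,\dots,m\}}=P$ of dimension $n$, so the complex is $R$ placed in degree $n$ and $H^b(\mathcal{H}_{\{1,\dots,m\}})=R$ for $b=n$ and $0$ otherwise. Summing over $T$ gives $H^b(\mathcal{G}^t_{R,\sigma^a})=0$ unless $t=m$ and $b=n$, in which case it equals $R$ (canonically $\bigwedge\nolimits^m\frac{M_R}{T_RF_{\sigma^a}}=R\,(e_1\wedge\dots\wedge e_m)$, depending only on a generator of $\bigwedge\nolimits^m\frac{M_R}{T_RF_{\sigma^a}}$, consistently with Proposition \ref{theocohom}); since $t=m$ is equivalent to $n=t+\dim F_{\sigma^a}$, this is exactly the statement. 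The only step demanding care is the last identification of the cochain complex of $\mathcal{H}_T$ with the (shifted) reduced cochain complex of a simplex — i.e. checking that the maps $e_W\mapsto e_W$ together with the balancing-signature signs assemble into the usual simplicial coboundary — but this is routine given Remark \ref{bij} and the freedom to choose a convenient balancing signature, so I do not expect a genuine obstacle here.
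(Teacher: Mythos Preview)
Your proof is correct and follows essentially the same approach as the paper: both arguments use the bijection of Remark~\ref{bij} to identify $L_{\sigma^a}$ with a Boolean lattice, decompose $\mathcal{G}^t_{R,\sigma^a}$ as a direct sum indexed by the $t$-subsets $T$ (your $\mathcal{H}_T$ is the paper's $C^{\bullet}(D_{\gamma^{-1}(T)};R)\otimes\bigwedge_{i\in T}e_i'$), and recognise each summand as a degree shift of the reduced simplicial cochain complex of a simplex on $\{1,\dots,m\}\setminus T$. The only differences are notational and expository; in particular your explicit discussion of the grading shift and of the degenerate case $t=m$ is slightly more detailed than the paper's, but the underlying computation is identical.
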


We cannot use the same strategy as Brugallé, Lopez de Medrano and Rau in \cite{BMR 2024}, who use some properties of non-singular projective toric varieties, because here the polytope $P$ is not supposed to be non-singular but only $R$-non-singular on the face $F_{\sigma^a}$. However, fixing a basis of $\mathbb{Z}^n\otimes R$ adapted to the definition of $R$-non-singularity of the polytope $P$ on the face $F_{\sigma^a}$ (see Definition \ref{locprim}), we are able to compute this cohomology by comparison with the simplicial cohomology of certain abstract simplices. It is the purpose of the following proof:

\begin{proof}
    \underline{Proof of Lemma \ref{homwedge}:}
    Let us fix the set of vectors $\{e_1,...,e_{n-\mathrm{dim}F_{\sigma^a}}\}$ of $M_R$ adapted for the definition of $R$-non-singularity of the polytope $P$ on the face $F_{\sigma^a}$ (see Definition \ref{locprim}). Using Definition \ref{locprim} of local $R$-non-singularity and Remark \ref{bij} we have a bijection of sets:
    \begin{align*}
       \gamma:\{F  \text{ face of } P,~ F \supseteq \sigma^a\} &\simeq  \mathcal{P}(\{1,...,{n-\mathrm{dim}F_{\sigma^a}}\}) \\
        F ~~~~~~~~&\mapsto ~~~~I \text{ such that } T_RF=T_RF_{\sigma^a} \oplus \bigoplus_{i \in I} Re_{i}
    \end{align*}
    where $\mathcal{P}(\{1,...,{n-\mathrm{dim}F_{\sigma^a}}\})$ denotes the power set of $\{1,...,{n-\mathrm{dim}F_{\sigma^a}}\}$.
    
    Let us denote $\{e_1',...,e_{n-\mathrm{dim}F_{\sigma^a}}'\}$ the image of $\{e_1,...,e_{n-\mathrm{dim}F_{\sigma^a}}\}$ by the projection $M_R \twoheadrightarrow \frac{M_R}{T_RF_{\sigma^a}}$. Then for any face $F$ of $P$ containing $\sigma^a$ the family $(\bigwedge_{i\in I}e_i')_{I \subseteq\gamma(F),~\mathrm{Card}I=t}$ forms a basis of $\mathcal{G}^t_{R,\sigma^a}(F)$. It induces the following decomposition of the cochain complex $C^{b}(\mathcal{G}^t_{R,\sigma^a})$:
    \begin{equation}
        C^{b}(\mathcal{G}^t_{R,\sigma^a}) = \bigoplus_{I\subseteq \{1,...,{n-\mathrm{dim}F_{\sigma^a}}\},~\mathrm{Card}I=t}C^{b}(D_{\gamma^{-1}(I)};R)\otimes \bigwedge_{i\in I}e_i'.
        \label{erty}
    \end{equation}
    where $D_{\gamma^{-1}(I)}:=\{F \text{ face of }P \text{ containing } \gamma^{-1}(I)\}$ and $C^{b}(D_{\gamma^{-1}(I)};R)$ are the cochains with coefficients in $R$ over $D_{\gamma^{-1}(I)}$ ordered by inclusion and graded by the dimension, with a balancing signature inherited from the balancing signature of the polytope $P$ as a polyhedral complex. Using the bijection $\gamma$, $D_{\gamma^{-1}(I)}$ is isomorphic as a poset to the poset of the subsets of $\{1,...,n-\mathrm{dim}F_{\sigma^a}\}$ which contain $I$. However, if $n>\mathrm{dim}F_{\sigma^a}+t$ the reduced simplicial cochains of an abstract simplex $\Delta^{n-\mathrm{dim}F_{\sigma^a}-t-1}$ of dimension $n-\mathrm{dim}F_{\sigma^a}-t-1$ can be indexed by the poset of subsets of $\{1,...,n-\mathrm{dim}F_{\sigma^a}-t\}$, which is isomorphic the poset of the subsets of $\{1,...,n-\mathrm{dim}F_{\sigma^a}\}$ which contain $I$ (because $\mathrm{Card}I=t$). Then if $n >\mathrm{dim}F_{\sigma^a}+t$ we can compute the cohomology of the polyhedral complex $D_{\gamma^{-1}(I)}$ using the reduced simplicial cohomology of an abstract simplex $\Delta^{n-\mathrm{dim}F_{\sigma^a}-\mathrm{Card}I-1}$ of dimension $n-\mathrm{dim}F_{\sigma^a}-t-1$:
    \begin{equation*}
        \forall n> \mathrm{dim}F_{\sigma^a}+t,~~H^{b}(D_{\gamma^{-1}(I)};R) =\Tilde H^{b-\mathrm{dim}F_{\sigma^a}-t-1}(\Delta^{n-\mathrm{dim}F_{\sigma^a}-t-1};R) =0.
    \end{equation*}
    The difference between the degrees $b$ and $b-\mathrm{dim}F_{\sigma^a}-t-1$ of the two cohomologies comes from the fact that there is a shift between the gradings of the corresponding cochain complexes.

    If $n<\mathrm{dim}F_{\sigma^a}+t$ the cohomology $H^{b}(D_{\gamma^{-1}(I)};R)$ also vanishes because $D_{\gamma^{-1}(I)}$ is empty. In the case $n=\mathrm{dim}F_{\sigma^a}+t$ we have $D_{\gamma^{-1}(I)}=\{P\}$, and $P$ is of dimension $n$, then we obtain that the cohomology groups $H^{b}(D_{\gamma^{-1}(I)};R)$ vanish in every degree $b$ except in degree $b=n$ where the cohomology is $R$. Finally we obtain the general identity:
    \begin{equation*}
        H^{b}(D_{\gamma^{-1}(I)};R)=\Bigg\{\begin{array}{ll}
            R & \text{if } b=n=\mathrm{dim}F_{\sigma^a}+t \\
            0 & \text{if } b\neq n \text{ or }n\neq \mathrm{dim}F_{\sigma^a}+t
        \end{array}.
    \end{equation*}

    It concludes the proof using equation \ref{erty}.
\end{proof}

Now we wan prove Proposition \ref{theocohom} using Lemma \ref{homwedge}:

\begin{proof}
    \underline{Proof of Proposition \ref{theocohom}:} From equation \ref{dirsum} and Lemma \ref{homwedge} we obtain:
    \begin{equation*}
        H^{b-a}(\mathcal{F}_R^p(\sigma^a,*)) = \left \{ \begin{array}{ll}
            0 & \text{if } b\neq n \\
            \mathcal{F}_R^{p - n + \mathrm{dim} ~ F_{\sigma^a}}(\sigma^a,\sigma^a)\otimes H^n(\mathcal{G}^{n-\mathrm{dim}F_{\sigma^a}}_{R,\sigma^a}) & \text{if } b=n
        \end{array} \right . .
    \end{equation*}
    In the case $b=n$, we can choose a generator of $H^n(\mathcal{G}^{n-\mathrm{dim}F_{\sigma^a}}_{R,\sigma^a})=R$ (which corresponds to a choice of a generator of $\bigwedge^{n-\mathrm{dim}F_{\sigma^a}}\frac{M_R}{T_RF_{\sigma^a}}$), which concludes the proof.
\end{proof}

We can now conclude the proof of Lemma \ref{5}, which was the purpose of this subsection. As we assume that the polytope $P$ is $R$-non-singular, then it is $R$-non-singular on every one of its faces. Then we obtain:
\begin{align*}
    E_1^{a,b}(\delta_{2})&:= \bigoplus_{\sigma^a \in \Gamma} H^{b-a}(\mathcal{F}^p_R(\sigma^a,*)) = \Bigg\{ \begin{array}{ll}
         \bigoplus_{\sigma^a \in \Gamma}\mathcal{F}^{p-n+\mathrm{dim}F_{\sigma^a}}_R(\sigma^a,\sigma^a)& \text{if } b = n \\
         0 & \text{if } b \neq n 
    \end{array} \\
    &= \Bigg\{ \begin{array}{ll}
         \mathrm{coker} ~ \delta^{a,n-1}_{2}& \text{if } b = n \\
         0 & \text{if } b \neq n 
         \end{array} .
\end{align*}

\subsection{The cap-product with the fundamental class: Proof of the Lemmas \ref{lebhb}, \ref{ejnf} and \ref{8}}

Lemmas \ref{lebhb}, \ref{ejnf} and \ref{8} concern the relations between the map $\phi^q_{(p)}$ (defined in Definition \ref{defphi}) and the differentials of the chain complexes of cosheaves $\mathcal{F}_p^R$ and the cochain complex of sheaves $\mathcal{F}^p_R$. We already know from the last section the rank of $\mathrm{ker} ~ d^{1,(n-1-p)}_{1,n-q}$ if the triangulation is $R$-primitive (Proposition \ref{theohom}), and the rank of $\mathrm{coker} ~ \delta^{n-q,n-1}_{2}$ if the polytope $P$ is $R$-non-singular (Proposition \ref{theocohom}). Thus to prove the Lemma \ref{lebhb}, we want to compute the rank of $\mathrm{im} ~ \phi^q_{(p)}$ and to prove that it is included into $\mathrm{ker} ~ d^{1,(n-1-p)}_{1,n-q}$, and to prove the Lemma \ref{ejnf} we want to compute the rank of $\mathrm{ker}~ \phi^q_{(p)} \cap C^{n-q,n}(\mathcal{F}^p_R)$ and to prove that it contains $\mathrm{im} ~ \delta^{n-q,n-1}_{2,(p)}$. However, those two computations are related by the rank theorem, and we just need to compute one of the two. Let us prove the following proposition:

\begin{proposition}
    Let us fix a simplex $\sigma^a \in \Gamma$. Assume that the triangulation satisfies the hypothesis 1 for the integral domain $R$ (see Definition \ref{max}). Let us denote $V=\sum_{\sigma^1 \subseteq \sigma^a}T_R \sigma^1$. Then the map $\phi_{\sigma^{a},(p)}$ (see Definition \ref{defphi}) satisfies:
    \begin{equation*}
        \mathrm{rank} ~ \phi_{\sigma^{a},(p)} =\binom{\mathrm{dim}F_{\sigma^a}}{p-n+\mathrm{dim}F_{\sigma^a}}-\binom{\mathrm{dim}F_{\sigma^a}-\mathrm{dim}V}{p-n+\mathrm{dim}F_{\sigma^a}-\mathrm{dim}V}.
    \end{equation*}
    Moreover, if we assume that the triangulation satisfies the hypothesis 2 for the integral domain $R$ (see Definition \ref{max}), the image of $\phi_{\sigma^{a},(p)}$ is a $R$-module saturated in $C_{a-1}(\mathcal{F}_{n-1-p}^R(*,\sigma^a))$ (see Definition \ref{sat} for the saturation of a sub-module).
    \label{ranphi}
\end{proposition}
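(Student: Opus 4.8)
The plan is to convert the cap‑product with $[\Omega]$ into an explicit map of exterior algebras and then to split the statement into a binomial rank count and a saturation argument. Throughout put $f:=\dim F_{\sigma^a}$, $W:=T_RF_{\sigma^a}$, $m:=n-1-p$ and $r_0:=p-(n-f)=p-n+\dim F_{\sigma^a}$, so that $r_0+m=f-1$. Unwinding Definition \ref{defphi} and the cap‑product formula of Proposition \ref{defcap}, and using that $F_{\sigma^n}=P$ (hence $T_RF_{\sigma^n}^{\perp}=0$ and the projection $\pi^{\sigma^a}_{\sigma^n}=\pi^{\sigma^a}$ is independent of $\sigma^n$), together with the identity $1_{\sigma^1,\sigma^n}=\mu(\sigma^n)\,g(o(\sigma^1))$ coming from the construction in the proof of Lemma \ref{yt} (where $\mu(\sigma^n)\in\{-1,1\}$ does not depend on $\sigma^1$), I would write, for $\beta=(\beta_{\sigma^a,\sigma^n})\in C^{n-a}(\mathcal F^p_R(\sigma^a,*))$ with $\beta_{\sigma^a,\sigma^n}=(\iota^{\sigma^n})^{*}(\bar\beta_{\sigma^n})$,
\[[\phi_{\sigma^a}(\beta)]_{\sigma^1,\sigma^a}=|\sigma^1|_r\,\pi^{\sigma^a}\!\Bigl(\Bigl(\textstyle\sum_{\sigma^a\subseteq\sigma^n}\mu(\sigma^n)\bar\beta_{\sigma^n}\Bigr).\,(g(o(\sigma^1))\otimes 1_R)\Bigr).\]
Hence $\phi_{\sigma^a}\circ S=\Psi\circ\Sigma$, where $S$ is the coordinatewise lift $(\bar\beta_{\sigma^n})\mapsto((\iota^{\sigma^n})^{*}\bar\beta_{\sigma^n})$ (surjective by Proposition \ref{defcap}), $\Sigma$ is the signed sum $(\bar\beta_{\sigma^n})\mapsto\sum_{\sigma^n}\mu(\sigma^n)\bar\beta_{\sigma^n}$ (surjective since the $\mu(\sigma^n)$ are units), and $\Psi(\bar\beta):=\bigl(|\sigma^1|_r\,\psi_{\sigma^1}(\bar\beta)\bigr)_{\sigma^1\subseteq\sigma^a}$ with $\psi_{\sigma^1}(\bar\beta):=\pi^{\sigma^a}(\bar\beta.(g(o(\sigma^1))\otimes 1_R))$. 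Consequently $\operatorname{im}\phi_{\sigma^a}=\operatorname{im}\Psi$, and under Hypothesis 1 (resp.\ 2) the scalars $|\sigma^1|_r$ are nonzero (resp.\ invertible), so the rank — and, under Hypothesis 2, the image — of $\phi_{\sigma^a}$ coincides with that of the map $\Psi'$ obtained from $\Psi$ by deleting the factors $|\sigma^1|_r$.

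Next I would make $\psi_{\sigma^1}$ explicit. Fixing a generator $g_F$ of $\bigwedge^{n-f}T_RF_{\sigma^a}^{\perp}$, a direct computation in a basis of $M_R$ adapted to the flag $T_R\sigma^1\subseteq W\subseteq M_R$ — tracking which terms survive contraction by $g(o(\sigma^1))=\pm g_{Q}\wedge g_F$ and the projection $\pi^{\sigma^a}$, and invoking Equation (\ref{yutr}) — shows that $\psi_{\sigma^1}$ factors as
\[\bigwedge^{p}M_R\ \xrightarrow{\ \lambda\ }\ \bigwedge^{r_0}W\ \xrightarrow{\ \operatorname{pr}_{\sigma^1}\ }\ \bigwedge^{r_0}\!\bigl(W/T_R\sigma^1\bigr)\ \xrightarrow{\ \sim\ }\ \mathcal F^R_{m}(\sigma^1,\sigma^a),\]
where $\lambda$ is contraction against $g_F$ (which lands in $\bigwedge^{r_0}W$ and is surjective), $\operatorname{pr}_{\sigma^1}$ is the canonical projection, and the last arrow is the top‑form contraction isomorphism $\bigwedge^{r_0}(W/T_R\sigma^1)\xrightarrow{\sim}\bigwedge^{m}(W/T_R\sigma^1)^{*}=\mathcal F^{R}_{m}(\sigma^1,\sigma^a)$, its dependence on the chosen generator of $\bigwedge^{f-1}(W/T_R\sigma^1)$ being exactly the choice already recorded in Proposition \ref{theocohom}. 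Since $\lambda$ is surjective and the remaining arrows are isomorphisms, $\operatorname{im}\Psi'$ is isomorphic to the image of the diagonal map $\Delta\colon\bigwedge^{r_0}W\to\bigoplus_{\sigma^1\subseteq\sigma^a}\bigwedge^{r_0}(W/T_R\sigma^1)$, whose kernel is $\bigcap_{\sigma^1\subseteq\sigma^a}\bigl(T_R\sigma^1\wedge\bigwedge^{r_0-1}W\bigr)$.

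The rank is then an elementary count. Working over $K:=\operatorname{Frac}(R)$, choose a $K$‑basis $f_1,\dots,f_{v}$ of $V\otimes K$ (where $v=\dim V$) and extend it to a $K$‑basis $f_1,\dots,f_{f}$ of $W\otimes K$; in this basis $T_R\sigma^1\wedge\bigwedge^{r_0-1}W\otimes K=\langle\,f_J:\ i(\sigma^1)\in J\,\rangle$, where $i(\sigma^1)\in\{1,\dots,v\}$ indexes the line $T_R\sigma^1\otimes K$, and since enlarging the spanning family of lines inside $V\otimes K$ does not shrink the intersection, $\ker\Delta\otimes K=\langle\,f_J:\{1,\dots,v\}\subseteq J\,\rangle$, of dimension $\binom{f-v}{\,r_0-v\,}$. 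Hence $\operatorname{rank}\phi_{\sigma^a}=\binom{f}{r_0}-\binom{f-v}{\,r_0-v\,}$, which is the asserted formula.

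Finally, under Hypothesis 2, one must show $\operatorname{im}\phi_{\sigma^a}=\operatorname{im}\Psi'$ is saturated in $C_{a-1}(\mathcal F^R_{n-1-p}(*,\sigma^a))$; since the last arrow above is an isomorphism, this amounts to torsion‑freeness of $\operatorname{coker}\Delta$. Here the invertibility of the reduced lengths is essential: through the triangle relation $|\sigma^1_1|_r u_{\sigma^1_1}+|\sigma^1_2|_r u_{\sigma^1_2}+|\sigma^1_3|_r u_{\sigma^1_3}=0$ used in the proof of Lemma \ref{yt} — now with all coefficients units — each edge‑direction of a triangle of $\sigma^a$ lies in the $R$‑span of the other two, and iterating this over the triangles of $\sigma^a$ allows one to run, after choosing a suitably adapted basis over $R$ itself, the coordinate computation of the previous paragraph integrally and to exhibit $\operatorname{coker}\Delta$ as a free $R$‑module. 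I expect this last step — upgrading the rational computation of $\ker\Delta$ to the integral statement about $\operatorname{coker}\Delta$, which is precisely where Hypothesis 2 (and not merely Hypothesis 1) is needed — to be the main obstacle; everything before it is cap‑product bookkeeping together with a binomial identity.
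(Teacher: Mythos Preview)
Your rank argument is essentially the paper's, repackaged. The paper also lifts along the surjections $(\iota^{\sigma^n})^*$ and factors through the signed sum $\epsilon(\beta_{\sigma^n})=\sum_{\sigma^n}\mu(\sigma^n)\beta_{\sigma^n}$ to a single map $\overline{\psi}\colon\bigwedge^p M_R\to C_{a-1}(\mathcal F^R_{n-1-p}(*,\sigma^a))$ (your $\Psi'$), then observes in coordinates that only the $\bigwedge^{r_0}W\otimes\bigwedge^{n-f}(M_R/W)$ component survives, and identifies the kernel as the span of those $f_A$ with $A\supseteq\{1,\dots,v\}$. Your diagonal map $\Delta$ and passage to $K=\operatorname{Frac}(R)$ are exactly this computation phrased invariantly; the paper works in a fixed $R$-basis of $V$ from the start. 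One notational slip: for an arbitrary $K$-basis $f_1,\dots,f_v$ of $V_K$ the line $T_K\sigma^1$ need not be a coordinate line, so your index $i(\sigma^1)$ only makes sense once you take the $f_i$ among edge directions and then use your ``enlarging the spanning family'' remark to handle the remaining edges.

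For saturation the paper's route is more concrete than your sketch and does not pass through triangle relations. Having fixed an $R$-basis $(f_1,\dots,f_v)$ of $V$ extended to bases of $W$ and $M_R$, the paper takes $\overline{\psi}(\alpha)=\mu y$, uses that the vectors $\bigwedge_l(v^{\sigma^1}_l)^*$ generate $\bigwedge^{v-1}V^*$ over $R$ (this follows because the $u_{\sigma^1}$ generate $V$ by definition), writes each $\bigwedge_{j\neq i}f_j^*$ as an $R$-combination of them, and then reads off from the resulting identity that every coefficient $\alpha_{A,\{1,\dots,n-f\}}$ with $A\not\supseteq\{1,\dots,v\}$ is divisible by $\mu$; here Hypothesis~2 enters only to invert the scalars $|\sigma^1|_r$. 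Your triangle-relation idea is actually what justifies the paper's first move (under Hypothesis~2 the edges at a fixed vertex of $\sigma^a$ span $V$, and as $V$ is torsion-free of rank $a$ they form an $R$-basis), but after that point one still needs the explicit divisibility argument above rather than an abstract torsion-freeness claim for $\operatorname{coker}\Delta$; your last paragraph is a plan, not yet a proof.
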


In the following we fix a simplex $\sigma^a \in \Gamma$. For any $\sigma^n \supseteq \sigma^a$ we have the surjective map $(\iota_{\sigma^a,\sigma^n}^{\sigma^n})^*: \bigwedge^pM_R \twoheadrightarrow \mathcal{F}^p_R(\sigma^a,\sigma^n)$. Then the image of $\phi_{\sigma^{a},(p)}$ is equal to the image of the following map:
\begin{align*}
    \psi:\bigoplus_{\sigma^n \supseteq \sigma^a}\bigwedge^pM_R &\rightarrow  C_{a-1}(\mathcal{F}_{n-1-p}(*,\sigma^a)) \\
    \bigoplus_{\sigma^n \supseteq \sigma^a} \beta_{\sigma^n} & \mapsto \phi_{\sigma^{a},(p)} \Bigg( \bigoplus_{\sigma^n \supseteq \sigma^a} (\iota_{\sigma^a,\sigma^n}^{\sigma^n})^*(\beta_{\sigma^n} ) \Bigg) .
\end{align*}

By definition of $\phi_{\sigma^{a},(p)}$ (Defintion \ref{defphi}) and by definition of the cap-product (Proposition \ref{defcap}), we have:
\begin{equation*}
    \psi \Bigg( \bigoplus_{\sigma^n \supseteq \sigma^a} \beta_{\sigma^n} \Bigg) = \bigoplus_{\sigma^1 \subseteq \sigma^a} \sum_{\sigma^n \supseteq \sigma^a}\pi^{\sigma^a}(\beta_{\sigma^n}.[\Omega]_{\sigma^1,\sigma^n}) = \bigoplus_{\sigma^1 \subseteq \sigma^a} \pi^{\sigma^a} \Bigg( \sum_{\sigma^n \supseteq \sigma^a}\beta_{\sigma^n}.[\Omega]_{\sigma^1,\sigma^n} \Bigg)
\end{equation*}
where the dot denotes the contraction morphism (Definition \ref{defcontr}), $[\Omega]_{\sigma^1,\sigma^n}$ denotes the $(\sigma^1,\sigma^n)$-coordinate of $[\Omega]$, and $\pi^{\sigma^a}$ denotes the projection $\bigwedge^{n-1-p}N_R\twoheadrightarrow \bigwedge^{n-1-p}\frac{N_R}{T_RF_{\sigma^a}^{\perp}}$.

Remind that the element $[\Omega]$ is defined by $[\Omega]:=\bigoplus_{\sigma^1,\sigma^n} |\sigma^1|_r1_{\sigma^1,\sigma^n} \otimes 1_R$, with the $1_{\sigma^1,\sigma^n}$ generators of the $\mathcal{F}_{n-1}^R(\sigma^1,\sigma^n)$, and satisfying $d_{n-1}([\Omega]) =0$. Moreover, in the proof of Lemma \ref{yt}, the element $1_{\sigma^1,\sigma^n}$ was defined in the following way:
\begin{equation*}
    1_{\sigma^1,\sigma^n} = \mu(\sigma^n)g(o(\sigma^1))
\end{equation*}
where $o(\sigma), ~\sigma \in \Gamma$ is a family of orientation fixed on the simplices of $\Gamma$; compatible with the balancing signature of the simplicial complex $\Gamma$ ;  where an orientation $o(M)$ is fixed on the ambient lattice $M$ and $\mu(\sigma^n)$ is defined by $\mu(\sigma^n)=1$ if $o(\sigma^n)=o(M)$, and $\mu(\sigma^n)=-1$ if $o(\sigma^n)=-o(M)$ ; $g(o(\sigma^1))$ is a generator of $\bigwedge^{n-1}T_{\mathbb{Z}}\sigma^{1\perp}$ obtained canonically from the orientation $o(\sigma^1)$ of $\sigma^1$ and the fixed orientation $o(M)$ of $M$.

We obtain:

\begin{equation*}
    \psi \Bigg( \bigoplus_{\sigma^n \supseteq \sigma^a} \beta_{\sigma^n} \Bigg) = \bigoplus_{\sigma^1 \subseteq \sigma^a} \pi^{\sigma^a} \Bigg( \Bigg[ \sum_{\sigma^n \supseteq \sigma^a}\mu(\sigma^n)\beta_{\sigma^n}\Bigg].(|\sigma^1|_rg(o(\sigma^1))\otimes1_R) \Bigg).
\end{equation*}

It shows that the map $\psi$ admits a factorisation $\psi =\overline{\psi} \circ \epsilon$ where:
\begin{align*}
    \epsilon: \bigoplus_{\sigma^n \supseteq \sigma^a} \bigwedge^p M_R & \twoheadrightarrow \bigwedge^p M_R &\text{and }~~~ \overline{\psi}: \bigwedge^p M_R & \twoheadrightarrow  C_{a-1}(\mathcal{F}_{n-1-p}(*,\sigma^a)) \\
    \bigoplus_{\sigma^n \supseteq \sigma^a} \beta_{\sigma^n} & \mapsto \sum_{\sigma^n \supseteq \sigma^a} \mu(\sigma^n)\beta_{\sigma^n} & \alpha & \mapsto \bigoplus_{\sigma^1 \subseteq \sigma^a}\pi^{\sigma^a} \Bigg( \alpha.(|\sigma^1|_rg(o(\sigma^1))\otimes1_R) \Bigg) .
\end{align*}

Since for every $\sigma^n \in \Gamma$ the term $\mu(\sigma^n)$ is invertible in $R$ (because it is $+1$ or $-1$), the map $\epsilon$ is surjective and we have $\mathrm{im} ~ \overline{\psi} = \mathrm{im} ~ \psi = \mathrm{im}~\phi_{\sigma^{a},(p)}$. Thus it is enough to study the map $\overline{\psi}$ to prove proposition \ref{ranphi}:

\begin{proof}
    Proof of Proposition \ref{ranphi}.
    
    Let us fix a basis $(f_1,...,f_{\mathrm{rank} V})$ of $V$, and let $\{f_{\mathrm{rank}V+1},...,f_{\mathrm{dim}F_{\sigma^a}}\}$ a set of vectors which complete it, to obtain a basis $(f_1,...,f_{\mathrm{dim} F_{\sigma^a}})$ of $T_R F_{\sigma^a}$. Let $\{e_1,...,e_{n-\mathrm{dim}F_{\sigma^a}}\}$ a set of vectors which complete $(f_1,...,f_{\mathrm{dim} F_{\sigma^a}})$, to obtain a basis of $M_R$. Let us denote $(f_1^*,...,f_{\mathrm{dim} F_{\sigma^a}}^*,e_1^*,...,e_{n-\mathrm{dim} F_{\sigma^a}}^*)$ the dual basis of $N_R$. For every $\sigma^1 \subseteq \sigma^a$, let us denote $u_{\sigma^1}$ a generator of $T_R\sigma^1$. We can complete it to obtain a basis $(u_{\sigma^1},v_1^{\sigma^1},...,v_{\mathrm{rank}V -1}^{\sigma^1})$ of $V$, and then the following basis of $M_R$:
    \begin{equation*}
        (u_{\sigma^1},v_1^{\sigma^1},...,v_{\mathrm{rank}V -1}^{\sigma^1},f_{\mathrm{rank}V+1},...,f_{\mathrm{dim}F_{\sigma^a}},e_1,...e_{n-\mathrm{dim}F_{\sigma^a}}).
    \end{equation*}
    The dual basis is of the form:
    \begin{equation*}
        (u_{\sigma^1}^*,(v_1^{\sigma^1})^*,...,(v_{\mathrm{rank}V -1}^{\sigma^1})^*,f_{\mathrm{rank}V+1}^*,...,f_{\mathrm{dim}F_{\sigma^a}}^*,e_1^*,...e_{n-\mathrm{rank}V}^*)
    \end{equation*}
    where the vectors $f_{\mathrm{rank}V+1}^*,...,f_{\mathrm{dim}F_{\sigma^a}}^*,e_1^*,...e_{n-\mathrm{dim}F_{\sigma^a}}^*$ are the same as the ones that appear in the dual basis $(f_1^*,...,f_{\mathrm{dim} F_{\sigma^a}}^*,e_1^*,...,e_{n-\mathrm{dim} F_{\sigma^a}}^*)$.
    
    Then $((v_1^{\sigma^1})^*, ..., (v_{\mathrm{dim}F_{\sigma^a} -1}^{\sigma^1})^*, f_{\mathrm{rank}V+1}^*, ..., f_{\mathrm{dim}F_{\sigma^a}}^*, e_1^* ,..., e_{n-\mathrm{dim}F_{\sigma^a}}^*)$ is a basis of $T_R \sigma^{1\perp}$. Then $(v_1^{\sigma^1})^*\wedge...\wedge(v_{\mathrm{dim}F_{\sigma^a}-1}^{\sigma^1})^*\wedge f_{\mathrm{rank}V+1}^*\wedge...\wedge f_{\mathrm{dim}F_{\sigma^a}} ^*\wedge e_1^*\wedge... \wedge e_{n-\mathrm{dim}F_{\sigma^a}}^*$ and $g((o(\sigma^1)) \otimes 1_R$ are both generators of $\bigwedge^{n-1}T_R\sigma^{1\perp}$, thus there exists an invertible element $\lambda_{\sigma^1}$ of $R$ such that:
    \begin{align*}
        &(v_1^{\sigma^1})^*\wedge...\wedge(v_{\mathrm{dim}F_{\sigma^a} -1}^{\sigma^1})^*\wedge f_{\mathrm{rank}V+1}^* \wedge...\wedge f_{\mathrm{dim}F_{\sigma^a}}^* \wedge e_1^*\wedge... \wedge e_{n-\mathrm{dim}F_{\sigma^a}}^* =\lambda_{\sigma^1} g(o(\sigma^1)) \otimes 1_R.
    \end{align*}

    Let $\alpha \in \bigwedge^p M_R$. We can write:
    \begin{equation*}
        \alpha = \sum_{k=0}^p\sum_{A \in \mathcal{P}_{p-k}(\{1,...,\mathrm{dim}F_{\sigma^a}\})}\sum_{B \in \mathcal{P}_{k}(\{1,...,n-\mathrm{dim} F_{\sigma^a}\})}\alpha_{A,B} \bigwedge_{i \in A}f_i \wedge\bigwedge_{j\in B} e_i
    \end{equation*}
    where $\mathcal{P}_l(\{1,...,s\})$ denotes the set of subsets of cardinal $l$ of $\{1,...,s\}$. However, for every $k\neq n- \mathrm{dim} F_{\sigma^a}$, $B \in \mathcal{P}_{k}(\{1,...,n-\mathrm{dim} F_{\sigma^a}\})$, and $A \in \mathcal{P}_{p-k}(\{1,...,\mathrm{dim}F_{\sigma^a}\})$, for every $\sigma^1 \subseteq \sigma^a$ we have:
    \begin{equation*}
        \pi^{\sigma^a}\Bigg(\Bigg[\bigwedge_{i \in A}f_i \wedge\bigwedge_{j\in B} e_i \Bigg].\Bigg[\bigwedge_{i=1}^{\mathrm{dim}F_{\sigma^a} -1}(v_i^{\sigma^1})^* \wedge\bigwedge_{i=\mathrm{rank}V+1}^{\mathrm{dim}F_{\sigma^a}}f_i^* \wedge\bigwedge_{j=1}^{n-\mathrm{dim}F_{\sigma^a}}e_j^* \Bigg] \Bigg) = 0.
    \end{equation*}

    Thus every coordinate of $\alpha$ contributes to zero to $\overline{\psi}(\alpha)$ except the ones for which $k=n-\mathrm{dim}F_{\sigma^a}$, which corresponds to $B=\{1,...,n-\mathrm{dim}F_{\sigma^a}\}$ Thus we obtain:
    \begin{equation*}
        \overline{\psi}(\alpha) = \overline{\psi} \Bigg( \sum_{A \in \mathcal{P}_{p-n+\mathrm{dim} F_{\sigma^a}}(\{1,...,\mathrm{dim}F_{\sigma^a}\})}\alpha_{A,\{1,...,n-\mathrm{dim} F_{\sigma^a}\} } \bigwedge_{i \in A}f_i \wedge\bigwedge_{j=1}^{n-\mathrm{dim} F_{\sigma^a}} e_i\Bigg).
    \end{equation*}

    Let us denote $\overline{\alpha}:=\sum_{A \in \mathcal{P}_{p-n+\mathrm{dim} F_{\sigma^a}}(\{1,...,\mathrm{rank}V\})}\alpha_{A,\{1,...,n-\mathrm{dim} F_{\sigma^a}\} } \bigwedge_{i \in A}f_i$. As we assume the triangulation to satisfy the hypothesis 1 for the integral domain $R$, we have for every $\sigma^1 \subseteq \sigma^a$ that $|\sigma^1|_r$ does not vanish in the integral domain $R$. Thus we have:
    \begin{align*}
        &\overline{\psi}(\alpha) = 0 \iff \forall \sigma^1 \subseteq \sigma^a, ~\overline{\alpha}.\Bigg[\bigwedge_{i=1}^{\mathrm{rank}V -1}(v_i^{\sigma^1})^*\wedge\bigwedge_{i=\mathrm{rank}V+1}^{\mathrm{dim}F_{\sigma^a}} f_i^*\Bigg] =0.
    \end{align*}

    Let us identify the vectors $(v_i^{\sigma^1})^*$ and the vector $u_{\sigma^1}^*$, for every $\sigma^1 \subseteq \sigma^a$, with their projection in $\frac{N_R}{V^{\perp}}=V^*$. Then for every $\sigma^1 \subseteq \sigma^a$ we have that $(u_{\sigma^1}^*,(v_1^{\sigma^1})^*,...,(v_{\mathrm{rank}V-1}^{\sigma^1})^*)$ forms a basis of $V^*$, dual to the basis $(u_{\sigma^1},v^{\sigma^1}_1,...,v^{\sigma^1}_{\mathrm{dim}F_{\sigma^a}})$ of $V$. Then the vector $\bigwedge_{i=1}^{\mathrm{rank}V -1}(v_i^{\sigma^1})^*$ is uniquely determined from the vector $u_{\sigma^1}$, up to an invertible factor of $R$. We have the following equality of rank:
    \begin{equation*}
        \mathrm{rank} ~ \sum_{\sigma^1 \subseteq \sigma^a} \bigwedge_{i=1}^{\mathrm{rank}V -1}(v_i^{\sigma^1})^* = \mathrm{rank}~ \sum_{\sigma^1 \subseteq \sigma^a} Ru_{\sigma^1} = \mathrm{rank}~ V = \mathrm{rank}~\bigwedge^{\mathrm{rank} V -1}V^*.
    \end{equation*}

    Thus we obtain that the family $( \bigwedge_{i=1}^{\mathrm{rank}V-1}(v_i^{\sigma^1})^*)_{\sigma^1 \subseteq \sigma^a}$ generates $\bigwedge^{\mathrm{rank} V -1}V^*$. Then we have:
    \begin{align*}
        &\overline{\psi}(\alpha) = 0 \\
        &\iff \forall x \in\bigwedge^{\mathrm{rank} V -1}\Bigg(\sum_{i=1}^{\mathrm{rank}V}Rf_i^*\Bigg), ~~\overline{\alpha}.\Bigg[x\wedge\bigwedge_{i=\mathrm{rank}V+1}^{\mathrm{dim}F_{\sigma^a}} f_i^*\Bigg] =0 \\
        & \iff \forall j \in \{1,...,\mathrm{rank}~V\},~~\overline{\alpha}.\Bigg[ \bigwedge_{i\in \{1,...,\mathrm{rank}V\}\backslash \{j\}} f_i^*\wedge\bigwedge_{i=\mathrm{rank}V+1}^{\mathrm{dim}F_{\sigma^a}} f_i^*\Bigg] =0 \\
        & \iff \forall A \in \mathcal{P}_{p-n+\mathrm{dim} F_{\sigma^a}}(\{1,...,\mathrm{dim}F_{\sigma^a}\}), ~~~~A \not\supseteq \{1,...,\mathrm{rank}V\} \implies\alpha_{A,\{1,...n-\mathrm{dim} F_{\sigma^a}\}}=0.
    \end{align*}

    Then we obtain:
    \begin{equation*}
        \mathrm{rank} ~\mathrm{ker} ~\overline{\psi}  = \mathrm{rank} ~ \bigwedge^p M_R - \Bigg[\binom{\mathrm{dim}F_{\sigma^a}}{p-n+\mathrm{dim}F_{\sigma^a}}-\binom{\mathrm{dim}F_{\sigma^a}-\mathrm{rank}V}{p-n+\mathrm{dim}F_{\sigma^a}-\mathrm{rank}V}\Bigg].
    \end{equation*}

    Finally we obtain:
    \begin{equation*}
        \mathrm{rank}~ \phi_{\sigma^{a},(p)} = \mathrm{rank} ~\overline{\psi} = \binom{\mathrm{dim}F_{\sigma^a}}{p-n+\mathrm{dim}F_{\sigma^a}}-\binom{\mathrm{dim}F_{\sigma^a}-\mathrm{rank}V}{p-n+\mathrm{dim}F_{\sigma^a}-\mathrm{rank}V}.
    \end{equation*}

    Now let us assume that the triangulation satisfies the hypothesis 2 for the integral domain $R$, and let us prove that the image of $\phi_{\sigma^{a},(p)}$ is saturated in $C_{a-1}(\mathcal{F}_{n-1-p}^R(*,\sigma^a))$. We just need to prove that the image of $\overline{\psi}$ is saturated. Let $y \in C_{a-1}(\mathcal{F}_{n-1-p}^R(*,\sigma^a))$ and $\mu \in R\backslash\{0\}$ such that $\overline{\psi}(\alpha) = \mu y$. For $\sigma^1 \subseteq \sigma^a$, using the properties of contraction we have:
    \begin{equation*}
        \mu y_{\sigma^1} =\pi^{\sigma^a} \Bigg( \alpha.(|\sigma^1|_rg(o(\sigma^1))\otimes1_R) \Bigg)= \pm\overline{\alpha}.\Bigg[\frac{|\sigma^1|_r}{\lambda_{\sigma^1}}\bigwedge_{i=1}^{\mathrm{rank}V -1}(v_i^{\sigma^1})^*\wedge\bigwedge_{i=\mathrm{rank}V+1}^{\mathrm{dim}F_{\sigma^a}} f_i^*\Bigg].
    \end{equation*}
    However we have already proven that the family  $( \bigwedge_{i=1}^{\mathrm{rank}V-1}(v_i^{\sigma^1})^*)_{\sigma^1 \subseteq \sigma^a}$ generates $\bigwedge^{n-1}\sum_{i=1}^{\mathrm{rank}V}Rf_i^*$. Then for any $1 \leq i\leq \mathrm{dim}V$ there exists a family of coefficients $(a^i_{\sigma^1})_{\sigma^1\subseteq \sigma^a}$ such that:
    \begin{equation*}
        \bigwedge_{j\in\{1,...,\mathrm{rank}V\}\backslash\{i\}}f_j^*=\sum_{\sigma^1\subseteq \sigma^a}a^i_{\sigma^1}\bigwedge_{i=1}^{\mathrm{rank}V-1}(v_i^{\sigma^1})^*.
    \end{equation*}
    Then for any $A \in \mathcal{P}_{p-n+\mathrm{dim} F_{\sigma^a}}(\{1,...,\mathrm{dim}F_{\sigma^a}\})$, if $A \not\supseteq\{1,...,\mathrm{rank}V\}$, there exists $i\in\{1,...,\mathrm{rank}V\}$ such that $i\not\in A$ and we have:
    \begin{equation*}
        \alpha_{A,\{1,...,n-\mathrm{dim}F_{\sigma^a}\}} \bigwedge_{j\in\{1,...\mathrm{rank}V\}\backslash(A\cup\{i\})}f_j^* = \sum_{\sigma^1\subseteq \sigma^a}\pm a^i_{\sigma^1}\frac{\lambda_{\sigma^1}}{|\sigma^1|_r} \mu y_{\sigma^1}.
    \end{equation*}
    We have used here the assumption that the triangulation satisfies the hypothesis $2$ for the integral domain $R$, which enable us to consider an inverse of $|\sigma^1|_r$ in $R$.
    
    From the last equation, $\mu$ divides $\alpha_{A,\{1,...,n-\mathrm{dim}F_{\sigma^a}\}}$ for any $A \in \mathcal{P}_{p-n+\mathrm{dim} F_{\sigma^a}}(\{1,...,\mathrm{dim}F_{\sigma^a}\})$ such that $A \not\supseteq\{1,...,\mathrm{rank}V\}$. However, we already know that if $A \supseteq\{1,...,\mathrm{rank}V\}$, then the term $\alpha_{A,\{1,...,n-\mathrm{dim}F_{\sigma^a}\}}$ contributes to zero in $\phi_{\sigma^{a},(p)}(\alpha)$. Then, for any $A \in \mathcal{P}_{p-n+\mathrm{dim} F_{\sigma^a}}(\{1,...,\mathrm{dim}F_{\sigma^a}\})$ such that $A \not\supseteq\{1,...,\mathrm{rank}V\}$ we can define $\beta_A$ the element of $R$ such that $\mu\beta_A = \alpha_{A,\{1,...,\mathrm{dim}F_{\sigma^a}\}}$ and we obtain:
    \begin{equation*}
        \phi_{\sigma^{a},(p)}\Bigg( \sum_{A \in \mathcal{P}_{p-n+\mathrm{dim} F_{\sigma^a}}(\{1,...,\mathrm{dim}F_{\sigma^a}\}), ~A\not\supseteq\{1,...\mathrm{rank}V\}}\beta_A\bigwedge_{i\in A}f_i^*\wedge \bigwedge_{j=1}^{n-\mathrm{dim}F_{\sigma^a}}e_j^* \Bigg) = y.
    \end{equation*}
    It proves that the image of $\phi_{\sigma^{a},(p)}$ is saturated in $C_{a-1}(\mathcal{F}_{n-1-p}^R(*,\sigma^a))$.
\end{proof}

\begin{lemma}
    Without hypothesis on the triangulation, we have:
    \begin{equation*}
        d^{*,\sigma^a,(n-1-p)}_{a-1} \circ \phi_{\sigma^a,(p)} = 0.
    \end{equation*}
\end{lemma}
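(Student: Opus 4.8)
The plan is to unwind both sides of the claimed identity componentwise and reduce it to the relation $d^{1}_{n-1}([\Omega])=0$, which is established (without any hypothesis on $\Gamma$) inside the proof of Lemma~\ref{yt}. First note that $\phi_{\sigma^a,(p)}$ takes $C^{n-a}(\mathcal{F}^p_R(\sigma^a,*))$ into $C_{a-1}(\mathcal{F}^R_{n-1-p}(*,\sigma^a))$, so when $a=1$ the target of $d^{*,\sigma^1,(n-1-p)}_{0}$ is $0$ and there is nothing to prove; assume $a\ge 2$. For $\beta=\bigoplus_{\sigma^a\subseteq\sigma^n}\beta_{\sigma^a,\sigma^n}$ the definition of the cap-product at the level of complexes gives, for every edge $\sigma^1\subseteq\sigma^a$,
$$\bigl[\phi_{\sigma^a,(p)}(\beta)\bigr]_{\sigma^1,\sigma^a}=\sum_{\sigma^a\subseteq\sigma^n}\beta_{\sigma^a,\sigma^n}\cap[\Omega]_{\sigma^1,\sigma^n}.$$

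The key auxiliary fact I would isolate is that the cap-product commutes with the inclusion cosheaf morphisms in its second argument: for incident simplices $\sigma^1\subseteq\sigma^2\subseteq\sigma^a\subseteq\sigma^n$ with $\dim\sigma^2\ge 1$, and for $\beta'\in\mathcal{F}^p_R(\sigma^a,\sigma^n)$, $\alpha'\in\mathcal{F}^R_{p'}(\sigma^1,\sigma^n)$, one has
$$\iota_{\sigma^1,\sigma^a}^{\sigma^2,\sigma^a}(\beta'\cap\alpha')=\beta'\cap\iota_{\sigma^1,\sigma^n}^{\sigma^2,\sigma^n}(\alpha').$$
This follows directly from the characterization of the cap-product in Proposition~\ref{defcap}: choosing a lift $\overline{\beta}\in\bigwedge^{p}T_RF_{\sigma^n}$ of $\beta'$, both sides equal the projection to $\bigwedge^{p'-p}\frac{N_R}{T_RF_{\sigma^a}^{\perp}}$ of the contraction $\overline{\beta}.\alpha'$, because the morphisms $\iota$ are genuine inclusions of submodules of $\bigwedge^{\bullet}\frac{N_R}{T_RF_{\sigma^n}^{\perp}}$ and this contraction is computed in that fixed ambient module, regardless of which submodule $\mathcal{F}^R_{\bullet}(\sigma^i,\sigma^n)$ we regard $\alpha'$ as lying in.

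Now I would compute the $(\sigma^2,\sigma^a)$-component of $d^{*,\sigma^a,(n-1-p)}_{a-1}\bigl(\phi_{\sigma^a,(p)}(\beta)\bigr)$ straight from the definition of the differential induced by $d^1$ and the balancing signature $\eta$: it is $\pm\sum_{\sigma^1\subset\sigma^2}\rho(\sigma^1,\sigma^2)\,\iota_{\sigma^1,\sigma^a}^{\sigma^2,\sigma^a}\bigl(\sum_{\sigma^a\subseteq\sigma^n}\beta_{\sigma^a,\sigma^n}\cap[\Omega]_{\sigma^1,\sigma^n}\bigr)$, the first sum running over the edges of the triangle $\sigma^2$. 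Interchanging the two summations, applying the auxiliary fact to push $\iota$ inside the cap-product, and using $R$-bilinearity of the cap-product, this equals $\pm\sum_{\sigma^a\subseteq\sigma^n}\beta_{\sigma^a,\sigma^n}\cap\bigl(\sum_{\sigma^1\subset\sigma^2}\rho(\sigma^1,\sigma^2)\,\iota_{\sigma^1,\sigma^n}^{\sigma^2,\sigma^n}([\Omega]_{\sigma^1,\sigma^n})\bigr)$. The inner parenthesis is, up to an overall sign, precisely the $(\sigma^2,\sigma^n)$-component of $d^{1}_{n-1}([\Omega])$, which vanishes by Lemma~\ref{yt}; hence the whole expression is $0$. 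As this holds for every triangle $\sigma^2\subseteq\sigma^a$, the composition $d^{*,\sigma^a,(n-1-p)}_{a-1}\circ\phi_{\sigma^a,(p)}$ is zero.

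The argument is formal once the indexing is set up; the only non-routine ingredient is the compatibility of the cap-product with the cosheaf inclusions recorded in the second paragraph, and beyond that one only needs to track the balancing-signature signs — which is harmless here since the result is compared with $0$, so the precise signs play no role.
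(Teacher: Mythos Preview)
Your proof is correct and follows essentially the same route as the paper: both arguments unwind the composition componentwise and reduce it to the vanishing $[d^{1}_{n-1}([\Omega])]_{\sigma^2,\sigma^n}=0$ established in Lemma~\ref{yt}. The only cosmetic difference is that the paper first passes through the auxiliary map $\psi$ (working with lifts $\overline{\beta}\in\bigwedge^p M_R$ and contractions directly), which spares it your compatibility lemma for $\iota$, but the substance of the computation is identical.
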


\begin{proof}
    It is enough to prove that $d^{*,\sigma^a,(n-1-p)}_{a-1} \circ \psi =0$, with the map $\psi$ define previously. We have:
    \begin{align*}
        d^{*,\sigma^a,(n-1-p)}_{a-1} \circ \psi\Bigg( \bigoplus_{\sigma^n \supseteq \sigma^a}\beta_{\sigma^n}\Bigg) &= \bigoplus_{\sigma^2 \subseteq \sigma^a} (-1)^{a-1} \sum_{\sigma^1 \subset \sigma^2} \sum_{\sigma^n \supseteq \sigma^a}\rho(\sigma^1,\sigma^2)\pi^{\sigma^a}_{\sigma^n}\Bigg(\beta_{\sigma^n}.[\Omega]_{\sigma^1,\sigma^n} \Bigg) \\
        & =\bigoplus_{\sigma^2 \subseteq \sigma^a} \pi^{\sigma^a} \Bigg( \sum_{\sigma^n \supseteq \sigma^a} \Bigg[ \beta_{\sigma^n} . \Bigg( d^{1,(n-1)}_{n-1}([\Omega]) \Bigg)_{\sigma^2,\sigma^n} \Bigg]\Bigg).
    \end{align*}
    However, we know that without any hypothesis (Proposition \ref{yt}) we have:
    \begin{equation*}
        d^{1,(n-1)}_{n-1}([\Omega]) = 0.
    \end{equation*}
    It concludes the proof.
\end{proof}

\begin{lemma}
    Without hypothetis on the triangulation we have:
    \begin{equation*}
        \phi_{\sigma^a,(p)} \circ \delta_{\sigma^a,*,(p)}^{n-a-1} = 0.
    \end{equation*}
\end{lemma}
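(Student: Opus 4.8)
The plan is to run the computation exactly as in the proof of the preceding lemma ($d^{*,\sigma^a,(n-1-p)}_{a-1}\circ\phi_{\sigma^a,(p)}=0$), but on the source side of $\phi_{\sigma^a,(p)}$: there, precomposing the boundary of the target complex with $\phi_{\sigma^a,(p)}$ turned the cap product into a contraction against $d^{1,(n-1)}_{n-1}([\Omega])$; here, postcomposing $\phi_{\sigma^a,(p)}$ with the coboundary $\delta^{n-a-1}_{\sigma^a,*,(p)}$ of the subcomplex $C^*(\mathcal{F}^p_R(\sigma^a,*))$ will, after one reindexing, turn the cap product into a contraction against $d^{2,(n-1)}_{n-1}([\Omega])$. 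Since $d_{n-1}([\Omega])=d^{1,(n-1)}_{n-1}([\Omega])+d^{2,(n-1)}_{n-1}([\Omega])=0$ (Lemma \ref{yt}) and the two summands are homogeneous for the bigrading, we get $d^{2,(n-1)}_{n-1}([\Omega])=0$, and the whole composite vanishes. As in the preceding proof, the values of the balancing signature are irrelevant, because every term in the final expression is annihilated by the relation $d^{2,(n-1)}_{n-1}([\Omega])=0$.

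Concretely, I would fix $\gamma\in C^{n-a-1}(\mathcal{F}^p_R(\sigma^a,*))$, i.e. a family $\gamma_{\sigma^a,\sigma^{n-1}}\in\mathcal{F}^p_R(\sigma^a,\sigma^{n-1})$ indexed by the $(n-1)$-simplices $\sigma^{n-1}\supseteq\sigma^a$. Using the surjectivity of $(\iota^{\sigma^{n-1}}_{\sigma^a,\sigma^{n-1}})^*\colon\bigwedge^p T_RF_{\sigma^{n-1}}\twoheadrightarrow\mathcal{F}^p_R(\sigma^a,\sigma^{n-1})$ (Proposition \ref{defcap}), choose a preimage $\overline{\gamma}_{\sigma^{n-1}}\in\bigwedge^p T_RF_{\sigma^{n-1}}$ of each $\gamma_{\sigma^a,\sigma^{n-1}}$. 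Because $\delta^{n-a-1}_{\sigma^a,*,(p)}$ is the dual of the differential $d^{2,\sigma^a}_{n-a}$, for every $\sigma^n\supseteq\sigma^a$ one has
\[
\big[\delta^{n-a-1}_{\sigma^a,*,(p)}(\gamma)\big]_{\sigma^a,\sigma^n}=\sum_{\sigma^{n-1}\subset\sigma^n,\ \sigma^a\subseteq\sigma^{n-1}}\rho(\sigma^{n-1},\sigma^n)\,(\pi^{\sigma^a,\sigma^n}_{\sigma^a,\sigma^{n-1}})^*(\gamma_{\sigma^a,\sigma^{n-1}}),
\]
and since the cosheaf projection $\pi^{\sigma^a,\sigma^n}_{\sigma^a,\sigma^{n-1}}$ is the restriction of the canonical projection $\bigwedge^p\frac{N_R}{T_RF_{\sigma^n}^{\perp}}\twoheadrightarrow\bigwedge^p\frac{N_R}{T_RF_{\sigma^{n-1}}^{\perp}}$, its dual fits into a commuting square giving $(\pi^{\sigma^a,\sigma^n}_{\sigma^a,\sigma^{n-1}})^*(\gamma_{\sigma^a,\sigma^{n-1}})=(\iota^{\sigma^n}_{\sigma^a,\sigma^n})^*(\overline{\gamma}_{\sigma^{n-1}})$, where $\overline{\gamma}_{\sigma^{n-1}}$ is now viewed inside $\bigwedge^p T_RF_{\sigma^n}$ through the inclusion $T_RF_{\sigma^{n-1}}\subseteq T_RF_{\sigma^n}$. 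Feeding this into the defining identity of the cap product (Proposition \ref{defcap}), the $(\sigma^1,\sigma^a)$-component of $\phi_{\sigma^a,(p)}(\delta^{n-a-1}_{\sigma^a,*,(p)}(\gamma))$ becomes
\[
\sum_{\sigma^n\supseteq\sigma^a}\ \sum_{\sigma^{n-1}\subset\sigma^n,\ \sigma^a\subseteq\sigma^{n-1}}\rho(\sigma^{n-1},\sigma^n)\,\pi^{\sigma^a}_{\sigma^n}\big(\overline{\gamma}_{\sigma^{n-1}}\,.\,[\Omega]_{\sigma^1,\sigma^n}\big),
\]
where $\pi^{\sigma^a}_{\sigma^n}\colon\bigwedge^{n-1-p}\frac{N_R}{T_RF_{\sigma^n}^{\perp}}\twoheadrightarrow\bigwedge^{n-1-p}\frac{N_R}{T_RF_{\sigma^a}^{\perp}}$ is the canonical projection.

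Finally I would reindex: swap the two sums so that $\sigma^{n-1}$ (with $\sigma^a\subseteq\sigma^{n-1}$) runs on the outside and $\sigma^n\supset\sigma^{n-1}$ on the inside; factor $\pi^{\sigma^a}_{\sigma^n}=\pi^{\sigma^a}_{\sigma^{n-1}}\circ\pi^{\sigma^{n-1}}_{\sigma^n}$; and use that the contraction with $\overline{\gamma}_{\sigma^{n-1}}$ commutes with $\pi^{\sigma^{n-1}}_{\sigma^n}$. The last point is an immediate dual variant of the property (\ref{yutr}): since $\overline{\gamma}_{\sigma^{n-1}}\in\bigwedge^p T_RF_{\sigma^{n-1}}$ and $T_RF_{\sigma^{n-1}}$ is exactly the annihilator of $\ker\pi^{\sigma^{n-1}}_{\sigma^n}$, the two elements $\pi^{\sigma^{n-1}}_{\sigma^n}(\overline{\gamma}_{\sigma^{n-1}}.x)$ and $\overline{\gamma}_{\sigma^{n-1}}.\pi^{\sigma^{n-1}}_{\sigma^n}(x)$ lie in $\bigwedge^{n-1-p}\frac{N_R}{T_RF_{\sigma^{n-1}}^{\perp}}$ and pair identically with every element of its dual, hence coincide. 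The displayed sum then collapses to
\[
\sum_{\sigma^{n-1}\supseteq\sigma^a}\pi^{\sigma^a}_{\sigma^{n-1}}\Big(\overline{\gamma}_{\sigma^{n-1}}\,.\,\big[d^{2,(n-1)}_{n-1}([\Omega])\big]_{\sigma^1,\sigma^{n-1}}\Big),
\]
because $\sum_{\sigma^n\supset\sigma^{n-1}}\rho(\sigma^{n-1},\sigma^n)\,\pi^{\sigma^{n-1}}_{\sigma^n}([\Omega]_{\sigma^1,\sigma^n})$ is precisely the $(\sigma^1,\sigma^{n-1})$-component of $d^{2,(n-1)}_{n-1}([\Omega])$. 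By Lemma \ref{yt} this component is zero, so the whole sum vanishes, which gives $\phi_{\sigma^a,(p)}\circ\delta^{n-a-1}_{\sigma^a,*,(p)}=0$.

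The only delicate part I expect is the natural-transformation bookkeeping: identifying the dual of the cosheaf projection $\pi^{\sigma^a,\sigma^n}_{\sigma^a,\sigma^{n-1}}$ with an inclusion of exterior powers of tangent spaces, and setting up the dual form of (\ref{yutr}) so that the contraction slides past $\pi^{\sigma^{n-1}}_{\sigma^n}$. Everything else is formal, and, as in the preceding proof, no sign computation is needed since each term is killed by $d^{2,(n-1)}_{n-1}([\Omega])=0$.
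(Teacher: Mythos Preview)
Your proposal is correct and follows essentially the same approach as the paper: lift each $\gamma_{\sigma^a,\sigma^{n-1}}$ to $\overline{\gamma}_{\sigma^{n-1}}\in\bigwedge^p T_RF_{\sigma^{n-1}}$, identify $(\pi^{\sigma^a,\sigma^{n-1}}_{\sigma^a,\sigma^n})^*(\gamma_{\sigma^a,\sigma^{n-1}})$ with $(\iota^{\sigma^n}_{\sigma^a,\sigma^n})^*(\overline{\gamma}_{\sigma^{n-1}})$, expand the cap product, swap the sums over $\sigma^n$ and $\sigma^{n-1}$, factor $\pi^{\sigma^a}_{\sigma^n}=\pi^{\sigma^a}_{\sigma^{n-1}}\circ\pi^{\sigma^{n-1}}_{\sigma^n}$, slide the contraction past $\pi^{\sigma^{n-1}}_{\sigma^n}$, and recognize the inner sum as the $(\sigma^1,\sigma^{n-1})$-component of $d^{2,(n-1)}_{n-1}([\Omega])=0$. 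The paper does exactly these steps in the same order; your additional justification for the commutation of contraction with $\pi^{\sigma^{n-1}}_{\sigma^n}$ is a welcome elaboration of a step the paper leaves implicit.
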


\begin{proof}
    Let $\alpha \in C^{n-a-1}(\mathcal{F}_p(\sigma^a,*))$ We have:
    \begin{equation*}
        \delta_{\sigma^a,*,(p)}^{n-a-1}(\alpha)=\bigoplus_{\sigma^a \subseteq \sigma^n}\sum_{\sigma^a \subseteq \sigma^{n-1} \subset \sigma^{n}} \rho(\sigma^{n-1},\sigma^{n}) (\pi_{\sigma^{a},\sigma^{n}}^{\sigma^{a},\sigma^{n-1}})^* (\alpha_{\sigma^a,\sigma^{n-1}}).
    \end{equation*}
    For every ${\sigma^a \subseteq \sigma^{n-1}\subseteq \sigma^n}$, we know that the map $(\iota_{\sigma^a,\sigma^{n-1}}^{\sigma^{n-1}})^*: \bigwedge ^pT_RF_{\sigma^{n-1}} \twoheadrightarrow \mathcal{F}^p_R(\sigma^a,\sigma^{n-1})$ is surjective. Thus there exists $\beta_{\sigma^{n-1}} \in \bigwedge ^pT_RF_{\sigma^{n-1}} $ such that $(\iota_{\sigma^a,\sigma^{n-1}}^{\sigma^{n-1}})^*(\beta_{\sigma^{n-1}})=\alpha_{\sigma^a,\sigma^{n-1}}$. Moreover, as $\pi^{\sigma^{n-1}}_{\sigma^n}$ is just the inclusion $\bigwedge^pT_RF_{\sigma^{n-1}} \hookrightarrow \bigwedge^p M_R$, we have:
    \begin{align*}
        (\pi_{\sigma^{a},\sigma^{n}}^{\sigma^{a},\sigma^{n-1}})^*(\alpha_{\sigma^a,\sigma^n}) &= (\pi_{\sigma^{a},\sigma^{n}}^{\sigma^{a},\sigma^{n-1}})^*\circ(\iota_{\sigma^a,\sigma^{n-1}}^{\sigma^{n-1}})^*(\beta_{\sigma^n}) = (\iota_{\sigma^a,\sigma^{n}}^{\sigma^{n}})^*\circ(\pi^{\sigma^{n-1}}_{\sigma^n})^*(\beta_{\sigma^n}) = (\iota_{\sigma^a,\sigma^{n}}^{\sigma^{n}})^*(\beta_{\sigma^n}).
    \end{align*}

    Then we obtain:
    \begin{align*}
       \phi_{\sigma^a,(p)} \circ \delta_{\sigma^a,*,(p)}^{n-a-1}(\alpha) &= \bigoplus_{\sigma^1 \subseteq \sigma^a} \sum_{\sigma^n \supseteq \sigma^a}\pi^{\sigma^a}\Bigg( \Bigg[ \sum_{\sigma^a \subseteq \sigma^{n-1} \subset \sigma^{n}} \rho(\sigma^{n-1},\sigma^{n}) \beta_{\sigma^{n-1}} \Bigg].[\Omega]_{\sigma^1,\sigma^n} \Bigg) \\
       & =\bigoplus_{\sigma^1 \subseteq \sigma^a} \sum_{\sigma^a \subseteq \sigma^{n-1} } \pi^{\sigma^a}_{\sigma^{n-1}} \circ \pi^{\sigma^{n-1}} \Bigg( \beta_{\sigma^{n-1}}.\Bigg[ \sum_{\sigma^n\supset \sigma^{n-1}} \rho(\sigma^{n-1},\sigma^n)[\Omega]_{\sigma^1,\sigma^n} \Bigg] \Bigg) \\
       &= \bigoplus_{\sigma^1 \subseteq \sigma^a} \sum_{\sigma^a \subseteq \sigma^{n-1}} \pi^{\sigma^a}_{\sigma^{n-1}} \Bigg( \beta_{\sigma^{n-1}}.\Bigg[ \sum_{\sigma^n\supset \sigma^{n-1}} \rho(\sigma^{n-1},\sigma^n)\pi^{\sigma^{n-1}}([\Omega]_{\sigma^1,\sigma^n}) \Bigg] \Bigg) \\
       & = \bigoplus_{\sigma^1 \subseteq \sigma^a} \sum_{\sigma^a \subseteq \sigma^{n-1}} \pi^{\sigma^a}_{\sigma^{n-1}} \Bigg( \beta_{\sigma^{n-1}}.\Bigg[ d^{2,(n-1)}_{n-1}([\Omega]) \Bigg]_{\sigma^1,\sigma^{n-1}} \Bigg).
    \end{align*}
    However, we know that without any hypothesis (Lemma \ref{yt}) we have:
    \begin{equation*}
        d^{2,(n-1)}_{n-1}([\Omega]) = 0.
    \end{equation*}
    It concludes the proof.
\end{proof}

\begin{lemma}
    If the polytope $P$ is $R$-non-singular on the face $F_{\sigma^a}$, and if the triangulation satisfies the hypothesis 2 for the integral domain $R$, then we have:
    \begin{equation*}
        \mathrm{ker} ~ \phi_{\sigma^a,(p)} = \mathrm{im}~ \delta^{n-1-a}_{\sigma^a,*,(p)}.
    \end{equation*}
\end{lemma}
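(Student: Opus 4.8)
The plan is to view $C^{n-a}(\mathcal{F}^p_R(\sigma^a,*))$ as the top-degree term of the cochain complex $C^{\bullet}(\mathcal{F}^p_R(\sigma^a,*))$: since every simplex of $\Gamma$ has dimension at most $n$, one has $C^{b-a}(\mathcal{F}^p_R(\sigma^a,*))=0$ for $b>n$, hence $\delta^{n-a}_{\sigma^a,*,(p)}=0$ and $H^{n-a}(\mathcal{F}^p_R(\sigma^a,*))=\operatorname{coker}\delta^{n-1-a}_{\sigma^a,*,(p)}$. The inclusion $\operatorname{im}\delta^{n-1-a}_{\sigma^a,*,(p)}\subseteq\ker\phi_{\sigma^a,(p)}$ is exactly the previous lemma ($\phi_{\sigma^a,(p)}\circ\delta^{n-a-1}_{\sigma^a,*,(p)}=0$); consequently $\phi_{\sigma^a,(p)}$ factors through a surjection
\[
\overline{\phi}\colon \operatorname{coker}\delta^{n-1-a}_{\sigma^a,*,(p)}=H^{n-a}(\mathcal{F}^p_R(\sigma^a,*))\twoheadrightarrow \operatorname{im}\phi_{\sigma^a,(p)}.
\]
It then suffices to prove that $\overline{\phi}$ is injective, for then any $\alpha\in\ker\phi_{\sigma^a,(p)}$ has trivial class in $\operatorname{coker}\delta^{n-1-a}_{\sigma^a,*,(p)}$, that is, $\alpha\in\operatorname{im}\delta^{n-1-a}_{\sigma^a,*,(p)}$.

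Because $P$ is $R$-non-singular on $F_{\sigma^a}$, Proposition~\ref{theocohom} shows that $H^{n-a}(\mathcal{F}^p_R(\sigma^a,*))$ is torsion-free and canonically isomorphic to $\mathcal{F}^{p-n+\dim F_{\sigma^a}}_R(\sigma^a,\sigma^a)$. Hence $\ker\overline{\phi}$, being a submodule of a torsion-free $R$-module, is torsion-free, and $\overline{\phi}$ is an isomorphism as soon as $\operatorname{rank}_R H^{n-a}(\mathcal{F}^p_R(\sigma^a,*))=\operatorname{rank}_R\operatorname{im}\phi_{\sigma^a,(p)}$ (a surjection of $R$-modules with torsion-free source and equal source/target rank has rank-$0$ torsion-free kernel, hence trivial kernel). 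For the second rank, hypothesis~2 is assumed, hence so is hypothesis~1, so Proposition~\ref{ranphi} gives
\[
\operatorname{rank}\phi_{\sigma^a,(p)}=\binom{\dim F_{\sigma^a}}{p-n+\dim F_{\sigma^a}}-\binom{\dim F_{\sigma^a}-\dim V}{p-n+\dim F_{\sigma^a}-\dim V},\qquad V:=\sum_{\sigma^1\subseteq\sigma^a}T_R\sigma^1.
\]

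For the first rank I would compute $\operatorname{rank}_R\mathcal{F}^{r}_R(\sigma^a,\sigma^a)$ with $r:=p-n+\dim F_{\sigma^a}$. Set $d:=\dim F_{\sigma^a}$, $v:=\dim V$, $K:=\operatorname{Frac}(R)$ and $W:=T_KF_{\sigma^a}\cong K^d$. Dualizing $\frac{N_R}{T_RF_{\sigma^a}^{\perp}}$, one has $\mathcal{F}^R_r(\sigma^a,\sigma^a)\otimes_R K=\sum_{\sigma^1\subseteq\sigma^a}\bigwedge^r H_{\sigma^1}$, where $H_{\sigma^1}\subseteq W^{*}$ is the hyperplane annihilating $T_K\sigma^1$; its orthogonal inside $\bigwedge^r W$ is $\bigcap_{\sigma^1\subseteq\sigma^a}\bigl(T_K\sigma^1\wedge\bigwedge^{r-1}W\bigr)$, the space of multivectors divisible by each line $T_K\sigma^1$. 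Since these lines span $V\otimes_R K$, this intersection equals $\bigl(\bigwedge^{v}(V\otimes_R K)\bigr)\wedge\bigwedge^{r-v}W$, of dimension $\binom{d-v}{r-v}$, whence $\operatorname{rank}_R\mathcal{F}^r_R(\sigma^a,\sigma^a)=\binom{d}{r}-\binom{d-v}{r-v}$, which coincides with the displayed value of $\operatorname{rank}\phi_{\sigma^a,(p)}$. This yields the desired rank equality, hence $\overline{\phi}$ is an isomorphism and the proof closes.

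The only non-formal step is the exterior-algebra identity $\bigcap_{\sigma^1}\bigl(T_K\sigma^1\wedge\bigwedge^{r-1}W\bigr)=\bigl(\bigwedge^{v}(V\otimes_R K)\bigr)\wedge\bigwedge^{r-v}W$: after choosing a basis $f_1,\dots,f_d$ of $W$ with $f_1,\dots,f_v$ spanning $V\otimes_R K$, it reduces to the statement that a multivector divisible by $f_1,\dots,f_j$ separately is divisible by $f_1\wedge\dots\wedge f_j$, which one proves by removing the factors one at a time and controlling the component not containing the next $f_i$. Everything else — the factorisation through $\operatorname{coker}$, the identification of the top cohomology, and the surjectivity/rank bookkeeping — is routine once Propositions~\ref{theocohom} and~\ref{ranphi} are available.
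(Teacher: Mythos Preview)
Your proof is correct and follows essentially the same route as the paper: both use the previous lemma for the inclusion $\operatorname{im}\delta^{n-1-a}_{\sigma^a,*,(p)}\subseteq\ker\phi_{\sigma^a,(p)}$, invoke Proposition~\ref{theocohom} for the torsion-freeness and identification of $H^{n-a}(\mathcal{F}^p_R(\sigma^a,*))$, and conclude by matching ranks via Proposition~\ref{ranphi}. The paper phrases the endgame as ``$\operatorname{im}\delta$ is saturated of the correct rank'' while you phrase it dually as ``$\overline{\phi}$ is a surjection of equal rank with torsion-free source''; the one substantive addition in your write-up is the explicit exterior-algebra computation of $\operatorname{rank}_R\mathcal{F}^{p-n+\dim F_{\sigma^a}}_R(\sigma^a,\sigma^a)$, which the paper uses without justification in its proof.
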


\begin{proof}
    Firstly, let us remark that the cohomology group $H^{n-a}(\mathcal{F}^p_R(\sigma^a,*))\simeq \mathcal{F}_R^p(\sigma^a,\sigma^a)$ is torsion-free. Thus the image of $\delta^{n-a-1}_{\sigma^a,*,(p)}$ is saturated in $C^{n-a}(\mathcal{F}^p_R(\sigma^a,*))$ (see Definition \ref{sat} for the saturation of a sub-module). Let us denote $V:=\sum_{\sigma^1 \subseteq \sigma^a}T_R \sigma^1$. Then, using the saturation of the image of $\delta^{n-a-1}_{\sigma^a,*,(p)}$ we have:
    \begin{align*}
        \mathrm{rank} ~ \delta^{n-a-1}_{\sigma^a,*,(p)} &= \mathrm{rank}~ C^{n-a}(\mathcal{F}^p_R(\sigma^a,*))- \mathrm{rank} ~ \mathcal{F}_R^p(\sigma^a,\sigma^a) \\&= \mathrm{rank}~ C^{n-a}(\mathcal{F}^p_R(\sigma^a,*))-\Bigg[ \binom{\mathrm{dim}F_{\sigma^a}}{p-n+\mathrm{dim}F_{\sigma^a}} -\binom{\mathrm{dim}F_{\sigma^a}-\mathrm{rank}V}{p-n+\mathrm{dim}F_{\sigma^a}-\mathrm{rank}V} \Bigg]\\
        &= \mathrm{rank} ~ \mathrm{ker}~ \phi_{\sigma^a,(p)}.
    \end{align*}
    Now we conclude the proof using again that the image of $\delta^{n-a-1}_{\sigma^a,*,(p)}$ is saturated in $C^{n-a}(\mathcal{F}^p_R(\sigma^a,*))$.
\end{proof}

We are now ready to prove the Lemma \ref{ejnf}.

\begin{proof}
    Proof of the Lemma \ref{ejnf}.

    If the polytope $P$ is $R$-non-singular, then it is $R$-non-singular on every one of its faces. Thus we have:
    \begin{align*}
        \mathrm{ker} ~ \phi^q_{(p)} ~ \cap C^{n-q,n}(\mathcal{F}^p_R) &=\bigoplus_{\sigma^{n-q}\in\Gamma}\mathrm{ker}~\phi_{\sigma^{n-q},(p)} = \bigoplus_{\sigma^{n-q}\in\Gamma}~ \mathrm{im} ~\delta^{q-1}_{\sigma^{n-q},*,(p)} = \mathrm{im}~ \delta^{n-q,n-1}_{2,(p)}.
    \end{align*}
\end{proof}

\begin{lemma}
    If the simplex $\sigma^a$ is $R$-primitive and if the triangulation satisfies the hypothesis 2 for the integral domain $R$, then we have:
    \begin{equation*}
        \mathrm{ker}~ d^{*,\sigma^a,(n-1-p)}_{a-1} = \mathrm{im}~\phi_{\sigma^a,(p)}.
    \end{equation*}
\end{lemma}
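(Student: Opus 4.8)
The plan is to prove the lemma by combining the preceding lemma with Propositions \ref{ranphi} and \ref{theohom} through a comparison of ranks together with a saturation argument. The preceding lemma gives $d^{*,\sigma^a,(n-1-p)}_{a-1}\circ\phi_{\sigma^a,(p)}=0$ without any hypothesis, hence $\mathrm{im}\,\phi_{\sigma^a,(p)}\subseteq\mathrm{ker}\,d^{*,\sigma^a,(n-1-p)}_{a-1}$ inside $C_{a-1}(\mathcal{F}_{n-1-p}^R(*,\sigma^a))$. It therefore suffices to check that these two $R$-submodules have the same rank, because Proposition \ref{ranphi} (using Hypothesis 2, which implies Hypothesis 1) asserts that $\mathrm{im}\,\phi_{\sigma^a,(p)}$ is saturated in $C_{a-1}(\mathcal{F}_{n-1-p}^R(*,\sigma^a))$: the quotient $\mathrm{ker}\,d^{*,\sigma^a,(n-1-p)}_{a-1}/\mathrm{im}\,\phi_{\sigma^a,(p)}$ then embeds into the torsion-free module $C_{a-1}(\mathcal{F}_{n-1-p}^R(*,\sigma^a))/\mathrm{im}\,\phi_{\sigma^a,(p)}$ and has rank zero, so it vanishes.

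To compute the rank of the kernel, I would use that $C_{a-1}(\mathcal{F}_{n-1-p}^R(*,\sigma^a))=\bigoplus_{\sigma^1\subseteq\sigma^a}\mathcal{F}_{n-1-p}^R(\sigma^1,\sigma^a)$ sits in the top degree of the complex $C_*(\mathcal{F}_{n-1-p}^R(*,\sigma^a))$ — the degree-$a$ term would involve a first factor of dimension $0$, which is excluded from $\Omega$ — so there is no incoming differential and $\mathrm{ker}\,d^{*,\sigma^a,(n-1-p)}_{a-1}=H_{a-1}(\mathcal{F}_{n-1-p}^R(*,\sigma^a))$. Since $\sigma^a$ is $R$-primitive, Proposition \ref{theohom} (applied with $n-1-p$ in place of $p$ and $\sigma^a$ in place of $\sigma^b$) shows this homology is torsion-free of rank $\binom{r}{n-p}-\binom{r-a}{n-p}$, where $r:=\mathrm{rank}\,T_RF_{\sigma^a}=\mathrm{dim}\,F_{\sigma^a}$.

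For the rank of the image, I would first observe that $R$-primitivity of $\sigma^a$ forces $V:=\sum_{\sigma^1\subseteq\sigma^a}T_R\sigma^1=T_R\sigma^a$: the directions of the edges issued from a fixed vertex of $\sigma^a$, each rescaled by the index $|\sigma^a|$, already form an $R$-basis of $T_R\sigma^a$ and each lies in the corresponding $T_R\sigma^1$. Hence $\mathrm{dim}\,V=a$, and the rank formula of Proposition \ref{ranphi} gives $\mathrm{rank}\,\phi_{\sigma^a,(p)}=\binom{r}{p-n+r}-\binom{r-a}{p-n+r-a}$. The symmetries $\binom{r}{p-n+r}=\binom{r}{n-p}$ and $\binom{r-a}{p-n+r-a}=\binom{r-a}{n-p}$ identify this with the rank of the kernel computed above, and the saturation statement of Proposition \ref{ranphi} then finishes the proof.

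I do not expect a real obstacle here: the substantive content is already packaged in Propositions \ref{ranphi} and \ref{theohom}, and what remains is the bookkeeping — identifying $\mathrm{ker}\,d^{*,\sigma^a,(n-1-p)}_{a-1}$ with the top homology via the indexing of the complex, checking $V=T_R\sigma^a$, and the two binomial-coefficient symmetries — together with the standard fact that a saturated submodule of the correct rank is the whole thing.
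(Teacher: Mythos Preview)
Your proposal is correct and follows essentially the same approach as the paper: both arguments use the preceding lemma for the inclusion $\mathrm{im}\,\phi_{\sigma^a,(p)}\subseteq\mathrm{ker}\,d^{*,\sigma^a,(n-1-p)}_{a-1}$, compute the rank of the kernel via Proposition~\ref{theohom} and the rank of the image via Proposition~\ref{ranphi} (after observing that $R$-primitivity gives $\mathrm{rank}\,V=a$), and then conclude by the saturation clause of Proposition~\ref{ranphi}. You are simply more explicit than the paper about the identification of the kernel with the top homology and about the binomial symmetries $\binom{r}{p-n+r}=\binom{r}{n-p}$, $\binom{r-a}{p-n+r-a}=\binom{r-a}{n-p}$, which the paper leaves implicit.
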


\begin{proof}
    From Proposition \ref{theohom} we have:
    \begin{align*}
        \mathrm{rank}~\mathrm{ker}~ d^{*,\sigma^a,(n-1-p)}_{a-1} = \binom{\mathrm{dim}F_{\sigma^a}}{n-p}-\binom{\mathrm{dim}F_{\sigma^a}-a}{n-p}.
    \end{align*}

    However, as we assume the simplex $\sigma^a$ to be $R$-primitive, we have:
    \begin{equation*}
        \mathrm{rank}~\sum_{\sigma^1 \subseteq \sigma^a}T_R\sigma^1 = \mathrm{rank}~T_R\sigma^a = a.
    \end{equation*}

    Thus we obtain by Proposition \ref{ranphi}:
    \begin{equation*}
        \mathrm{rank}~\mathrm{ker}~ d^{*,\sigma^a,(n-1-p)}_{a-1} =\mathrm{rank}~\phi_{\sigma^a,(p)}.
    \end{equation*}
    Now, as the Proposition \ref{ranphi} also announces that the image of $\phi_{\sigma^a,(p)}$ is saturated in $C_{a-1}(\mathcal{F}_p^R(*,\sigma^a))$, it concludes the proof (see Definition \ref{sat} for the saturation of a sub-module).
\end{proof}

We are now ready to prove the Lemma \ref{lebhb}.

\begin{proof}
    Proof of the Lemma \ref{lebhb}.

    If the triangulation is $(k,R)$-primitive, then each of its $n-q$-simplices for $q\geq n-k$ is $R$-primitive. Moreover the hypothesis $2$ is also a consequence of the $(k,R)$-primitivity (see Proposition \ref{jt}). Then we can apply the lemma above for any $k$-simplex, and we get for $q \geq n-k$:
    \begin{align*}
        \mathrm{im} ~ \phi^q_{(p)} &=\bigoplus_{\sigma^{n-q}\in\Gamma}\mathrm{im}~\phi_{\sigma^{n-q},(p)} = \bigoplus_{\sigma^{n-q}\in\Gamma}~ \mathrm{ker} ~d^{*,\sigma^{n-q},(n-1-p)}_{n-q-1} = \mathrm{ker}~ d^{1,(n-1-p)}_{1,n-q}.
    \end{align*}
\end{proof}

Now let us prove the Lemma \ref{8}:

\begin{proof}
    Proof of the Lemma \ref{8}.

    We have previously defined a map $\psi:\bigoplus_{\sigma^n\supseteq \sigma^a}\bigwedge^p M_R \rightarrow C_{a-1}(\mathcal{F}_{n-1-p}(*,\sigma^a))$ which depends on a fixed simplex $\sigma^a$. Let us then denote it $\psi_{\sigma^a}$. Then we can define:
    \begin{equation*}
        \psi^q:=\bigoplus_{\sigma^{n-q} \in \Gamma}\psi_{\sigma^{n-q}}.
    \end{equation*}

    This map satisfies $\psi^q = \phi^q_{(p)} \circ \gamma^q$, where $\gamma^q:\bigoplus_{(\sigma^{n-q},\sigma^n)\in\Omega}\bigwedge^p M_R \twoheadrightarrow C^{n-q,n}(\mathcal{F}^p_R)$ is the canonical projection.

    Let $\alpha \in C^{n-q,n}(\mathcal{F}_R^p)$. We have:
    \begin{align*}
        \psi(\alpha) = \bigoplus_{\sigma^1 \subseteq \sigma^{n-q}}\sum_{\sigma^n \supseteq \sigma^{n-q}} \pi^{\sigma^{n-q}}(\alpha_{\sigma^a,\sigma^n}.[\Omega]_{\sigma^1,\sigma^n}).
    \end{align*}
    Then we have:
    \begin{align*}
        &d^{2,(n-1-p)}_{1,n-q} \circ \psi^q(\alpha) \\&= \bigoplus_{(\sigma^1,\sigma^{n-q-1})\in\Omega}\sum_{\sigma^{n-q-1}\subset \sigma^{n-q}\subseteq \sigma^n}\rho(\sigma^{n-q-1},\sigma^{n-q})\pi_{\sigma^{n-q}}^{\sigma^{n-q-1}}\circ\pi^{\sigma^{n-q}}(\alpha_{\sigma^{n-q},\sigma^n}.[\Omega]_{\sigma^1,\sigma^n})
        \\& = \bigoplus_{(\sigma^1,\sigma^{n-q-1})\in\Omega}\sum_{\sigma^{n-q-1}\subset \sigma^{n}}\pi^{\sigma^{n-q-1}}\Bigg( \Bigg[
        \sum_{\sigma^{n-q-1}\subset \sigma^{n-q}\subseteq \sigma^n}\rho(\sigma^{n-q-1},\sigma^{n-q})\alpha_{\sigma^{n-q},\sigma^n}\Bigg].[\Omega]_{\sigma^1,\sigma^n}\Bigg).
    \end{align*}
    However, we have also:
    \begin{align*}
        &\delta^{n-q,n}_{1,(p)} \circ \gamma^q(\alpha) \\&= \bigoplus_{(\sigma^{n-q-1},\sigma^n)\in\Omega}(-1)^{q+1}\sum_{\sigma^{n-q-1} \subset \sigma^{n-q}} \rho(\sigma^{n-q-1},\sigma^{n-q})(\iota^{\sigma^{n-q},\sigma^n}_{\sigma^{n-q-1},\sigma^n})^*\circ (\iota^{\sigma^n}_{\sigma^{n-q},\sigma^n})^*(\alpha_{\sigma^{n-q},\sigma^n}) \\
        &= \bigoplus_{(\sigma^{n-q-1},\sigma^n)\in\Omega} (\iota^{\sigma^n}_{\sigma^{n-q-1},\sigma^n})^* \Bigg( (-1)^{q+1}\sum_{\sigma^{n-q-1} \subset \sigma^{n-q}}\rho(\sigma^{n-q-1},\sigma^{n-q})\alpha_{\sigma^{n-q},\sigma^n}\Bigg) \\
        &=\gamma^{q+1} \Bigg( \bigoplus_{(\sigma^{n-q-1},\sigma^n)\in\Omega} (-1)^{q+1}\sum_{\sigma^{n-q-1} \subset \sigma^{n-q}}\rho(\sigma^{n-q-1},\sigma^{n-q})\alpha_{\sigma^{n-q},\sigma^n} \Bigg).
    \end{align*}
    Thus we have:
    \begin{align*}
        &\phi^{q+1}_{(p)}\circ \delta^{n-q,n}_{1,(p)} \circ \gamma^q(\alpha) \\
        &=\phi^{q+1}_{(p)}\circ \gamma^{q+1} \Bigg( \bigoplus_{(\sigma^{n-q-1},\sigma^n)\in\Omega} (-1)^{q+1}\sum_{\sigma^{n-q-1} \subset \sigma^{n-q}}\rho(\sigma^{n-q-1},\sigma^{n-q})\alpha_{\sigma^{n-q},\sigma^n}\Bigg) \\
        & = \psi^{q+1} \Bigg( \bigoplus_{(\sigma^{n-q-1},\sigma^n)\in\Omega} (-1)^{q+1}\sum_{\sigma^{n-q-1} \subset \sigma^{n-q}}\rho(\sigma^{n-q-1},\sigma^{n-q})\alpha_{\sigma^{n-q},\sigma^n}\Bigg) \\
        &= \bigoplus_{(\sigma^1,\sigma^{n-q-1})\in\Omega}\sum_{\sigma^{n-q-1}\subset \sigma^{n}}\pi^{\sigma^{n-q-1}}\Bigg( \Bigg[(-1)^{q+1}\sum_{\sigma^{n-q-1}\subset \sigma^{n-q}\subseteq \sigma^n} \rho(\sigma^{n-q-1},\sigma^{n-q})\alpha_{\sigma^{n-q},\sigma^n}\Bigg].[\Omega]_{\sigma^1,\sigma^n}\Bigg) \\
        &=(-1)^{q+1}d^{2,(n-1-p)}_{1,n-q} \circ \phi^q_{(p)} \circ \gamma^q(\alpha).
    \end{align*}
    By surjectivity of the map $\gamma^q$ we obtain:
    \begin{equation*}
        \phi^{q+1}_{(p)}\circ \delta^{n-q,n}_{1,(p)} = (-1)^{q+1}d^{2,(n-1-p)}_{1,n-q} \circ \phi^q_{(p)}.
    \end{equation*}
    However we also know that $\mathrm{im} ~ \phi^q_{(p)} \subseteq C_{1,n-q}(\mathcal{F}_{n-1-p}^R(*,\sigma^{n-q}))$, thus $d^{2,(n-1-p)}_{a+1+q-n,a} \circ \phi^q_{(p)} = 0$ for every $a\neq n-q$. Moreover $\phi^{q+1}_{(p)}$ vanishes on every $C^{a,a+q+1}(\mathcal{F}^p_R)$ for $a \neq n-q-1$, and $\mathrm{im}~\delta^{a+1,a+1+q}_{1,(p)} \subseteq C^{a,a+q+1}(\mathcal{F}^p_R)$ thus we have $\phi^{q+1}_{(p)}\circ\delta^{a+1,a+1+q}_{1,(p)} =0$ for every $a \neq n-q-1$. Therefore the previous equations was enough to conclude the proof:
    \begin{equation*}
        \phi^{q+1}_{(p)} \circ \delta_{1,(p)}^q = (-1)^{q+1} d^{2,(n-1-p)}_{n-q-1} \circ \phi^q_{(p)}.
    \end{equation*}
\end{proof}

\section{Bibliography}

 \end{document}